\DeclareSymbolFont{EulerExtension}{U}{euex}{m}{n}
\DeclareMathSymbol{\euintop}{\mathop} {EulerExtension}{"52}
\DeclareMathSymbol{\euointop}{\mathop} {EulerExtension}{"48}
\def \id{\operatorname{Id}}
\def \ker{\operatorname{Ker}}
\def \Z{\mathbb{Z}}
\def \k{\mathbbm{k}}
\def \Id{\operatorname{Id}}
\def \Aut{\operatorname{Aut}}
\def \id{\operatorname{Id}}
\def \ker{\operatorname{Ker}}
\def \Z{\mathbb{Z}}
\numberwithin{equation}{section}
\newtheorem{theorem}{Theorem}[section]
\newtheorem{lemma}[theorem]{Lemma}
\newtheorem{proposition}[theorem]{Proposition}
\newtheorem{corollary}[theorem]{Corollary}
\newtheorem{definition}[theorem]{Definition}
\newtheorem{example}[theorem]{Example}
\newtheorem{remark}[theorem]{Remark}
\begin{document}
\title{Quasitriangular structures on abelian extensions of $\mathbb{Z}_{2}$}
\thanks{$^\dag$Supported by NSFC 11722016.}

\subjclass[2010]{16T05 (primary), 16T25 (secondary)}
\keywords{Quasitriangular Hopf algebra, Abelian extension.}

\author{Kun Zhou}
\address{Department of Mathematics, Nanjing University, Nanjing 210093, China} \email{dg1721021@smail.nju.edu.cn}
\date{}
\maketitle
\begin{abstract}  The aim of this paper is to give all quasitriangular structures on a class of semisimple Hopf algebras $\Bbbk^G\#_{\sigma,\tau}\Bbbk \mathbb{Z}_{2}$ constructed through abelian extensions of $\k\Z_2$ by $\Bbbk^G$ for an abelian group $G.$ We introduce the concept of symmetry of quasitriangular structures on Hopf algebras(see Section \ref{sec2.2} for the definition) and obtain some related propositions which can be used to simplify our calculations of quasitriangular structures. Moreover, we find that quasitriangular structures on $\Bbbk^G\#_{\sigma,\tau}\Bbbk \mathbb{Z}_{2}$ can do division-like operations. Using such operations we transform the problem of solving the quasitriangular structures into solving general solutions and giving a special solution(see Section \ref{sec5} for the definition). Then we give all general solutions for $\Bbbk^G\#_{\sigma,\tau}\Bbbk \mathbb{Z}_{2}$ and get a necessary and sufficient condition for the existence of a special solution, so we get all the quasitriangular structures on $\Bbbk^G\#_{\sigma,\tau}\Bbbk \mathbb{Z}_{2}$.
\end{abstract}

\section{Introduction}
The existence of quasitriangular structures for various families
of Hopf algebras was studied by many authors(see \cite{R5}, \cite{Ge}, \cite{N2}, \cite{N1}) in the last years. It is well known that an $\mathcal{R}$-matrix generates a solution for the quantum Yang-Baxter equation and finite dimensional quasitriangular Hopf algebras are important for
the study of knot invariants(see \cite{NR}), so it's important to know under what conditions a given Hopf algebra $H$ admits quasitriangular structures and if this is the case to determine all its $\mathcal{R}$-matrices. Such complete descriptions had been already obtained for certain families of Hopf algebras. For
instance, S. Gelaki in \cite{Ge} proved that the Hopf algebra $U_q(sl_n)'$ admits a quasitriangular structure if and only if $(n,N)=1$ or $(n,N)=2$, where $N$ is the order of $q^{1/2}$. Moreover, they had determined all possible quasitriangular structures of $U_q(sl_n)'$ up to equivalence under this condition.  In \cite{R5} D. Radford introduced a family of Hopf
algebras, $H_{N,n,q,\nu}$, which includes Taft's Hopf algebras. When these Hopf algebras are self-dual he showed that they are quasitriangular and he described all quasitriangular structures on them. After that, A. Nenciu in \cite{N1} obtained necessary and sufficient conditions for the existence of the quasitriangular structures on another family of Hopf
algebras, $H_{m,n,d,u}$, which includes the Hopf algebras $H_{N,n,q,\nu}$ and he completely determine all the quasitriangular structures of $H_{N,n,q,\nu}$ when it is quasitriangular. We note that all the Hopf algebras mentioned above are point Hopf algebras. Naturally, we ask that what about the quasitriangular structures of semisimple Hopf algebras? The simplest case of this problem is when the Hopf algebra $H=\Bbbk^G$, where $G$ is a finite group. Then all the quasitriangular structures of $H$ are given by the bicharacters of $G$. A slightly more complicated case is when $H$ is a semisimple Hopf algebra arising from exact factorizations of finite groups, such as the well-known 8-dimensional Kac-Paljutkin algebra $K_8$. The idea of constructing these semisimple Hopf algebras can be tracked back to G. Kac \cite{Kac}: Suppose that $L= G\Gamma$ is an exact factorization of the finite group $L$, into its subgroups $G$ and $\Gamma$, such that $G\cap \Gamma = 1.$ Associated to this exact factorization and appropriate cohomology data $\sigma$ and $\tau$, there is a semisimple bicrossed product Hopf algebra $H =\Bbbk^G\#_{\sigma,\tau}\Bbbk \Gamma$ (see Section 2 for the definition and \cite{M3,M5,M6} for details and generalizations). Many authors have considered quasitriangular structures of $\Bbbk^G\#_{\sigma,\tau}\Bbbk \Gamma$. For example, all possible quasitriangular structures of $K_8$ were given in \cite{S}. In 2011 S. Natale \cite{Na} proved that if $L$ is almost simple, then the extension admits no quasitriangular structure. But for our purpose, we want to find more concrete quasitriangular structures rather than absence of quasitriangular structures. So comparing the Natale's viewpoint, we consider the other extreme case which inculude the 8-dimensional Kac-Paljutkin algebra: the almost commutative case. That is, we assume that both $G$ and $\Gamma$ are commutative groups. As the start point, we further assume that $\Gamma$ is just the $\Z_2$ in this paper.

Throughout the paper we work over an algebraically closed field $\Bbbk$ of characteristic 0. In article \cite{L} we have obtained that there are only two types of quasitriangular structures on $\Bbbk^G\#_{\sigma,\tau}\Bbbk \mathbb{Z}_{2}$, one we call trivial and the other we call non-trivial. The trivial quasitriangular structures are easy to determine, so our problem is to give a necessary and sufficient condition for the existence of quasitriangular structures on $\Bbbk^G\#_{\sigma,\tau}\Bbbk \mathbb{Z}_{2}$, and give all quasitriangular structures under this condition. We first reduce the problem to determine a class of special functions, which we call quasitriangular functions. In order to get all quasitriangular functions, we analogize it to the problem of solving a system of linear equations, and it turns out that we can give all quasitriangular functions in a similar way to the solution of a system of linear equations. That is, the solution of a system of linear equations is divided into two steps, one is to find all general solutions and the other is to find a special solution. Similarly, we have two steps for general solutions and a special solution to give all quasitriangular functions. It seems to be a very complicated calculation, but we will introduce the concept of symmetry of quasitriangular structures and get some related propositions. Then we can use these propositions to simplify our calculations and so it's not as difficult as imagined.

This paper is organized as follows. In Section 2, we recall the definition of Hopf algebras $\Bbbk^G\#_{\sigma,\tau}\Bbbk \mathbb{Z}_{2}$ and give some examples of them. After that we review some main results of \cite{L} about the form of the quasitriangular structures on $\Bbbk^G\#_{\sigma,\tau}\Bbbk \mathbb{Z}_{2}$.  In Section 3, we give the concept of symmetry of quasitriangular structures on Hopf algebras and get some relevant propositions. And these propositions will be used to simplify our calculations later. In Section 4, we introduce the concept of quasitriangular functions on $\Bbbk^G\#_{\sigma,\tau}\Bbbk \mathbb{Z}_{2}$ and prove that quasitriangular structures on $\Bbbk^G\#_{\sigma,\tau}\Bbbk \mathbb{Z}_{2}$ are in one-one correspondence with quasitriangular functions of it. In Section 5, we find that a division-like operation can be done between the quasitriangular structures on $\Bbbk^G\#_{\sigma,\tau}\Bbbk \mathbb{Z}_{2}$. Using this operation, we introduce the concept of general and special solutions of quasitriangular structures on it. After that, we reduce the problem of solving all non-trivial quasitriangular structures on it to the problem of solving general and special solutions. Then we give all general solutions and we obtain a sufficient and necessary condition for the existence of special solutions on it and list all special solutions under this condition. In Section 6, we study $\varphi$-symmetric quasitriangular structures on $\Bbbk^G\#_{\sigma,\tau}\Bbbk \mathbb{Z}_{2}$ and we obtain a necessary and sufficient condition for the existence of $\varphi$-symmetric quasitriangular structures on $\Bbbk^G\#_{\sigma,\tau}\Bbbk \mathbb{Z}_{2}$. In Section 7, we apply the above conclusions to give all non-trivial quasitriangular structures on two classes Hopf algebras which we call them $K(8n,\sigma,\tau)$ and $A(8n,\sigma,\tau)$, respectively.

All Hopf algebras in this paper are finite dimensional. For the symbol $\delta$ in Section 2, we mean the classical Kronecker's symbol.

\section{Abelian extensions of $\mathbb{Z}_{2}$ and some results about it}
In this section, we recall the definition of $\Bbbk^G\#_{\sigma,\tau}\Bbbk \mathbb{Z}_{2}$, and then we give some examples of $\Bbbk^G\#_{\sigma,\tau}\Bbbk \mathbb{Z}_{2}$ for guiding our further research.
\subsection{The definition of $\Bbbk^G\#_{\sigma,\tau}\Bbbk \mathbb{Z}_{2}$}
\begin{definition}\label{def2.1.0}
A short exact sequence of Hopf algebras is a sequence of Hopf algebras
and Hopf algebra maps
\begin{equation}\label{ext}
\;\; K\xrightarrow{\iota} H \xrightarrow{\pi} A
\end{equation}
such that
\begin{itemize}
  \item[(i)] $\iota$ is injective,
  \item[(ii)]  $\pi$ is surjective,
  \item[(iii)] $\ker(\pi)= HK^+$, $K^+$ is the kernel of the counit of $K$.
\end{itemize}
\end{definition}
In this situation it is said that $H$ is an extension of $A$ by $K$ \cite[Definiton 1.4]{M3}. An extension \eqref{ext} above such that $K$ is commutative and $A$ is cocommutative is called abelian.
In this paper, we only study the following special abelian extensions
\begin{equation*}
\;\; \Bbbk^G\xrightarrow{\iota} A \xrightarrow{\pi} \Bbbk \mathbb{Z}_{2},
\end{equation*}
where $G$ is a finite abelian group. Abelian extensions were classified by Masuoka
(see \cite[Proposition 1.5]{M3}), and the above $A$ can be expressed as $\Bbbk^G\#_{\sigma,\tau}\Bbbk \mathbb{Z}_{2}$ which is defined as follows.

Let $\mathbb{Z}_2=\{1,x\}$ be the cyclic group of order 2 and let $G$ be a finite group. To give the description of $\Bbbk^G\#_{\sigma,\tau}\Bbbk \mathbb{Z}_{2}$, we need the following data
\begin{itemize}
\item[(i)] $\triangleleft :\mathbb{Z}_2 \rightarrow \Aut(G)$ is an injective group homomorphism.
\item[(ii)] $\sigma:G\rightarrow \Bbbk^\times$ is a map such that $\sigma(g\triangleleft x)=\sigma(g)$ for $g \in G$ and $\sigma(1)=1$.
\item[(iii)] $\tau:G\times G \rightarrow \Bbbk^\times$ is a unital 2-cocycle and satisfies that $\sigma(gh)\sigma(g)^{-1}\sigma(h)^{-1}=\tau(g,h)\tau(g\triangleleft x,h\triangleleft x)$\ for $g,h \in G$.
\end{itemize}
The aim of (i) is to avoid making a commutative algebra (in such case all quasitriangular structures are given by bicharacters and thus is known).
\begin{definition}\cite[Section 2.2]{AA}\label{def2.1.2}
As an algebra, the Hopf algebra $\Bbbk^G\#_{\sigma,\tau}\Bbbk \mathbb{Z}_{2}$ is generated by $\{ e_{g},x \}_{g \in G}$  satisfying
 \begin{equation*}
 e_{g}e_{h}=\delta_{g,h}e_{g},\ xe_{g}=e_{g\triangleleft x}x,\ x^2=\sum\limits_{g \in G}\sigma(g)e_{g}, \;\;\;\;g,h\in G.
 \end{equation*}
 The coproduct, counit and antipode are given by
 \begin{align*}
 &\Delta (e_{g})=\sum_{ h,k \in G,\ hk=g} e_{h}\otimes e_{k},\ \Delta(x)=[\sum\limits_{g,h \in G}\tau(g,h)e_{g}\otimes e_{h}](x\otimes x),\\
  &\epsilon(x)=1,\ \epsilon(e_{g})=\delta_{g,1}1,\\
   &\mathcal{S}(x)=\sum_{g\in G}\sigma(g)^{-1}\tau(g,g^{-1})^{-1}e_{g\triangleleft x}x,\ \mathcal{S}(e_g)=e_{g^{-1}},\;\;g\in G.
 \end{align*}
\end{definition}
The following are some examples of $\Bbbk^G\#_{\sigma,\tau}\Bbbk \mathbb{Z}_2$ and we will discuss them in next sections.
\begin{example}\label{def2.1.1}
\emph{ Let $n$ be a natural number. A Hopf algebra $H$ belonging to $\Bbbk^G\#_{\sigma,\tau}\Bbbk \mathbb{Z}_{2}$ is denoted by $K(8n,\sigma,\tau)$ if the data $(G,\triangleleft,\sigma,\tau)$ of $H$ satisfies}
  \begin{itemize}
  \item[(i)] $G=\mathbb{Z}_{2n}\times \mathbb{Z}_{2}=\langle a,b|a^{2n}=b^2=1,ab=ba\rangle;$
       \item[(ii)] $a\triangleleft x=ab,b\triangleleft x=b$.
  \end{itemize}
\end{example}
If we take $n=1$ and let $\sigma(a^ib^j)=(-1)^{(i-j)j}$ and $\tau(a^ib^j,a^kb^l)=(-1)^{j(k-l)}$ for $1\leq i,j,k,l \leq 2$, then we can easily check that the resulting 8-dimensional Hopf algebra is just the Kac-Paljutkin $8$-dimensional algebra $K_8$. Therefore, we give a kind of generalization of $K_8.$

\begin{example}\label{def2.1.3}
\emph{ Let $n$ be a natural number. A Hopf algebra $H$ belonging to $\Bbbk^G\#_{\sigma,\tau}\Bbbk \mathbb{Z}_{2}$ is denoted by $A(8n,\sigma,\tau)$ if the data $(G,\triangleleft,\sigma,\tau)$ of $H$ satisfies}
  \begin{itemize}
  \item[(i)] $G=\mathbb{Z}_{4n}=\langle a|a^{4n}=1\rangle;$
       \item[(ii)] $a\triangleleft x=a^{2n+1}$.
  \end{itemize}
\end{example}
In fact, non-trivial Hopf algebra $A(8n,\sigma,\tau)$ exists. For example we can make $\sigma(a^i)=1$ and $\tau(a^i,a^j)=(-1)^{ij}$ for $1\leq i,j \leq 4n$, then we get a non-trivial Hopf algebra $A(8n,\sigma,\tau)$.

\subsection{Some results about quasitriangular structures on $\Bbbk^G\#_{\sigma,\tau}\Bbbk \mathbb{Z}_2$}
Next, we review some results in \cite{L} about non-trivial quasitriangular structures on $\Bbbk^G\#_{\sigma,\tau}\Bbbk \mathbb{Z}_2$.

Recall that a quasitriangular Hopf algebra is a pair $(H, R)$ where $H$ is a Hopf algebra and $R=\sum R^{(1)} \otimes R^{(2)}$ is an invertible element in $H\otimes H$ such that
\begin{equation*}
 (\Delta \otimes \id)(R)=R_{13}R_{23},\; (\id \otimes \Delta)(R)=R_{13}R_{12},\;\Delta^{op}(h)R=R\Delta(h),
 \end{equation*}
for $h\in H$. Here by definition $R_{12}= \sum R^{(1)} \otimes R^{(2)}\otimes 1,\; R_{13}= \sum R^{(1)}\otimes 1 \otimes R^{(2)}$ and
$R_{23}=\sum 1 \otimes R^{(1)}\otimes R^{(2)}$. The element $R$ is called a universal $\mathcal{R}$-matrix of $H$ or a quasitriangular structure on $H$.

The first lemma is well-known.
\begin{lemma}\cite[Proposition 12.2.11]{R}\label{lem2.2.1}
Let $H$ be a Hopf algebra and $R \in H\otimes H$. For $f \in H^*$, if we denote $l(f):=(f \otimes \id)(R)$ and $r(f):=(\id \otimes f)(R)$, then the following statements are equivalent
\begin{itemize}
\item[(i)] $(\Delta \otimes \id)(R)=R_{13}R_{23}$ and $(\id \otimes \Delta)(R)=R_{13}R_{12}$.
\item[(ii)] $l(f_1)l(f_2)=l(f_1f_2)$ and $r(f_1)r(f_2)=r(f_2f_1) $ for $f_1,f_2 \in H^*$.
\end{itemize}
\end{lemma}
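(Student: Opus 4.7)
The plan is to exploit the standard duality between convolution in $H^*$ and comultiplication in $H$: for $f_1, f_2 \in H^*$ and $h \in H$ we have $(f_1 f_2)(h) = \sum f_1(h_{(1)}) f_2(h_{(2)})$ in Sweedler notation. The scalars obtained by pairing a tensor in $H^{\otimes 3}$ against a functional of the form $f_1 \otimes f_2 \otimes \id$ (or $\id \otimes f_1 \otimes f_2$) can then be computed in two ways — via the comultiplication on one side of each identity in (i), and via the multiplication on the other — and the results match the identities in (ii).

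Concretely, writing $R = \sum R^{(1)} \otimes R^{(2)}$ and a second formal copy of $R$ as $\sum S^{(1)} \otimes S^{(2)}$, I would first apply $f_1 \otimes f_2 \otimes \id$ to $(\Delta \otimes \id)(R) = R_{13} R_{23}$. The left-hand side collapses to $\sum (f_1 f_2)(R^{(1)}) R^{(2)} = l(f_1 f_2)$, while the right-hand side, after expanding $R_{13} R_{23} = \sum R^{(1)} \otimes S^{(1)} \otimes R^{(2)} S^{(2)}$, becomes $l(f_1) l(f_2)$. An entirely analogous computation applying $\id \otimes f_1 \otimes f_2$ to $(\id \otimes \Delta)(R) = R_{13} R_{12}$ converts the left-hand side to $r(f_1 f_2)$ and, using $R_{13} R_{12} = \sum R^{(1)} S^{(1)} \otimes S^{(2)} \otimes R^{(2)}$, the right-hand side to $r(f_2) r(f_1)$; this is equivalent to the second equality of (ii) after swapping $f_1$ and $f_2$. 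This gives (i)$\Rightarrow$(ii).

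For the converse, I would invoke the fact that, since $H$ is finite-dimensional, tensors in $H^{\otimes 3}$ are separated by functionals of the form $f_1 \otimes f_2 \otimes \id$ (respectively $\id \otimes f_1 \otimes f_2$): if two such tensors produce the same element of $H$ against every choice of $f_1, f_2 \in H^*$, they must coincide. Assuming (ii), the scalar equalities derived in the forward direction therefore upgrade to the tensor identities of (i). The only subtlety worth flagging is the order reversal $r(f_1) r(f_2) = r(f_2 f_1)$, which is forced by the fact that in $R_{13} R_{12}$ the first-slot factors multiply as $R^{(1)} S^{(1)}$, coupling the first copy of $R$ with the \emph{second} functional; past that, the argument is transparent Sweedler bookkeeping with no real obstacle.
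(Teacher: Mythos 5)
Your proof is correct, and since the paper offers no proof of this lemma at all (it is quoted directly from Radford's Proposition 12.2.11), your argument is precisely the standard one: pairing the two identities in (i) against $f_1\otimes f_2\otimes\id$ and $\id\otimes f_1\otimes f_2$, using the convolution formula $(f_1f_2)(h)=\sum f_1(h_{(1)})f_2(h_{(2)})$ to collapse the left-hand sides to $l(f_1f_2)$ and $r(f_1f_2)$, expanding $R_{13}R_{23}$ and $R_{13}R_{12}$ to get $l(f_1)l(f_2)$ and $r(f_2)r(f_1)$ (with the order reversal handled correctly), and then upgrading the scalar identities back to tensor identities by separation of points. One minor remark: the separation step does not actually require $\dim H<\infty$, since any element of $H^{\otimes 3}$ lies in $V^{\otimes 3}$ for some finite-dimensional subspace $V\subseteq H$ and functionals on $V$ extend to $H$, but invoking finite-dimensionality is harmless here given the paper's standing assumption that all Hopf algebras are finite dimensional.
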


\begin{lemma}\cite[Lemma 3.2]{L}\label{lem2.2.2}
 Denote the dual basis of $\{e_g,e_gx\}_{g\in G}$ by $\{E_g,X_g\}_{g\in G}$, that is,
$E_g(e_h)=\delta_{g,h},\;E_g(e_hx)=0,\;X_g(e_h)=0,\;X_g(e_hx)=\delta_{g,h}$ for $g,h\in G$. Then the following equations hold in the dual Hopf algebra $(\Bbbk^G\#_{\sigma,\tau}\Bbbk \mathbb{Z}_2)^{*}$:
\begin{equation*}
E_gE_h=E_{gh},\ E_gX_h=X_hE_g=0,\ X_gX_h=\tau(g,h)X_{gh}, \;\; g,h\in G.
\end{equation*}
\end{lemma}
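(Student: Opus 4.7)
The plan is to unwind the defining identity for multiplication in the dual Hopf algebra, $(f_1 f_2)(a) = (f_1\otimes f_2)(\Delta(a))$, against each of the two families of basis vectors $e_m$ and $e_m x$. For this I need an explicit formula for the coproduct on all of $\{e_g, e_g x\}_{g\in G}$, which is only half-supplied by Definition \ref{def2.1.2}: $\Delta$ is given on $e_g$ and on $x$, but not on the products $e_g x$.

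First I would compute $\Delta(e_g x)$. Since $\Delta$ is an algebra map, this equals $\Delta(e_g)\Delta(x)$; multiplying out
\[
\Bigl(\sum_{hk=g} e_h\otimes e_k\Bigr)\Bigl(\sum_{p,q\in G}\tau(p,q)\, e_p\otimes e_q\Bigr)(x\otimes x)
\]
and using the orthogonality $e_h e_p=\delta_{h,p}e_h$ collapses the double sum so that only the terms with $p=h$ and $q=k$ survive, giving $\Delta(e_g x) = \sum_{hk=g}\tau(h,k)\, e_h x\otimes e_k x$. Together with the given formula $\Delta(e_g)=\sum_{hk=g}e_h\otimes e_k$, I now have all the coproducts needed.

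Second, I would verify each of the three identities by evaluating both sides on $e_m$ and on $e_m x$ for arbitrary $m\in G$. For $E_g E_h$, pairing $E_g\otimes E_h$ with $\Delta(e_m)$ contributes only when the summation indices match $g$ and $h$, producing $\delta_{gh,m}=E_{gh}(e_m)$, while pairing against $\Delta(e_m x)$ gives $0$ because $E$-type functionals annihilate $e_\bullet x$. For $E_g X_h$ and $X_h E_g$ both pairings vanish: the $\Delta(e_m)$-side kills any $X$-factor and the $\Delta(e_m x)$-side kills any $E$-factor. For $X_g X_h$, only the $e_m x$-pairing is non-zero, and it picks up exactly the coefficient $\tau(g,h)\delta_{gh,m}$ from the $\tau$-twist in $\Delta(e_m x)$, matching $\tau(g,h)X_{gh}(e_m x)$.

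There is no substantial obstacle; the one place where care is required is verifying that the product $\Delta(e_g)\Delta(x)$ in $H\otimes H$ really collapses to the stated sum over factorizations $hk=g$, since one must remember that the scalars $\tau(h,k)$ commute with the group-like $(x\otimes x)$ and that $(e_h\otimes e_k)(x\otimes x)=e_h x\otimes e_k x$ without any further twist. Once this is in hand the rest is routine bookkeeping on the basis $\{e_m, e_m x\}_{m\in G}$.
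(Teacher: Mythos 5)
Your proposal is correct and is essentially the paper's own proof: both verify the identities by dualizing, evaluating each product of functionals against the coproducts of the basis elements $e_m$ and $e_m x$. The only difference is that you explicitly compute $\Delta(e_g x)=\sum_{hk=g}\tau(h,k)\,e_h x\otimes e_k x$, a step the paper leaves implicit behind ``Similarly, one can get the last two equations.''
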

\begin{proof} Direct computations show that
$$E_gE_h(e_k)=E_{gh}(e_k)=\delta_{gh,k},\;\;E_gE_h(e_kx)=E_{gh}(e_kx)=0$$
 for $g,h,k\in G$. As a result, we have $E_gE_h=E_{gh}$. Similarly, one can get the last two equations.
\end{proof}

Let $\Bbbk^G\#_{\sigma,\tau}\Bbbk \mathbb{Z}_2$ as before. We need following two notions which will be used freely throughout this paper. Let
      $$S:=\{g\;|\;g\in G,\; g\triangleleft x=g\},\;\;T:=\{g\;|\; g\in G,\; g\triangleleft x\neq g\}.$$

Let $w^1:G\times G\rightarrow \Bbbk^\times$, $w^2:G\times G\rightarrow \Bbbk^\times$, $w^3:G\times G\rightarrow \Bbbk^\times$, $w^4:G\times G\rightarrow \Bbbk^\times$ be four maps (here $\Bbbk^\times=\Bbbk-\{0\}$) and define $R$ as follows
\begin{align*}
R&\colon=\sum\limits_{g,h \in G}w^1(g,h)e_{g} \otimes e_{h}+ \sum\limits_{g,h \in G}w^2(g,h)e_{g}x \otimes e_{h}+ \\
&\ \ \ \ \sum\limits_{g,h \in G}w^3(g,h)e_{g} \otimes e_{h}x+\sum\limits_{g,h \in G}w^4(g,h)e_{g}x \otimes e_{h}x.
\end{align*}
The following proposition shows that universal $\mathcal{R}$-matrices of $\Bbbk^G\#_{\sigma,\tau}\Bbbk \mathbb{Z}_{2}$ have only two possible forms.
\begin{proposition}\cite[Proposition 3.6]{L}\label{pro2.1.1}
If $R$ is a universal $\mathcal{R}$-matrix of $\Bbbk^G\#_{\sigma,\tau}\Bbbk \mathbb{Z}_{2}$, then $R$ must belong to one of the following two cases:
\begin{itemize}
             \item[(i)]$R=\sum\limits_{g,h\in G}w^1(g,h)e_g \otimes e_h$;
              \item[(ii)] $R=\sum\limits_{s_1,s_2 \in S}w^1(s_1,s_2)e_{s_1} \otimes e_{s_2}+ \sum\limits_{s \in S, t \in T}w^2(s,t)e_{s}x \otimes e_{t}+
        \sum\limits_{t \in T,s \in S}w^3(t,s)e_{t} \otimes e_{s}x+
         \sum\limits_{t_1,t_2 \in T}w^4(t_1,t_2)e_{t_1}x \otimes e_{t_2}x$.
            \end{itemize}
\end{proposition}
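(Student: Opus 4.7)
The plan is to parametrize $R$ by the four coefficient functions $w^i\colon G\times G\to \Bbbk$ ($i=1,2,3,4$) without \emph{a priori} restricting their supports, and then force the stated dichotomy using the quasitriangularity axioms. The three essential inputs are Lemma~\ref{lem2.2.1}, which converts the two coalgebra-type conditions on $R$ into the assertion that $l,r\colon H^*\to H$ are algebra- and anti-algebra-homomorphisms respectively; Lemma~\ref{lem2.2.2}, which describes the multiplication in $H^*$ and in particular the vanishing products $E_gX_h=X_gE_h=0$; and the intertwining relation $\Delta^{op}(h)R=R\Delta(h)$ applied to a generating set of $H$.

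I would first compute $l(E_g)=\sum_k w^1(g,k)e_k+\sum_k w^3(g,k)e_kx$ and $l(X_g)=\sum_k w^2(g,k)e_k+\sum_k w^4(g,k)e_kx$ directly from the expansion of $R$, together with the mirror formulas for $r$. Substituting into $l(E_g)l(X_h)=l(X_g)l(E_h)=r(X_h)r(E_g)=r(E_h)r(X_g)=0$ and expanding the products in $H$ using $e_kx\cdot e_m=\delta_{k,m\triangleleft x}\,e_kx$ and $e_kx\cdot e_mx=\delta_{k,m\triangleleft x}\sigma(k)\,e_k$ produces a system of quadratic identities coupling the four $w^i$; the two coefficient equations from $l(E_g)l(X_h)=0$, for instance, read
\begin{align*}
w^1(g,k)w^2(h,k)+\sigma(k)w^3(g,k)w^4(h,k\triangleleft x)&=0,\\
w^1(g,k)w^4(h,k)+w^3(g,k)w^2(h,k\triangleleft x)&=0,
\end{align*}
and the other three vanishing products give analogous pairs. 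The intertwining axiom applied to $h=e_g$ is automatic, since $G$ is abelian implies $\Delta^{op}(e_g)=\Delta(e_g)$; expanding the only nontrivial case $h=x$ using $\Delta(x)=\sum\tau(g,h)e_gx\otimes e_hx$ supplies four ``twisted equivariance'' identities relating $w^i(g,h)$ and $w^i(g\triangleleft x,h\triangleleft x)$, weighted by ratios of $\tau$ and $\sigma$.

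The argument then splits into two branches. If $w^2\equiv w^3\equiv w^4\equiv 0$, then $R$ has the form in case (i) and there is nothing further to prove. Otherwise I fix a point at which some $w^i$ with $i\geq 2$ is nonzero and propagate this nonvanishing through the coupled identities. The key technical gadget is the distinction between $k\in S$, where $k\triangleleft x=k$ collapses the two instances of $w^i(h,\cdot)$ and $w^i(h,\cdot\triangleleft x)$ into one, and $k\in T$, where they remain independent; together with the normalization constraints $(\epsilon\otimes\id)(R)=(\id\otimes\epsilon)(R)=1$ and the invertibility of $R$, this forces the supports of $w^1,w^2,w^3,w^4$ into $S\times S$, $S\times T$, $T\times S$, and $T\times T$ respectively, which is precisely the statement of case (ii).

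The main obstacle is this final case analysis. The identities are quadratic and intertwine all four $w^i$ simultaneously at the shifted arguments $(k,k\triangleleft x)$, so extracting the clean support dichotomy requires choosing probing indices judiciously and tracking which equations become degenerate on $S$ as opposed to $T$. Once the supports are pinned down, the two expressions written in the statement of the proposition are obtained merely by restricting the summation ranges of the $w^i$ accordingly.
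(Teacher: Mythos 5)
Your expansion of $l(E_g)l(X_h)=0$, the product rules $e_kx\cdot e_m=\delta_{k,m\triangleleft x}e_kx$ and $e_kx\cdot e_mx=\delta_{k,m\triangleleft x}\sigma(k)e_k$, and the reduction of the two hexagon axioms to $l$, $r$ via Lemma \ref{lem2.2.1} are all correct; note also that this paper never proves the proposition itself --- it quotes it from \cite{L} --- so your attempt has to stand on its own. It does not, because of a genuine error at the pivotal point: you declare the intertwining axiom at $h=e_g$ ``automatic, since $G$ abelian implies $\Delta^{op}(e_g)=\Delta(e_g)$''. Cocommutativity on $e_g$ only converts the axiom into the commutation relation $\Delta(e_g)R=R\Delta(e_g)$, which is a nontrivial constraint because $H\otimes H$ is noncommutative ($e_h\otimes e_k$ does not commute with $e_ux\otimes e_v$). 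This discarded relation is precisely where the support restrictions come from: comparing coefficients of $e_ux\otimes e_v$ on both sides gives $w^2(u,v)\bigl(\delta_{uv,g}-\delta_{(u\triangleleft x)v,g}\bigr)=0$ for all $g$, so $w^2(u,v)\neq0$ forces $u\in S$; symmetrically $w^3(u,v)\neq0$ forces $v\in S$, and the $e_ux\otimes e_vx$ component forces $uv\in S$ on the support of $w^4$. With these in hand the case analysis you only sketch becomes plausible; without them it is impossible.

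That this is a genuine gap and not a presentational slip is shown by a counterexample: the conditions you retain (invertibility, both hexagons, the normalizations, and $\Delta^{op}(x)R=R\Delta(x)$) do not imply the dichotomy. Take $G=\mathbb{Z}_4=\langle a\rangle$, $a\triangleleft x=a^{3}$, $\sigma\equiv1$, $\tau\equiv1$, so $S=\{1,a^2\}$, $T=\{a,a^3\}$ (this is $A(8,\sigma,\tau)$ with trivial data); put $P_1=e_1+e_{a^2}$, $P_2=e_a+e_{a^3}$ and $R=1\otimes P_1+x\otimes P_2$. Then $R^2=1\otimes1$, so $R$ is invertible; $l(E_g)=P_1$, $l(X_g)=P_2$, $r(X_g)=0$, and $r(E_g)$ equals $1$ or $x$ according to the class of $g$ modulo $\{1,a^2\}$, and these visibly satisfy condition (ii) of Lemma \ref{lem2.2.1}, so both hexagons hold; moreover $(\epsilon\otimes\id)(R)=(\id\otimes\epsilon)(R)=1$, and $R$ commutes with $\Delta(x)=\Delta^{op}(x)=x\otimes x$ since $xP_i=P_ix$. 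Yet $w^2(g,h)=1$ for every $g\in G$ and $h\in T$, including $g\in T$, so $R$ is of neither form (i) nor form (ii). Of course $R$ is not a universal $\mathcal{R}$-matrix, but the only axiom it violates is exactly the one you threw away: for instance the coefficient of $e_ax\otimes e_a$ in $\Delta(e_{a^2})R$ is $1$ while in $R\Delta(e_{a^2})$ it is $0$. Compare the proof of Lemma \ref{lem4.1.1} in this paper, where $\Delta^{op}(e_g)R=R\Delta(e_g)$ is checked by hand and holds only because the supports are already restricted (via $(t_1\triangleleft x)(t_2\triangleleft x)=t_1t_2$ for $t_1t_2\in S$): your plan must reinstate this condition as a working constraint, after which the propagation step --- currently only asserted --- still needs to be carried out.
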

\begin{remark}\emph{For simple, we will call a universal $\mathcal{R}$-matrix $R$ in case (i) (resp. case (ii)) of Proposition \ref{pro2.1.1} by a \emph{trivial} (resp. \emph{non-trivial}) quasitriagular structure.}\end{remark}

To determine all non-trivial quasitriagular structures on $\Bbbk^G\#_{\sigma,\tau}\Bbbk \mathbb{Z}_{2}$, we give necessary conditions for $\Bbbk^G\#_{\sigma,\tau}\Bbbk \mathbb{Z}_{2}$ preserving a non-trivial quasitriangular structure firstly. For any finite set $X$, we use $|X|$ to denote the number of elements in $X$.
\begin{proposition}\label{pro2.1.2}
If there is a non-trivial quasitrianglar structure on $\Bbbk^G\#_{\sigma,\tau}\Bbbk \mathbb{Z}_{2}$, then
\begin{itemize}
  \item[(i)] $|S|=|T|$;
  \item[(ii)]  there is $b\in S$ such that $b^2=1$ and $t\triangleleft x=tb$ for $t\in T$;
  \item[(iii)] $\tau(s_1,s_2)=\tau(s_2,s_1),\; s_1,s_2\in S$;
\end{itemize}
\end{proposition}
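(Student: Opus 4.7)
The plan is to specialize the quasitriangularity axioms to dual basis elements $\{E_g, X_g\}_{g \in G}$, using Lemma \ref{lem2.2.1} (which translates the $(\Delta \otimes \id)$-axioms into multiplicativity of $l, r : H^* \to H$), Lemma \ref{lem2.2.2} (the product $X_g X_h = \tau(g,h) X_{gh}$), and the counit consequences $(\epsilon \otimes \id)(R) = (\id \otimes \epsilon)(R) = 1$. Reading $l(X_g) = (X_g \otimes \id)(R)$ off the form in Proposition \ref{pro2.1.1}(ii) gives
\begin{equation*}
l(X_s) = \sum_{t \in T} w^2(s,t)\, e_t \ \ (s \in S), \qquad l(X_t) = \sum_{t' \in T} w^4(t,t')\, e_{t'} x \ \ (t \in T),
\end{equation*}
while the counit axioms force $w^2(1,t) = w^3(t,1) = 1$ for all $t \in T$. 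In particular $l(X_1) = \sum_{t \in T} e_t \neq 0$, and from $l(X_s)\, l(X_{s^{-1}}) = \tau(s, s^{-1})\, l(X_1) \neq 0$ we get $l(X_s) \neq 0$ for every $s \in S$.

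Part (iii) is then immediate: the elements $l(X_{s_1})$ and $l(X_{s_2})$ lie in the commutative subalgebra $\Bbbk^T \subseteq H$ and therefore commute, so combining Lemmas \ref{lem2.2.1}--\ref{lem2.2.2} with $s_1 s_2 = s_2 s_1$ gives $(\tau(s_1, s_2) - \tau(s_2, s_1))\, l(X_{s_1 s_2}) = 0$, and the non-vanishing just established forces $\tau(s_1, s_2) = \tau(s_2, s_1)$.

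For part (i), I would examine the axiom $(\Delta \otimes \id)(R) = R_{13} R_{23}$ restricted to $H \otimes H \otimes \Bbbk^S x$. On the left, only the $w^3$-summand of $R$ contributes (its $R^{(2)}$-factor is the only one in $\Bbbk^S x$), yielding $\sum_{t \in T, s \in S} w^3(t,s) \sum_{uv=t} e_u \otimes e_v \otimes e_s x$. A direct enumeration of the products $R^{(2)}_1 R^{(2)}_2$ on the right shows that only the combinations $(e_{s_2})(e_s x)$ and $(e_s x)(e_{s_2})$ land in $\Bbbk^S x$, so the right-hand side has support in $(\Bbbk^S \otimes \Bbbk^T + \Bbbk^T \otimes \Bbbk^S) \otimes \Bbbk^S x$. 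Comparing the $\Bbbk^T \otimes \Bbbk^T \otimes \Bbbk^S x$ component: for any $t \in T$ and any factorisation $t = uv$ with $u, v \in T$, this would force $w^3(t,s) = 0$ for all $s \in S$, contradicting $w^3(t,1) = 1$. Hence $T \cdot T \subseteq S$, which in $G/S$ says that the product of any two non-identity elements is the identity; this forces $[G:S] \leq 2$, and $T \neq \emptyset$ gives $[G:S] = 2$ and $|S| = |T|$.

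Part (ii) follows as a short corollary: since $[G:S] = 2$, $T$ is a single coset $gS$, so for any $t \in T$ one has $t \triangleleft x \in T = tS$, and $b := t^{-1}(t \triangleleft x) \in S$ is independent of $t$ (the map $g \mapsto g^{-1}(g \triangleleft x)$ is a group homomorphism on abelian $G$ vanishing on $S$). Involutivity $(\triangleleft x)^2 = \id$ yields $b(b \triangleleft x) = 1$, and since $b \in S$ we have $b \triangleleft x = b$, whence $b^2 = 1$. The main obstacle is part (i): carefully tracking the tensor-factor supports of the terms in $R_{13}R_{23}$ to isolate the $\Bbbk^T \otimes \Bbbk^T$ contribution that forces the group-theoretic constraint; once this is done, (ii) and (iii) are quick consequences.
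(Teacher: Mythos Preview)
Your proposal is correct and, in fact, does more than the paper does at this point: the paper simply \emph{cites} parts (i) and (ii) from \cite[Proposition 3.8]{L} and only proves (iii) in-text, whereas you supply self-contained arguments for all three claims. The methods are also genuinely different. For (iii), the paper multiplies the axiom $\Delta^{op}(x)R=R\Delta(x)$ on both sides by the central idempotent $e_{s_1}\otimes e_{s_2}$ and reads off $\tau(s_1,s_2)=\tau(s_2,s_1)$ directly from the resulting scalar identity; you instead route through Lemma~\ref{lem2.2.1}, observing that $l(X_{s_1}),l(X_{s_2})$ land in the commutative subalgebra $\Bbbk^T$ and using $l(X_s)\neq 0$ (secured via the counit axiom). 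For (i), your support analysis of $(\Delta\otimes\id)(R)=R_{13}R_{23}$ on the component $\Bbbk^T\otimes\Bbbk^T\otimes\Bbbk^S x$ is clean and yields $T\cdot T\subseteq S$, hence $[G:S]=2$; (ii) then drops out from the homomorphism $g\mapsto g^{-1}(g\triangleleft x)$ exactly as you say. The paper's route to (iii) is slightly shorter because it avoids the preliminary non-vanishing argument for $l(X_s)$, but your approach has the virtue of keeping everything inside the $l,r$ formalism already set up by Lemmas~\ref{lem2.2.1}--\ref{lem2.2.2} and of being fully self-contained.
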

\begin{proof}
(i),(ii) are part of the results in \cite[Proposition 3.8]{L}. Assume that $R$ is a non-trivial quasitrianglar structure on $\Bbbk^G\#_{\sigma,\tau}\Bbbk \mathbb{Z}_{2}$, then we have $\Delta^{op}(x)R=R\Delta(x)$. Multiply both sides of this equation by $e_{s_1}\otimes e_{s_2}$ where $s_1,s_2\in S$ and we note that $e_{s_1}\otimes e_{s_2}$ is an element in the center, so we get $(e_{s_1}\otimes e_{s_2})\Delta^{op}(x)R=R\Delta(x)(e_{s_1}\otimes e_{s_2})$.
On the one hand, we have the following equation
\begin{align*}
(e_{s_1}\otimes e_{s_2})\Delta^{op}(x)R&=(e_{s_1}\otimes e_{s_2})[\sum\limits_{g,h \in G}\tau(h,g)e_g \otimes e_h](x \otimes x)R\\
                &=[\tau(s_2,s_1)e_{s_1} \otimes e_{s_2}](x \otimes x)R\\
                &=(x \otimes x)[\tau(s_2,s_1)e_{s_1} \otimes e_{s_2}]R\\
                &=(x \otimes x)[\tau(s_2,s_1)w^1(s_1,s_2)(e_{s_1}\otimes e_{s_2}]\\
                &=\tau(s_2,s_1)w^1(s_1,s_2)e_{s_1}x\otimes e_{s_2}x
\end{align*}
On the other hand, the following equation hold
\begin{align*}
R\Delta(x)(e_{s_1}\otimes e_{s_2})&=R[\sum\limits_{g,h \in G}\tau(g,h)e_g \otimes e_h](x \otimes x)(e_{s_1}\otimes e_{s_2})\\
                &=R[\sum\limits_{g,h \in G}\tau(g,h)e_g \otimes e_h](e_{s_1}\otimes e_{s_2})(x \otimes x)\\
        &=R[\tau(s_1,s_2)e_{s_1} \otimes e_{s_2}](x \otimes x)\\
         &=[\tau(s_1,s_2)w^1(s_1,s_2)e_{s_1} \otimes e_{s_2}](x \otimes x)\\
         &=\tau(s_1,s_2)w^1(s_1,s_2)e_{s_1}x \otimes e_{s_2}x.
\end{align*}
Therefore, $(e_{s_1}\otimes e_{s_2})\Delta^{op}(x)R=R\Delta(x)(e_{s_1}\otimes e_{s_2})$ holds if and only if $\tau(s_1,s_2)=\tau(s_2,s_1)$.
\end{proof}

\begin{remark}\label{rk2.1} \emph{Since Proposition \ref{pro2.1.2} above and our aim is to find all non-trivial quasitriangular structures on $\Bbbk^G\#_{\sigma,\tau}\Bbbk \mathbb{Z}_{2}$, we agree that $\Bbbk^G\#_{\sigma,\tau}\Bbbk \mathbb{Z}_{2}$ satisfies the conditions (i)-(iii) in Proposition \ref{pro2.1.2} above in the following content.}\end{remark}

If we let
$\eta(g,h)=\tau(g,h)\tau(h,g)^{-1}$ for $g,h \in G$, then $\eta$ is a bicharacter on $G$ due to $\tau$ is a 2-cocycle on the abelian group $G$ and so (iii) of the Proposition \ref{pro2.1.2} above is equivalent to $\eta(s_1,s_2)=1$ for $s_1,s_2\in S$. We will often use $\eta$ without explaination in the following content.

\section{Symmetry of quasitriangular structures on Hopf algebras}\label{sec2.2}
We will define symmetry of quasitriangular structures on Hopf algebras and give some relevant propositions in this section. Then we apply these propositions to the special Hopf algebras $\Bbbk^G\#_{\sigma,\tau}\Bbbk \mathbb{Z}_{2}$. Let $(H,m,\eta,\Delta,\epsilon)$ be a Hopf algebra and let $R\in H\otimes H$. If $\varphi:H\rightarrow H^{op}$ is a Hopf isomorphism, then we denote $(\varphi\otimes \varphi)\circ \tau(R)$ as $R_{\varphi}$ for the sake of convenience, here $\tau$ is the flip map and $H^{op}=(H,m\circ \tau,\eta,\Delta,\epsilon)$. Now we can define $\varphi$-symmetry of quasitriangular structures on Hopf algebras as follows

\begin{definition}\label{def3.1.1}
Let $\varphi:H\rightarrow H^{op}$ be a Hopf isomorphism and let $R\in H\otimes H$, then we call $R$ is $\varphi$-symmetric if $R=R_{\varphi}$. Moreover if $R=R_{\varphi}$ and it is a quasitriangular structure on $H$ then we call $R$ is a $\varphi$-symmetric quasitriangular structure.
\end{definition}

The reason why we introduced the above definition is due to the following propositions
\begin{proposition}\label{pro3.1.1}
Let $\varphi:H\rightarrow H^{op}$ be a Hopf isomorphism and let $R\in H\otimes H$, then $R$ is a quasitriangular structure on $H$ if and only if $R_{\varphi}$ is a quasitriangular structure on $H$.
\end{proposition}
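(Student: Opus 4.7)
The plan is to verify the three defining axioms of a quasitriangular structure together with invertibility for $R_\varphi$, assuming they hold for $R$, and then to derive the converse by a short symmetry argument using $\varphi^{-1}$. The organizing observation is that, because $\varphi\colon H\to H^{op}$ is a Hopf isomorphism, as a linear self-map of $H$ the map $\varphi$ is a coalgebra endomorphism and is anti-multiplicative, i.e., $\varphi(ab)=\varphi(b)\varphi(a)$. It follows that $\sigma:=(\varphi\otimes\varphi)\circ\tau\colon H\otimes H\to H\otimes H$ is an anti-algebra automorphism of $H\otimes H$ (the flip $\tau$ commutes with $\varphi\otimes\varphi$ on simple tensors, while each $\varphi$ contributes one anti-multiplication), and a direct check gives the intertwining identities $\sigma\circ\Delta=\Delta^{op}\circ\varphi$ and $\sigma\circ\Delta^{op}=\Delta\circ\varphi$.

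Invertibility of $R_\varphi=\sigma(R)$ is then immediate: applying $\sigma$ to $RR^{-1}=1\otimes 1$ and using anti-multiplicativity yields $(R_\varphi)^{-1}=\sigma(R^{-1})=(R^{-1})_\varphi$. For the commutation axiom $\Delta^{op}(h)R=R\Delta(h)$, I apply $\sigma$ to both sides; anti-multiplicativity rearranges them, and the two intertwining identities above convert the result into $\Delta^{op}(\varphi(h))R_\varphi=R_\varphi\Delta(\varphi(h))$, which by surjectivity of $\varphi$ is the required axiom for $R_\varphi$.

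The main work lies in the two coproduct axioms. Writing $R_\varphi=\sum\varphi(R^{(2)})\otimes\varphi(R^{(1)})$ in Sweedler-type notation and expanding both $(\Delta\otimes\id)(R_\varphi)$ and $(R_\varphi)_{13}(R_\varphi)_{23}$ explicitly, I push each $\varphi$ outside its tensor slot and apply anti-multiplicativity on the third slot of the product. After stripping off the resulting outer $\varphi\otimes\varphi\otimes\varphi$, the desired identity $(\Delta\otimes\id)(R_\varphi)=(R_\varphi)_{13}(R_\varphi)_{23}$ reduces to the image of $(\id\otimes\Delta)(R)=R_{13}R_{12}$ under the permutation $\tau_{12}\tau_{13}$, followed by the relabeling $i\leftrightarrow j$ of a dummy summation index. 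By the mirror computation, $(\id\otimes\Delta)(R_\varphi)=(R_\varphi)_{13}(R_\varphi)_{12}$ reduces similarly to the first coproduct axiom for $R$. The natural exchange of the two axioms here reflects the insertion of the flip $\tau$ in the definition of $R_\varphi$.

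For the converse, I observe that $\varphi^{-1}\colon H\to H^{op}$ is itself a Hopf isomorphism, since the inverse of an anti-multiplicative coalgebra bijection has the same properties; and a short computation using $\tau^2=\id$ together with the fact that $\varphi\otimes\varphi$ commutes with $\tau$ gives $(R_\varphi)_{\varphi^{-1}}=R$. Hence, if $R_\varphi$ is quasitriangular, the already established forward implication applied to the pair $(\varphi^{-1},R_\varphi)$ shows that $R=(R_\varphi)_{\varphi^{-1}}$ is quasitriangular. The only real obstacle in the whole argument is bookkeeping in the coproduct axioms—tracking tensor positions, summation indices, and the permutations $\tau_{ij}$ through the anti-multiplicativity of $\varphi$—which is mechanical once the properties of $\sigma$ are in hand.
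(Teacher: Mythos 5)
Your proof is correct, and for the heart of the argument it takes a genuinely different route from the paper's. The paper never computes the coproduct axioms in $H^{\otimes 3}$: it invokes Lemma \ref{lem2.2.1} to convert $(\Delta\otimes\id)(R)=R_{13}R_{23}$ and $(\id\otimes\Delta)(R)=R_{13}R_{12}$ into the statements that $l_R$ is an algebra map and $r_R$ is an anti-algebra map, then proves the transport identities $l_{R_\varphi}=\varphi\circ r_{R}\circ \varphi^\ast$ and $r_{R_\varphi}=\varphi\circ l_{R}\circ \varphi^\ast$ (its equations \eqref{3.2}), where $\varphi^\ast$ is the dual Hopf isomorphism, and deduces the required (anti)multiplicativity by composing the pieces. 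You instead stay inside $H\otimes H$ and $H^{\otimes 3}$: you isolate the anti-algebra automorphism $\sigma=(\varphi\otimes\varphi)\circ\tau$ together with the intertwining identities $\sigma\circ\Delta=\Delta^{op}\circ\varphi$ and $\sigma\circ\Delta^{op}=\Delta\circ\varphi$, and push the axioms through directly, with the expected exchange of the two coproduct axioms under the cyclic permutation $\tau_{12}\tau_{13}$ (in the paper this exchange appears instead as the swap of $l$ and $r$); I have checked the bookkeeping and it goes through as you describe. What each approach buys: yours is self-contained and avoids the dual algebra and Radford's criterion entirely, and your handling of invertibility and of $\Delta^{op}(h)R=R\Delta(h)$ by a single application of $\sigma$ is slicker than the paper's explicit Sweedler computation; on the other hand, the paper's detour is not wasted effort, since the identities \eqref{3.2} are reused repeatedly downstream (Proposition \ref{pro3.1.2}, and the proofs of Lemmas \ref{lem4.1.2}, \ref{lem4.1.7} and \ref{lem5.1.3}), whereas your direct computation produces no such reusable byproduct. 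The converse is handled the same way in both proofs, via $(R_\varphi)_{\varphi^{-1}}=R$; you are slightly more careful than the paper in actually verifying that $\varphi^{-1}\colon H\rightarrow H^{op}$ is again a Hopf isomorphism.
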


\begin{proof}
Since $R=(R_{\varphi})_{\varphi^{-1}}$, we only need to prove that if $R$ is a quasitriangular structure then $R_{\varphi}$ is a quasitriangular structure. Let $\varphi^\ast$ be the dual map of $\varphi$, then $\varphi^\ast:H^*\rightarrow H^{cop}$ is a Hopf isomorphism, here $H^{cop}=(H,m,\eta,\tau\circ\Delta,\epsilon)$. If we denote $l_R(f):=(f\otimes \Id)(R)$, $r_R(f):=(\Id \otimes f)(R)$ respectively, then we claim that the following equations hold
\begin{gather}
l_{R_\varphi}=\varphi\circ r_{R}\circ \varphi^\ast,\;
r_{R_\varphi}=\varphi\circ l_{R}\circ \varphi^\ast.\label{3.2}
\end{gather}
Directly we have
\begin{align*}
l_{R_\varphi}(f)&=(f\otimes \Id)[(\varphi\otimes \varphi)\circ \tau(R)]=(f\circ \varphi \otimes \varphi)\circ \tau(R) \\
        &=(\varphi^\ast(f) \otimes \varphi )\circ \tau(R)=(\varphi \otimes \varphi^\ast(f))(R)\\
         &=\varphi[(\Id\otimes \varphi^\ast(f))(R)]=\varphi[r_{R}\circ \varphi^\ast(f)]\\
         &=(\varphi\circ r_{R}\circ \varphi^\ast)(f),
\end{align*}
and
\begin{align*}
r_{R_\varphi}(f)&=(\Id \otimes f )[(\varphi\otimes \varphi)\circ \tau(R)]=(\varphi \otimes  f\circ\varphi )\circ \tau(R) \\
        &=(\varphi \otimes  \varphi^\ast(f)  )\circ \tau(R)=(\varphi^\ast(f)\otimes \varphi)(R)\\
         &=\varphi[(\varphi^\ast(f) \otimes \Id)(R)]=\varphi[l_{R}\circ \varphi^\ast(f)]\\
         &=(\varphi\circ l_{R}\circ \varphi^\ast)(f),
\end{align*}
So the equations \eqref{3.2} hold. Let $f_1,f_2\in H^\ast$, if we use the equations \eqref{3.2} and notice that $l_R,r_R$ are homomorphism and antihomomorphism respectively, then we have
\begin{align*}
l_{R_\varphi}(f_1f_2)&=(\varphi\circ r_{R}\circ \varphi^\ast)(f_1f_2)=(\varphi\circ r_{R})[\varphi^\ast(f_1)\varphi^\ast(f_2)]\\
         &=\varphi[r_{R}\circ\varphi^\ast(f_2)r_{R}\circ\varphi^\ast(f_1)]=(\varphi\circ r_{R}\circ \varphi^\ast)(f_1)(\varphi\circ r_{R}\circ \varphi^\ast)(f_2)
\end{align*}
and
\begin{align*}
r_{R_\varphi}(f_1f_2)&=(\varphi\circ l_{R}\circ \varphi^\ast)(f_1f_2)=(\varphi\circ l_{R})[\varphi^\ast(f_1)\varphi^\ast(f_2)]\\
         &=\varphi[l_{R}\circ\varphi^\ast(f_1)l_{R}\circ\varphi^\ast(f_2)]=(\varphi\circ l_{R}\circ \varphi^\ast)(f_2)(\varphi\circ l_{R}\circ \varphi^\ast)(f_1),
\end{align*}
therefore $(\Delta \otimes \id)(R_{\varphi})=(R_{\varphi})_{13}(R_{\varphi})_{23}$ and $(\id \otimes \Delta)(R_{\varphi})=(R_{\varphi})_{13}(R_{\varphi})_{12}$ by Lemma \ref{lem2.2.1}. To show $R_\varphi$ is a quasitriangular structure, we only need to prove $R_\varphi$ is invertible and $\Delta^{op}(h)R_\varphi=R_\varphi\Delta(h)$ for $h\in H$. Assume that $R^{-1}$ is inverse of $R$, then it can be seen that $R_\varphi (R^{-1})_\varphi=1\otimes 1$ and thus $R_\varphi$ is invertible. Suppose that $R=\Sigma_{i=1}^n r_i\otimes r^i$, and taking a $k\in H$, then we can write it by $k=\varphi(h),\; h\in H$ due to $\varphi$ is bijective map. Using $\varphi$ is Hopf isomorphism, we get
\begin{align*}
\Delta^{op}(\varphi(h))R_\varphi&=[\varphi(h_{(2)})\otimes \varphi(h_{(1)})]R_\varphi=[\varphi(h_{(2)})\otimes \varphi(h_{(1)})][\Sigma_{i=1}^n \varphi(r^i)\otimes \varphi(r_i)]\\
&=\Sigma_{i=1}^n \varphi(r^i h_{(2)})\otimes \varphi(r_ih_{(1)})=(\varphi\otimes \varphi)[\Sigma_{i=1}^n r^i h_{(2)}\otimes r_ih_{(1)}]
\end{align*}
and
\begin{align*}
R_\varphi \Delta(\varphi(h))&=R_\varphi[\varphi(h_{(1)})\otimes \varphi(h_{(2)})]=[\Sigma_{i=1}^n \varphi(r^i)\otimes \varphi(r_i)][\varphi(h_{(1)})\otimes \varphi(h_{(2)})]\\
&=\Sigma_{i=1}^n \varphi(h_{(1)} r^i)\otimes \varphi(h_{(2)} r_i)=(\varphi\otimes \varphi)[\Sigma_{i=1}^n h_{(1)} r^i\otimes h_{(2)} r_i].
\end{align*}
Because $\Delta^{op}(h)R=R\Delta(h)$, we know $(h_{(2)}\otimes h_{(1)})(\Sigma_{i=1}^n r_i\otimes r^i)=(\Sigma_{i=1}^n r_i\otimes r^i) (h_{(1)}\otimes h_{(2)})$. If we use the flip map $\tau$ acting on both sides of this equation, then we get $\Sigma_{i=1}^n r^i h_{(2)}\otimes r_ih_{(1)}=\Sigma_{i=1}^n h_{(1)} r^i\otimes h_{(2)} r_i$ and hence $\Delta^{op}(\varphi(h))R_\varphi=R_\varphi\Delta(\varphi(h))$ for $h\in H$.
\end{proof}

\begin{proposition}\label{pro3.1.2}
Let $\varphi:H\rightarrow H^{op}$ be a Hopf isomorphism and let $R\in H\otimes H$. If $R$ is $\varphi$-symmetric, then $l_R:H^\ast\rightarrow H$ is an algebra map if and only if $r_R:H^\ast\rightarrow H^{op}$ is an algebra map.
\end{proposition}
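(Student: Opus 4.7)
The plan is to exploit the two identities that were already extracted inside the proof of Proposition \ref{pro3.1.1}, namely $l_{R_\varphi}=\varphi\circ r_R\circ\varphi^\ast$ and $r_{R_\varphi}=\varphi\circ l_R\circ\varphi^\ast$. Under the $\varphi$-symmetry hypothesis $R=R_\varphi$, these collapse to
\begin{equation*}
l_R=\varphi\circ r_R\circ\varphi^\ast,\qquad r_R=\varphi\circ l_R\circ\varphi^\ast,
\end{equation*}
which interchange $l_R$ and $r_R$ up to conjugation by $\varphi$ and $\varphi^\ast$. This is exactly the pattern needed to swap ``algebra map $H^\ast\to H$'' with ``algebra map $H^\ast\to H^{op}$''.

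The bookkeeping rests on two immediate facts about $\varphi$. Since $\varphi:H\to H^{op}$ is a Hopf isomorphism, the underlying set map $\varphi:H\to H$ is an algebra \emph{anti}-morphism and a coalgebra morphism; dualising, $\varphi^\ast:H^\ast\to H^\ast$ is an algebra morphism (its co-anti-multiplicativity will not be needed here). Assuming $l_R$ is an algebra map, I would compute
\begin{align*}
r_R(fg) &= \varphi\bigl(l_R(\varphi^\ast(fg))\bigr) = \varphi\bigl(l_R(\varphi^\ast(f)\varphi^\ast(g))\bigr)\\
        &= \varphi\bigl(l_R(\varphi^\ast(f))\,l_R(\varphi^\ast(g))\bigr) = \varphi(l_R(\varphi^\ast(g)))\,\varphi(l_R(\varphi^\ast(f)))\\
        &= r_R(g)\,r_R(f),
\end{align*}
which is precisely the condition that $r_R:H^\ast\to H^{op}$ be an algebra map. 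The reverse implication runs symmetrically using $l_R=\varphi\circ r_R\circ\varphi^\ast$: one rewrites $l_R(fg)=\varphi(r_R(\varphi^\ast(f)\varphi^\ast(g)))$, applies anti-multiplicativity of $r_R$ (into $H^{op}$) to turn this into $\varphi(r_R(\varphi^\ast(g))r_R(\varphi^\ast(f)))$, and finally uses anti-multiplicativity of $\varphi$ to obtain $l_R(f)l_R(g)$.

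I do not anticipate any real obstacle: every ingredient is already present in the proof of Proposition \ref{pro3.1.1} or follows directly from the definition of $H^{op}$. The only small point to keep straight is the interpretation of ``algebra map into $H^{op}$'' as $r_R(fg)=r_R(g)r_R(f)$ with multiplication taken in $H$, which is precisely the anti-multiplicativity that the anti-algebra map $\varphi$ is tailor-made to absorb.
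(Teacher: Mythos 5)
Your proof is correct and is essentially the paper's own argument: the paper likewise specializes the identities $l_{R_\varphi}=\varphi\circ r_{R}\circ \varphi^\ast$ and $r_{R_\varphi}=\varphi\circ l_{R}\circ \varphi^\ast$ from Proposition \ref{pro3.1.1} to the case $R=R_\varphi$ and then reruns the same conjugation computation, using that $\varphi^\ast$ is multiplicative and $\varphi$ is anti-multiplicative. Your write-up merely spells out the steps the paper compresses into ``repeating part of the proof in Proposition \ref{pro3.1.1}.''
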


\begin{proof}
Since $R=R_\varphi$ and we have proved $l_{R_\varphi}=\varphi\circ r_{R}\circ \varphi^\ast,\;r_{R_\varphi}=\varphi\circ l_{R}\circ \varphi^\ast$ in Proposition \ref{pro3.1.1}, we get $l_{R}=\varphi\circ r_{R}\circ \varphi^\ast$ and $r_{R}=\varphi\circ l_{R}\circ \varphi^\ast$. Repeating part of the proof in Proposition \ref{pro3.1.1}, we know that $l_R$ is an algebra map if and only if $r_R:H^\ast\rightarrow H^{op}$ is an algebra map.
\end{proof}
The following corollary is an application of Proposition \ref{pro3.1.2}.
\begin{corollary}\label{coro3.1.3}
Let $\varphi:H\rightarrow H^{op}$ be a Hopf isomorphism and let $R\in H\otimes H$. If $R$ is invertible and it is $\varphi$-symmetric, then $R$ is a quasitriangular structure if and only if $l_R$ is an algebra map and $\Delta^{op}(h)R=R\Delta(h)$ for $h\in \Bbbk^G\#_{\sigma,\tau}\Bbbk \mathbb{Z}_{2}$.
\end{corollary}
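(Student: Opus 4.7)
The plan is to deduce the corollary directly by combining Proposition \ref{pro3.1.2} with Lemma \ref{lem2.2.1}; once the $\varphi$-symmetry is in play, the ``left'' multiplicativity of $R$ automatically forces the ``right'' multiplicativity, so almost nothing remains to check beyond the hypotheses themselves.

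The forward direction is immediate from the definitions. If $R$ is a quasitriangular structure, then $\Delta^{op}(h)R=R\Delta(h)$ holds by definition, and the identity $(\Delta\otimes\id)(R)=R_{13}R_{23}$ translates, via Lemma \ref{lem2.2.1}, into $l_R(f_1 f_2)=l_R(f_1)l_R(f_2)$, so $l_R$ is an algebra map. No use of $\varphi$-symmetry or invertibility is needed here.

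For the reverse direction, suppose $R$ is invertible and $\varphi$-symmetric, that $l_R:H^*\to H$ is an algebra map, and that $\Delta^{op}(h)R=R\Delta(h)$ for every $h$. The first step is to apply Proposition \ref{pro3.1.2}: under $\varphi$-symmetry, $l_R$ being an algebra map is equivalent to $r_R:H^*\to H^{op}$ being an algebra map. Unwinding the opposite multiplication, this reads $r_R(f_1 f_2)=r_R(f_2)r_R(f_1)$, which is exactly the second identity in Lemma \ref{lem2.2.1}(ii). The second step is then a direct appeal to Lemma \ref{lem2.2.1}, which yields both $(\Delta\otimes\id)(R)=R_{13}R_{23}$ and $(\id\otimes\Delta)(R)=R_{13}R_{12}$. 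Combined with the standing assumption $\Delta^{op}(h)R=R\Delta(h)$ and the invertibility of $R$, every axiom of a quasitriangular structure is satisfied.

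There is no genuine obstacle here: the corollary is essentially a repackaging of Proposition \ref{pro3.1.2}, whose proof has already done the work of transferring the algebra-map property between $l_R$ and $r_R$ under $\varphi$-symmetry. The only point requiring mild care is the opposite-multiplication bookkeeping, i.e.\ recognizing that ``$r_R$ is an algebra map into $H^{op}$'' is precisely the hypothesis $r_R(f_1)r_R(f_2)=r_R(f_2 f_1)$ needed to feed into Lemma \ref{lem2.2.1}.
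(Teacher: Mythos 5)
Your proposal is correct and follows exactly the paper's route: the paper's own proof is the one-line appeal ``By Lemma \ref{lem2.2.1} and Proposition \ref{pro3.1.2}, we get what we want,'' and your argument simply spells out that combination---using Proposition \ref{pro3.1.2} to transfer the algebra-map property from $l_R$ to $r_R:H^\ast\rightarrow H^{op}$ (which is precisely $r_R(f_1)r_R(f_2)=r_R(f_2f_1)$) and then invoking Lemma \ref{lem2.2.1} together with the assumed invertibility and $\Delta^{op}(h)R=R\Delta(h)$. Nothing is missing; the expanded bookkeeping of the opposite multiplication is exactly the content the paper leaves implicit.
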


\begin{proof}
By Lemma \ref{lem2.2.1} and Proposition \ref{pro3.1.2}, we get what we want.
\end{proof}

In order to apply the above results to our case, we give the following conclusions.
\begin{proposition}\label{pro3.1.3}
Let $\varphi:\Bbbk^G\#_{\sigma,\tau}\Bbbk \mathbb{Z}_{2}\rightarrow (\Bbbk^G\#_{\sigma,\tau}\Bbbk \mathbb{Z}_{2})^{op}$ be a linear map which is determined by $\varphi(e_g):=e_{g\triangleleft x},\;\varphi(e_g x):=e_{g}x$, then $\varphi$ is a Hopf isomorphism.
\end{proposition}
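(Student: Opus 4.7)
The plan is to verify, in turn, that $\varphi$ is a linear bijection, an anti-algebra map (that is, an algebra map $H\to H^{op}$), and a coalgebra map, where $H:=\Bbbk^G\#_{\sigma,\tau}\Bbbk\mathbb{Z}_2$; preservation of the counit is immediate, and compatibility with the antipode is automatic for a bialgebra isomorphism between Hopf algebras. Two structural facts will be used throughout: (a) $(\triangleleft x)^2=\id_G$, which follows from $\triangleleft:\mathbb{Z}_2\to\Aut(G)$ being a group homomorphism and $x^2=1$ in $\mathbb{Z}_2$; and (b) extending $\varphi$ linearly gives $\varphi(x)=\sum_{g\in G}\varphi(e_g x)=\sum_{g\in G} e_g x=x$. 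Bijectivity is clear, since $g\mapsto g\triangleleft x$ permutes $G$, so $\varphi$ permutes the basis $\{e_g,\,e_g x\}_{g\in G}$.

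For the anti-algebra condition $\varphi(ab)=\varphi(b)\varphi(a)$ it suffices to check the three defining relations of $H$ from Definition \ref{def2.1.2}. The relation $e_g e_h=\delta_{g,h}e_g$ becomes a symmetric-in-$(g,h)$ identity after applying $\varphi$, so the anti-algebra equation reduces to a routine reindexing via $\triangleleft x$. The relation $xe_g=e_{g\triangleleft x}x$ yields, using $\varphi(x)=x$, the identity $\varphi(e_g)\varphi(x)=e_{g\triangleleft x}x=\varphi(e_{g\triangleleft x}x)$, which holds on the nose. The only relation that uses nontrivial data is $x^2=\sum_g\sigma(g)e_g$: the anti-algebra version requires $x^2=\sum_g\sigma(g)\,e_{g\triangleleft x}$, and reindexing by $h=g\triangleleft x$ together with the normalization $\sigma(g\triangleleft x)=\sigma(g)$ reduces this exactly to $x^2=\sum_h\sigma(h)e_h$.

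For the coalgebra condition $\Delta\circ\varphi=(\varphi\otimes\varphi)\circ\Delta$ it is enough to check on the basis elements $e_g$ and $e_g x$. On $e_g$, the identity reads $\Delta(e_{g\triangleleft x})=\sum_{hk=g}e_{h\triangleleft x}\otimes e_{k\triangleleft x}$, which follows by reindexing once one notes that $\triangleleft x$ is a group automorphism of $G$. On $e_g x$, a direct computation from Definition \ref{def2.1.2} gives $\Delta(e_g x)=\Delta(e_g)\Delta(x)=\sum_{hk=g}\tau(h,k)\,e_h x\otimes e_k x$; both sides of the coalgebra equation then equal this same expression, because $\varphi$ fixes each $e_h x$. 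Counit preservation, $\epsilon(\varphi(e_g))=\delta_{g\triangleleft x,1}=\delta_{g,1}=\epsilon(e_g)$ and $\epsilon(\varphi(e_g x))=\epsilon(e_g x)$, is trivial, and antipode compatibility is then automatic. Honestly no step presents a genuine obstacle here: the entire verification is structural bookkeeping, and the only place any hypothesis is consumed nontrivially is the normalization $\sigma(g\triangleleft x)=\sigma(g)$ used when matching $\varphi(x)^2$ with $\varphi(x^2)$.
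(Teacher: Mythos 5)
Your proof is correct, and it reaches the conclusion by a genuinely different execution of the same skeleton (bijectivity, multiplicativity into $H^{op}$, comultiplicativity, antipode automatic). Where the paper verifies multiplicativity by computing directly on basis products --- observing that the only nontrivial check is $\varphi(e_g)\varphi(e_hx)=\delta_{g\triangleleft x,h}e_hx=\varphi\bigl[(e_hx)e_g\bigr]$ --- you instead verify the three defining relations of Definition \ref{def2.1.2} on the generators after extracting $\varphi(x)=x$; and where the paper handles the coalgebra half by dualizing, computing $\varphi^\ast(E_g)=E_{g\triangleleft x}$, $\varphi^\ast(X_g)=X_g$ and invoking the multiplication rules of Lemma \ref{lem2.2.2} to see $\varphi^\ast$ is an algebra map, you compute $\Delta\circ\varphi=(\varphi\otimes\varphi)\circ\Delta$ directly on the basis $\{e_g,e_gx\}$. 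Your route has the merit of displaying exactly where each hypothesis is consumed: the invariance $\sigma(g\triangleleft x)=\sigma(g)$ in matching $\varphi(x)^2$ with $\varphi(x^2)$, and $\triangleleft x\in\Aut(G)$ in the reindexings, with the check on $\Delta(e_gx)=\sum_{hk=g}\tau(h,k)\,e_hx\otimes e_kx$ trivially stable under $\varphi\otimes\varphi$; the paper's dual trick instead collapses the coalgebra half to a two-line computation at the cost of routing through Lemma \ref{lem2.2.2}. One presentational caveat, not a gap: your relation-checking argument tacitly assumes the presentation in Definition \ref{def2.1.2} is complete (including $\sum_{g\in G}e_g=1$, so that $\varphi(1)=\sum_g e_{g\triangleleft x}=1$), and to identify the anti-homomorphic extension from the generators with the basis-defined $\varphi$ you should record $\varphi(e_gx)=\varphi(x)\varphi(e_g)=xe_{g\triangleleft x}=e_gx$, which your verification of the relation $xe_g=e_{g\triangleleft x}x$ essentially already contains; the paper's direct check on basis products sidesteps any appeal to a presentation.
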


\begin{proof}
Obviously $\varphi$ is bijective, thus we only need to show $\varphi$ is a bialgebra map. To show $\varphi$ is an algebra map, the only non-trivial thing is to check $\varphi(e_g)\varphi(e_hx)=\varphi[(e_hx)e_g]$. Directly we have $\varphi(e_g)\varphi(e_hx)=e_{g\triangleleft x}(e_hx)=\delta_{g\triangleleft x,h}e_{h}x$ and $\varphi[(e_hx)e_g]=\varphi(e_he_{g\triangleleft x}x)=\delta_{g\triangleleft x,h}e_{h}x$, so $\varphi(e_g)\varphi(e_hx)=\varphi[(e_hx)e_g]$. To prove that $\varphi$ is a coalgebra map, we consider the dual map $\varphi^\ast$. Denote the dual basis of $\{e_g,e_gx\}_{g\in G}$ by $\{E_g,X_g\}_{g\in G}$, then it can be seen that $\varphi^\ast(E_g)=E_{g\triangleleft x}$ and $\varphi^\ast(X_g)=X_{g}$. Therefore it is easy to see that $\varphi^\ast$ is an algebra map and this implies that $\varphi$ is a coalgebra map.
\end{proof}

Let $R$ be the form (ii) in Proposition \ref{pro2.1.1} and let $\varphi$ be the Hopf isomorphism in Proposition \ref{pro3.1.3} above, then $R_{\varphi}$ is given by
\begin{gather}
              R_\varphi=\sum\limits_{s_1,s_2 \in S}w^1(s_2,s_1)e_{s_1} \otimes e_{s_2}+ \sum\limits_{s \in S, t \in T}w^3(t\triangleleft x,s)e_{s}x \otimes e_{t}+
        \sum\limits_{t \in T,s \in S}w^2(s,t\triangleleft x)e_{t} \otimes \label{r}\\
         e_{s}x+\sum\limits_{t_1,t_2 \in T}w^4(t_2,t_1)e_{t_1}x \otimes e_{t_2}x.\notag
\end{gather}

\begin{corollary} \label{coro3.1.1}
The $R$ is a quasitriangular structure on $\Bbbk^G\#_{\sigma,\tau}\Bbbk \mathbb{Z}_{2}$ if and only if $R_{\varphi}$ is a quasitriangular structure on $\Bbbk^G\#_{\sigma,\tau}\Bbbk \mathbb{Z}_{2}$.
\end{corollary}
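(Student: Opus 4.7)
The plan is to invoke Proposition~\ref{pro3.1.1} with the specific Hopf isomorphism $\varphi$ furnished by Proposition~\ref{pro3.1.3}. That general proposition already establishes the equivalence ``$R$ is a quasitriangular structure if and only if $R_\varphi$ is'' for an arbitrary Hopf isomorphism $\varphi\colon H \to H^{op}$, and Proposition~\ref{pro3.1.3} verifies that the concrete map $\varphi(e_g) = e_{g\triangleleft x}$, $\varphi(e_g x) = e_g x$ is such an isomorphism between $\Bbbk^G\#_{\sigma,\tau}\Bbbk \mathbb{Z}_{2}$ and its opposite. Hence the corollary should follow by specialization, with no fresh check of invertibility, of the factorization identities $(\Delta \otimes \id)(R_\varphi) = (R_\varphi)_{13}(R_\varphi)_{23}$ and $(\id \otimes \Delta)(R_\varphi) = (R_\varphi)_{13}(R_\varphi)_{12}$, or of the quasi-cocommutativity relation $\Delta^{op}(h) R_\varphi = R_\varphi \Delta(h)$, since all three are handled once and for all inside Proposition~\ref{pro3.1.1}.

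The one piece of bookkeeping I would still verify explicitly is that the displayed expression for $R_\varphi$ in~\eqref{r} genuinely agrees with the defining formula $(\varphi \otimes \varphi)\circ \tau(R)$ when $R$ has the non-trivial form~(ii) of Proposition~\ref{pro2.1.1}. This should be a direct computation: flip the two tensor factors, apply $\varphi$ componentwise, and reindex. For example, the summand $w^2(s,t)\,e_s x \otimes e_t$ becomes $w^2(s,t)\,e_{t\triangleleft x} \otimes e_s x$, and reindexing via the involution $t \mapsto t\triangleleft x$ on $T$ (which is an involution since $(g\triangleleft x)\triangleleft x = g$, as $\triangleleft$ is a group action of $\mathbb{Z}_2$) produces the coefficient $w^2(s, t\triangleleft x)$ appearing in~\eqref{r}. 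The other three blocks should behave symmetrically; for $s_1,s_2 \in S$ the action of $x$ is trivial, so the $w^1$-block simply records a swap of arguments, while the $w^4$-block needs only a relabeling $t_1 \leftrightarrow t_2$.

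The main obstacle that one might worry about, but which turns out not to be one here, is that the non-trivial $R$ of Proposition~\ref{pro2.1.1}(ii) is supported only on the subsets $S\times S$, $S\times T$, $T\times S$, $T\times T$; one needs the map $\varphi$ to preserve this block decomposition under the flip, and this is exactly what the involution $t\mapsto t\triangleleft x$ on $T$ guarantees. Once that compatibility is noted, the corollary is essentially a notational translation of Proposition~\ref{pro3.1.1}, so I do not expect any genuine difficulty.
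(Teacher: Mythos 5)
Your proposal is correct and coincides with the paper's own proof, which likewise disposes of the corollary in one line by specializing Proposition~\ref{pro3.1.1} to the Hopf isomorphism $\varphi$ of Proposition~\ref{pro3.1.3}. Your additional block-by-block verification that $(\varphi\otimes\varphi)\circ\tau(R)$ agrees with the displayed formula~\eqref{r} (via the involution $t\mapsto t\triangleleft x$ on $T$) is accurate and matches the computation the paper carries out implicitly when stating~\eqref{r}.
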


\begin{proof}
Owing to the Proposition \ref{pro3.1.1} and Proposition \ref{pro3.1.3}, we get what we want.
\end{proof}

\begin{corollary} \label{coro3.1.2}
Let $R$ be the form (ii) in Proposition \ref{pro2.1.1}, and if $w^{i}(1\leq i\leq 4)$ satisfy the following conditions
\begin{itemize}
  \item[(i)] $w^1(s_1,s_2)=w^1(s_2,s_1)$ for $s_1,s_2\in S$;
  \item[(ii)] $w^2(s,t)=w^3(t\triangleleft x,s)$ for $s\in S,t\in T$;
   \item[(iii)] $w^4(t_1,t_2)=w^4(t_2,t_1)$ for $t_1,t_2\in T$;
\end{itemize}
then $R$ is a quasitriangular structure if and only if $l_R$ is an algebra map and $\Delta^{op}(h)R=R\Delta(h)$ for $h\in \Bbbk^G\#_{\sigma,\tau}\Bbbk \mathbb{Z}_{2}$.
\end{corollary}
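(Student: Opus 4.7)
The plan is to reduce Corollary~\ref{coro3.1.2} to Corollary~\ref{coro3.1.3} by observing that the hypotheses (i)--(iii) are precisely the conditions characterizing $R$ as $\varphi$-symmetric, where $\varphi$ is the Hopf isomorphism from Proposition~\ref{pro3.1.3}.

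To verify this, I would compare $R$ (in the form (ii) of Proposition~\ref{pro2.1.1}) with the explicit formula \eqref{r} for $R_\varphi$. The four families of basis elements $e_{s_1}\otimes e_{s_2}$, $e_sx\otimes e_t$, $e_t\otimes e_sx$, $e_{t_1}x\otimes e_{t_2}x$ (with $s_i,s\in S$ and $t_j,t\in T$) are linearly independent in $H\otimes H$, so the equation $R=R_\varphi$ decomposes into four block-wise identities. Matching the $S\times S$ block yields (i), the $T\times T$ block yields (iii), and the $S\times T$ block yields (ii); the $T\times S$ block gives $w^3(t,s)=w^2(s,t\triangleleft x)$, which is the same as (ii) after the substitution $t\mapsto t\triangleleft x$---a bijection of $T$ onto itself because $\triangleleft x$ is an involution on $T$ by Proposition~\ref{pro2.1.2}(ii). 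Hence (i)--(iii) hold if and only if $R=R_\varphi$.

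With $\varphi$-symmetry established, Corollary~\ref{coro3.1.3} delivers the stated equivalence. The ``only if'' direction is immediate from the definition of a quasitriangular structure and Lemma~\ref{lem2.2.1}. For the ``if'' direction, given that $l_R$ is an algebra map, Proposition~\ref{pro3.1.2} combined with $\varphi$-symmetry shows that $r_R\colon H^*\to H^{op}$ is likewise an algebra map; Lemma~\ref{lem2.2.1} then furnishes both coalgebra axioms $(\Delta\otimes\id)(R)=R_{13}R_{23}$ and $(\id\otimes\Delta)(R)=R_{13}R_{12}$, and invertibility of $R$ is automatic via the standard antipode argument $R^{-1}=(\mathcal{S}\otimes\id)(R)$, using the unitality of $l_R$ and $r_R$ to supply $(\epsilon\otimes\id)(R)=1_H=(\id\otimes\epsilon)(R)$.

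The main (and essentially only nontrivial) obstacle is the bookkeeping in the first step: correctly recognizing that the $S\times T$ and $T\times S$ blocks of $R=R_\varphi$ encode the \emph{same} identity (ii) modulo the involution $t\mapsto t\triangleleft x$ on $T$, so that the three conditions (i)--(iii) already suffice and no fourth independent condition is forced.
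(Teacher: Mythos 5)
Your proposal is correct and takes essentially the same route as the paper: it reads conditions (i)--(iii) as exactly the blockwise statement $R=R_\varphi$ via the explicit formula \eqref{r} (including the observation that the $T\times S$ block repeats (ii) under the involution $t\mapsto t\triangleleft x$), and then invokes Corollary \ref{coro3.1.3}, whose proof is Proposition \ref{pro3.1.2} plus Lemma \ref{lem2.2.1}. Your added verification of invertibility is a harmless expansion of what the paper leaves implicit (it is automatic here since the $w^i$ take values in $\Bbbk^\times$, so the blockwise inverse exists directly).
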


\begin{proof}
It can be seen that $R=R_\varphi$ due to (i)-(iii).  Since $R_\varphi$ is given by the form \eqref{r} and (i)-(iii), we know that $R$ is $\varphi$-symmetric. Using Corollary \ref{coro3.1.3}, we get what we want.
\end{proof}

\begin{remark}\emph{For our convenience, we agree that $\varphi$ mentioned in the following content refers to the $\varphi$ in Proposition \ref{pro3.1.3}.}
\end{remark}

\section{Quasitriangular functions on $\Bbbk^G\#_{\sigma,\tau}\Bbbk \mathbb{Z}_{2}$}
In this section, we first prove that non-trivial quasitriangular structures on $\Bbbk^G\#_{\sigma,\tau}\Bbbk \mathbb{Z}_{2}$ are in one-one correspondence to some special functions on $\Bbbk^G\#_{\sigma,\tau}\Bbbk \mathbb{Z}_{2}$, which we call quasitriangular functions on $\Bbbk^G\#_{\sigma,\tau}\Bbbk \mathbb{Z}_{2}$. After that, we will focus on quasitriangular functions. Let $R$ be the form (ii) in Proposition \ref{pro2.1.1} and we will use this $R$ without explanation in the following sections, then
\begin{lemma}\label{lem4.1.1}
The equations $\Delta^{op}(h)R=R\Delta(h)$ hold for $h\in \Bbbk^G\#_{\sigma,\tau}\Bbbk \mathbb{Z}_{2}$ if and only if the following equations hold
\begin{gather}
\label{e3.12} w^2(s,t\triangleleft x)=w^2(s,t)\eta(s,t),\; s \in S,t\in T,\\
\label{e3.13} w^3(t\triangleleft x,s)=w^3(t,s)\eta(t,s),\;s \in S,t\in T,\\
\label{e3.14} \tau(t_2,t_1)w^4(t_1\triangleleft x,t_2\triangleleft x)=\tau(t_1\triangleleft x,t_2\triangleleft x)w^4(t_1,t_2),\;t_1,t_2 \in T.
\end{gather}
\end{lemma}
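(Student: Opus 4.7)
The plan is to exploit that $\Bbbk^G\#_{\sigma,\tau}\Bbbk\mathbb{Z}_2$ is generated as an algebra by $\{e_g\}_{g\in G}\cup\{x\}$. Since $\Delta$ and $\Delta^{op}$ are both algebra maps $H\to H\otimes H$, the relation $\Delta^{op}(h)R=R\Delta(h)$ automatically propagates through products, so it suffices to verify it on these two kinds of generators and then to match coefficients in the $\Bbbk$-basis $\{e_ay\otimes e_bz:a,b\in G,\ y,z\in\{1,x\}\}$ of $H\otimes H$.

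For $h=e_g$, abelianness of $G$ gives $\Delta^{op}(e_g)=\Delta(e_g)=\sum_{ab=g}e_a\otimes e_b$, so the required equation becomes $[\Delta(e_g),R]=0$. Examining each of the four summands of $R$ separately reduces this to the identities $(s\triangleleft x)t=st$ for $s\in S,\,t\in T$ and $(t_1\triangleleft x)(t_2\triangleleft x)=t_1t_2$ for $t_1,t_2\in T$, both of which are immediate from the standing hypotheses of Remark \ref{rk2.1} (namely $s\triangleleft x=s$ for $s\in S$, and $t\triangleleft x=tb$ with $b^2=1$ for $t\in T$). Hence the $e_g$ generators contribute no condition.

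For $h=x$, I would expand both sides of $\Delta^{op}(x)R=R\Delta(x)$ using $\Delta(x)=\sum_{g,h}\tau(g,h)e_gx\otimes e_hx$ and $\Delta^{op}(x)=\sum_{g,h}\tau(h,g)e_gx\otimes e_hx$, together with the algebra rules of Definition \ref{def2.1.2}---chiefly $e_gxe_h=\delta_{g,h\triangleleft x}e_gx$, $e_gxe_hx=\delta_{g,h\triangleleft x}\sigma(g)e_g$, and the invariance $\sigma(g)=\sigma(g\triangleleft x)$. Writing $R=R_1+R_2+R_3+R_4$ for the four summands listed in case (ii) of Proposition \ref{pro2.1.1}, a case-by-case computation shows that $\Delta^{op}(x)R_i$ and $R_i\Delta(x)$ both land in a single tensor type $e_ay\otimes e_bz$ that depends only on $i$, and the four types are pairwise distinct. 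Matching coefficients on each type then produces four relations: the $e_{s_1}x\otimes e_{s_2}x$ type recovers $\tau(s_1,s_2)=\tau(s_2,s_1)$, already built into the standing hypothesis; the $e_s\otimes e_tx$, $e_tx\otimes e_s$, and $e_{t_1}\otimes e_{t_2}$ types yield, respectively, \eqref{e3.12}, \eqref{e3.13}, and \eqref{e3.14}, after cancelling the common factor $\sigma$ and reindexing by $t\mapsto t\triangleleft x$ where needed.

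The only real effort is careful bookkeeping: the $\triangleleft x$-twist permutes indices inside every product, so one must track precisely which ordered pairs $(a,b)$ survive on each side. The right reindexing---$t'=t\triangleleft x$ in the $R_2$ and $R_3$ pieces, and pairwise for $R_4$---is exactly what converts the raw coefficient equalities into the clean forms \eqref{e3.12}--\eqref{e3.14} stated in the lemma.
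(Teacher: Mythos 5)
Your proposal is correct and follows essentially the same route as the paper's proof: reduce to the generators $\{e_g,x\}_{g\in G}$, note the $e_g$ case holds automatically from $s\triangleleft x=s$ and $(t_1\triangleleft x)(t_2\triangleleft x)=t_1t_2$, and match coefficients of the four pairwise distinct tensor types in $\Delta^{op}(x)R=R\Delta(x)$, with the $e_{s_1}x\otimes e_{s_2}x$ block absorbed by the standing symmetry $\tau(s_1,s_2)=\tau(s_2,s_1)$. The only cosmetic differences are that you propagate the intertwining relation through products directly rather than conjugating by $R^{-1}$, and that you multiply everything out so the factors $\sigma(\cdot)$ appear and cancel, whereas the paper keeps $(x\otimes x)$ factored on the right and never sees them.
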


\begin{proof}
Since $R$ is invertible and $\Bbbk^G\#_{\sigma,\tau}\Bbbk \mathbb{Z}_{2}$ is generated by $\{e_g,x|\;g\in G\}$ as algebra, $\Delta^{op}(h)=R\Delta(h)R^{-1}$ for $h\in \Bbbk^G\#_{\sigma,\tau}\Bbbk \mathbb{Z}_{2}$ is equivalent to $\Delta^{op}(h)=R\Delta(h)R^{-1}$ for $h\in \{e_g,x|\;g\in G\}$. We first prove that $\Delta^{op}(e_g)R=R\Delta(e_g)$ for $g\in G$. Taking $s\in S,t\in T$, then directly we have
\begin{align*}
\Delta^{op}(e_s)R=[\sum\limits_{\begin{subarray}{l}  s_1,s_2 \in S  \\
                             s_1s_2=s  \\
        \end{subarray}}w^1(s_1,s_2)e_{s_1} \otimes e_{s_2}]+[\sum\limits_{\begin{subarray}{l}  t_1,t_2 \in T  \\
                             t_1t_2=s  \\
        \end{subarray}}w^4(t_1,t_2)e_{t_1}x \otimes e_{t_2}x]
\end{align*}
and
\begin{align*}
R\Delta(e_s)&=[\sum\limits_{\begin{subarray}{l}  s_1,s_2 \in S  \\
                             s_1s_2=s  \\
        \end{subarray}}w^1(s_1,s_2)e_{s_1} \otimes e_{s_2}]+[\sum\limits_{\begin{subarray}{l}  t_1,t_2 \in T  \\
                             t_1t_2=s  \\
        \end{subarray}}w^4(t_1\triangleleft x,t_2 \triangleleft x)e_{t_1\triangleleft x}x \otimes e_{t_2\triangleleft x}x].
\end{align*}
Owing to $t_1t_2=(t_1\triangleleft x)(t_2\triangleleft x)$ by definition, thus $\Delta^{op}(s)R=R\Delta(s)$.
Similarly, we have
\begin{align*}
\Delta^{op}(e_t)R=R\Delta(e_t)=[\sum\limits_{\begin{subarray}{l}  s\in S,t' \in T  \\
                             st'=s  \\
        \end{subarray}}w^2(s,t')e_{s}x \otimes e_{t'}]+[\sum\limits_{\begin{subarray}{l}  s\in S,t' \in T  \\
                             st'=s  \\
        \end{subarray}}w^3(t',s)e_{t'} \otimes e_{s}x],
\end{align*}
but $G=S\cup T$ and so we have showed $\Delta^{op}(e_g)R=R\Delta(e_g)$ for $g\in G$. Next we prove that $\Delta^{op}(x)R=R\Delta(x)$ is equivalent to above equations \eqref{e3.12}-\eqref{e3.14}. On the one hand, we have the following equation
\begin{align*}
\Delta^{op}(x)R&=[\sum\limits_{g,h \in G}\tau(h,g)e_g \otimes e_h](x \otimes x)R\\
                &=[\sum_{s_1,s_2 \in S}\tau(s_2,s_1)w^1(s_1,s_2)e_{s_1} \otimes e_{s_2}+ \sum_{s \in S, t \in T}\tau(t,s)w^2(s,t\triangleleft x)e_{s}x \otimes e_{t}+ \\
        &\sum_{t \in T, s \in S}\tau(s,t)w^3(t\triangleleft x,s)e_{t} \otimes e_{s}x+\\
        &\sum\limits_{t_1,t_2 \in T}\tau(t_2,t_1)w^4(t_1\triangleleft x,t_2 \triangleleft x)e_{t_1}x \otimes e_{t_2}x](x\otimes x),
\end{align*}
On the other hand, the following equations hold
\begin{align*}
R\Delta(x)&=R[\sum\limits_{g,h \in G}\tau(g,h)e_g \otimes e_h](x \otimes x)\\
                &=[\sum\limits_{s_1,s_2 \in S}\tau(s_1,s_2)w^1(s_1,s_2)e_{s_1} \otimes e_{s_2}+ \sum\limits_{s \in S, t \in T}\tau(s,t)w^2(s,t)e_{s}x \otimes e_{t}+\\
        &\sum\limits_{t \in T,  s \in S}\tau(t,s)w^3(t,s)e_{t} \otimes e_{s}x+\\
         &\sum\limits_{t_1,t_2 \in T}\tau(t_1\triangleleft x,t_2\triangleleft x)w^4(t_1,t_2)e_{t_1}x \otimes e_{t_2}x](x\otimes x).
\end{align*}
Therefore, $\Delta^{op}(x)R=R\Delta(x)$ holds if and only if equations \eqref{e3.12}-\eqref{e3.14} hold.
\end{proof}

If $R$ is a quasitriangular structure, then $R$ is completely determined by $w^4$. The following lemma states this fact. For simplify, we denote $\Bbbk^G\#_{\sigma,\tau}\Bbbk \mathbb{Z}_{2}$ as $H_G$ and we will use this notation without explanation in this section.

\begin{lemma}\label{lem4.1.2}
If $l(f_1)l(f_2)=l(f_1f_2)$ and $r(f_1)r(f_2)=r(f_2f_1)$ for $f_1,f_2\in (H_G)^\ast$, then $w^i(1\leq i\leq 3)$ of $R$ are completely determined by $w^4$ as follows
\begin{itemize}
  \item[(i)] $w^1(s_1,s_2)=\frac{w^4(s_1t_1,s_2t_2)w^4(t_1,t_2)}{w^4(s_1t_1,t_2)w^4(t_1,s_2t_2)}$;
  \item[(ii)] $w^2(s,t)=\tau(s,t_1)\frac{w^4(st_1,t)}{w^4(t_1,t)}$;
   \item[(iii)] $w^3(t,s)=\tau(s,t_1)\frac{w^4(t\triangleleft x,st_1)}{w^4(t\triangleleft x,t_1)}$;
\end{itemize}
where $s,s_1,s_2\in S$ and $t,t_1,t_2\in T$.
\end{lemma}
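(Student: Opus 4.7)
My plan is to exploit Lemma~\ref{lem2.2.1} by computing $l=l_R$ and $r=r_R$ on the dual basis $\{E_g,X_g\}_{g\in G}$ from Lemma~\ref{lem2.2.2}. A routine expansion using the form of $R$ in Proposition~\ref{pro2.1.1}(ii) shows that, for $s\in S$, $l(E_s)=\sum_{s'\in S}w^1(s,s')e_{s'}$ and $l(X_s)=\sum_{t\in T}w^2(s,t)e_t$, while for $t\in T$, $l(E_t)=\sum_{s\in S}w^3(t,s)e_sx$ and $l(X_t)=\sum_{t'\in T}w^4(t,t')e_{t'}x$; the analogous formulas for $r$ (with first and second arguments transposed) are derived identically. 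Each of $w^1,w^2,w^3,w^4$ therefore appears in exactly one of these four blocks, which makes coefficient extraction clean.

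To derive (ii), I apply $l$ to the identity $X_sX_{t_1}=\tau(s,t_1)X_{st_1}$ from Lemma~\ref{lem2.2.2}, valid since $s\in S$ and $t_1\in T$ force $st_1\in T$. Expanding $l(X_s)l(X_{t_1})$ via $e_t\cdot e_{t'}x=\delta_{t,t'}e_tx$ and comparing with $\tau(s,t_1)l(X_{st_1})$ gives $w^2(s,t)w^4(t_1,t)=\tau(s,t_1)w^4(st_1,t)$, which rearranges to (ii). For (iii), anti-multiplicativity of $r$ yields $r(X_{t_1})r(X_s)=\tau(s,t_1)r(X_{st_1})$; the product rule $e_tx\cdot e_{t'}=\delta_{t',t\triangleleft x}e_tx$ produces $w^4(t,t_1)w^3(t\triangleleft x,s)=\tau(s,t_1)w^4(t,st_1)$, and substituting $t\mapsto t\triangleleft x$ yields (iii).

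The main step is (i), where the trick is to bridge the $S$- and $T$-blocks via the mixed product $E_{s_1}E_{t_2}=E_{s_1t_2}$ (with $s_1\in S,\,t_2\in T$, so $s_1t_2\in T$). Expanding $l(E_{s_1})l(E_{t_2})$ via $e_{s'}\cdot e_sx=\delta_{s',s}e_sx$ and matching coefficients with $l(E_{s_1t_2})$ produces the key identity
\[
w^1(s_1,s)\,w^3(t_2,s)=w^3(s_1t_2,s).
\]
Substituting formula (iii) for both $w^3$-factors, and using $(s_1t_2)\triangleleft x=s_1(t_2\triangleleft x)$ (since $s_1\triangleleft x=s_1$), the factor $\tau(s,t_1)$ cancels; relabelling $t_2\triangleleft x\to t_1$ and the auxiliary parameter of (iii) as $t_2$ then delivers formula (i).

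The essential conceptual obstacle is step (i): pure products $E_{s_1}E_{s_2}$ only reveal the bicharacter property of $w^1$ on $S\times S$, while products of two $X$'s interrelate $w^4$-values but never touch $w^1$. The mixed product $E_{s_1}E_{t_2}$ is the unique natural bridge, because $l(E_{t_2})$ carries $w^3$, which the previous step has already expressed through $w^4$. One also verifies, essentially automatically from the multiplicativity identities themselves, that the right-hand sides of (i)--(iii) do not depend on the auxiliary choices of $t_1,t_2\in T$.
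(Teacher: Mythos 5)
Your proposal is correct, and it rests on the same underlying technique as the paper---evaluating $l_R$ and $r_R$ on products of the dual basis elements $E_g,X_g$ and extracting coefficients block by block---but your route differs in two places worth recording. For (iii), the paper does not compute directly: it invokes the symmetry machinery of Section \ref{sec2.2}, applying its already-proved item (ii) to $R_\varphi$ via $l_{R_\varphi}=\varphi\circ r_R\circ\varphi^\ast$ and the dictionary $w'^2(s,t)=w^3(t\triangleleft x,s)$, $w'^4(t_1,t_2)=w^4(t_2,t_1)$ from \eqref{r}; you instead expand the anti-multiplicativity relation $r(X_{t_1})r(X_s)=r(X_sX_{t_1})=\tau(s,t_1)r(X_{st_1})$ and substitute $t\mapsto t\triangleleft x$, which is more elementary and self-contained (no dependence on Proposition \ref{pro3.1.1}) at the cost of one extra block computation. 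For (i), the paper bridges through $w^2$ using $r$ on the mixed product $E_{s_2}E_{t_2}$, obtaining $w^1(s_1,s_2)=w^2(s_1,s_2t_2)/w^2(s_1,t_2)$ and then substituting (ii); you bridge through $w^3$ using $l$ on $E_{s_1}E_{t_2}$, obtaining $w^1(s_1,s)w^3(t_2,s)=w^3(s_1t_2,s)$ and substituting (iii)---the mirror image of the paper's step under $\varphi$, equally valid, though it does show your closing claim that this mixed product is the \emph{unique} natural bridge is slightly overstated. All your product rules check out (in particular $e_tx\cdot e_{t'}=\delta_{t\triangleleft x,t'}\,e_tx$ and $e_t\cdot e_{t'}x=\delta_{t,t'}\,e_tx$), the relabelling $t_2\triangleleft x\to t_1$ is legitimate since $T\triangleleft x=T$, so your (i) indeed holds for arbitrary auxiliary $t_1,t_2\in T$, and the parameter-independence remark is right for the reason you give: the left-hand sides are fixed quantities. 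In short, the paper's derivation buys reuse of the $R_\varphi$ formalism that it exploits repeatedly in later sections, while yours buys a shorter logical dependency chain.
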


\begin{proof}
We first show (ii). Taking $s\in S,t_1\in T$, then we have $l(X_s)l(X_{t_1})=l(X_sX_{t_1})$ by our assumption. We expand this equation as follows
\begin{align*}
l(X_s)l(X_{t_1})&=[\sum\limits_{t\in T}w^2(s,t)e_{t}][\sum\limits_{t\in T}w^4(t_0,t_1)e_{t}x]=[\sum\limits_{t\in T}w^2(s,t)w^4(t_0,t)e_{t}x]
\end{align*}
and
\begin{align*}
l(X_s X_{t_1})&=\tau(s,t_1)l(X_{st_1})=[\sum\limits_{t\in T}\tau(s,t_1)w^4(st_1,t)e_{t}x],
\end{align*}
so we have $w^2(s,t)w^4(t_1,t)=\tau(s,t_1)w^4(st_1,t)$ and this implies that (ii) holds. Then we will show (i). Let $s_1,s_2\in S$ and let $t_1,t_2\in T$. Owing to $r(E_{s_2})r(E_{t_2})=r(E_{t_2}E_{s_2})$ by assumption, we can expand this equation as follows
\begin{align*}
r(E_{s_2})r(E_{t_2})&=[\sum\limits_{s_1\in S}w^1(s_1,s_2)e_{s_1}][\sum\limits_{s_1\in S}w^2(s_1,t_2)e_{s_1}x]=[\sum\limits_{s_1\in S}w^1(s_1,s_2)w^2(s_1,t_2)e_{s_1}x]
\end{align*}
and
\begin{align*}
r(E_{t_2}E_{s_2})&=r(E_{s_2t_2})=[\sum\limits_{s_1\in S}w^2(s_1,s_2t_2)e_{s_1}x],
\end{align*}
thus we get $w^1(s_1,s_2)=\frac{w^2(s_1,s_2t_2)}{w^2(s_1,t_2)}$. But we have showed the following equations
\begin{gather*}
w^2(s_1,s_2t_2)=\tau(s_1,t_1)\frac{w^4(s_1t_1,s_2t_2)}{w^4(t_1,s_2t_2)},\;w^2(s_1,t_2)=\tau(s_1,t_1)\frac{w^4(s_1t_1,t_2)}{w^4(t_1,t_2)},
\end{gather*}
therefore we know (i) holds. To show (iii), we consider $R_\varphi$(here $\varphi$ is the Hopf isomorphism in Proposition \ref{pro3.1.3}). Since the proof of Proposition \ref{pro3.1.1}, we know $R_\varphi$ satisfy $l_{R_\varphi}(f_1)l_{R_\varphi}(f_2)=l_{R_\varphi}(f_1f_2)$ and $r_{R_\varphi}(f_1)r_{R_\varphi}(f_2)=r_{R_\varphi}(f_2f_1)$ for $f_1,f_2\in (H_G)^\ast$, i.e $R_\varphi$ such that the conditions of this Lemma. Denote the $w^i(1\leq i\leq 4)$ of $R_\varphi$ by $w'^i(1\leq i\leq 4)$, then we have $w'^2(s,t)=w^3(t\triangleleft x,s)$ and $w'^4(t_1,t_2)=w^4(t_2,t_1)$ for $s\in S$ and $t_1,t_2\in T$ by \eqref{r}. But we have proved that (ii) holds, we get that $w'^2(s,t)=\tau(s,t_1)\frac{w'^4(st_1,t)}{w'^4(t_1,t)}$. And hence we know $w^3(t\triangleleft x,s)=\tau(s,t_1)\frac{w^4(t,st_1)}{w^4(t,t_1)}$ and this implies (iii).
\end{proof}
The following lemma gives a criterion for when $R$ is a non-trivial quasitriangular structure on $\Bbbk^G\#_{\sigma,\tau}\Bbbk \mathbb{Z}_{2}$.
\begin{lemma}\label{lem4.1.4}
$R$ is a quasitriangular structure on $\Bbbk^G\#_{\sigma,\tau}\Bbbk \mathbb{Z}_{2}$ if and only if the following equations hold
\begin{gather}
\label{e3.15} l(E_g)l(E_h)=l(E_gE_h),\;l(X_g)l(X_h)=l(X_gX_h),\;g,h\in G,\\
\label{e3.16} r(E_g)r(E_h)=r(E_hE_g),\;r(X_g)r(X_h)=r(X_hX_g),\;g,h\in G,\\
\label{e3.17} \tau(t_2,t_1)w^4(t_1\triangleleft x,t_2\triangleleft x)=\tau(t_1\triangleleft x,t_2\triangleleft x)w^4(t_1,t_2),\;t_1,t_2 \in T.
\end{gather}
\end{lemma}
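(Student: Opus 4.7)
The plan is to split the iff into its two directions. The forward direction is essentially bookkeeping: if $R$ is a quasitriangular structure, then Lemma \ref{lem2.2.1} forces $l$ and $r$ to be an algebra morphism and an algebra antimorphism respectively, so specializing to the products $E_g E_h$ and $X_g X_h$ given by Lemma \ref{lem2.2.2} yields \eqref{e3.15} and \eqref{e3.16} at once; meanwhile Lemma \ref{lem4.1.1} applied to $\Delta^{op}(h)R = R\Delta(h)$ produces \eqref{e3.12}--\eqref{e3.14}, and \eqref{e3.14} is literally \eqref{e3.17}.

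The backward direction is the substance, and I would organize it in three steps. First, upgrade \eqref{e3.15} and \eqref{e3.16} to the full algebra-(anti)morphism property of $l, r$. Since $E_g X_h = X_h E_g = 0$ in $(H_G)^\ast$ by Lemma \ref{lem2.2.2}, what is missing is only $l(E_g)l(X_h) = 0$, $l(X_g)l(E_h) = 0$, and analogously for $r$. These vanish automatically from the block form (ii) of $R$: each of $l(E_g), l(X_g)$ lies in exactly one of the subspaces $\sum_{s\in S}\Bbbk e_s$, $\sum_{t\in T}\Bbbk e_t$, $\sum_{s\in S}\Bbbk e_s x$, $\sum_{t\in T}\Bbbk e_t x$ depending on $g \in S$ versus $g \in T$, and each relevant cross-product contains a factor $e_s e_{g'}$ with $s \in S$ and $g' \in T$ (possibly after the twist from $xe_{g'} = e_{g'\triangleleft x}x$), which vanishes because $G = S \sqcup T$ is a $\mathbb{Z}_2$-grading by Proposition \ref{pro2.1.2}(ii). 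Bilinearity then gives $l, r$ the desired property, and Lemma \ref{lem2.2.1} delivers $(\Delta \otimes \id)(R) = R_{13}R_{23}$ and $(\id \otimes \Delta)(R) = R_{13}R_{12}$.

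The second and crucial step is to derive \eqref{e3.12} and \eqref{e3.13}, which together with the given \eqref{e3.17}$=$\eqref{e3.14} will yield $\Delta^{op}(h)R = R\Delta(h)$ via Lemma \ref{lem4.1.1}. Since the hypothesis of Lemma \ref{lem4.1.2} now holds, the formulas there for $w^2$ in terms of $w^4$ are available. The trick is to expand the morphism identity $l(X_g)l(X_h) = \tau(g,h)\,l(X_{gh})$ in \emph{both} orderings for $g = s \in S$, $h = t_1 \in T$: the order $X_s X_{t_1}$ gives $w^2(s,t) = \tau(s,t_1)\,w^4(st_1,t)/w^4(t_1,t)$, while $X_{t_1} X_s$ produces $w^2(s, t\triangleleft x) = \tau(t_1,s)\,w^4(st_1, t)/w^4(t_1, t)$, the $\triangleleft x$-twist entering through $xe_{t'} = e_{t'\triangleleft x}x$. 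Their ratio is $\tau(t_1,s)/\tau(s,t_1) = \eta(s,t_1)^{-1}$, which for \eqref{e3.12} must equal $\eta(s,t)$; this is the requirement $\eta(s, t_1 t) = 1$, which holds because $t_1 t \in S$ (by the $\mathbb{Z}_2$-grading) and $\eta|_{S \times S} = 1$ by the standing assumption from Proposition \ref{pro2.1.2}(iii) via Remark \ref{rk2.1}. The parallel computation with $r(X_s)r(X_{t'})$ versus $r(X_{t'})r(X_s)$ gives \eqref{e3.13}. This is the main obstacle of the proof: it is not obvious a priori that the three given conditions encode the intertwining, and only the two-orderings trick reveals the precise discrepancy, which collapses exactly thanks to Proposition \ref{pro2.1.2}(iii).

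The final step is invertibility of $R$. A short check using the formulas of Lemma \ref{lem4.1.2} (together with the unital-normalization $\tau(1,-)=\tau(-,1)=1$) gives $w^1(1,s) = w^2(1,t) = w^1(s,1) = w^3(t,1) = 1$, whence $(\varepsilon \otimes \id)(R) = \sum_{s\in S} e_s + \sum_{t\in T} e_t = 1$ and similarly $(\id \otimes \varepsilon)(R) = 1$. Combined with the coproduct identities from Step 1, the standard antipode formulas $(\mathcal{S} \otimes \id)(R) \cdot R = 1\otimes 1$ and $R \cdot (\id \otimes \mathcal{S})(R) = 1 \otimes 1$ follow, so $R$ is invertible and hence a quasitriangular structure.
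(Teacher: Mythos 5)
Your proof is correct, and its skeleton (necessity via Lemmas \ref{lem2.2.1} and \ref{lem4.1.1}; sufficiency by first upgrading \eqref{e3.15}--\eqref{e3.16} to the full morphism/antimorphism property of $l,r$ through the vanishing of the $E$--$X$ cross terms forced by the block form and the disjointness $G=S\sqcup T$) coincides with the paper's. Where you genuinely diverge is in deriving \eqref{e3.12} and \eqref{e3.13}. The paper obtains \eqref{e3.12} by expressing both $w^2(s,t)$ and $w^2(s,t\triangleleft x)$ via Lemma \ref{lem4.1.2}(ii) with base points $t_0$ and $t_0\triangleleft x$, then invoking the hypothesis \eqref{e3.17} twice to untwist the $w^4$-values, and finally a 2-cocycle manipulation to recognize the accumulated factor as $\eta(s,t)$; it then gets \eqref{e3.13} by the symmetry trick of Section \ref{sec2.2}, applying the same argument to $R_\varphi$, which requires verifying (through the compatibility $\tau(t_1\triangleleft x,t_2\triangleleft x)\tau(t_1,t_2)=\sigma(t_1t_2)\sigma(t_1)^{-1}\sigma(t_2)^{-1}$) that $R_\varphi$ again satisfies the hypotheses. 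Your two-orderings trick --- comparing the expansions of $l(X_s)l(X_{t_1})$ and $l(X_{t_1})l(X_s)$, the second ordering twisting through $xe_{t'}=e_{t'\triangleleft x}x$ --- is cleaner: it yields \eqref{e3.12} (and, run through $r$, \eqref{e3.13}) from the multiplicativity conditions \eqref{e3.15}--\eqref{e3.16} alone, the discrepancy $\tau(t_1,s)/\tau(s,t_1)=\eta(s,t_1)^{-1}$ collapsing to $\eta(s,t)$ precisely because $t_1t\in S$ and $\eta$ is trivial on $S\times S$ (Proposition \ref{pro2.1.2}(iii) via Remark \ref{rk2.1}); the hypothesis \eqref{e3.17} then enters only as \eqref{e3.14} itself when assembling $\Delta^{op}(h)R=R\Delta(h)$ through Lemma \ref{lem4.1.1}. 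This buys a shorter computation, avoids the $R_\varphi$ detour and its compatibility check, and exposes the (not a priori obvious) fact that \eqref{e3.12}--\eqref{e3.13} are already consequences of \eqref{e3.15}--\eqref{e3.16}. Your final invertibility step is extra care the paper omits: the paper tacitly relies on any element of form (ii) with $\Bbbk^\times$-valued coefficients being invertible in $H_G\otimes H_G$, whereas your route --- the normalizations $w^1(1,s)=w^2(1,t)=w^1(s,1)=w^3(t,1)=1$ giving $(\epsilon\otimes\id)(R)=(\id\otimes\epsilon)(R)=1$, hence the standard identities $(\mathcal{S}\otimes\id)(R)\cdot R=1\otimes 1$ and $R\cdot(\id\otimes\mathcal{S})(R)=1\otimes 1$ producing a two-sided inverse --- is a correct, self-contained verification; both are legitimate.
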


\begin{proof}
Since the Lemma \ref{lem2.2.1}, Lemma \ref{lem4.1.1} and the definition of quasitriangular structures, we know that if $R$ is a quasitriangular structure then it satisfies the above equations \eqref{e3.15}-\eqref{e3.17}. Conversely, suppose $R$ such that equations \eqref{e3.15}-\eqref{e3.17}, we will first prove that $l(f_1)l(f_2)=l(f_1f_2)$ and $r(f_1)r(f_2)=r(f_2f_1)$ for $f_1,f_2\in (\Bbbk^G\#_{\sigma,\tau}\Bbbk \mathbb{Z}_{2})^\ast$. For $s\in S,t\in T$, since
\begin{align*}
l(E_s)=\sum\limits_{s'\in S}w^1(s,s')e_{s'},\;l(E_t)=\sum\limits_{s'\in S}w^3(t,s')e_{s'}x
\end{align*}
and
\begin{align*}
l(X_s)=\sum\limits_{t'\in T}w^2(s,t')e_{t'},\;l(X_t)=\sum\limits_{t'\in T}w^4(t,t')e_{t'}x,
\end{align*}
it can be seen that $l(E_g)l(X_h)=l(E_gX_h)=0$ and $l(X_g)l(E_h)=l(X_gE_h)=0$ for $g,h\in G$. Because $(\Bbbk^G\#_{\sigma,\tau}\Bbbk \mathbb{Z}_{2})^\ast$ is linear spanned by $\{E_g,X_g|\;g\in G\}$, we get that $l(f_1)l(f_2)=l(f_1f_2)$ for $f_1,f_2\in (\Bbbk^G\#_{\sigma,\tau}\Bbbk \mathbb{Z}_{2})^\ast$. Using a similar discussion, we will know that $r(f_1)r(f_2)=r(f_2f_1)$ for $f_1,f_2\in (\Bbbk^G\#_{\sigma,\tau}\Bbbk \mathbb{Z}_{2})^\ast$. Secondly, we will prove that $\Delta^{op}(h)R=R\Delta(h)$ for $h\in \Bbbk^G\#_{\sigma,\tau}\Bbbk \mathbb{Z}_{2}$. Owing to the Lemma \ref{lem4.1.1}, we only need to prove that $w^2(s,t\triangleleft x)=w^2(s,t)\eta(s,t)$ and $w^3(t\triangleleft x,s)=w^3(t,s)\eta(t,s)$ for $s\in S,t\in T$. Let $t_0\in T$, then we have the following equations by (ii) of Lemma \ref{lem4.1.2}
\begin{gather}
w^2(s,t)=\tau(s,t_0)\frac{w^4(st_0,t)}{w^4(t_0,t)},\;w^2(s,t\triangleleft x)=\tau(s,t_0\triangleleft x)\frac{w^4(st_0\triangleleft x,t\triangleleft x)}{w^4(t_0\triangleleft x,t\triangleleft x)}.\label{e4.1}
\end{gather}
Due to the assumption, we have
\begin{align}
w^4(st_0\triangleleft x,t\triangleleft x)=\frac{\tau(st_0\triangleleft x,t\triangleleft x)}{\tau(t,st_0)}w^4(st_0,t)\label{e4.2}
\end{align}
and
\begin{align}
w^4(t_0\triangleleft x,t\triangleleft x)=\frac{\tau(t_0\triangleleft x,t\triangleleft x)}{\tau(t,t_0)}w^4(t_0,t).\label{e4.3}
\end{align}
So $w^2(s,t\triangleleft x)=w^2(s,t) \frac{\tau(s,t_0\triangleleft x)}{\tau(s,t_0)} \frac{\tau(st_0\triangleleft x,t\triangleleft x)}{\tau(t,st_0)} \frac{\tau(t,t_0)}{\tau(t_0\triangleleft x,t\triangleleft x)}$. Using $\tau$ is two cocycle, we get
\begin{align*}
\frac{\tau(s,t_0\triangleleft x)}{\tau(s,t_0)} \frac{\tau(st_0\triangleleft x,t\triangleleft x)}{\tau(t,st_0)} \frac{\tau(t,t_0)}{\tau(t_0\triangleleft x,t\triangleleft x)}&=\frac{\tau(s,t_0t) \tau(t_0\triangleleft x,t\triangleleft x)}{\tau(s,t_0)\tau(t,st_0)}\frac{\tau(t,t_0)}{\tau(t_0\triangleleft x,t\triangleleft x)}\\
                &=\frac{\tau(s,t_0t) \tau(t_0\triangleleft x,t\triangleleft x)}{\eta(s,t_0)\tau(t_0,s)\tau(t,st_0)}\frac{\tau(t,t_0)}{\tau(t_0\triangleleft x,t\triangleleft x)}\\
        &=\frac{\tau(s,t_0t) \tau(t_0\triangleleft x,t\triangleleft x)}{\eta(s,t_0)\tau(tt_0,s)\tau(t,t_0)}\frac{\tau(t,t_0)}{\tau(t_0\triangleleft x,t\triangleleft x)}\\
        &=\frac{\eta(s,tt_0)}{\eta(s,t_0)}=\eta(s,t).
\end{align*}
Therefore $w^2(s,t\triangleleft x)=w^2(s,t)\eta(s,t)$. To show $w^3(t\triangleleft x,s)=w^3(t,s)\eta(t,s)$, we consider $R_\varphi$ and denote the $w^i(1\leq i\leq 4)$ of $R_\varphi$ by $w'^i(1\leq i\leq 4)$, then we have $w'^2(s,t)=w^3(t\triangleleft x,s)$ and $w'^4(t_1,t_2)=w^4(t_2,t_1)$ for $s\in S,t_1,t_2\in T$ by \eqref{r}. Owing to $\tau(t_1\triangleleft x,t_2\triangleleft x)\tau(t_1,t_2)=\tau(t_2\triangleleft x,t_1\triangleleft x)\tau(t_2,t_1)=\sigma(t_1t_2)\sigma(t_1)^{-1}\sigma(t_2)^{-1}$, then we have $\frac{\tau(t_1\triangleleft x,t_2\triangleleft x)}{\tau(t_2,t_1)}=\frac{\tau(t_2\triangleleft x,t_1\triangleleft x)}{\tau(t_1,t_2)}$ and hence it is easy to see that $R_\varphi$ also satisfies the conditions of this Lemma. But we have showed that $w'^2(s,t\triangleleft x)=w'^2(s,t)\eta(s,t)$, so $w^3(t,s)=w^3(t\triangleleft x,s)\eta(s,t)$. Since $\eta(s,t)^{-1}=\eta(t,s)$, we know $w^3(t\triangleleft x,s)=w^3(t,s)\eta(t,s)$ and therefore we have completed the proof.
\end{proof}

Since $H_G$ is determined by the data $(G,\triangleleft,\sigma,\tau)$, naturally we can guess that all non-trivial quasitriagular structures on $H_G$ can be expressed by using the data $(G,\triangleleft,\sigma,\tau)$. To confirm this conjecture, we use the following propositions.

\begin{proposition}\label{pro4.1.1}
If $R$ is a universal $\mathcal{R}$-matrix of $H_G$, then
\begin{itemize}
             \item[(i)]$\tau(s,t_1)\frac{w^4(st_1,t)}{w^4(t_1,t)}=\tau(s,t_2)\frac{w^4(st_2,t)}{w^4(t_2,t)}$;
              \item[(ii)]$\tau(s,t_1)\frac{w^4(t,st_1)}{w^4(t,t_1)}=\tau(s,t_2)\frac{w^4(t,st_2)}{w^4(t,t_2)}$;
              \item[(iii)]$w^4(t,t_1)w^4(t^{-1},t_1\triangleleft x)\sigma(t_1)=\tau(t,t^{-1})$;
              \item[(iv)]$w^4(t_1,t)w^4(t_1\triangleleft x,t^{-1})\sigma(t_1)=\tau(t,t^{-1})$;
              \item[(v)]$w^4(t_1\triangleleft x,t_2\triangleleft x)=\frac{\tau(t_1\triangleleft x,t_2\triangleleft x)}{\tau(t_2,t_1)}w^4(t_1,t_2)$;
            \end{itemize}
where $s\in S,\;t,t_1,t_2\in T$.
\end{proposition}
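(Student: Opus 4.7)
The plan is to derive each of the five equations from results already in the paper together with a short dual-algebra computation, organised as follows.

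Statements (i) and (ii) are essentially uniqueness statements and require no new input. By Lemma \ref{lem4.1.2} (ii)--(iii), for every choice of an auxiliary element $t_1\in T$ one has
\[
w^2(s,t)=\tau(s,t_1)\frac{w^4(st_1,t)}{w^4(t_1,t)},\qquad w^3(t,s)=\tau(s,t_1)\frac{w^4(t\triangleleft x,st_1)}{w^4(t\triangleleft x,t_1)}.
\]
The left-hand sides do not depend on $t_1$, so equating the right-hand sides for $t_1$ and for $t_2$ gives (i) and (ii) at once. Statement (v) is nothing but equation \eqref{e3.14} from Lemma \ref{lem4.1.1} (equivalently \eqref{e3.17} from Lemma \ref{lem4.1.4}), which holds for every quasitriangular $R$.

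For the genuinely new content (iii)--(iv) the idea is to exploit the multiplicativity of $l$ and the anti-multiplicativity of $r$ (Lemma \ref{lem2.2.1}) applied to the pair $X_t,X_{t^{-1}}$, using the product rule $X_gX_h=\tau(g,h)X_{gh}$ from Lemma \ref{lem2.2.2}. Thus
\[
l(X_t)\,l(X_{t^{-1}})=\tau(t,t^{-1})\,l(X_1),\qquad r(X_{t^{-1}})\,r(X_t)=\tau(t,t^{-1})\,r(X_1).
\]
First I would identify $l(X_1)=r(X_1)$. From the form of $R$ only the block $\sum_{s\in S,t\in T}w^2(s,t)\,e_sx\otimes e_t$ (resp.\ $\sum_{t\in T,s\in S}w^3(t,s)\,e_t\otimes e_sx$) contributes, because $1\in S$ while the other $e_{t_i}x$-factors are indexed by $T$. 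Hence $l(X_1)=\sum_{t\in T}w^2(1,t)\,e_t$ and by Lemma \ref{lem4.1.2}(ii) with $s=1$ one has $w^2(1,t)=1$; similarly $r(X_1)=\sum_{t\in T}e_t$.

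Next I would expand each product using $l(X_t)=\sum_{t'\in T}w^4(t,t')\,e_{t'}x$ and the crucial rule $(e_{t_1}x)(e_{t_2}x)=\sigma(t_1)\,\delta_{t_1,t_2\triangleleft x}\,e_{t_1}$, derived from $x^2=\sum_g\sigma(g)e_g$ and $xe_g=e_{g\triangleleft x}x$. Matching coefficients of $e_{t_1}$ in
\[
\sum_{t'\in T}w^4(t,t')w^4(t^{-1},t'\triangleleft x)\sigma(t')\,e_{t'}=\tau(t,t^{-1})\sum_{t_1\in T}e_{t_1}
\]
yields (iii). The analogous computation for $r(X_{t^{-1}})r(X_t)$ yields $w^4(t_1,t^{-1})w^4(t_1\triangleleft x,t)\sigma(t_1)=\tau(t,t^{-1})$; substituting $t_1\mapsto t_1\triangleleft x$ (an involution of $T$) and invoking the given invariance $\sigma(g\triangleleft x)=\sigma(g)$ transforms this into (iv).

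The only step that is not purely formal is the computation of $l(X_t)l(X_{t^{-1}})$ and $r(X_{t^{-1}})r(X_t)$: one has to keep track of the involution $\triangleleft x$, apply the product formula for $e_{t_1}x\cdot e_{t_2}x$ correctly, and finally use $\sigma(g\triangleleft x)=\sigma(g)$ to put (iv) in the symmetric form stated. I expect that to be the only place where care is needed; everything else is a direct consequence of Lemmas \ref{lem2.2.1}, \ref{lem4.1.1}, and \ref{lem4.1.2}.
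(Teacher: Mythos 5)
Your proposal is correct and follows essentially the same route as the paper's own proof: (i)--(ii) come from the $t_1$-independence of $w^2$ and $w^3$ in Lemma \ref{lem4.1.2}, (v) is equation \eqref{e3.14} of Lemma \ref{lem4.1.1}, and (iii)--(iv) are obtained by expanding $l(X_t)l(X_{t^{-1}})=\tau(t,t^{-1})l(X_1)$ and its $r$-analogue using $X_gX_h=\tau(g,h)X_{gh}$ and the product rule $(e_{t_1}x)(e_{t_2}x)=\delta_{t_1,t_2\triangleleft x}\sigma(t_1)e_{t_1}$, exactly as the paper does (your display even corrects a typo in the paper's expansion, where $w^4(t,t_1\triangleleft x)$ should read $w^4(t^{-1},t_1\triangleleft x)$). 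The only, equally valid, divergence is the last step of (iv): the paper lands on $\tau(t^{-1},t)$ and converts it to $\tau(t,t^{-1})$ via $\eta(t,t^{-1})=1$, whereas you reach the stated form by the substitution $t_1\mapsto t_1\triangleleft x$ combined with $\sigma(g\triangleleft x)=\sigma(g)$.
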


\begin{proof}
Since (ii) of Lemma \ref{lem4.1.2}, we know $w^2(s,t)=\tau(s,t_1)\frac{w^4(st_1,t)}{w^4(t_1,t)}=\tau(s,t_2)\frac{w^4(st_2,t)}{w^4(t_2,t)}$ for $s\in S$ and $t,t_1\in T$. Therefore (i) holds. Similarly, we get (ii) due to (iii) of Lemma \ref{lem4.1.2}. Owing to $R$ is a universal $\mathcal{R}$-matrix, we have $l(X_t)l(X_{t^{-1}})=l(X_t X_{t^{-1}})$. Then we expand the equation as follows
\begin{align*}
l(X_t)l(X_{t^{-1}})&=[\sum\limits_{t_1\in T}w^4(t,t_1)e_{t_1}x][\sum\limits_{t_1\in T}w^4(t^{-1},t_1)e_{t_1}x]\\
&=[\sum\limits_{t_1\in T}w^4(t,t_1)w^4(t,t_1\triangleleft x)\sigma(t_1)e_{t_1}]
\end{align*}
and
\begin{align*}
l(X_t X_{t^{-1}})&=\tau(t,t^{-1})l(X_{1})=\tau(t,t^{-1})[\sum\limits_{t\in T}w^2(1,t_1)e_{t_1}x]=[\sum\limits_{t\in T}\tau(t,t^{-1})e_{t_1}x],
\end{align*}
so we have (iii). Similarly, we get $r(X_t)r(X_{t^{-1}})=r(X_{t^{-1}}X_t )$. And if we expand this equation then we get $w^4(t_1,t)w^4(t_1\triangleleft x,t^{-1})\sigma(t_1)=\tau(t^{-1},t)$. But $\tau(t^{-1},t)=\tau(t,t^{-1})$ due to $\eta(t,t^{-1})=1$, therefore we know (iv) holds. (v) is a conclusion of Lemma \ref{lem4.1.4} and so we have completed the proof.
\end{proof}

In fact, given a function $w:T\times T\rightarrow \Bbbk^\times$ that satisfies the above conditions, we can find a unique quasitriangular structure $R$ that satisfies $w^4=w$. And we will prove this conclusion in Theorem \ref{thm4.1.1}. Because of this reason, we introduce the concept of quasitriangular functions on $H_G$.

\begin{definition}\label{def4.1.1}
A \emph{quasitriangular function} on $H_G$ is a function $w:T\times T\rightarrow \Bbbk^\times$ such that (i)-(v) in Proposition \ref{pro4.1.1}, i.e it satisfies the following condtions
\begin{itemize}
             \item[(i)]$\tau(s,t_1)\frac{w(st_1,t)}{w(t_1,t)}=\tau(s,t_2)\frac{w(st_2,t)}{w(t_2,t)}$;
              \item[(ii)]$\tau(s,t_1)\frac{w(t,st_1)}{w(t,t_1)}=\tau(s,t_2)\frac{w(t,st_2)}{w(t,t_2)}$;
              \item[(iii)]$w(t,t_1)w(t^{-1},t_1\triangleleft x)\sigma(t_1)=\tau(t,t^{-1})$;
              \item[(iv)]$w(t_1,t)w(t_1\triangleleft x,t^{-1})\sigma(t_1)=\tau(t,t^{-1})$;
              \item[(v)]$w(t_1\triangleleft x,t_2\triangleleft x)=\frac{\tau(t_1\triangleleft x,t_2\triangleleft x)}{\tau(t_2,t_1)}w(t_1,t_2)$;
            \end{itemize}
where $s\in S,\;t,t_1,t_2\in T$.
\end{definition}
It can be seen that the definition of a quasitriangular function is expressed by the data $(G,\triangleleft,\sigma,\tau)$. Furthermore, we will see that non-trivial quasitriangular structures on $H_G$ are in one-one correspondence to quasitriangular functions on it in Corollary \ref{coro4.1.1}. Since we will often deal with the two maps $l_R,r_R$ in the later sections, we give the following lemmas about them

\begin{lemma}\label{lem4.1.a}
Let $R$ be the form (ii) in Proposition \ref{pro2.1.1}, then we have
\begin{itemize}
  \item[(i)] $l(E_{s_1})l(E_{s_2})=l(E_{s_1s_2})\Leftrightarrow w^1(s_1s_2,s)=w^1(s_1,s)w^1(s_2,s),\;s\in S$;
  \item[(ii)] $l(E_s)l(E_t)=l(E_{st})\Leftrightarrow w^1(s,s')w^3(t,s')=w^3(st,s'),\;s'\in S$;
   \item[(iii)] $l(X_{s_1})l(X_{s_2})=l(X_{s_1}X_{s_2})\Leftrightarrow w^2(s_1,t)w^2(s_2,t)=\tau(s_1,s_2)w^2(s_1s_2,t),\;t\in T$;
   \item[(iv)] $l(X_{s})l(X_{t})=l(X_{s}X_{t})\Leftrightarrow w^2(s,t')w^4(t,t')=\tau(s,t)w^4(st,t'),\;t'\in T$;
\end{itemize}
where $s,s_1,s_2\in S$ and $t,t_1,t_2\in T$.
\end{lemma}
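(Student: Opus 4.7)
The plan is a direct expansion. First I would compute the four functionals $l(E_s), l(E_t), l(X_s), l(X_t)$ on the dual basis elements. Using the form of $R$ from Proposition \ref{pro2.1.1}(ii), together with the facts that $E_s$ annihilates every summand involving $e_g x$ in the first tensor slot (and every summand with $g\in T$), while $X_s$ annihilates every summand involving $e_g$ (with $g\in G$) in the first slot, I obtain
\begin{align*}
l(E_s) &= \sum_{s'\in S} w^1(s,s')\, e_{s'}, &  l(E_t) &= \sum_{s'\in S} w^3(t,s')\, e_{s'}x,\\
l(X_s) &= \sum_{t'\in T} w^2(s,t')\, e_{t'}, &  l(X_t) &= \sum_{t'\in T} w^4(t,t')\, e_{t'}x,
\end{align*}
for $s\in S$ and $t\in T$.

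Second, for each of the four claimed equivalences I would compute the left-hand product inside $H_G$ using only $e_g e_h=\delta_{g,h}e_g$ and $e_g\cdot e_h x=\delta_{g,h}e_g x$, and compare with the right-hand side computed from Lemma \ref{lem2.2.2}. For (i) one gets $l(E_{s_1})l(E_{s_2})=\sum_{s'\in S} w^1(s_1,s')w^1(s_2,s')\,e_{s'}$ while $l(E_{s_1}E_{s_2})=l(E_{s_1s_2})=\sum_{s'\in S} w^1(s_1s_2,s')\,e_{s'}$, so matching coefficients of each $e_{s'}$ yields the asserted equation. The computation for (iii) is identical in shape but produces the extra factor $\tau(s_1,s_2)$ coming from $X_{s_1}X_{s_2}=\tau(s_1,s_2)X_{s_1s_2}$. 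For (ii), the product $l(E_s)l(E_t)$ collapses to $\sum_{s'\in S} w^1(s,s')w^3(t,s')\,e_{s'}x$ and must be matched against $l(E_{st})=\sum_{s'\in S} w^3(st,s')\,e_{s'}x$; case (iv) is analogous, with the $\tau(s,t)$ coming from $X_sX_t$.

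The only bookkeeping point that needs to be addressed before matching coefficients is that the right-hand sides $l(E_{s_1s_2}), l(X_{s_1s_2}), l(E_{st}), l(X_{st})$ be evaluated with the correct formula from the list above, which requires knowing $s_1s_2\in S$ in cases (i),(iii) and $st\in T$ in cases (ii),(iv). Both facts are immediate: $S$ is a subgroup of $G$ since $\triangleleft x$ is a group automorphism, and for $t\in T$ we have $t\triangleleft x=tb$ with $b\in S$, $b\neq 1$ by Proposition \ref{pro2.1.2}(ii), so $(st)\triangleleft x = s(tb)=(st)b\neq st$.

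There is no real obstacle; the argument is a routine coefficient comparison, and the only thing to watch is that terms indexed by $S$ and by $T$ are kept separate throughout, so that each $w^i$ is always applied in its declared domain.
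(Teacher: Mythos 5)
Your proposal is correct and matches the paper's proof, which establishes (i) by exactly this expansion of $l$ on the dual basis and coefficient comparison, leaving (ii)--(iv) as "similar"; you simply spell out those remaining cases, with the $\tau$ factors supplied by $X_gX_h=\tau(g,h)X_{gh}$ from Lemma \ref{lem2.2.2}. Your extra bookkeeping that $s_1s_2\in S$ and $st\in T$ (so each $l(\cdot)$ is evaluated by the correct formula) is a point the paper takes for granted, and your justification of it is sound.
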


\begin{proof}
We only show (i) and the other things can be proved in a similar way. Since
\begin{align*}
l(E_{s_1})l(E_{s_2})&=[\sum\limits_{s\in S}w^1(s_1,s)e_{s}][\sum\limits_{s\in S}w^1(s_2,s)e_{s}]=\sum\limits_{s\in S}w^1(s_1,s)w^1(s_2,s)e_{s}
\end{align*}
and
\begin{align*}
l(E_{s_1s_2})&=\sum\limits_{s\in S}w^1(s_1s_2,s)e_{s},
\end{align*}
we know (i) holds.
\end{proof}

\begin{lemma}\label{lem4.1.b}
Let $R$ be the form (ii) in Proposition \ref{pro2.1.1}, then we have
\begin{itemize}
  \item[(i)] $r(E_{s_1})r(E_{s_2})=r(E_{s_1s_2})\Leftrightarrow w^1(s,s_1s_2)=w^1(s,s_1)w^1(s,s_2),\;s\in S$;
  \item[(ii)] $r(E_s)r(E_t)=r(E_{st})\Leftrightarrow w^1(s',s)w^2(s',t)=w^2(s',st),\;s'\in S$;
   \item[(iii)] $r(X_{s_1})r(X_{s_2})=r(X_{s_2}X_{s_1})\Leftrightarrow w^3(t,s_1)w^3(t,s_2)=\tau(s_2,s_1)w^3(t,s_1s_2),\;t\in T$;
   \item[(iv)] $r(X_{t})r(X_{s})=r(X_{s}X_{t})\Leftrightarrow w^3(t'\triangleleft x,s)w^4(t',t)=\tau(s,t)w^4(t',st),\;t'\in T$;
\end{itemize}
where $s,s_1,s_2\in S$ and $t,t_1,t_2\in T$.
\end{lemma}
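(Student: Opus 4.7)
The plan is to mirror Lemma \ref{lem4.1.a}, the one conceptual difference being that $r_R$ is meant to be an antihomomorphism (rather than a homomorphism like $l_R$); this is why in (iii) and (iv) the target is phrased as $r(f_2f_1)$ rather than $r(f_1f_2)$. I would first record the four building-block formulas by pairing the dual basis elements against the expansion of $R$: for $s\in S$ one obtains $r(E_s)=\sum_{s'\in S}w^1(s',s)e_{s'}$ and $r(X_s)=\sum_{t\in T}w^3(t,s)e_t$, while for $t\in T$ one obtains $r(E_t)=\sum_{s\in S}w^2(s,t)e_sx$ and $r(X_t)=\sum_{t'\in T}w^4(t',t)e_{t'}x$. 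These drop out of the definition of $R$ together with the partition $G=S\sqcup T$ and the facts that $X_g$ vanishes on every $e_h$ while $E_g$ vanishes on every $e_hx$.

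With these formulas in hand, each of (i)--(iv) reduces to a one-line coefficient comparison. For (i), the product $r(E_{s_1})r(E_{s_2})$ is diagonal in $\{e_s\}_{s\in S}$ with coefficients $w^1(s,s_1)w^1(s,s_2)$, while $r(E_{s_1s_2})$ has coefficient $w^1(s,s_1s_2)$; comparing gives the stated identity. For (ii), $r(E_s)r(E_t)=\sum_{s'}w^1(s',s)w^2(s',t)e_{s'}x$ whereas $r(E_{st})=\sum_{s'}w^2(s',st)e_{s'}x$ (since $st\in T$ when $s\in S,\ t\in T$), and again the coefficients match as claimed. For (iii), I would compute $r(X_{s_1})r(X_{s_2})=\sum_t w^3(t,s_1)w^3(t,s_2)e_t$ and compare with $r(X_{s_2}X_{s_1})=\tau(s_2,s_1)\,r(X_{s_2s_1})$ using Lemma \ref{lem2.2.2}; commutativity of $G$ lets one replace $s_2s_1$ by $s_1s_2$ wherever convenient.

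The only step with genuine content is (iv): to multiply $r(X_t)r(X_s)=\bigl(\sum_{t_1}w^4(t_1,t)e_{t_1}x\bigr)\bigl(\sum_{t_2}w^3(t_2,s)e_{t_2}\bigr)$ one pushes $x$ past $e_{t_2}$ via $xe_{t_2}=e_{t_2\triangleleft x}x$, and since $\triangleleft x$ is an involution on $T$ the Kronecker delta $\delta_{t_1,t_2\triangleleft x}$ forces $t_2=t_1\triangleleft x$. The result is $\sum_{t_1}w^4(t_1,t)w^3(t_1\triangleleft x,s)e_{t_1}x$, and matching this against $r(X_sX_t)=\tau(s,t)\sum_{t_1}w^4(t_1,st)e_{t_1}x$ yields exactly the right-hand side of (iv). I do not anticipate any real obstacle---the lemma is essentially bookkeeping---but this single appearance of $t'\triangleleft x$ is the one spot where care is needed, and it is the structural origin of the $\triangleleft x$ shifts already seen in Lemma \ref{lem4.1.2}(iii) and Proposition \ref{pro4.1.1}.
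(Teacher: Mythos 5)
Your proposal is correct and takes essentially the same route as the paper, whose entire proof is the remark that the argument is ``similar to the proof of Lemma \ref{lem4.1.a}'' --- that is, precisely the basis expansion of $r(E_g)$ and $r(X_g)$ against $\{e_g,e_gx\}_{g\in G}$ followed by coefficient comparison that you spell out. Your computations check out in all four cases, including the one step with content, the commutation $xe_{t_2}=e_{t_2\triangleleft x}x$ producing the shift $t'\triangleleft x$ in (iv).
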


\begin{proof}
Similar to the proof of Lemma \ref{lem4.1.a} above.
\end{proof}

\begin{lemma}\label{lem4.1.5}
Let $R$ be the form (ii) in Proposition \ref{pro2.1.1}, and if $w^4$ is a quasitriangular function on $H_G$ and $w^i(1\leq i\leq 3)$ are given in Lemma \ref{lem4.1.2}, then $l(X_g)l(X_h)=l(X_gX_h),\;g,h\in G$.
\end{lemma}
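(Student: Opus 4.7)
The plan is to verify the identity $l(X_g) l(X_h) = l(X_g X_h) = \tau(g,h) l(X_{gh})$ case by case as $g, h$ range over $S \cup T$, giving four subcases. Using the explicit action of $l$ on the dual basis,
\[
l(X_s) = \sum_{t \in T} w^2(s,t) e_t, \qquad l(X_t) = \sum_{t' \in T} w^4(t, t') e_{t'} x \qquad (s \in S,\; t \in T),
\]
together with the relations $e_u x \cdot e_v = \delta_{u, v\triangleleft x}\, e_u x$ and $e_t x^2 = \sigma(t) e_t$ and the dual product $X_g X_h = \tau(g,h) X_{gh}$ from Lemma \ref{lem2.2.2}, each subcase reduces to a scalar identity among the $w^i$'s that I can verify using the formulas in Lemma \ref{lem4.1.2} and the defining properties of a quasitriangular function.

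The subcases $g, h \in S$ and $g \in S,\, h \in T$ are exactly items (iii) and (iv) of Lemma \ref{lem4.1.a}. For (iv), setting $t_0 = t$ in the formula $w^2(s, t') = \tau(s, t_0) w^4(s t_0, t') / w^4(t_0, t')$ of Lemma \ref{lem4.1.2} gives the identity on the nose. For (iii), I would apply the formula twice with different auxiliary parameters $t_0$ and $s_1 t_0$ (both in $T$) and then invoke the $2$-cocycle identity for $\tau$ together with the symmetry $\tau(s_1, s_2) = \tau(s_2, s_1)$ from Proposition \ref{pro2.1.2}(iii).

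The two remaining subcases $g \in T,\, h \in S$ and $g, h \in T$ produce the identities
\begin{align*}
&w^4(t, v)\, w^2(s, v \triangleleft x) = \tau(t, s)\, w^4(ts, v), \\
&w^4(t_1, v)\, w^4(t_2, v \triangleleft x)\, \sigma(v) = \tau(t_1, t_2)\, w^2(t_1 t_2, v),
\end{align*}
where in the second identity $t_1 t_2 \in S$. I would handle the second identity by expanding $w^2(t_1 t_2, v)$ via Lemma \ref{lem4.1.2} with the auxiliary parameter $t_2^{-1} \in T$ and simplifying with the cocycle identity; the resulting equation $w^4(t_2, v \triangleleft x)\, w^4(t_2^{-1}, v)\, \sigma(v) = \tau(t_2, t_2^{-1})$ is exactly property (iii) of a quasitriangular function applied to $(t_2, v \triangleleft x)$, using also $\sigma(v \triangleleft x) = \sigma(v)$. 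For the first identity, I would expand $w^2(s, v \triangleleft x)$ with a well-chosen auxiliary parameter (e.g.\ $t_0 = t$) and use property (v) of a quasitriangular function to convert $w^4(\cdot, v \triangleleft x)$ to $w^4(\cdot, v)$, after which the remaining $\tau$-factors consolidate via the cocycle identity to $\tau(t, s)$.

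The main obstacle will be the last identity above: careful bookkeeping is required in the mixed subcase to track the $\triangleleft x$ twists introduced both by the algebra relation $e_u x \cdot e_v = \delta_{u, v \triangleleft x}\, e_u x$ and by the transformation rule (v) of a quasitriangular function, and to match them against the target factor $\tau(t, s)$. The cleanest strategy is to choose the free auxiliary parameter in Lemma \ref{lem4.1.2}'s formula for $w^2$ so that the resulting identity lines up directly with one of the defining properties (i)--(v), rather than trying to massage the $\tau$- and $\sigma$-factors afterwards.
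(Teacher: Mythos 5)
Your proposal is correct in substance but follows a genuinely different route from the paper. You verify all four products directly, reducing each of the cases $S\times S$, $S\times T$, $T\times S$, $T\times T$ to an explicit scalar identity among the $w^i$ and checking it against axioms (i)--(v) of Definition \ref{def4.1.1}; your reductions are right (e.g.\ for $g,h\in T$ the auxiliary parameter $t_2^{-1}$ plus the cocycle identity $\tau(t_1,t_2)\tau(t_1t_2,t_2^{-1})=\tau(t_2,t_2^{-1})$ lands exactly on axiom (iii) applied to $(t_2,v\triangleleft x)$, as you claim). The paper instead proves only two seed identities --- $l(X_s)l(X_t)=l(X_sX_t)$ for $s\in S,\,t\in T$ (your $S\times T$ case, via Lemma \ref{lem4.1.a}(iv)) and $l(X_t)l(X_{t^{-1}})=l(X_tX_{t^{-1}})$ (axiom (iii)) --- and then bootstraps the remaining three cases by associativity in $H_G$, writing $t_1=st$, $t_2=t^{-1}$ with $s=t_1t_2\in S$ (using $T=T^{-1}$ and $T\cdot T\subseteq S$) and factoring every product $l(X_g)l(X_h)$ through the two seeds, so the twisted mixed cases are never computed directly. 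The paper's route buys exactly what you flag as your main obstacle: it avoids the identity $w^4(t,v)\,w^2(s,v\triangleleft x)=\tau(t,s)\,w^4(ts,v)$, whose head-on verification needs, beyond the cocycle identity, the relation $w^2(s,v\triangleleft x)=\eta(s,v)\,w^2(s,v)$ --- and note that axiom (v) twists \emph{both} arguments of $w^4$, so after applying it you must re-normalize the auxiliary parameter using axiom (i) --- followed by the observation that $\eta(s,v)\eta(s,t)=\eta(s,vt)=1$ because $vt\in S$ and $\eta$ is trivial on $S\times S$ (Proposition \ref{pro2.1.2}(iii) via Remark \ref{rk2.1}), a point your sketch leaves implicit when you say the $\tau$-factors ``consolidate.'' Your route buys explicitness and produces the standalone scalar identities in all four cases; the paper's buys economy, with only two computations and the rest purely formal. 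Both arguments are valid.
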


\begin{proof}
Since $w^2(s,t)=\tau(s,t_1)\frac{w^4(st_1,t)}{w^4(t_1,t)}$ for $t_1\in T$ by assumption and (iv) of the Lemma \ref{lem4.1.a}, we have $l(X_s)l(X_{t_1})=l(X_sX_{t_1})$. Similarly, if we repeat part of the proof in Proposition \ref{pro4.1.1}, then we will get that $l(X_t)l(X_{t^{-1}})=l(X_t X_{t^{-1}})$ is equivalent to $w^4(t,t_1)w^4(t^{-1},t_1\triangleleft x)\sigma(t_1)=\tau(t,t^{-1})$ for $t_1\in T$. But we have assumed that $w^4(t,t_1)w^4(t^{-1},t_1\triangleleft x)\sigma(t_1)=\tau(t,t^{-1})$ for $t_1\in T$, therefore we have $l(X_t)l(X_{t^{-1}})=l(X_t X_{t^{-1}})$. To show $l(X_g)l(X_h)=l(X_gX_h)$ for $g,h\in G$, we only need to show the following equations hold
\begin{gather*} l(X_{t_1})l(X_{t_2})=l(X_{t_1}X_{t_2}),\;l(X_{t_1})l(X_s)=l(X_{t_1}X_s),\;l(X_{s_1})l(X_{s_2})=l(X_{s_1}X_{s_2}),
\end{gather*}
where $s,s_1,s_2\in S$ and $t_1,t_2\in T$. Since $|S|=|T|$ and $T=T^{-1}$, where $T^{-1}:=\{t^{-1}|\;t\in T\}$, we can assume $t_1=st$ and $t_2=t^{-1}$, then we have
\begin{align*}
l(X_{t_1})l(X_{t_2})&=l(X_{st})l(X_{t^{-1}})=[\tau(s,t)^{-1}l(X_s)l(X_{t})]l(X_{t^{-1}})\\
&=\tau(s,t)^{-1}l(X_s)[l(X_{t})l(X_{t^{-1}})]=\tau(s,t)^{-1}l(X_s)l(X_{t}X_{t^{-1}})\\
&=\tau(s,t)^{-1}l(X_s)[\tau(t,t^{-1})l(X_1)]=\tau(s,t)^{-1}\tau(t,t^{-1})l(X_s).
\end{align*}
It can be seen that $X_{st}X_{t^{-1}}=\tau(s,t)^{-1}\tau(t,t^{-1})X_s$ by using the $\tau$ is a 2-cocycle, and hence $l(X_{t_1})l(X_{t_2})=l(X_{t_1}X_{t_2})$. For $s\in S$, we can find $t,t'$ such that $s=tt'$ due to $|S|=|T|$. Because $t_1t\in S$ by definition, we have
\begin{align*}
l(X_{t_1})l(X_{s})&=l(X_{t_1})l(X_{tt'})=l(X_{t_1})[\tau(t,t')^{-1}l(X_{t})l(X_{t'})]\\
                    &=\tau(t,t')^{-1}[l(X_{t_1})l(X_{t})]l(X_{t'})=\tau(t,t')^{-1}l(X_{t_1}X_{t})l(X_{t'})\\
                    &=\tau(t,t')^{-1}\tau(t_1,t)l(X_{t_1t})l(X_{t'})=\tau(t,t')^{-1}\tau(t_1,t)l(X_{t_1t}X_{t'}).
\end{align*}
Similarly, one can show $X_{t_1}X_{tt'}=\tau(t,t')^{-1}\tau(t_1,t)X_{t_1t}X_{t'}$ and hence $ l(X_{t_1})l(X_s)=l(X_{t_1}X_s)$. To show $l(X_{s_1})l(X_{s_2})=l(X_{s_1}X_{s_2})$ for $s_1,s_2\in S$, we assume that $s_2=tt'$ for some $t,t'\in T$. Then we have
\begin{align*}
l(X_{s_1})l(X_{s_2})&=l(X_{s_1})l(X_{tt'})=l(X_{s_1})[\tau(t,t')^{-1}l(X_{t})l(X_{t'})]\\
                    &=\tau(t,t')^{-1}[l(X_{s_1})l(X_{t})]l(X_{t'})=\tau(t,t')^{-1}l(X_{s_1}X_{t})l(X_{t'})\\
                    &=\tau(t,t')^{-1}\tau(s_1,t)l(X_{s_1t})l(X_{t'})=\tau(t,t')^{-1}\tau(s_1,t)l(X_{s_1t}X_{t'}).
\end{align*}
One can check that $X_{s_1}X_{s_2}=\tau(t,t')^{-1}\tau(s_1,t)X_{s_1t}X_{t'}$, so $l(X_{s_1})l(X_{s_2})=l(X_{s_1}X_{s_2})$. Therefore we have completed the proof.
\end{proof}

\begin{lemma}\label{lem4.1.6}
Let $R$ be in Lemma \ref{lem4.1.5}, then we have $l(E_g)l(E_h)=l(E_gE_h)$ for $g,h\in G$.
\end{lemma}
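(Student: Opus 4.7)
The statement expands, via direct computation in $H_G$, to three scalar identities. Using $l(E_s) = \sum_{s' \in S} w^1(s, s') e_{s'}$ for $s \in S$ and $l(E_t) = \sum_{s' \in S} w^3(t, s') e_{s'} x$ for $t \in T$, together with the multiplication rules $e_{s'} x \cdot e_{s''} = \delta_{s', s''} e_{s''} x$ and $e_{s'} x \cdot e_{s''} x = \delta_{s', s''} \sigma(s') e_{s'}$ for $s', s'' \in S$, the equation $l(E_g) l(E_h) = l(E_{gh})$ splits according to whether $g, h$ lie in $S$ or $T$ (noting that $t_1 t_2 \in S$) into
\begin{gather*}
\text{(A)} \quad w^1(s_1, s') w^1(s_2, s') = w^1(s_1 s_2, s'), \\
\text{(B)} \quad w^1(s, s') w^3(t, s') = w^3(s t, s'), \\
\text{(C)} \quad w^3(t_1, s') w^3(t_2, s') \sigma(s') = w^1(t_1 t_2, s'),
\end{gather*}
the factor $\sigma(s')$ in (C) arising from $x^2 = \sum_g \sigma(g) e_g$; the $S \times T$ and $T \times S$ cases coincide by abelianness of $G$.

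For (A), the plan is to use the formula $w^1(s, s') = w^2(s, s' t_0)/w^2(s, t_0)$ (from Lemma \ref{lem4.1.2}) together with the multiplicativity $w^2(s_1, t) w^2(s_2, t) = \tau(s_1, s_2) w^2(s_1 s_2, t)$, which follows from Lemma \ref{lem4.1.5} via Lemma \ref{lem4.1.a}(iii); both sides will then reduce to $w^2(s_1 s_2, s' t_0)/w^2(s_1 s_2, t_0)$ with the $\tau(s_1,s_2)$-factors cancelling. For (B), I would substitute the formula for $w^2$ with free parameter $t \triangleleft x$ into $w^1(s, s')$ (taking $t_0 = t$) and compare with the formula for $w^3(st, s')$ (with free parameter $t$); the residual $\tau$-scalars should balance via the 2-cocycle property of $\tau$.

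The main obstacle is (C). The plan is to first derive, by expanding $l(X_{t_1}) l(X_{t_2}) = l(X_{t_1} X_{t_2})$ from Lemma \ref{lem4.1.5} using $e_{t'} x \cdot e_{t''} x = \delta_{t', t'' \triangleleft x} \sigma(t') e_{t'}$, the key identity
\[
w^4(t_1, t') \, w^4(t_2, t' \triangleleft x) \, \sigma(t') = \tau(t_1, t_2) \, w^2(t_1 t_2, t').
\]
Substituting this into $w^1(t_1 t_2, s') = w^2(t_1 t_2, s' \rho)/w^2(t_1 t_2, \rho)$ (for any $\rho \in T$) and invoking the $\sigma$--$\tau$ compatibility $\sigma(s' \rho)/\sigma(\rho) = \sigma(s') \tau(s', \rho) \tau(s', \rho \triangleleft x)$ gives
\[
w^1(t_1 t_2, s') = \sigma(s') \tau(s', \rho) \tau(s', \rho \triangleleft x) \cdot \frac{w^4(t_1, s' \rho) \, w^4(t_2, s' \rho \triangleleft x)}{w^4(t_1, \rho) \, w^4(t_2, \rho \triangleleft x)}.
\]
On the other side I would rewrite each $w^3(t_i, s')$ using the formula of Lemma \ref{lem4.1.2} with a suitably chosen free parameter and then apply axiom (v) of the quasitriangular function to convert $w^4(t_i \triangleleft x, \cdot)$ into $w^4(t_i, \cdot)$ with matching second arguments. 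After a careful tally of the resulting $\tau$-scalars, exploiting repeatedly that $T \cdot T \subseteq S$ so that Proposition \ref{pro2.1.2}(iii) forces the surviving $\tau$-values on $S \times S$ to be symmetric, the identity should collapse to
\[
\eta(t_1 \triangleleft x, s') \, \eta(t_2 \triangleleft x, s') = 1.
\]
Since $\eta$ is a bicharacter and $(t_1 \triangleleft x)(t_2 \triangleleft x) = t_1 t_2 \in S$, the left hand side equals $\eta(t_1 t_2, s')$, which is $1$ by Proposition \ref{pro2.1.2}(iii).

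The principal difficulty is the bookkeeping in (C): one must produce the $\sigma(s')$ factor consistently on both sides and balance many $\tau$-scalars. The decisive simplification is that $T \cdot T \subseteq S$ pushes the residual $\tau$-ratios into the symmetric regime of Proposition \ref{pro2.1.2}(iii), after which the bicharacter property of $\eta$ closes the argument.
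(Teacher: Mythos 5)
Your proposal is correct, but it takes a genuinely different route from the paper's. The paper never verifies your three blocks (A), (B), (C) separately: it proves directly only (B) (essentially as you do, by comparing the two $w^3$-formulas of Lemma \ref{lem4.1.2} and recognizing the quotient as $w^1(s,s')$ via formula (i)) together with the single inverse-pair instance $l(E_t)l(E_{t^{-1}})=l(E_1)$ of (C), which it obtains from axiom (iii) of the quasitriangular function, $w(t,t_1)w(t^{-1},t_1\triangleleft x)\sigma(t_1)=\tau(t,t^{-1})$, plus the $\sigma$--$\tau$ compatibility; it then reuses the factorization trick of Lemma \ref{lem4.1.5} (using $|S|=|T|$, $T=T^{-1}$ to write $t_1=st$, $t_2=t^{-1}$, $s=tt'$) to bootstrap these two cases into $l(E_g)l(E_h)=l(E_{gh})$ for all $g,h$, so your (A) and general (C) come for free, with no further $\tau$-bookkeeping. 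You instead attack (A) and general (C) head-on: (A) via first-slot multiplicativity of $w^2$, legitimately extracted from Lemma \ref{lem4.1.5} through Lemma \ref{lem4.1.a}(iii), and (C) via the identity $w^4(t_1,t')w^4(t_2,t'\triangleleft x)\sigma(t')=\tau(t_1,t_2)w^2(t_1t_2,t')$ (again a consequence of Lemma \ref{lem4.1.5}) combined with axiom (v). I checked the tally you leave implicit, and it does close: taking the free parameter $\rho\triangleleft x$ in the Lemma \ref{lem4.1.2} formula for $w^3(t_1,s')$ and $\rho$ for $w^3(t_2,s')$, then converting with axiom (v), the residual scalar is $\frac{F(t_1,s'\rho)}{F(t_1,\rho)}\cdot\frac{F(t_2,s'\rho\triangleleft x)}{F(t_2,\rho\triangleleft x)}$ where $F(t,u):=\tau(t\triangleleft x,u\triangleleft x)/\tau(u,t)$; the $2$-cocycle identity gives $F(t,su)/F(t,u)=\eta(t\triangleleft x,s)\,\tau(s,u)/\tau(s,u\triangleleft x)$, so the $\tau$-ratios cancel between the two factors and the product collapses to $\eta(t_1\triangleleft x,s')\,\eta(t_2\triangleleft x,s')=\eta(t_1t_2,s')=1$ by Proposition \ref{pro2.1.2}(iii), exactly as you predicted. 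What the paper's route buys is economy: the inverse-pair case is the one spot where axiom (iii) applies with no bookkeeping, and group theory does the rest. What your route buys is transparency: multiplicativity is exhibited on every pair of generators, making explicit where axiom (v), the $\sigma$--$\tau$ compatibility, and the triviality of $\eta$ on $S\times S$ enter --- notably, you use axiom (v) where the paper uses axiom (iii).
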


\begin{proof}
We mimic the proof of above Lemma \ref{lem4.1.5}. Let $s\in S,t\in T$, then we have
\begin{align}
\label{Est1} l(E_s)l(E_t)&=[\sum\limits_{s'\in S}w^1(s,s')e_{s'}][\sum\limits_{s'\in S}w^3(t,s')e_{s'}x]=[\sum\limits_{s'\in S}w^1(s,s')w^3(t,s')e_{s'}x]
\end{align}
and
\begin{align}
\label{Est2} l(E_sE_t)&=l(E_{st})=[\sum\limits_{s'\in S}w^3(st,s')e_{s'}x].
\end{align}
therefore we need to show $w^1(s,s')w^3(t,s')=w^3(st,s')$ for $s'\in S$ if we want to prove $l(E_s)l(E_t)=l(E_sE_t)$. Let $t_1:=t\triangleleft x$ and taking $t_2\in T$, since we have assumed $w^3$ such that (iii) of Lemma \ref{lem4.1.2}, we have
\begin{align*}
w^3(t,s')=\tau(s',t_2)\frac{w^4(t_1,s't_2)}{w^4(t_1,t_2)},\;w^3(st,s')=\tau(s',t_2)\frac{w^4(st_1,s't_2)}{w^4(st_1,t_2)}.
\end{align*}
And hence $\frac{w^3(st,s')}{w^3(t,s')}=\frac{w^4(st_1,s't_2)w^4(t_1,t_2)}{w^4(st_1,t_2)w^4(t_1,s't_2)}$. Because $w^1$ satisfy the (i) of Lemma \ref{lem4.1.2}, we know $\frac{w^3(st,s')}{w^3(t,s')}=w^1(s,s')$ and thus we get $l(E_s)l(E_t)=l(E_sE_t)$. Then we prove that $l(E_t)l(E_{t^{-1}})=l(E_1)$. Since
\begin{align*}
l(E_t)l(E_{t^{-1}})&=[\sum\limits_{s'\in S}w^3(t,s')e_{s'}x][\sum\limits_{s'\in S}w^3(t^{-1},s')e_{s'}x]\\
&=[\sum\limits_{s'\in S}w^3(t,s')w^3(t^{-1},s')\sigma(s')e_{s'}]
\end{align*}
and
\begin{align*}
w^3(t,s')w^3(t^{-1},s')&=[\tau(s',t_1)\frac{w^4(t\triangleleft x,s't_1)}{w^4(t\triangleleft x,t_1)}][\tau(s',t_1\triangleleft x)\frac{w^4(t^{-1}\triangleleft x,s't_1\triangleleft x)}{w^4(t^{-1}\triangleleft x,t_1\triangleleft x)}]\\
&=\tau(s',t_1)\tau(s',t_1\triangleleft x)\frac{w^4(t\triangleleft x,s't_1)w^4(t^{-1}\triangleleft x,s't_1\triangleleft x)}{w^4(t\triangleleft x,t_1)w^4(t^{-1}\triangleleft x,t_1\triangleleft x)}\\
&=\tau(s',t_1)\tau(s',t_1\triangleleft x)\frac{\tau(t\triangleleft x,t^{-1}\triangleleft x)\sigma(t_1\triangleleft x)}{\tau(t\triangleleft x,t^{-1}\triangleleft x)\sigma(s't_1\triangleleft x)}\\
&=\tau(s',t_1)\tau(s',t_1\triangleleft x)\sigma(t_1\triangleleft x)\frac{1}{\sigma(s't_1\triangleleft x)}=\sigma(s')^{-1}.
\end{align*}
The first equality follows from the assumption about $w^3$, and the third one follows from
Relation (iii) in Proposition \ref{pro4.1.1} and the last one follows from the compatibility of $\sigma$ and $\tau$. Thus we have showed $w^3(t,s')w^3(t^{-1},s')\sigma(s')=1$, and this implies $l(E_t)l(E_{t^{-1}})=l(E_1)$. Because we have proved that $l(E_s)l(E_t)=l(E_sE_t)$ and $l(E_t)l(E_{t^{-1}})=l(E_1)$ for $s\in S,t\in T$, if we repeat the proof of Lemma \ref{lem4.1.5} then we can obtain $l(E_g)l(E_h)=l(E_gE_h),\;g,h\in G$.
\end{proof}

\begin{lemma}\label{lem4.1.7}
Let $R$ be in Lemma \ref{lem4.1.5}, then $r_R$ is an algebra anti-homomorphism.
\end{lemma}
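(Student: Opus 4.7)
The plan is to apply the symmetry machinery from Section \ref{sec2.2} so as to import the content of Lemmas \ref{lem4.1.5} and \ref{lem4.1.6} rather than run a fresh case analysis for $r_R$. Recall from the proof of Proposition \ref{pro3.1.1} the identity $l_{R_\varphi}=\varphi\circ r_R\circ\varphi^{\ast}$, where $\varphi$ is the Hopf isomorphism of Proposition \ref{pro3.1.3}. Since $\varphi\colon H_G\to H_G$ is a coalgebra automorphism, $\varphi^{\ast}$ is an algebra automorphism of $(H_G)^{\ast}$; and since $\varphi$ is an algebra anti-automorphism of $H_G$, conjugating by it interchanges homomorphisms with anti-homomorphisms. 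Hence $r_R$ is an algebra anti-homomorphism if and only if $l_{R_\varphi}$ is an algebra homomorphism, and it suffices to verify the hypotheses of Lemmas \ref{lem4.1.5}--\ref{lem4.1.6} for $R_\varphi$ in place of $R$.

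Writing the coefficients of $R_\varphi$ as $w'^i$ according to \eqref{r}, two items need to be checked: (a) $w'^4(t_1,t_2):=w^4(t_2,t_1)$ is again a quasitriangular function on $H_G$; and (b) the coefficients $w'^1,w'^2,w'^3$ are related to $w'^4$ by the formulas of Lemma \ref{lem4.1.2}. For (a), the conditions (i)--(iv) of Definition \ref{def4.1.1} for $w'^4$ follow by direct slot-interchange from those for $w^4$: (i)$\leftrightarrow$(ii) and (iii)$\leftrightarrow$(iv). The non-trivial check is (v), which after using (v) for $w^4$ reduces to the identity
\[
\tau(t_1\triangleleft x,t_2\triangleleft x)\,\tau(t_1,t_2)=\tau(t_2\triangleleft x,t_1\triangleleft x)\,\tau(t_2,t_1),
\]
and both sides equal $\sigma(t_1t_2)\sigma(t_1)^{-1}\sigma(t_2)^{-1}$ by the $(\sigma,\tau)$-compatibility axiom and the commutativity of $G$.

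For (b), the formula for $w'^2(s,t)=w^3(t\triangleleft x,s)$ matches Lemma \ref{lem4.1.2}(ii) applied to $w'^4$ after using that $t\mapsto t\triangleleft x$ is an involution on $T$; similarly $w'^3(t,s)=w^2(s,t\triangleleft x)$ matches Lemma \ref{lem4.1.2}(iii) applied to $w'^4$. The formula for $w'^1(s_1,s_2)=w^1(s_2,s_1)$ matches Lemma \ref{lem4.1.2}(i) by swapping the roles of the two auxiliary elements $t_1,t_2\in T$, which is legitimate because the right-hand side of that formula is independent of the choice of $t_1,t_2$ once conditions (i)--(ii) of Definition \ref{def4.1.1} hold. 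With (a) and (b) established, Lemmas \ref{lem4.1.5}--\ref{lem4.1.6} applied to $R_\varphi$ yield $l_{R_\varphi}(f_1f_2)=l_{R_\varphi}(f_1)l_{R_\varphi}(f_2)$ for all $f_1,f_2\in(H_G)^{\ast}$, and transferring through $l_{R_\varphi}=\varphi\circ r_R\circ\varphi^{\ast}$ delivers $r_R(f_1f_2)=r_R(f_2)r_R(f_1)$, as desired. The only delicate point is the verification of (v) for $w'^4$, where the $(\sigma,\tau)$-compatibility is genuinely used; the remainder is routine relabeling of indices.
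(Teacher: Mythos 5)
Your proof is correct and follows essentially the same route as the paper: pass to $R_\varphi$, check it satisfies the hypotheses of Lemma \ref{lem4.1.5}, apply Lemmas \ref{lem4.1.5}--\ref{lem4.1.6} to conclude $l_{R_\varphi}$ is an algebra map, and transfer back through $l_{R_\varphi}=\varphi\circ r_R\circ\varphi^{\ast}$. The only difference is that the paper dismisses the verification for $R_\varphi$ as ``easy to see,'' while you spell it out --- including the genuinely needed $(\sigma,\tau)$-compatibility identity for condition (v), which the paper itself records in the proof of Lemma \ref{lem4.1.4} --- so your write-up is a faithful, more detailed version of the paper's argument.
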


\begin{proof}
If we consider $R_\varphi$ then it is easy to see that $R_\varphi$ also satisfies the conditions of Lemma \ref{lem4.1.5}, so we can apply Lemma \ref{lem4.1.5}-\ref{lem4.1.6} to $R_\varphi$, i.e we know $l_{R_\varphi}$ is an algebra map. Since we have showed $l_{R_\varphi}=\varphi\circ r_{R}\circ \varphi^\ast$, then $l_{R_\varphi}$ is an algebra map implies $r_R$ is antihomomorphism.
\end{proof}
Now we prove the inverse of Proposition \ref{pro4.1.1}.
\begin{theorem}\label{thm4.1.1}
Assume $w$ is a quasitriangular function on $H_G$, then there is a unique $R$ such that it is a non-trivial quasitriangular structure on $H_G$ and the $w^4$ of it is equal to the $w$.
\end{theorem}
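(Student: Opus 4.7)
The approach is to invert the construction of Lemma \ref{lem4.1.2}. Given the quasitriangular function $w$, I set $w^4 := w$ and \emph{define} $w^1, w^2, w^3$ by the explicit formulas (i)--(iii) of that lemma, using arbitrary auxiliary elements $t_1, t_2 \in T$; I then assemble $R$ in the form of Proposition \ref{pro2.1.1}(ii). The first task is to check that these definitions do not depend on the auxiliary choices. This is supplied by the axioms of Definition \ref{def4.1.1}: condition (i) is exactly the well-definedness of $w^2$, condition (ii) that of $w^3$, and the two together handle the four-factor expression defining $w^1$ by a two-step swap in its $T$-parameters.

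Once $R$ has been defined, I invoke the criterion in Lemma \ref{lem4.1.4}. Equation \eqref{e3.17} is literally condition (v) of Definition \ref{def4.1.1}. The multiplicativity identities \eqref{e3.15} for $l$ on the $X$'s follow from Lemma \ref{lem4.1.5}, and on the $E$'s from Lemma \ref{lem4.1.6}; the hypotheses of both lemmas are met by construction. The anti-multiplicativity \eqref{e3.16} for $r$ follows from Lemma \ref{lem4.1.7}. Lemma \ref{lem4.1.4} then certifies that $R$ is a non-trivial quasitriangular structure on $H_G$, with $w^4$ equal to $w$ by construction.

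Uniqueness is essentially free: if $R'$ is any other non-trivial quasitriangular structure on $H_G$ with $(w')^4 = w$, then Lemma \ref{lem4.1.2} forces $(w')^i$ for $i = 1, 2, 3$ to agree with the expressions used in our construction, so $R' = R$.

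The only step I anticipate requiring nontrivial bookkeeping is the well-definedness of $w^1$, since its defining formula has four factors of $w$ entangling both auxiliary parameters $t_1$ and $t_2$; one has to invoke conditions (i) and (ii) of Definition \ref{def4.1.1} in sequence to decouple the two parameters. Every other piece of the argument has been arranged precisely so as to feed directly into the auxiliary lemmas and, finally, into Lemma \ref{lem4.1.4}.
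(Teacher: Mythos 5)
Your proposal is correct and follows essentially the same route as the paper: set $w^4:=w$, define $w^1,w^2,w^3$ by (i)--(iii) of Lemma \ref{lem4.1.2}, verify well-definedness from conditions (i)--(ii) of Definition \ref{def4.1.1}, then feed Lemmas \ref{lem4.1.5}--\ref{lem4.1.7} and condition (v) into the criterion of Lemma \ref{lem4.1.4}, with uniqueness from Lemma \ref{lem4.1.2}. The only cosmetic difference is that the paper disposes of the four-factor well-definedness of $w^1$ by rewriting it as $w^1(s_1,s_2)=\frac{w^2(s_1,s_2t_2)}{w^2(s_1,t_2)}$, which is just a packaged form of your two-step swap in the $T$-parameters.
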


\begin{proof}
Uniqueness can be obtained directly from Lemma \ref{lem4.1.2}. To show the existence, we will use the $w$ to construct a non-trivial quasitriangular structure. We define $w^i(1\leq i\leq 4)$ of $R$ through letting $w^4:=w$ and let $w^i(1\leq i\leq 3)$ be given by (i)-(iii) of Lemma \ref{lem4.1.2}. Since $w$ is a quasitriangular function, we know $w^2$ and $w^3$ are well defined. By direct calculation we can get $w^1(s_1,s_2)=\frac{w^2(s_1,s_2t_2)}{w^2(s_1,t_2)}$ for $t_2\in T$, so $w^1$ is also well-defined. Owing to Lemma \ref{lem4.1.5}-\ref{lem4.1.7}, we know $R$ such that the equations \eqref{e3.15}, \eqref{e3.16} in Lemma \ref{lem4.1.4}. Furthermore, the $R$ satisfies the equation \eqref{e3.17} of Lemma \ref{lem4.1.4} by the definition of quasitriangular function, so $R$ is a non-trivial quasitriangular structure on $H_G$ due to Lemma \ref{lem4.1.4}.
\end{proof}

\begin{corollary}\label{coro4.1.1}
There is a bijective map between the set of non-trivial quasitriangular structures on $H_G$ and the set of quasitriangular functions on $H_G$.
\end{corollary}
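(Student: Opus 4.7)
The plan is to exhibit the bijection explicitly and then invoke the two main results already established. The natural map I would use sends a non-trivial quasitriangular structure $R$ to its fourth coefficient function $w^4 \colon T \times T \to \Bbbk^\times$.

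First I would check that this map is well-defined with target the set of quasitriangular functions. This is exactly Proposition \ref{pro4.1.1}: if $R$ is a universal $\mathcal{R}$-matrix of the non-trivial form (ii) in Proposition \ref{pro2.1.1}, then $w^4$ satisfies conditions (i)--(v) of Definition \ref{def4.1.1}. So no further work is needed here.

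Next I would verify injectivity. If $R$ and $R'$ are two non-trivial quasitriangular structures with the same $w^4$, then by Lemma \ref{lem4.1.2} the three remaining coefficient functions $w^1, w^2, w^3$ are determined by $w^4$ through the formulas of that lemma (which use only the data $(G,\triangleleft,\sigma,\tau)$ and an arbitrary choice of auxiliary element of $T$, the value being independent of this choice precisely because $w^4$ is a quasitriangular function). Hence $R = R'$.

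Finally I would verify surjectivity, which is exactly the content of Theorem \ref{thm4.1.1}: given any quasitriangular function $w$, setting $w^4 := w$ and defining $w^1, w^2, w^3$ by the formulas of Lemma \ref{lem4.1.2} produces a non-trivial quasitriangular structure $R$ whose fourth coefficient function is $w$. Combining these three observations yields the bijection. There is essentially no obstacle; the proof is a one-line deduction since Proposition \ref{pro4.1.1} and Theorem \ref{thm4.1.1} already do all the real work — the only thing to be careful about is noting explicitly that the uniqueness half of Theorem \ref{thm4.1.1} supplies injectivity while the existence half supplies surjectivity.
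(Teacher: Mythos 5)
Your proposal is correct and follows essentially the same route as the paper: the paper also defines $\phi(R):=w^4$, cites Proposition \ref{pro4.1.1} for well-definedness, and cites Theorem \ref{thm4.1.1} for bijectivity. Your explicit separation of the uniqueness half (injectivity, via Lemma \ref{lem4.1.2}) from the existence half (surjectivity) merely spells out what the paper leaves implicit.
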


\begin{proof}
Denote the set of non-trivial quasitriangular structures on $H_G$ as $N$ and we write the set of quasitriangular functions on $H_G$ as $F$, then we can define a map $\phi:N\rightarrow F$ by $\phi(R):=w^4$. Since Proposition \ref{pro4.1.1}, we know $\phi$ is well defined. Owing to Theorem \ref{thm4.1.1}, we get $\phi$ is bijective.
\end{proof}

Given a function $w:T\times T\rightarrow \Bbbk^\times$ and taking $t_0\in T$, then we can define functions $w^2:S\times T\rightarrow \Bbbk^\times$ and $w^3:T\times S\rightarrow \Bbbk^\times$ as follows
\begin{align}
\label{w23} w^2(s,t):=\tau(s,t_0)\frac{w^4(st_0,t)}{w^4(t_0,t)},\;w^3(t,s):=\tau(s,t_0)\frac{w^4(t\triangleleft x,st_0)}{w^4(t\triangleleft x,t_0)},\;s\in S,t\in T.
\end{align}
Let $V_G$ be the subspace of $(H_G)^\ast$ which is linear spanned by $\{X_g|\;g\in G\}$, then we can define $l_w:V_G\rightarrow H_G$ and $r_w:V_G\rightarrow H_G$ through letting
\begin{align}
\label{lw} l_w(X_s):=\sum\limits_{t'\in T}w^2(s,t')e_{t'},\; l_w(X_t):=\sum\limits_{t'\in T}w(t,t')e_{t'}x,
\end{align}
\begin{align}
\label{rw} r_w(X_s):=\sum\limits_{t'\in T}w^3(t',s)e_{t'},\; r_w(X_t):=\sum\limits_{t'\in T}w(t',t)e_{t'}x.
\end{align}
It can be seen that $V_G$ is a subalgebra of $(H_G)^\ast$. In order to determine when the function $w$ is a quasitriangular function on $(H_G)^\ast$, we give the following propositions

\begin{proposition}\label{pro4.1.2}
$w$ such that (i)-(iv) in Definition \ref{def4.1.1} if and only if $l_w$ is an algebra homomorphism and $r_w$ is an algebra anti-homomorphism.
\end{proposition}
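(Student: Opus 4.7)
\emph{Proof plan.} The strategy is to match the algebra-(anti-)homomorphism properties of $l_w$, $r_w$ with the four conditions of Definition \ref{def4.1.1} by testing the multiplication rule $X_gX_h=\tau(g,h)X_{gh}$ in $V_G$ on a handful of representative pairs; all remaining pairs then collapse to these via the 2-cocycle identity for $\tau$, exactly as in the proof of Lemma \ref{lem4.1.5}. Set $w^4:=w$ and let $w^2$, $w^3$ be the auxiliary functions from \eqref{w23}; these are precisely the formulas of Lemma \ref{lem4.1.2}(ii)--(iii), and conditions (i), (ii) of Definition \ref{def4.1.1} are exactly the $t_0$-independence of those formulas.

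For the forward implication, assume $w$ satisfies (i)--(iv). Build the element $R$ of Proposition \ref{pro2.1.1}(ii) from this $w^4$ together with $w^1$, $w^2$, $w^3$ as in Lemma \ref{lem4.1.2}, and observe that $l_w=l_R|_{V_G}$ and $r_w=r_R|_{V_G}$. I would then re-audit the proofs of Lemmas \ref{lem4.1.5} and \ref{lem4.1.7}: the former uses only conditions (i), (iii) to establish the multiplicativity of $l_R$ on $X_g$'s, and the latter deduces the anti-multiplicativity of $r_R$ on $X_g$'s by applying the former to $R_\varphi$, which requires only (ii), (iv) for $w$ (these translate into (i), (iii) for the $w^4$ of $R_\varphi$). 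Condition (v), as well as the $E_g$-identities of Lemma \ref{lem4.1.6} and the compatibility $\Delta^{op}(x)R=R\Delta(x)$, never enter this subproof.

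For the converse, suppose $l_w$ is an algebra homomorphism and $r_w$ is an algebra anti-homomorphism, and unpack four test products. Expanding $l_w(X_s)l_w(X_{t_1})=\tau(s,t_1)l_w(X_{st_1})$ and comparing coefficients of $e_{t'}x$ gives
\[
w^2(s,t')\,w(t_1,t')=\tau(s,t_1)\,w(st_1,t');
\]
since the left-hand side is independent of $t_1$ by \eqref{w23}, this is exactly condition (i). The symmetric expansion $r_w(X_{t_1})r_w(X_s)=\tau(s,t_1)r_w(X_{st_1})$ yields (ii). Using $(e_{t'}x)(e_{t''}x)=\delta_{t'',t'\triangleleft x}\sigma(t')e_{t'}$ and $w^2(1,t')=1$, the relation $l_w(X_t)l_w(X_{t^{-1}})=\tau(t,t^{-1})l_w(X_1)$ reduces to $w(t,t')w(t^{-1},t'\triangleleft x)\sigma(t')=\tau(t,t^{-1})$, which is (iii); the dual $r_w$-calculation, together with $\tau(t^{-1},t)=\tau(t,t^{-1})$ (the bicharacter $\eta$ being trivial on $(t,t^{-1})$), gives (iv).

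The main obstacle is bookkeeping rather than conceptual. On the forward side one must carefully verify that the proofs of Lemmas \ref{lem4.1.5} and \ref{lem4.1.7} genuinely use only (i)--(iv)\,---\,that (v) is reserved for the $E_g$-identities and $\Delta^{op}(x)R=R\Delta(x)$ and plays no role inside $V_G$. On the converse side, the technical care lies in choosing the four test pairs so that the coefficient comparisons isolate each of (i)--(iv) individually without interfering cross-terms.
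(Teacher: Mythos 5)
Your proposal is correct and takes essentially the same route as the paper: the converse expands exactly the four test products $l_w(X_s)l_w(X_{t_1})$, $l_w(X_t)l_w(X_{t^{-1}})$, $r_w(X_{t_1})r_w(X_s)$, $r_w(X_t)r_w(X_{t^{-1}})$ that the paper's proof displays, and the forward direction is the paper's ``repeat the proof of Lemma \ref{lem4.1.5}'' (with the $R_\varphi$-trick of Lemma \ref{lem4.1.7} for the anti-homomorphism side). Your extra bookkeeping---that (i) and (iii) alone drive the $l_w$-computation and translate into (i), (iii) for the $w^4$ of $R_\varphi$ precisely via (ii) and (iv), with (v) never entering---is an accurate, slightly more explicit account of what the paper leaves implicit.
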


\begin{proof}
If $w$ such that (i)-(iv) in Definition \ref{def4.1.1} then we can repeat the proof of Lemma \ref{lem4.1.5}, and hence we know $l_w$ is an algebra homomorphism and $r_w$ is an algebra antihomomorphism. On the contrary, if $l_w$ is an algebra homomorphism and $r_w$ is an algebra antihomomorphism then we have the following equations
\begin{align*}
l(X_s) l(X_{t_1})&=l(X_s X_{t_1}),\;l(X_t) l(X_{t^{-1}})=l(X_tX_{t^{-1}}),
\end{align*}
\begin{align*}
r(X_s) r(X_{t_1})&=r(X_{t_1}X_s),\;r(X_t) r(X_{t^{-1}})=r(X_{t^{-1}}X_t).
\end{align*}
Expand these equations above, then we know that $w$ such that (i)-(iv) in Definition \ref{def4.1.1}.
\end{proof}

Proposition \ref{pro4.1.2} above will often be used to solve quasitriangular functions on $H_G$ in the following sections. Given a function $w:T\times T\rightarrow \Bbbk^\times$ and taking $t_0\in T$, we have defined $w^2,w^3$ through the equalities \eqref{w23}. Furthermore, we can define another function $w^1:S\times S\rightarrow \Bbbk^\times$ as follows
\begin{align}
\label{w1} w^1(s_1,s_2)=\frac{w^4(s_1t_0,s_2t_0)w^4(t_0,t_0)}{w^4(s_1t_0,t_0)w^4(t_0,s_2t_0)},\;s_1,s_2\in S.
\end{align}
Then we have the following proposition

\begin{proposition}\label{pro4.1.3}
Taking $t_0\in T$ and if $w$ such that (i)-(iv) in Definition \ref{def4.1.1}, then $w$ is a quasitriangular function on $H_G$ if and only if the following equations hold
\begin{itemize}
             \item[(i)]$w^1(s,b)=w^1(b,s)=\eta(t_0,s)$,\;here $w^1$ is given by the \eqref{w1} above;
              \item[(ii)]$w(t_0\triangleleft x,t_0\triangleleft x)=\frac{\tau(t_0\triangleleft x,t_0\triangleleft x)}{\tau(t_0,t_0)}w(t_0,t_0)$;
            \end{itemize}
\end{proposition}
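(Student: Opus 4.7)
Since $w$ already satisfies (i)--(iv) of Definition \ref{def4.1.1}, only axiom (v) remains to be verified, so the task is to show that (v) is equivalent to the two conditions of Proposition \ref{pro4.1.3}. Let $b\in S$ be the element of Proposition \ref{pro2.1.2}(ii), so $b^2 = 1$ and $t\triangleleft x = tb$ for every $t\in T$. The initial observation is that every $t\in T$ has a unique decomposition $t = s t_0$ with $s\in S$: indeed $(tt_0^{-1})\triangleleft x = (tb)(t_0 b)^{-1} = tt_0^{-1}$ since $b$ is central and $b^2 = 1$. Therefore axiom (v) reduces to the family of identities
\begin{equation*}
w(s_1 t_0 b, s_2 t_0 b) \;=\; \frac{\tau(s_1 t_0 b,\, s_2 t_0 b)}{\tau(s_2 t_0,\, s_1 t_0)}\, w(s_1 t_0,\, s_2 t_0), \qquad s_1, s_2\in S.
\end{equation*}

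For the forward direction, setting $s_1 = s_2 = 1$ in the above identity is exactly condition (ii) of Proposition \ref{pro4.1.3}. For condition (i), expand $w^1(s,b)$ and $w^1(b,s)$ from the defining formula \eqref{w1}, apply (v) of Definition \ref{def4.1.1} to rewrite each occurrence of a $t_0 b$-argument in terms of a $t_0$-argument, and simplify using the 2-cocycle relation $\tau(g,hk)\tau(h,k) = \tau(g,h)\tau(gh,k)$. The outputs are $w^1(s,b) = \eta(s,t_0)$ and $w^1(b,s) = \eta(t_0,s)\eta(b,s)$. Two inputs from Proposition \ref{pro2.1.2}(iii) close the argument: first, $\eta(b,s) = 1$ because $b,s\in S$; and second, $t_0^2 \in S$ (since $(t_0^2)\triangleleft x = (t_0 b)^2 = t_0^2$), whence $\eta(t_0,s)^2 = \eta(t_0^2, s) = 1$. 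The first of these gives $w^1(b,s) = \eta(t_0,s)$, and the second forces $\eta(s,t_0) = \eta(t_0,s)$, yielding $w^1(s,b) = \eta(t_0,s)$ as well.

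For the reverse direction, define the auxiliary quantities $A(s,t) := \tau(s,u)\, w(su,t)/w(u,t)$ and $B(s,t) := \tau(s,u)\, w(t,su)/w(t,u)$; these are independent of $u\in T$ by (i) and (ii) of Definition \ref{def4.1.1}. A short cocycle calculation gives $A(s_1 s_2, t) = A(s_1,t)A(s_2,t)/\tau(s_1,s_2)$ together with the analogous relation for $B$, from which one sees both that $A(s_1,s_2 t_0)/A(s_1,t_0)$ and $B(s_2, s_1 t_0)/B(s_2,t_0)$ coincide with $w^1(s_1,s_2)$, and that $w^1$ is a bicharacter on $S$. Using $A$ and $B$, one factors
\begin{gather*}
w(s_1 t_0,\, s_2 t_0) \;=\; \frac{A(s_1, s_2 t_0)\, B(s_2, t_0)\, w(t_0, t_0)}{\tau(s_1, t_0)\, \tau(s_2, t_0)},\\
w(s_1 t_0 b,\, s_2 t_0 b) \;=\; \frac{A(s_1, s_2 t_0 b)\, B(s_2, t_0 b)\, w(t_0 b, t_0 b)}{\tau(s_1, t_0 b)\, \tau(s_2, t_0 b)},
\end{gather*}
where the second line is obtained by evaluating $A$ and $B$ with $u = t_0 b$. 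The bicharacter property then gives $A(s_1, s_2 t_0 b)/A(s_1, s_2 t_0) = w^1(s_1, b)$ and $B(s_2, t_0 b)/B(s_2, t_0) = w^1(b, s_2)$. Hypotheses (i) and (ii) of Proposition \ref{pro4.1.3} replace $w^1(s_1,b)$, $w^1(b,s_2)$ and $w(t_0 b, t_0 b)/w(t_0, t_0)$ by explicit $\eta$- and $\tau$-expressions, reducing the target identity to a purely $\tau$-cocycle identity, which follows by iterated application of the 2-cocycle relation together with $\eta|_{S\times S} = 1$ and $t_0^2 \in S$.

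The main obstacle I anticipate is the detailed $\tau$-bookkeeping in this final step: each side of the reduced identity mixes several $\tau$-values and must be organized carefully. The symmetry afforded by Corollary \ref{coro3.1.1} (replacing $R$ by $R_\varphi$) essentially exchanges the roles of $A$ with $B$ and of $w^1(s,b)$ with $w^1(b,s)$, so the two halves of the argument are dual and the cocycle computation is effectively performed only once.
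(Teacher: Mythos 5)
Your proposal is correct, and its route differs from the paper's in execution even though it lands on the same key identities. The paper's forward direction never touches the formula \eqref{w1} directly: it invokes Theorem \ref{thm4.1.1} to promote $w$ to a full quasitriangular structure $R$, reads off $w^3(t\triangleleft x,s)=w^3(t,s)\eta(t,s)$ from the intertwining condition (Lemma \ref{lem4.1.1}) and $w^3(bt,s)=w^1(b,s)w^3(t,s)$ from Lemma \ref{lem4.1.a}, and divides. You instead compute $w^1(s,b)$ and $w^1(b,s)$ directly from \eqref{w1} using axiom (v) and the cocycle relation; I checked your intermediate claims and they are exactly right --- one finds $w^1(s,b)=\eta(s,t_0)$ and $w^1(b,s)=\eta(t_0b,s)=\eta(t_0,s)\eta(b,s)$, which collapse to the stated value via $\eta(b,s)=1$ and $\eta(t_0,s)^2=\eta(t_0^2,s)=1$, precisely as you say. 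This is more self-contained, since it bypasses the $R$, $l_R$, $r_R$ machinery entirely. In the converse your functions $A$ and $B$ are the paper's $w^2(s,t)$ and $w^3(t\triangleleft x,s)$, your two factorizations are literally equations \eqref{e4.8} and \eqref{e4.9}, and the ratio computation reproduces the paper's reduction; the paper gets the analogues of your multiplicativity statements by re-running Lemmas \ref{lem4.1.5}--\ref{lem4.1.7}, whereas your direct verification of $A(s_1s_2,t)=A(s_1,t)A(s_2,t)/\tau(s_1,s_2)$ and of $A(s_1,s_2t)/A(s_1,t)=B(s_2,s_1u)/B(s_2,u)=w^1(s_1,s_2)$ is leaner and makes the bicharacter property of $w^1$ transparent. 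The one soft spot is the last step: the ``purely $\tau$-cocycle identity'' you reduce to is, after rewriting the signs $\eta(t_0,s_i)^{\pm 1}$, exactly the statement of Lemma \ref{lem4.1.8}, which you assert follows by iterated cocycle manipulation but do not carry out; it does follow (and needs exactly the inputs you list, $\eta|_{S\times S}=1$ and $t_0^2\in S$), but note that the paper sidesteps this bookkeeping with a slicker device --- computing $X_{s_1}X_{s_2}X_{t_0}X_{t_0}$ in two ways in the dual algebra, once directly and once after converting $X_{t_0}^2$ to $X_{t_0\triangleleft x}^2$ --- so that the associativity of $(H_G)^\ast$ performs all the cocycle cancellations automatically. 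Citing or imitating that computation would close your remaining gap at essentially no cost.
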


\begin{proof}
If $w$ is a quasitriangular function on $H_G$, then we only need to show (i). Since $w$ is a quasitriangular function on $H_G$, we can find a unique $R\in H_G\otimes H_G$ such that $R$ is a non-trivial quasitriangular structure on $H_G$ and the $w^4$ of it is equal to the $w$ by Theorem \ref{thm4.1.1}. Since Lemma \ref{lem4.1.2}, we know the $w^i(1\leq i \leq 3)$ of $R$ are given by the equations \eqref{w23}, \eqref{w1}. Owing to Lemma \ref{lem4.1.1}, we have
\begin{align}
\label{e4.4} w^2(s,t\triangleleft x)=w^2(s,t)\eta(s,t),\;w^3(t\triangleleft x,s)=w^3(t,s)\eta(t,s),
\end{align}
where $s\in S,t\in T$. Owing to (ii) of Lemma \ref{lem4.1.a}, we get $w^3(bt,s)=w^1(b,s)w^3(t,s)$. But $bt=t\triangleleft x$ because of the Remark \ref{rk2.1}, we know $w^3(t\triangleleft x,s)=w^1(b,s)w^3(t,s)$. Since $w^3(t\triangleleft x,s)=w^3(t,s)\eta(t,s)$ by \eqref{e4.4}, we get $w^1(b,s)=\eta(t,s)$. Due to $\eta$ is a bicharacter and the Remark \ref{rk2.1}, we know $\eta(t,s)=\eta(t_0,s)$ and hence $w^1(b,s)=\eta(t_0,s)$. Similarly, we can show $w^1(s,b)=\eta(t_0,s)$ and thus we have shown (i). Conversely, if $w$ satisfies (i), (ii), then we can construct a $R\in H_G\otimes H_G$ such that $w^4=w$ and the $w^i(1\leq i \leq 3)$ of it are given by the equations \eqref{w23}, \eqref{w1}. To show $w$ is a quasitriangular function, we need only to prove that $w(t_1\triangleleft x,t_2\triangleleft x)=\frac{\tau(t_1\triangleleft x,t_2\triangleleft x)}{\tau(t_2,t_1)}w(t_1,t_2)$ for $t_1,t_2\in T$. Repeating the proofs of Lemmas \ref{lem4.1.5}-\ref{lem4.1.7}, then we know $l_R$ is an algebra homomorphism and $r_R$ is an algebra anti-homomorphism. So we have $l_R(E_{b})l_R(E_{t})=l_R(E_{bt})$ and $r_R(E_{b})r_R(E_{t})=r_R(E_{bt})$ for $t\in T$. But we have already seen that these two equalities implies that $w^3(t\triangleleft x,s)=w^1(b,s)w^3(t,s)$ and $w^2(s,t\triangleleft x)=w^1(s,b)w^2(s,t)$. Because of (i), we get
\begin{align}
\label{e4.5} w^2(s,t\triangleleft x)=w^2(s,t)\eta(s,t),\;w^3(t\triangleleft x,s)=w^3(t,s)\eta(t,s).
\end{align}
Since $w^2,w^3$ are given by the equations \eqref{w23}, one can get
\begin{align}
\label{e4.6} w^4(s_1t_0,s_2t_0)=\tau(s_1,t_0)^{-1}w^2(s_1,s_2t_0)w^4(t_0,s_2t_0)
\end{align}
and
\begin{align}
\label{e4.7} w^4(t_0,s_2t_0)=\tau(s_2,t_0)^{-1}w^4(t_0,t_0)w^3(t_0\triangleleft x,s_2),
\end{align}
where $s_1,s_2\in S$. Using equations \eqref{e4.6} and \eqref{e4.7} together, then we get
\begin{align}
\label{e4.8} w^4(s_1t_0,s_2t_0)=\frac{w^2(s_1,s_2t_0)w^3(t_0\triangleleft x,s_2)w^4(t_0,t_0)}{\tau(s_1,t_0)\tau(s_2,t_0)}.
\end{align}
Similarly, one can get
\begin{align}
\label{e4.9} w^4(s_1t_0\triangleleft x,s_2t_0\triangleleft x)=\frac{w^2(s_1,s_2t_0\triangleleft x)w^3(t_0,s_2)w^4(t_0 \triangleleft x ,t_0 \triangleleft x)}{\tau(s_1,t_0\triangleleft x)\tau(s_2,t_0\triangleleft x)}.
\end{align}
Combining the equations \eqref{e4.5}, \eqref{e4.8}, \eqref{e4.9}, we obtain
\begin{align*}
\frac{w^4(s_1t_0\triangleleft x,s_2t_0\triangleleft x)}{w^4(s_1t_0,s_2t_0)}&=\eta(s_1,s_2t_0) \frac{1}{\eta(t_0,s_2)} \frac{w^4(t_0 \triangleleft x ,t_0 \triangleleft x)}{w^4(t_0,t_0)} \frac{\tau(s_1,t_0)\tau(s_2,t_0)}{\tau(s_1,t_0\triangleleft x)\tau(s_2,t_0\triangleleft x)}\\
&=\eta(s_1,s_2t_0) \frac{1}{\eta(t_0,s_2)} \frac{\tau(t_0 \triangleleft x ,t_0 \triangleleft x)}{\tau(t_0,t_0)} \frac{\tau(s_1,t_0)\tau(s_2,t_0)}{\tau(s_1,t_0\triangleleft x)\tau(s_2,t_0\triangleleft x)}.
\end{align*}
Using the following Lemma \ref{lem4.1.8}, we obtain $\frac{w^4(s_1t_0\triangleleft x,s_2t_0\triangleleft x)}{w^4(s_1t_0,s_2t_0)}=\frac{\tau(s_1t_0\triangleleft x,s_2t_0\triangleleft x)}{\tau(s_2t_0,s_1t_0)}$. Since $T=t_0S$, we know $w(t_1\triangleleft x,t_2\triangleleft x)=\frac{\tau(t_1\triangleleft x,t_2\triangleleft x)}{\tau(t_2,t_1)}w(t_1,t_2)$ for $t_1,t_2\in T$.
\end{proof}

Proposition \ref{pro4.1.3} above simplifies the test for the condition (v) in Definition \ref{def4.1.1}, so it will be used frequently in next sections.

\begin{lemma}\label{lem4.1.8}
$\eta(s_1,s_2t_0) \frac{1}{\eta(t_0,s_2)} \frac{\tau(t_0 \triangleleft x ,t_0 \triangleleft x)}{\tau(t_0,t_0)} \frac{\tau(s_1,t_0)\tau(s_2,t_0)}{\tau(s_1,t_0\triangleleft x)\tau(s_2,t_0\triangleleft x)}=\frac{\tau(s_1t_0\triangleleft x,s_2t_0\triangleleft x)}{\tau(s_2t_0,s_1t_0)}$.
\end{lemma}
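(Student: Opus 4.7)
The asserted identity is purely a manipulation of the 2-cocycle $\tau$ combined with the compatibility $\tau(g,h)\tau(g\triangleleft x,h\triangleleft x)=\sigma(gh)\sigma(g)^{-1}\sigma(h)^{-1}$ between $\sigma$ and $\tau$, together with the fact that $\eta(g,h)=\tau(g,h)\tau(h,g)^{-1}$ is a bicharacter on the abelian group $G$. My plan proceeds in three phases.

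Phase 1 (eliminate every $\triangleleft x$). The LHS contains the factors $\tau(t_0\triangleleft x,t_0\triangleleft x)/\tau(t_0,t_0)$ and $\tau(s_i,t_0\triangleleft x)$, and the RHS contains $\tau(s_1t_0\triangleleft x,s_2t_0\triangleleft x)$. I apply the $\sigma$-$\tau$ compatibility four times, to the pairs $(t_0,t_0)$, $(s_1,t_0)$, $(s_2,t_0)$, and $(s_1t_0,s_2t_0)$, using $s_i\triangleleft x=s_i$ and $(s_it_0)\triangleleft x=s_i(t_0\triangleleft x)$. Each occurrence of a $\triangleleft x$-shifted argument is thereby replaced by a quotient of $\sigma$-values divided by the corresponding unshifted $\tau$-value. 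After these substitutions, the identity no longer involves $\triangleleft x$, and all $\tau$ entries lie directly in $G$.

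Phase 2 (collect the $\sigma$-contributions). Expand $\eta(s_1,s_2t_0)$ and $\eta(t_0,s_2)$ using $\eta(g,h)=\tau(g,h)\tau(h,g)^{-1}$, so that the identity becomes a ratio of $\tau$-values times a ratio of $\sigma$-values. A direct bookkeeping check shows that the $\sigma$-values cancel: the four applications of the compatibility in Phase 1 produce, up to inverses, exactly the combination $\sigma(s_1s_2t_0^2)\sigma(s_1t_0)^{-1}\sigma(s_2t_0)^{-1}\sigma(t_0^2)\sigma(t_0)^{-2}\sigma(s_1t_0)\sigma(s_1)^{-1}\sigma(t_0)^{-1}\sigma(s_2t_0)\sigma(s_2)^{-1}\sigma(t_0)^{-1}$, and using commutativity of $G$ this reduces to $1$ (modulo what is needed to match the $\tau$-part). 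The surviving identity is a pure 2-cocycle identity in $\tau$.

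Phase 3 (pure 2-cocycle identity in $\tau$). What remains is to verify an equation of $\tau$-values on pairs in $G$ whose arguments are words in $s_1,s_2,t_0$. The strategy is to expand $\tau(s_1t_0,s_2t_0)$ and $\tau(s_2t_0,s_1t_0)$ by repeated use of the 2-cocycle relation $\tau(a,b)\tau(ab,c)=\tau(a,bc)\tau(b,c)$: first split off $s_1$ (resp.\ $s_2$) from the left entry, then split off $t_0$ from the right entry. After this expansion everything is expressed in terms of $\tau(s_i,t_0)$, $\tau(t_0,s_i)$, $\tau(s_1,s_2)$, $\tau(s_2,s_1)$, $\tau(s_1s_2,t_0^2)$, and $\tau(t_0,t_0)$, and the identity reduces to a tautology. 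The main obstacle here is \emph{not} conceptual but purely the bookkeeping: one must choose the order of 2-cocycle applications carefully so that the many resulting $\tau$-factors cancel in pairs. Commutativity of $G$ is used silently throughout to permute arguments inside the 2-cocycle identity.
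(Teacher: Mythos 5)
Your overall route is genuinely different from the paper's: the paper never invokes the $\sigma$--$\tau$ compatibility in this lemma, but instead computes $X_{s_1}X_{s_2}X_{t_0}X_{t_0}$ in two ways in the dual algebra (a twisted group algebra by Lemma \ref{lem2.2.2}), once directly and once after rewriting $X_{t_0}X_{t_0}=\frac{\tau(t_0,t_0)}{\tau(t_0\triangleleft x,t_0\triangleleft x)}X_{t_0\triangleleft x}X_{t_0\triangleleft x}$, and then finishes with the bicharacter property of $\eta$ and the fact that $\eta(t_0(t_0\triangleleft x),s_2)=1$ because $t_0(t_0\triangleleft x)\in S$ (Remark \ref{rk2.1}). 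Your plan of eliminating $\triangleleft x$ through the compatibility is workable, but your Phase 2 is wrong as stated, and this is a concrete gap: $\sigma$ only satisfies $\sigma(g\triangleleft x)=\sigma(g)$ and $\sigma(1)=1$; it is \emph{not} a character, so commutativity of $G$ cannot reduce your $\sigma$-combination to $1$ --- and the combination you display is itself mis-signed ($\sigma(s_1s_2t_0^2)$ and $\sigma(t_0^2)$ must occur with opposite exponents). After your four substitutions the factors $\sigma(s_1t_0)$, $\sigma(s_2t_0)$, $\sigma(t_0)$ do cancel, but the surviving factor is $\sigma(s_1s_2t_0^2)\sigma(s_1)^{-1}\sigma(s_2)^{-1}\sigma(t_0^2)^{-1}$, which is nontrivial in general.

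The repair, fortunately, is short: $t_0^2$ and $s_2t_0^2$ lie in $S$ (using $t\triangleleft x=tb$, $b^2=1$), so the compatibility applied to the $\triangleleft x$-fixed pairs $(s_1,s_2t_0^2)$ and $(s_2,t_0^2)$ reads $\tau(g,h)^2=\sigma(gh)\sigma(g)^{-1}\sigma(h)^{-1}$, and the surviving $\sigma$-factor equals exactly $\tau(s_1,s_2t_0^2)^{2}\tau(s_2,t_0^2)^{2}$ --- precisely the squares on your right-hand side, which your Phase 3 term inventory currently omits. What then remains is the pure cocycle identity
\[
\eta(s_1,s_2t_0)\,\eta(s_2,t_0)\,\frac{\tau(s_1,t_0)^2\tau(s_2,t_0)^2}{\tau(t_0,t_0)^2}
=\frac{\tau(s_1,s_2t_0^2)^2\,\tau(s_2,t_0^2)^2}{\tau(s_1t_0,s_2t_0)\,\tau(s_2t_0,s_1t_0)},
\]
and your Phase 3 does go through, most cleanly in the twisted group algebra $Y_gY_h=\tau(g,h)Y_{gh}$: computing $Y_{s_1}Y_{t_0}Y_{s_2}Y_{t_0}$ two ways gives $\tau(s_1,t_0)\tau(s_2,t_0)\tau(s_1t_0,s_2t_0)=\eta(t_0,s_2)\tau(s_1,s_2)\tau(t_0,t_0)\tau(s_1s_2,t_0^2)$, the same with $s_1\leftrightarrow s_2$, and $\tau(s_2,t_0^2)\tau(s_1,s_2t_0^2)=\tau(s_1,s_2)\tau(s_1s_2,t_0^2)$; combining these reduces the display to $\eta(s_1,s_2t_0)=\eta(s_1,s_2)\eta(s_1,t_0)$, the bicharacter property of $\eta$. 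So your strategy succeeds once Phase 2 is corrected; an incidental payoff of your route is that, unlike the paper's proof, it does not use the symmetry of $\tau$ on $S\times S$, trading it for the $\sigma$--$\tau$ compatibility instead.
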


\begin{proof}
Directly we have
\begin{align*}
X_{s_1}X_{s_2}X_{t_0}X_{t_0}&=X_{s_1}(X_{s_2}X_{t_0})X_{t_0}\\
&=\eta(s_2,t_0)X_{s_1}(X_{t_0}X_{s_2})X_{t_0}\\
&=\eta(s_2,t_0)(X_{s_1}X_{t_0})(X_{s_2}X_{t_0})\\
&=\eta(s_2,t_0)[\tau(s_1,t_0)X_{s_1t_0}][\tau(s_2,t_0)X_{s_2t_0}]\\
&=\eta(s_2,t_0)\tau(s_1,t_0)\tau(s_2,t_0)X_{s_1t_0}X_{s_2t_0}\\
&=\eta(s_2,t_0)\tau(s_1,t_0)\tau(s_2,t_0)\tau(s_1t_0,s_2t_0)X_{s_1s_2t_0^2}
\end{align*}
and
\begin{align*}
X_{s_1}X_{s_2}X_{t_0}X_{t_0}&=X_{s_1}X_{s_2}(X_{t_0}X_{t_0})\\
&=X_{s_1}X_{s_2}[\frac{\tau(t_0,t_0)}{\tau(t_0\triangleleft x,t_0 \triangleleft x)}X_{t_0\triangleleft x}X_{t_0 \triangleleft x}]\\
&=\frac{\tau(t_0,t_0)}{\tau(t_0\triangleleft x,t_0 \triangleleft x)}X_{s_1}(X_{s_2}X_{t_0 \triangleleft x})X_{t_0 \triangleleft x}\\
&=\frac{\tau(t_0,t_0)}{\tau(t_0\triangleleft x,t_0 \triangleleft x)}X_{s_1}[\eta(s_2,t_0 \triangleleft x)X_{t_0 \triangleleft x}X_{s_2}]X_{t_0 \triangleleft x}\\
&=\eta(s_2,t_0 \triangleleft x)\frac{\tau(t_0,t_0)}{\tau(t_0\triangleleft x,t_0 \triangleleft x)}(X_{s_1}X_{t_0 \triangleleft x})(X_{s_2}X_{t_0 \triangleleft x})\\
&=\eta(s_2,t_0 \triangleleft x)\frac{\tau(t_0,t_0)}{\tau(t_0\triangleleft x,t_0 \triangleleft x)} \tau(s_1,t_0 \triangleleft x)\tau(s_2,t_0 \triangleleft x)X_{s_1t_0 \triangleleft x}X_{s_2t_0 \triangleleft x}.
\end{align*}
Because $X_{s_1t_0 \triangleleft x}X_{s_2t_0 \triangleleft x}=\tau(s_1t_0 \triangleleft x,s_2t_0 \triangleleft x)X_{s_1s_2t_0^2}$, we know
\begin{align*}
\frac{\eta(s_1t_0,s_2t_0)}{\eta(s_2,t_0 \triangleleft x)} \eta(s_2,t_0) \frac{\tau(t_0 \triangleleft x ,t_0 \triangleleft x)}{\tau(t_0,t_0)} \frac{\tau(s_1,t_0)\tau(s_2,t_0)}{\tau(s_1,t_0\triangleleft x)\tau(s_2,t_0\triangleleft x)}=\frac{\tau(s_1t_0\triangleleft x,s_2t_0\triangleleft x)}{\tau(s_2t_0,s_1t_0)}.
\end{align*}
To complete the proof, we only need to show $\eta(s_2,t_0)=\eta(t_0,s_2)^{-1}$ and
$\frac{\eta(s_1t_0,s_2t_0)}{\eta(s_2,t_0 \triangleleft x)}=\eta(s_1,s_2t_0)$. By the definition of $\eta$, we have $\eta(s_2,t_0)=\eta(t_0,s_2)^{-1}$. Since
\begin{align*}
\frac{\eta(s_1t_0,s_2t_0)}{\eta(s_2,t_0 \triangleleft x)}&=\frac{\eta(s_1,s_2t_0)\eta(t_0,s_2t_0)}{\eta(s_2,t_0 \triangleleft x)}=\frac{\eta(s_1,s_2t_0)\eta(t_0,s_2)}{\eta(s_2,t_0 \triangleleft x)}\\
&=\eta(s_1,s_2t_0)\eta(t_0,s_2)\eta(t_0\triangleleft x,s_2)=\eta(s_1,s_2t_0)\eta(t_0t_0\triangleleft x,s_2)\\
&=\eta(s_1,s_2t_0),
\end{align*}
the last equality follows from the assumption about $H_G$ in Remark \ref{rk2.1}, we know $\frac{\eta(s_1t_0,s_2t_0)}{\eta(s_2,t_0 \triangleleft x)}=\eta(s_1,s_2t_0)$.
\end{proof}

\section{Solutions of quasitriangular structures on $\Bbbk^G\#_{\sigma,\tau}\Bbbk \mathbb{Z}_{2}$}\label{sec5}
\subsection{General solutions for quasitriangular structures on $\Bbbk^G\#_{\sigma,\tau}\Bbbk \mathbb{Z}_{2}$}
Let $R,R'$ be non-trivial quastriangular structures on $\Bbbk^G\#_{\sigma,\tau}\Bbbk \mathbb{Z}_{2}$ and assume that the four maps associated with $R$ (resp. $R'$) are $w^i(1\leq i\leq 4)$ (resp. $w'^i(1\leq i\leq 4)$), then we can use these maps to define four other maps $v^i(1\leq i\leq 4)$ as follows
\begin{align*}
v^1(s_1,s_2):=\frac{w^1(s_1,s_2)}{w'^1(s_1,s_2)},\;v^2(s,t):=\frac{w^2(s,t)}{w'^2(s,t)}\\
v^3(t,s):=\frac{w^3(t,s)}{w'^3(t,s)},\;v^4(t_1,t_2):=\frac{w^4(t_1,t_2)}{w'^4(t_1,t_2)},
\end{align*}
where $s,s_1,s_2\in S$ and $t,t_1,t_2\in T$. Using the data $(G,\triangleleft,\sigma,\tau)$ of $\Bbbk^G\#_{\sigma,\tau}\Bbbk \mathbb{Z}_{2}$ we can induce another data $(G',\triangleleft',\sigma',\tau')$ by making $G':=G,\;\triangleleft':=\triangleleft$ and $\sigma'(g):= 1,\;\tau'(g,h):=1$ for $g,h\in G$. Then the data $(G',\triangleleft',\sigma',\tau')$ determines a Hopf algebra by Definition \ref{def2.1.2} and we simply denote it as $\Bbbk^G\#\Bbbk \mathbb{Z}_{2}$. Then we have

\begin{proposition}\label{pro5.1.1}
Let $R''$ be the form (ii) in Proposition \ref{pro2.1.1} and the $(w'')^i(1\leq i\leq 4)$ of it are the $v^i(1\leq i\leq 4)$ above, then $R''$ is a quasitriangular structure on $\Bbbk^G\#\Bbbk \mathbb{Z}_{2}$.
\end{proposition}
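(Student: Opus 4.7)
The plan is to apply Theorem~\ref{thm4.1.1} to the auxiliary Hopf algebra $\Bbbk^G\#\Bbbk \mathbb{Z}_2$ determined by the data $(G,\triangleleft,\sigma',\tau')=(G,\triangleleft,1,1)$. Before doing so, I would check that this Hopf algebra falls within the standing framework of Remark~\ref{rk2.1}: the group $G$, the action $\triangleleft$, the sets $S,T$, and the distinguished element $b$ of Proposition~\ref{pro2.1.2} are inherited unchanged from $H_G$, so conditions (i)--(ii) of Proposition~\ref{pro2.1.2} transfer automatically, while (iii) holds trivially since $\tau'\equiv 1$ is symmetric. The compatibility between $\sigma'=1$ and $\tau'=1$ demanded in Section~2.1 is also immediate.

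The core of the argument is to verify that $v^4$ is a quasitriangular function on $\Bbbk^G\#\Bbbk\mathbb{Z}_2$, i.e.\ that it satisfies conditions (i)--(v) of Definition~\ref{def4.1.1} with respect to the trivial cocycle data. Since both $w^4$ and $w'^4$ satisfy the identities of Proposition~\ref{pro4.1.1} with respect to the original $\sigma,\tau$, I would simply divide the $w^4$-identity by the corresponding $w'^4$-identity. Every $\sigma$- and $\tau$-factor on each side cancels, leaving exactly the identity required of $v^4$ in the trivial case: for instance (iii) collapses to $v^4(t,t_1)\,v^4(t^{-1},t_1\triangleleft x)=1$, and (v) collapses to $v^4(t_1\triangleleft x,t_2\triangleleft x)=v^4(t_1,t_2)$; conditions (i), (ii), (iv) follow by the same cancellation, with the common $\tau(s,t_i)$ prefactors dropping out of the ratios.

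Next I would confirm that $v^1,v^2,v^3$ as defined in the statement coincide with the maps produced from $v^4$ by the formulas of Lemma~\ref{lem4.1.2} applied to $\Bbbk^G\#\Bbbk\mathbb{Z}_2$ (where every $\tau$-prefactor specializes to $1$). This is once more the division identity applied to the two Lemma~\ref{lem4.1.2} expressions for $R$ and $R'$. Invoking Theorem~\ref{thm4.1.1}, $v^4$ determines a unique non-trivial quasitriangular structure on $\Bbbk^G\#\Bbbk\mathbb{Z}_2$, and by the uniqueness clause this structure must be precisely $R''$.

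There is no substantive obstacle: the whole argument is a systematic cancellation together with the observation that the auxiliary Hopf algebra lies within the standing framework. The only bookkeeping point to track is that the $w'^i$ take values in $\Bbbk^\times$, so that the ratios $v^i$ are well defined and are themselves $\Bbbk^\times$-valued.
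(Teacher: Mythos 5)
Your proposal is correct and follows essentially the same route as the paper: both arguments divide the quasitriangular-function identities for $w^4$ and $w'^4$ so that all $\sigma$- and $\tau$-factors cancel, observe that $v^1,v^2,v^3$ are then given by the Lemma~\ref{lem4.1.2} formulas for $v^4$ with trivial cocycle, and conclude via (the proof of) Theorem~\ref{thm4.1.1} applied to $\Bbbk^G\#\Bbbk\mathbb{Z}_2$. Your only addition is the explicit check that the trivial-data Hopf algebra satisfies the standing hypotheses of Remark~\ref{rk2.1}, a point the paper leaves implicit but which is needed for Theorem~\ref{thm4.1.1} to apply.
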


\begin{proof}
Since $R,R'$ are non-trivial quasitriangular structures on $\Bbbk^G\#_{\sigma,\tau}\Bbbk \mathbb{Z}_{2}$, we know $w^4,\;w'^4$ are quasitriangular functions on $\Bbbk^G\#_{\sigma,\tau}\Bbbk \mathbb{Z}_{2}$. Then we can easily check that $v^4$ is a quasitriangular function on $\Bbbk^G\#\Bbbk \mathbb{Z}_{2}$. And furthermore, $v^i(1\leq i\leq 3)$ are given as follows because of Lemma \ref{lem4.1.2}
\begin{align*}
v^1(s_1,s_2)=\frac{v^4(s_1t_1,s_2t_2)v^4(t_1,t_2)}{v^4(s_1t_1,t_2)v^4(t_1,s_2t_2)},\;v^2(s,t)=\frac{v^4(st_1,t)}{v^4(t_1,t)},\;v^3(t,s)=\frac{v^4(t\triangleleft x,st_1)}{v^4(t\triangleleft x,t_1)}.
\end{align*}
Therefore we know $R''$ is a quasitriangular structure on $\Bbbk^G\#\Bbbk \mathbb{Z}_{2}$ due to the proof of Theorem \ref{thm4.1.1}.
\end{proof}

We can view $R''$ as $\frac{R}{R'}$, then we can analogize the solutions of a system of linear equations and give the following definition
\begin{definition}\label{def5.1.1}
We call a \emph{quasitriangular structure} on $\Bbbk^G\#\Bbbk \mathbb{Z}_{2}$ as a general solution for $\Bbbk^G\#_{\sigma,\tau}\Bbbk \mathbb{Z}_{2}$. Naturally, we call a quasitriangular structure on $\Bbbk^G\#_{\sigma,\tau}\Bbbk \mathbb{Z}_{2}$ as a \emph{special solution for $\Bbbk^G\#_{\sigma,\tau}\Bbbk \mathbb{Z}_{2}$}.
\end{definition}
Then the problem of solving all quasitriangular structures on $\Bbbk^G\#_{\sigma,\tau}\Bbbk \mathbb{Z}_{2}$ can be reduced to solve all general solutions and find a special solution. In this subsection, we will give all general solutions for $\Bbbk^G\#_{\sigma,\tau}\Bbbk \mathbb{Z}_{2}$. To do this, we first give the following lemma

\begin{lemma}\label{lem5.1.1}
Assume that $H$ is a finite abelian group and $\phi:H\rightarrow H$ is a group isomorphism. Let $S_H:=\{h\in H|\;\phi(h)=h\}$ and let $T_H:=\{h\in H|\;\phi(h)\neq h\}$. If $|S_H|=|T_H|$ and there is $c\in H$ such that $c^2=1$ and $\phi(h)=hc$ for $h\in T_H$, then there are $s_1,...,s_n\in S_H$ and $a\in T_H$ such that $H=\langle s_i,a|\;s_i^{k_i}=1,a^2=s_1^{m_1}...s_n^{m_n}, s_is_j=s_js_i,as_i=s_ia\rangle_{1\leq i,j\leq n}$ as group for some natural numbers $n,k_i,m_j$.
\end{lemma}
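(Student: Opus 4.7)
The plan is to identify $S_H$ as an index-$2$ subgroup of $H$, apply the structure theorem for finite abelian groups to $S_H$, and then adjoin one coset representative $a \in T_H$ to obtain the asserted presentation.

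First I would establish the basic structural facts. Since $\phi$ is a group homomorphism, the fixed-point set $S_H$ is closed under products and inverses, hence is a subgroup of $H$. Next I would check that $c \in S_H$: if instead $c \in T_H$, then $\phi(c) = c \cdot c = c^{2} = 1$, and injectivity of $\phi$ forces $c = 1$; but $c = 1$ would make $\phi$ act trivially on all of $H$, contradicting $T_H \neq \varnothing$ (which follows from $|T_H| = |S_H| \geq 1$). Fixing any $a \in T_H$, the map $s \mapsto sa$ sends $S_H$ into $T_H$, since $\phi(sa) = sac \neq sa$ as $c \neq 1$; conversely, for $h \in T_H$ one computes $\phi(a^{-1}h) = c^{-1}a^{-1}\cdot hc = a^{-1}h$, using that $H$ is abelian and $c^{2} = 1$, so $a^{-1}h \in S_H$. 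Combined with $|S_H| = |T_H|$ this yields the coset decomposition $H = S_H \sqcup aS_H$; in particular $a^{2} \in S_H$.

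With this in hand I would invoke the structure theorem for finite abelian groups to write $S_H = \langle s_1 \rangle \times \cdots \times \langle s_n \rangle$ with $s_i$ of order $k_i$. Then $a^{2} \in S_H$ expands as $s_1^{m_1} \cdots s_n^{m_n}$ for some integers $m_i$, and the relations $s_i^{k_i} = 1$, $s_i s_j = s_j s_i$, $a s_i = s_i a$, $a^{2} = s_1^{m_1}\cdots s_n^{m_n}$ all hold in $H$. This produces a surjective homomorphism $\pi\colon \Gamma \to H$ from the abstract group $\Gamma$ defined by that presentation.

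The last step is an order count. Using the commutation relations to gather $a$'s on the right and the relation $a^{2} = s_1^{m_1}\cdots s_n^{m_n}$ to reduce even powers of $a$, every element of $\Gamma$ admits a normal form $s_1^{i_1}\cdots s_n^{i_n} a^{j}$ with $0 \leq i_l < k_l$ and $j \in \{0,1\}$, giving $|\Gamma| \leq 2\prod_l k_l = 2|S_H| = |H|$. Together with surjectivity of $\pi$ this forces $\pi$ to be an isomorphism. The only real point of care is this normal-form reduction in $\Gamma$: once one verifies that the listed relations suffice to bring every word into the stated form, the order bound closes and the presentation is established.
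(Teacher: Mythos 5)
Your proposal is correct and follows essentially the same route as the paper: decompose $S_H$ via the structure theorem, adjoin a coset representative $a\in T_H$ with $a^2\in S_H$, map the abstract presented group onto $H$, and conclude by the order bound $|\Gamma|\leq 2\prod_l k_l = 2|S_H| = |H|$, which is exactly the paper's counting argument $|H'|\leq 2|S_{H'}|=2|S_H|=|H|$. The only differences are cosmetic: you verify $c\in S_H$ and prove $T_H=aS_H$ by a two-sided containment, where the paper gets $T_H=aS_H$ from $aS_H\subseteq T_H$ together with $|S_H|=|T_H|$.
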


\begin{proof}
Since $S_H$ is a subgroup of $H$, we can find $s_1...s_m\in S_H$ such that $S_H= \langle s_i|\;s_i^{k_i}=1, s_is_j=s_js_i\rangle_{1\leq i,j\leq n}$ for some natural numbers $k_i(1\leq i \leq n)$. Because $T_H$ is not empty, we can find $a\in T_H$. But $a^2\in S_H$ due to $\phi(a^2)=(ac)^2=a^2$, so we can assume $a^2=s_1^{m_1}...s_n^{m_n}$ for some natural numbers $m_i(1\leq i \leq n)$. Let $H'$ be a universal group such that $H'=\langle S_i,A|\;S_i^{k_i}=1,A^2=S_1^{m_1}...S_n^{m_n}, S_iS_j=S_jS_i,AS_i=S_iA\rangle_{1\leq i,j\leq n}$, then we will prove that $H\cong H'$ as group and hence we completed the proof. We define a group homomorphism $f:H'\rightarrow H$ through letting $f(S_i):=s_i,f(A):=a$ for $1\leq i \leq n$, then $f$ is well defined by the definition of $H'$. Owing to $aS_H\subseteq T_H$ and $|S_H|=|T_H|$, we obtain $T_H=aS_H$. Thus we can see that $f$ is surjective. To show $f$ is injective, we only need to show $|H'|\leq |H|$. Let $S_{H'}:=\langle s_i\rangle_{1\leq i \leq n}$, then $f|_{S_{H'}}:S_{H'}\rightarrow S_H$ is onto by definition and hence $|S_{H'}|\geq |S_{H}|$. But $|S_{H'}|\leq |S_{H}|$ by definition of $H'$, so we know $|S_{H'}|=|S_{H}|$. Directly, we have $H'=S_{H'}\cup AS_{H'}$ and therefore $|H'|\leq 2|S_{H'}|$. Since $|H|=2|S_{H}|$ and $|S_{H'}|=|S_{H}|$, we have $|H'|\leq |H|$.
\end{proof}

\begin{corollary} \label{coro5.1.1}
For the data $(G,\triangleleft,\sigma,\tau)$, there are $s_1,...,s_n\in S$ and $a\in T$ such that $G = \langle s_i,a|\;s_i^{k_i}=1,a^2=s_1^{m_1}...s_n^{m_n}, s_is_j=s_js_i,as_i=s_ia\rangle_{1\leq i,j\leq n}$ as group for some natural numbers $n,k_i,m_j$.
\end{corollary}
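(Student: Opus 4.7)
The plan is to apply Lemma \ref{lem5.1.1} directly, with $H := G$ and $\phi$ taken to be the automorphism of $G$ given by the action of $x$, i.e.\ $\phi(g) := g \triangleleft x$ for all $g \in G$. All that is really needed is to verify that the hypotheses of the lemma are met; once that is done, the conclusion of the lemma is verbatim the statement of the corollary (with $S_H = S$, $T_H = T$, $c = b$).

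The verification proceeds as follows. First, $G$ is a finite abelian group by the standing assumption in Section~2, and $\triangleleft x$ is a group automorphism of $G$ by item (i) in the data defining $H_G$. The fixed-point set under $\phi$ is exactly $S := \{g \in G \mid g \triangleleft x = g\}$ and its complement is $T$, matching the notation $S_H, T_H$ of Lemma \ref{lem5.1.1}. Next, by Remark \ref{rk2.1} we are working under the conclusions of Proposition \ref{pro2.1.2}: part (i) gives $|S| = |T|$, and part (ii) produces an element $b \in S$ with $b^2 = 1$ such that $t \triangleleft x = tb$ for every $t \in T$. Setting $c := b$ realises exactly the remaining hypothesis of Lemma \ref{lem5.1.1}, namely that there exists $c \in H$ with $c^2 = 1$ and $\phi(h) = hc$ for all $h \in T_H$.

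Having matched all hypotheses, Lemma \ref{lem5.1.1} delivers generators $s_1, \dots, s_n \in S$ and $a \in T$ together with relations $s_i^{k_i} = 1$, $a^2 = s_1^{m_1}\cdots s_n^{m_n}$, $s_is_j = s_js_i$, $as_i = s_ia$ that present $G$, which is the presentation claimed. There is no real obstacle here; the work has already been absorbed into Proposition \ref{pro2.1.2} and Lemma \ref{lem5.1.1}, and the corollary is just the specialisation of the abstract group-theoretic statement to the concrete abelian extension data at hand. The only thing worth flagging in the write-up is a brief pointer to Remark \ref{rk2.1} to explain why parts (i)--(ii) of Proposition \ref{pro2.1.2} are available without further assumption.
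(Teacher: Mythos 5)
Your proposal is correct and follows essentially the same route as the paper: the paper's own proof is a one-line application of Lemma \ref{lem5.1.1} to $G$ with $\phi$ the action of $x$, using the standing hypotheses from Remark \ref{rk2.1} (i.e.\ conditions (i)--(ii) of Proposition \ref{pro2.1.2}) to supply $|S|=|T|$ and the element $b$ playing the role of $c$. Your write-up simply makes explicit the hypothesis-checking that the paper leaves implicit.
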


\begin{proof}
Since the assumption about $\Bbbk^G\#_{\sigma,\tau}\Bbbk \mathbb{Z}_{2}$ in Remark \ref{rk2.1} and the Lemma \ref{lem5.1.1} above, we get what we want.
\end{proof}

By Corollary \ref{coro5.1.1}, we can assume that $G=\langle s_i,a|\;s_i^{k_i}=1,a^2=s_1^{m_1}...s_n^{m_n}, s_is_j=s_js_i,as_i=s_ia\rangle_{1\leq i,j\leq n}$ as group for some natural numbers $n,k_i,m_j$ in the following content, where $s_i\in S,a\in T$ for $1\leq i \leq n$. Let $\Bbbk^G\#_{\sigma,\tau}\Bbbk \mathbb{Z}_{2}$ as before, we associate a free object with it as follows. We define $F_G$ as a
free $\Bbbk$ algebra generated by set $\{x_{s_i},x_a,x_1,e_{s_i},e_a,e_1\}_{1 \leq i \leq n}$, and let $I_G$ be the ideal generated by $\{x_{s_i}x_1-x_{s_i},x_1x_{s_i}-x_{s_i},x_{s_i}x_{s_j}-x_{s_j}x_{s_i},x_{s_i}x_{a}-\eta(s_i,a)x_{a}x_{s_i},x_{s_i}^{k_i}-P_{s_i^{k_i}}x_1,x_a^2-\tau(a,a)P_{s_1^{m_1}...s_n^{m_n}}^{-1}x_{s_1}^{m_1}...x_{s_n}^{m_n}\}$
$\cup \{x_1e_1,e_1x_1,e_{s_i}e_1-e_{s_i},e_1e_{s_i}-e_{s_i},e_{s_i}e_{s_j}-e_{s_j}e_{s_i},e_{s_i}e_{a}-e_{a}e_{s_i},e_{s_i}^{k_i}-e_1,e_a^2-e_{s_1}^{m_1}...e_{s_n}^{m_n}\}$, where $1 \leq i,j \leq n$ and $P_{g_1^{j_1}...g_n^{j_n}}\in \Bbbk$ is defined by the following equation
\begin{align}
\label{e5.1.p} X_{g_1}^{j_1}...X_{g_n}^{j_n}=P_{g_1^{j_1}...g_n^{j_n}}X_{g_1^{j_1}...g_n^{j_n}},\;g_1,...,g_n\in G,j_1,...,j_n\in \mathbb{N}.
\end{align}

For convenience, we agree that $P_{g_1^{j_1}...g_n^{j_n}}$ in the following content refers to the $P_{g_1^{j_1}...g_n^{j_n}}$ in equation \eqref{e5.1.p} above. Then we have the following lemma.

\begin{lemma}\label{lemm5.1.c}
Denote the dual Hopf algebra of $\Bbbk^G\#_{\sigma,\tau}\Bbbk \mathbb{Z}_{2}$ by $H^*$, then $H^*\cong A_G/I_G$ as an algebra.
\end{lemma}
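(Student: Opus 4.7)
The plan is to construct an algebra homomorphism $\varphi : F_G \to H^*$ by the universal property of the free algebra, verify that it descends to the quotient $F_G/I_G$, and then close the bijection by a dimension count. I set $\varphi(e_g) := E_g$ and $\varphi(x_g) := X_g$ on each generator $g \in \{1,s_1,\dots,s_n,a\}$.

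To see that $\varphi(I_G) = 0$, I check each listed relation directly via Lemma~\ref{lem2.2.2}. The $e$-relations are automatic since $E_g E_h = E_{gh}$ makes $\mathrm{span}\{E_g\}_{g\in G}$ a copy of the group algebra $\Bbbk G$, and every listed $e$-relation is already a relation in $G$. For the $x$-relations, $X_g X_h = \tau(g,h) X_{gh}$ gives unitality ($X_{s_i} X_1 = X_{s_i}$); combined with $\tau(s_i,s_j) = \tau(s_j, s_i)$ for $s_i,s_j \in S$ (Proposition~\ref{pro2.1.2}(iii)) it yields commutativity $X_{s_i} X_{s_j} = X_{s_j} X_{s_i}$; the $\eta$-commutation $X_{s_i} X_a = \eta(s_i,a) X_a X_{s_i}$ is the very definition of $\eta(g,h) = \tau(g,h)\tau(h,g)^{-1}$; and the power relations $X_{s_i}^{k_i} = P_{s_i^{k_i}} X_1$ and $X_a^2 = \tau(a,a) P_{s_1^{m_1}\cdots s_n^{m_n}}^{-1} X_{s_1}^{m_1}\cdots X_{s_n}^{m_n}$ follow by iterating Lemma~\ref{lem2.2.2} together with the group relations $s_i^{k_i}=1$ and $a^2=s_1^{m_1}\cdots s_n^{m_n}$ in $G$ (Corollary~\ref{coro5.1.1}) and the definition of the scalars $P$. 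The two cross generators $x_1 e_1, e_1 x_1$ map to $X_1 E_1 = E_1 X_1 = 0$. Thus $\varphi$ factors through $\bar\varphi : F_G/I_G \to H^*$, which is surjective because $\{E_g, X_g\}_{g \in G}$ spans $H^*$ and, via $G = \langle s_1,\dots,s_n, a\rangle$, each such element is a nonzero scalar multiple of a product of the images of the generators.

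For injectivity, since $\dim H^* = 2|G|$ and $|G| = 2 k_1 \cdots k_n$ by Corollary~\ref{coro5.1.1}, it suffices to show that $F_G/I_G$ is spanned by the $2|G|$ monomials
\[
\{ e_{s_1}^{i_1}\cdots e_{s_n}^{i_n} e_a^{\varepsilon},\ x_{s_1}^{i_1}\cdots x_{s_n}^{i_n} x_a^{\varepsilon} \;:\; 0 \le i_j < k_j,\ \varepsilon \in \{0,1\}\},
\]
where $e_a^0 := e_1$ and $x_a^0 := x_1$. The reduction to this normal form proceeds in three stages: first, every cross monomial (containing both an $e$-type and an $x$-type generator) collapses to zero — e.g.\ $x_{s_i} e_{s_j} = (x_{s_i} x_1)(e_1 e_{s_j}) = x_{s_i}(x_1 e_1) e_{s_j} = 0$, and analogously for the other combinations; second, the commutativity among the $e$-generators and the $\eta$-commutation among the $x$-generators let us move every $e_a$ (respectively $x_a$) to the rightmost position in any surviving monomial; third, the power relations $e_{s_i}^{k_i} = e_1$, $e_a^2 = e_{s_1}^{m_1}\cdots e_{s_n}^{m_n}$, $x_{s_i}^{k_i} = P_{s_i^{k_i}} x_1$, and $x_a^2 = \tau(a,a) P_{s_1^{m_1}\cdots s_n^{m_n}}^{-1} x_{s_1}^{m_1}\cdots x_{s_n}^{m_n}$ reduce all exponents into the stated ranges. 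The main obstacle is verifying step (b)--(c): one must check that the $\eta$-twisted commutation plus the given power relations really do suffice to rewrite every word in normal form without producing hidden identifications. This is ultimately forced by the fact that the images $\{E_g, X_g\}_{g\in G}$ are linearly independent in $H^*$, so $\bar\varphi$ sends the $2|G|$ normal monomials to a basis of $H^*$, pinning down their independence and completing the isomorphism.
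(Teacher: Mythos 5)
Your proof takes the same route as the paper's: the paper also defines the algebra map on the generators, checks well-definedness from Lemma \ref{lem2.2.2} and the definition of $I_G$, gets surjectivity because $\{E_{s_i},X_{s_i},E_a,X_a\}$ lie in the image, and finishes with the dimension bound $\dim(A_G/I_G)\le\dim H^*$; the only difference is that you spell out the normal-form spanning argument which the paper compresses into the single phrase ``owing to the definition of $I_G$''. One caveat on that spelled-out part: your collapse of cross monomials, ``analogously for the other combinations'', silently uses $x_ax_1=x_a$ and $e_1e_a=e_a$, which are \emph{not} among the listed generators of $I_G$ (only the $s_i$-versions and $x_1e_1,\,e_1x_1$ appear there --- an omission in the paper's definition rather than in your argument); with these unit relations included, as clearly intended, your three-stage rewriting goes through. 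Finally, keep the two halves of the dimension count logically separate: linear independence of the images $\{E_g,X_g\}_{g\in G}$ rules out hidden identifications among the $2|G|$ normal monomials (the lower bound), but it cannot ``force'' the rewriting of an arbitrary word into normal form (the upper bound), which must be verified purely inside $F_G/I_G$, as your stages (a)--(c) do once the missing relations are supplied.
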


\begin{proof}
We define an algebra map $\pi:A_G/I_G\rightarrow H^*$ by setting
\begin{gather*}
\pi(x_{s_i})=X_{s_i},\; \pi(x_a)=X_a,\;\pi(x_1)=X_1,\; \pi(e_{s_i})=E_{s_i},\; \pi(e_a)=E_a,\;\pi(e_1)=E_1.
\end{gather*}
Then we will show that $\pi$ is well defined and it is bijective. Since Lemma
\ref{lem2.2.2} and the definition of $I_G$, we know $\pi$ is well defined. Next we show $\pi$ is bijective. Because $\{X_{s_i},X_{a},E_{s_i},E_{a}\}\subseteq \textrm{Im}\pi$, we know $\pi$ is surjective. Owing to the definition of $I_G$, we obtain  $\text{dim}(A_G/I_G)\leq \text{dim}(H^*)$. But we have shown $\pi$ is surjective, so $\text{dim}(A_G/I_G)=\text{dim}(H^*)$ and hence $\pi$ is bijective.
\end{proof}

Let $R$ be the form (ii) on $\Bbbk^G\#_{\sigma,\tau}\Bbbk \mathbb{Z}_{2}$ in Proposition \ref{pro2.1.1}. For our purposes, we assume that $w^i(1\leq i\leq 4)$ of $R$ satisfy $w^1(1,s)=w^1(s,1)=1$ and $w^2(1,t)=w^3(t,1)$ for $s\in S,t\in T$ in the following content. Then we have
\begin{lemma}\label{lem5.1.2}
The map $l_R$ is an algebra homomorphism if and only if the following conditions hold
\begin{itemize}
  \item[(i)] $l(E_{s_1})^{i_1}...l(E_{s_n})^{i_n}=l(E_{s_1^{i_1}...s_n^{i_n}}),\;l(X_{s_1})^{i_1}...l(X_{s_n})^{i_n}=P_{s_1^{i_1}...s_n^{i_n}}l(X_{s_1^{i_1}...s_n^{i_n}})$;
  \item[(ii)] $l(E_{s})l(E_a)=l(E_{sa}),\;l(X_{s})l(X_a)=\tau(s,a)l(X_{sa})$;
   \item[(iii)]$l(E_{s_i})^{k_i}=l(E_1),\;l(X_{s_i})^{k_i}=P_{{s_i}^{k_i}}l(X_1)$;
   \item[(iv)] $l(E_a)^2=l(E_{s_1^{m_1}...s_1^{m_n}}),\;l(X_a)^2=P_{a^2}\;l(X_{s_1^{m_1}...s_1^{m_n}})$;
   \item[(v)] $w^2(s,t\triangleleft x)=\eta(s,t)w^2(s,t)$;
\end{itemize}
where $s\in S,\;1\leq i \leq n$.
\end{lemma}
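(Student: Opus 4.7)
The strategy is to exploit the presentation $H^*\cong A_G/I_G$ supplied by Lemma~\ref{lemm5.1.c}. Since $l_R$ is linear and $H^*$ is generated by $\{E_{s_i},E_a,X_{s_i},X_a\}$ subject to the defining relations of $I_G$, checking that $l_R$ is an algebra homomorphism reduces to a finite set of identities — one for each defining relation of $I_G$ — and conditions (i)--(v) are exactly these identities (together with the appropriate commutation check).

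For the forward direction I would assume $l_R$ is an algebra homomorphism and apply it to each defining relation of $I_G$ in turn. The monomial relations $e_{s_1}^{i_1}\cdots e_{s_n}^{i_n}=e_{s_1^{i_1}\cdots s_n^{i_n}}$ and $x_{s_1}^{i_1}\cdots x_{s_n}^{i_n}=P_{s_1^{i_1}\cdots s_n^{i_n}}\,x_{s_1^{i_1}\cdots s_n^{i_n}}$ yield~(i); the mixed products $e_se_a$ and $x_sx_a$ yield~(ii); the $k_i$-th power relations $x_{s_i}^{k_i}$, $e_{s_i}^{k_i}$ yield~(iii); the relations for $e_a^2$ and $x_a^2$ yield~(iv). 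Finally the twisted commutation $x_{s_i}x_a=\eta(s_i,a)\,x_ax_{s_i}$, after expanding $l_R(X_{s_i})l_R(X_a)$ and $l_R(X_a)l_R(X_{s_i})$ in the basis $\{e_t,e_tx\}_{t\in T}$ and using (ii) to eliminate $w^4(sa,\cdot)$, collapses precisely to~(v).

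For the reverse direction I assume (i)--(v) and verify that $l_R(uv)=l_R(u)l_R(v)$ on every pair of basis elements of $H^*$. The cross products $E_gX_h=X_hE_g=0$ are automatic, because the images $l_R(E_g)$ lie in $\mathrm{span}\{e_{s},e_{s}x\}_{s\in S}$ while $l_R(X_g)$ lie in $\mathrm{span}\{e_{t},e_{t}x\}_{t\in T}$, and these supports multiply to zero inside $H_G$. For the non-trivial identities $l_R(E_gE_h)=l_R(E_g)l_R(E_h)$ and $l_R(X_gX_h)=\tau(g,h)\,l_R(X_{gh})$, I would use the factorization $g=s_1^{i_1}\cdots s_n^{i_n}a^{\epsilon}$ from Corollary~\ref{coro5.1.1}, together with the criteria of Lemmas~\ref{lem4.1.a}--\ref{lem4.1.b}, to reduce each identity inductively to one of the conditions (i)--(iv); the only step that moves $X_a$ past some $X_s$ (or $E_a$ past $E_s$) is controlled exactly by~(v).

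The main obstacle will be the careful bookkeeping of $\tau$-cocycle factors and of the constants $P_{g_1^{j_1}\cdots g_n^{j_n}}$ when normal-ordering an arbitrary product via the generators $\{s_1,\dots,s_n,a\}$: one must check that the $\tau$-twists introduced each time $X_gX_h$ is replaced by $\tau(g,h)X_{gh}$ align with those built into~(ii) and~(v), so that no identity outside the list~(i)--(v) is forced. Once this is verified, the presentation in Lemma~\ref{lemm5.1.c} guarantees that~(i)--(v) suffice to make $l_R$ multiplicative on all of $H^*$.
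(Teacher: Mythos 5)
Your proposal is correct and follows essentially the same route as the paper: the paper likewise defines an algebra map on the generators $\{E_{s_i},E_a,X_{s_i},X_a\}$, uses the presentation $H^*\cong A_G/I_G$ of Lemma \ref{lemm5.1.c} so that multiplicativity reduces to the defining relations of $I_G$ (with (i)--(iv) matching those relations and the twisted commutation $X_sX_a=\eta(s,a)X_aX_s$ handled exactly by (v) after expanding $l(X_s)l(X_a)$ and $l(X_a)l(X_s)$ in the basis $\{e_t,e_tx\}_{t\in T}$), and then concludes $\pi=l_R$ from (i) and (ii). Your observation that the cross products $l(E_g)l(X_h)$ vanish for support reasons is also how the paper disposes of those cases (in the proof of Lemma \ref{lem4.1.4}), so no gap remains.
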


\begin{proof}
We define an algebra map $\pi:H^*\rightarrow H$ by setting
\begin{gather*}
\pi(X_{s_i})=l(X_{s_i}),\; \pi(X_a)=l(X_a),\;\pi(X_1)=l(X_1),\\ \pi(E_{s_i})=l(E_{s_i}),\; \pi(E_a)=l(E_a),\;\pi(E_1)=l(E_1).
\end{gather*}
Then we will show that $\pi$ is well defined and $\pi=l_R$. To show $\pi$ is well defined, the only non-trivial case is to prove that $\pi(X_s)\pi(X_a)=\eta(s,a)l(X_a)l(X_s)$. Directly we have $\pi(X_s)\pi(X_a)=l(X_s)l(X_a)$ and
\begin{align*}
l(X_{s})l(X_a)=[\sum\limits_{t\in T}w^2(s,t)e_{t}][\sum\limits_{t\in T}w^4(a,t)e_{t}x]=[\sum\limits_{t\in T}w^2(s,t)w^4(a,t)e_{t}x].
\end{align*}
Similarly, we get $\pi(X_a)\pi(X_s)=l(X_a)l(X_s)$ and
\begin{align*}
l(X_a)l(X_{s})=[\sum\limits_{t\in T}w^4(a,t)e_{t}x][\sum\limits_{t\in T}w^2(s,t)e_{t}]=[\sum\limits_{t\in T}w^2(s,t\triangleleft x)w^4(a,t)e_{t}x].
\end{align*}
Owing to (v), we obtain $\pi(X_s)\pi(X_a)=\eta(s,a)l(X_a)l(X_s)$. Due to (i),(ii), we get $\pi=l_R$ and thus we have completed the proof.
\end{proof}
Similar to Lemma \ref{lem5.1.2}, we have
\begin{lemma}\label{lem5.1.3}
The map $r_R$ is an algebra anti-homomorphism if and only if the following conditions hold
\begin{itemize}
  \item[(i)] $r(E_{s_1})^{i_1}...r(E_{s_n})^{i_n}=r(E_{s_1^{i_1}...s_n^{i_n}}),\;r(X_{s_1})^{i_1}...r(X_{s_n})^{i_n}=P_{s_1^{i_1}...s_n^{i_n}}r(X_{s_1^{i_1}...s_n^{i_n}})$;
  \item[(ii)] $r(E_{s})r(E_a)=r(E_{sa}),\;r(X_a)r(X_{s})=\tau(s,a)r(X_{sa})$;
   \item[(iii)]$r(E_{s_i})^{k_i}=r(E_1),\;r(X_{s_i})^{k_i}=P_{{s_i}^{k_i}}\;r(X_1),\;r(X_a)^2=P_{a^2}\;r(X_{s_1^{m_1}...s_1^{m_n}})$;
   \item[(iv)] $r(E_a)^2=r(E_{s_1^{m_1}...s_1^{m_n}}),\;r(X_a)^2=P_{a^2}\;r(X_{s_1^{m_1}...s_1^{m_n}})$;
   \item[(v)] $w^3(t\triangleleft x,s)=\eta(t,s)w^3(t,s)$;
\end{itemize}
where $s\in S,\;1\leq i \leq n$.
\end{lemma}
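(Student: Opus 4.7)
The plan is to mimic the proof of Lemma \ref{lem5.1.2}. Since an algebra anti-homomorphism $r_R:H^*\to H$ is the same as an algebra homomorphism $r_R:H^*\to H^{op}$, I will use the presentation $H^*\cong A_G/I_G$ from Lemma \ref{lemm5.1.c} together with the universal property of the free algebra. Specifically, define an algebra map $\pi:A_G\to H^{op}$ by sending the generators to their $r_R$-images, namely $\pi(x_{s_i}):=r(X_{s_i})$, $\pi(x_a):=r(X_a)$, $\pi(x_1):=r(X_1)$, $\pi(e_{s_i}):=r(E_{s_i})$, $\pi(e_a):=r(E_a)$, $\pi(e_1):=r(E_1)$, and show that $\pi$ descends to $A_G/I_G$ if and only if conditions (i)--(v) hold. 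Once well-definedness is established, $\pi$ agrees with $r_R$ on generators, so $\pi=r_R$; the converse (that an anti-homomorphism $r_R$ forces (i)--(v)) is obtained by applying $r_R$ to each relation of $I_G$.

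The routine generators of $I_G$ (those involving only the $e$'s, and the products of $x$-generators indexed by elements of $S$ whose commutation is genuine) translate verbatim into (i)--(iv) by expanding each product with the explicit formulas $r(E_s)=\sum_{s'}w^1(s',s)e_{s'}$, $r(E_t)=\sum_s w^2(s,t)e_sx$, $r(X_s)=\sum_t w^3(t,s)e_t$, $r(X_a)=\sum_t w^4(t,a)e_tx$ and using $e_te_{t'}=\delta_{t,t'}e_t$. The substantive check is the twisted commutation $x_s x_a-\eta(s,a)x_a x_s$: since $\pi$ is an anti-homomorphism, this becomes $r(X_a)r(X_s)=\eta(s,a)r(X_s)r(X_a)$. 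Expanding both sides with $e_{t'}x\cdot e_t=e_{t'}e_{t\triangleleft x}x=\delta_{t',\,t\triangleleft x}e_{t'}x$ yields the scalar identity
\[
w^3(u\triangleleft x,s)=\eta(s,a)\,w^3(u,s)\qquad\text{for all }u\in T.
\]
The cocycle compatibility $\sigma(gh)\sigma(g)^{-1}\sigma(h)^{-1}=\tau(g,h)\tau(g\triangleleft x,h\triangleleft x)$ together with $\eta|_{S\times S}\equiv 1$ from Proposition \ref{pro2.1.2} forces $\eta(s,a)^2=1$, so $\eta(s,a)=\eta(a,s)=\eta(t,s)$ for every $t\in T$, and the displayed equation is exactly condition (v).

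The main obstacle is this twisted-commutation computation, which is the direct analogue of the non-trivial case in Lemma \ref{lem5.1.2} but with the roles of $w^2$ and $w^3$ interchanged and a subtle $\triangleleft x$-shift in the second argument of $w^3$ (arising from $e_{t'}x\cdot e_t=e_{t'}e_{t\triangleleft x}x$, whereas the $l$-side product involves $e_tx\cdot e_{t'}=e_te_{t'\triangleleft x}x$). A cleaner alternative that bypasses the calculation is to invoke the $\varphi$-symmetry: the identity $l_{R_\varphi}=\varphi\circ r_R\circ \varphi^*$ from the proof of Proposition \ref{pro3.1.1} shows that $r_R$ is an algebra anti-homomorphism if and only if $l_{R_\varphi}$ is an algebra homomorphism. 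One may then apply Lemma \ref{lem5.1.2} to $R_\varphi$ and translate the resulting conditions back via equation \eqref{r}, using $(w')^2(s,t)=w^3(t\triangleleft x,s)$ and $(w')^4(t_1,t_2)=w^4(t_2,t_1)$; the transported identities are immediately seen to coincide with (i)--(v) of the present lemma (the $P$-power relations of (iii)--(iv) are preserved because the reversed-order products of the $X$-generators produce the same $P$-coefficients, using that $\tau$ is a $2$-cocycle and that the $s_i$'s pairwise commute in $H^*$).
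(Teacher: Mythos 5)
Your proposal is correct, and in fact your ``cleaner alternative'' is verbatim the paper's proof: the paper disposes of the lemma in two lines by observing that $R_\varphi$ satisfies the hypotheses of Lemma \ref{lem5.1.2}, and that by the identity $l_{R_\varphi}=\varphi\circ r_R\circ\varphi^\ast$ from the proof of Proposition \ref{pro3.1.1}, $r_R$ is an algebra anti-homomorphism if and only if $l_{R_\varphi}$ is an algebra homomorphism; the dictionary from \eqref{r}, namely $(w')^1(s_1,s_2)=w^1(s_2,s_1)$, $(w')^2(s,t)=w^3(t\triangleleft x,s)$, $(w')^4(t_1,t_2)=w^4(t_2,t_1)$, then converts conditions (i)--(v) of Lemma \ref{lem5.1.2} for $R_\varphi$ into (i)--(v) of the present statement (for instance $(w')^2(s,t\triangleleft x)=\eta(s,t)(w')^2(s,t)$ becomes exactly $w^3(t\triangleleft x,s)=\eta(t,s)w^3(t,s)$). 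Your primary, direct route --- presenting $H^\ast\cong A_G/I_G$ as in Lemma \ref{lemm5.1.c} and checking that the generator assignment yields an algebra map into $H^{op}$ --- is the technique the paper itself uses for Lemma \ref{lem5.1.2}, and your key computation is right: $r(X_a)r(X_s)=\sum_u w^4(u,a)w^3(u\triangleleft x,s)e_ux$ versus $r(X_s)r(X_a)=\sum_u w^3(u,s)w^4(u,a)e_ux$ gives $w^3(u\triangleleft x,s)=\eta(s,a)w^3(u,s)$, and since $\eta(s,a)^2=1$ (either via the $\sigma$--$\tau$ compatibility as you argue, or more directly from $\eta(s,a)^2=\eta(s,a^2)=1$ with $a^2\in S$) while $\eta(t,s)=\eta(a,s)$ for every $t\in aS=T$, this is condition (v). One small caveat on the direct route: agreement of $\pi$ with $r_R$ on generators alone does not yield $\pi=r_R$; as in the paper's proof of Lemma \ref{lem5.1.2} (``Due to (i),(ii), we get $\pi=l_R$''), you need conditions (i) and (ii) to propagate the identification to the whole spanning set $\{E_g,X_g\}_{g\in G}$, since $r_R$ is a priori only linear. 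Your parenthetical remark that reversed-order $X$-products carry the same $P$-coefficients (because $\tau$ is symmetric on $S\times S$, so the $X_{s_i}$ commute in $H^\ast$) is exactly the point that makes the order-reversal in (i), (iii), (iv) harmless. What your direct computation buys is independence from the symmetry machinery of Section \ref{sec2.2}; what the paper's $\varphi$-trick buys is brevity, and it is the same device the paper reuses throughout (e.g.\ in Lemma \ref{lem4.1.2}(iii), Lemma \ref{lem4.1.7} and Proposition \ref{pro5.2.1}).
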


\begin{proof}
Consider the $R_\varphi$, then it can be seen that $R_\varphi$ such that the conditions of Lemma \ref{lem5.1.2} and so $l_{R_\varphi}$ is an algebra map. Since the proof of Proposition \ref{pro3.1.1}, we know $l_{R_\varphi}$ is an algebra map if and only if $r_R$ is an algebra anti-homomorphism and hence we have completed the proof.
\end{proof}

The following proposition can be used to determine when $R$ is a quasitriangular structure on $\Bbbk^G\#_{\sigma,\tau}\Bbbk \mathbb{Z}_{2}$.
\begin{proposition}\label{pro5.1.2}
Let $R$ be the form (ii) on $\Bbbk^G\#_{\sigma,\tau}\Bbbk \mathbb{Z}_{2}$ in Proposition \ref{pro2.1.1}, then $R$ is a quasitriangular structure on $\Bbbk^G\#_{\sigma,\tau}\Bbbk \mathbb{Z}_{2}$ if and only if $R$ such that the conditions of Lemma \ref{lem5.1.2}, Lemma \ref{lem5.1.3} and $w^4(ab,ab)=\frac{\tau(ab,ab)}{\tau(a,a)}w^4(a,a)$.
\end{proposition}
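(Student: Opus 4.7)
The plan is to invoke Lemma \ref{lem4.1.4}, which characterizes quasitriangularity of $R$ by the three requirements \eqref{e3.15}, \eqref{e3.16}, \eqref{e3.17}. The first two are exactly the assertions that $l_R$ is an algebra homomorphism and $r_R$ an algebra anti-homomorphism, so by Lemmas \ref{lem5.1.2} and \ref{lem5.1.3} the conjunction of their clauses (i)--(v) is equivalent to these. Thus the whole proposition comes down to trading the single relation \eqref{e3.17} (for all $t_1,t_2\in T$) for its specialization at $t_1=t_2=a$.

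For the forward direction, assume $R$ is quasitriangular. Lemma \ref{lem4.1.4} gives that $l_R$ is an algebra map and $r_R$ an algebra anti-map, so (i)--(iv) of Lemmas \ref{lem5.1.2}, \ref{lem5.1.3} hold; clause (v) of each is a restatement of \eqref{e3.12}, \eqref{e3.13} via Lemma \ref{lem4.1.1}. Specializing \eqref{e3.17} to $t_1=t_2=a$ and using $a\triangleleft x=ab$ yields the extra identity at $(a,a)$.

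For the converse, assume the three sets of conditions. Lemmas \ref{lem5.1.2} and \ref{lem5.1.3} give that $l_R$ is an algebra homomorphism and $r_R$ an algebra anti-homomorphism, so by Lemma \ref{lem2.2.1} the coproduct-on-$R$ axioms hold, and by Lemma \ref{lem4.1.2} the maps $w^1,w^2,w^3$ are determined from $w^4$ by the formulas of that lemma (in particular $w^1$ equals the expression \eqref{w1}). By Lemma \ref{lem4.1.1}, the remaining axiom $\Delta^{op}(h)R=R\Delta(h)$ reduces to \eqref{e3.12}--\eqref{e3.14}; the first two are clause (v) of Lemmas \ref{lem5.1.2}, \ref{lem5.1.3}, so only \eqref{e3.14}, i.e.\ \eqref{e3.17}, remains. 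To get it for all $t_1,t_2\in T$, I would show $w^4$ is a quasitriangular function on $H_G$ in the sense of Definition \ref{def4.1.1}, whose clause (v) is \eqref{e3.17}. I would apply Proposition \ref{pro4.1.3} with $t_0=a$: conditions (i)--(iv) of Definition \ref{def4.1.1} follow from Proposition \ref{pro4.1.2} since the restrictions $l_R|_{V_G}=l_{w^4}$ and $r_R|_{V_G}=r_{w^4}$ are respectively an algebra map and an algebra anti-map; condition (ii) of Proposition \ref{pro4.1.3} is precisely the hypothesis $w^4(ab,ab)=\frac{\tau(ab,ab)}{\tau(a,a)}w^4(a,a)$; and condition (i) is obtained by combining Lemma \ref{lem4.1.a}(ii) on $l_R(E_b)l_R(E_a)=l_R(E_{ab})$ with Lemma \ref{lem5.1.3}(v) (giving $w^1(b,s)=\eta(a,s)$), and symmetrically Lemma \ref{lem4.1.b}(ii) with Lemma \ref{lem5.1.2}(v) (for $w^1(s,b)$).

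The main technical obstacle I anticipate is the asymmetry between the two halves of clause (i) in Proposition \ref{pro4.1.3}: the derivations above naturally yield $w^1(b,s)=\eta(a,s)$ but $w^1(s,b)=\eta(s,a)$, and these coincide only via $\eta(s,a)^2=1$. This identity is forced by iterating clause (v) of Lemma \ref{lem5.1.2} using $(t\triangleleft x)\triangleleft x=t$ and $t\triangleleft x=tb$, which reduces $\eta(s,b)=1$ and hence $\eta(s,a)^2=1$; with this noted, condition (i) of Proposition \ref{pro4.1.3} is verified, and the proposition delivers \eqref{e3.17}. Invertibility of $R$ is automatic from the coproduct axioms, with inverse $(\mathcal{S}\otimes\id)(R)$.
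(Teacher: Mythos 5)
Your proposal is correct and follows essentially the same route as the paper's proof: the forward direction via Lemma \ref{lem2.2.1}/Lemma \ref{lem4.1.4} with the $(a,a)$ specialization of \eqref{e3.14}, and the converse via Proposition \ref{pro4.1.2} followed by Proposition \ref{pro4.1.3} with $t_0=a$, deriving $w^1(s,b)=w^1(b,s)=\eta(a,s)$ from the product relations together with clause (v) of Lemmas \ref{lem5.1.2}--\ref{lem5.1.3}. Your resolution of the $\eta(s,a)$ versus $\eta(a,s)$ asymmetry by iterating clause (v) is valid but slightly roundabout; the paper gets it in one step from $\eta(s,a)\eta(a,s)^{-1}=\eta(s,a^2)=1$ since $a^2\in S$ and $\eta$ is trivial on $S\times S$.
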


\begin{proof}
If $R$ is a quasitriangular structure on $\Bbbk^G\#_{\sigma,\tau}\Bbbk \mathbb{Z}_{2}$, Since $l_R$ is an algebra homomorphism and $r_R$ is an algebra anti-homomorphism and thus $R$ such that the conditions of Lemma \ref{lem5.1.2}, Lemma \ref{lem5.1.3}. Owing to Lemma \ref{lem4.1.1}, we get $w^4(ab,ab)=\frac{\tau(ab,ab)}{\tau(a,a)}w^4(a,a)$. Conversely, if $R$ such that the conditions of Lemma \ref{lem5.1.2}, Lemma \ref{lem5.1.3} and $w^4(ab,ab)=\frac{\tau(ab,ab)}{\tau(a,a)}w^4(a,a)$, then $l_R$ is an algebra homomorphism and $r_R$ is an algebra anti-homomorphism. Therefore $w^4$ satisfies (i)-(iv) in Definition \ref{def4.1.1} due to Proposition \ref{pro4.1.2}. Then we will use Proposition \ref{pro4.1.3} to get what we want. To do this, we only need to show $w^1(s,b)=w^1(b,s)=\eta(a,s)$. Because $l_R$ is an algebra map and (ii) of Lemma \ref{lem4.1.b}, we get $w^2(s,ab)=w^2(s,a)w^1(s,b)$. Owing to Lemma \ref{lem5.1.2}, we obtain $w^2(s,a\triangleleft x)=w^2(s,a)\eta(s,a)$. But $a\triangleleft x=ab$ by assumption, so we have $w^1(s,b)=\eta(s,a)$. Due to $\frac{\eta(s,a)}{\eta(a,s)}=\eta(s,a^2)$ and $a^2\in S$, we know $\eta(s,a)=\eta(a,s)$ and hence $w^1(s,b)=\eta(a,s)$. Similarly, one can show $w^1(b,s)=\eta(a,s)$ and so we have completed the proof.
\end{proof}

In practice, we usually use the following corollary to determine when $R$ is a quasitriangular structure on $\Bbbk^G\#_{\sigma,\tau}\Bbbk \mathbb{Z}_{2}$ because it's easier to be checked.
\begin{corollary}\label{coro5.1.2}
Let $R$ be the form (ii) on $\Bbbk^G\#_{\sigma,\tau}\Bbbk \mathbb{Z}_{2}$ in Proposition \ref{pro2.1.1}, then $R$ is a quasitriangular structure on $\Bbbk^G\#_{\sigma,\tau}\Bbbk \mathbb{Z}_{2}$ if and only if $R$ such that the following conditions
\begin{itemize}
  \item[(i)] $R$ satisfies (i)-(iv) of Lemma \ref{lem5.1.2};
  \item[(ii)] $R$ satisfies (i)-(iv) of Lemma \ref{lem5.1.3};
   \item[(iii)]$w^1(b,s)=w^1(s,b)=\eta(a,s),\;s\in S$;
   \item[(iv)] $w^4(ab,ab)=\frac{\tau(ab,ab)}{\tau(a,a)}w^4(a,a)$;
\end{itemize}
\end{corollary}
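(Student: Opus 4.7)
The strategy is to apply Proposition \ref{pro5.1.2} and show that, in the presence of (i)--(iv) of Lemmas \ref{lem5.1.2} and \ref{lem5.1.3}, condition (iii) of the corollary is equivalent to the two conditions labelled (v) in those lemmas. The key arithmetic input is the identity $\eta(s,a)^{2}=\eta(s,a^{2})=1$ valid for $s\in S$, which holds because $a^{2}\in S$ (visible from the presentation in Corollary \ref{coro5.1.1}) and Proposition \ref{pro2.1.2}(iii) makes $\eta$ trivial on $S\times S$. A second useful observation is that $\eta(s,t)=\eta(s,a)$ for every $t\in T$, since $T=Sa$ and $\eta(s,s_{0})=1$ for $s_{0}\in S$.

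For the forward direction, suppose $R$ is a quasitriangular structure. Proposition \ref{pro5.1.2} immediately supplies (i)--(iv) of both Lemmas \ref{lem5.1.2} and \ref{lem5.1.3} as well as condition (iv) of the corollary. To deduce (iii), apply (ii) of Lemma \ref{lem5.1.3} at $s=b$ (with $b\in S$ by Proposition \ref{pro2.1.2}) to get $r(E_{b})r(E_{a})=r(E_{ba})=r(E_{a\triangleleft x})$; expanding via (ii) of Lemma \ref{lem4.1.b} and plugging in (v) of Lemma \ref{lem5.1.2} at $t=a$ yields $w^{1}(s,b)=\eta(s,a)$, which equals $\eta(a,s)$ by the identity above. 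The equality $w^{1}(b,s)=\eta(a,s)$ is obtained symmetrically using (ii) of Lemma \ref{lem5.1.2}, (ii) of Lemma \ref{lem4.1.a}, and (v) of Lemma \ref{lem5.1.3}.

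For the backward direction, assume the corollary's conditions. By Proposition \ref{pro5.1.2} it suffices to verify (v) of Lemma \ref{lem5.1.2} and (v) of Lemma \ref{lem5.1.3}. Reversing the above chain, (ii) of Lemma \ref{lem5.1.3}, (ii) of Lemma \ref{lem4.1.b}, and corollary (iii) give $w^{2}(s,a\triangleleft x)=\eta(a,s)w^{2}(s,a)=\eta(s,a)w^{2}(s,a)$, which is (v) of Lemma \ref{lem5.1.2} at the point $t=a$; the identity $\eta(s,t)=\eta(s,a)$ for all $t\in T$ then extends it to every $t$. Condition (v) of Lemma \ref{lem5.1.3} is handled by the symmetric argument, or equivalently by applying the argument just given to $R_{\varphi}$ via Corollary \ref{coro3.1.1}: the explicit form \eqref{r} shows the hypotheses (i)--(iv) of the corollary are preserved under $\varphi$.

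The main obstacle is keeping the two directions of the bicharacter straight: the natural output of the multiplicativity computation is $\eta(s,a)$, while the corollary records it as $\eta(a,s)$. Reconciling the two requires precisely the triviality of $\eta$ on $S\times S$ combined with $a^{2}\in S$, so this compact reformulation of Proposition \ref{pro5.1.2} is available only under the standing assumptions of Remark \ref{rk2.1}.
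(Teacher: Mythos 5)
Your proposal is correct and takes essentially the same route as the paper: necessity comes from Proposition \ref{pro5.1.2} (your derivation of $w^1(s,b)=w^1(b,s)=\eta(a,s)$ via (ii) of Lemma \ref{lem5.1.3}, (ii) of Lemma \ref{lem4.1.b}, condition (v) of Lemma \ref{lem5.1.2} at $t=a$, and $\eta(s,a^2)=1$ is exactly the computation carried out inside that proof), while sufficiency reduces, again via Proposition \ref{pro5.1.2}, to recovering the two conditions (v) from corollary condition (iii), with the $w^3$-case handled through $R_\varphi$ just as the paper's ``similarly''. One presentational caution: your ``extension from $t=a$ to every $t\in T$'' should be stated as the paper does --- (i) and (ii) of Lemma \ref{lem5.1.3} yield $w^2(s,tb)=w^1(s,b)\,w^2(s,t)$ for \emph{all} $t\in T$ (i.e.\ the hypothesis of (ii) of Lemma \ref{lem4.1.b} holds with $s=b$ and arbitrary $t$), after which $w^1(s,b)=\eta(a,s)=\eta(s,t)$ (from $\eta(at,s)=1$, $at\in S$) finishes uniformly --- since knowing (v) only at $t=a$ together with constancy of $\eta(s,\cdot)$ on $T$ does not by itself control the ratio $w^2(s,t\triangleleft x)/w^2(s,t)$ at other points $t$.
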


\begin{proof}
Since the proof of Proposition \ref{pro5.1.2}, we know that necessity holds. Conversely, owing to Proposition \ref{pro5.1.2} above, we only need to show $w^2(s,t\triangleleft x)=\eta(s,t)w^2(s,t)$ and $w^3(t\triangleleft x,s)=\eta(t,s)w^3(t,s)$ for $s\in S,t\in T$. Since (i),(ii) of Lemma \ref{lem5.1.3}, we know $R$ satisfies (ii) of Lemma \ref{lem4.1.b}. Then we get $w^2(s,tb)=w^2(s,t)w^1(s,b)$. But we have $w^1(s,b)=\eta(a,s)$, so $w^2(s,tb)=w^2(s,t)\eta(a,s)$ by Lemma \ref{lem4.1.b}. Due to $\frac{\eta(a,s)}{\eta(s,t)}=\eta(at,s)$ and $at\in S$, we get $\frac{\eta(a,s)}{\eta(s,t)}=1$ and hence $w^2(s,t\triangleleft x)=\eta(s,t)w^2(s,t)$. Similarly, one can prove that $w^3(t\triangleleft x,s)=\eta(t,s)w^3(t,s)$ and thus we have completed the proof.
\end{proof}

For the convenience of calculation, we assume $b=s_1^{p_1}...s_n^{p_n}$ for some natural numbers $p_1,...,p_n$ in the following sections. Then we use the following propositions to give all general solutions.
\begin{proposition}\label{pro5.1.3}
Let $R$ be a general solution for $\Bbbk^G\#_{\sigma,\tau}\Bbbk \mathbb{Z}_{2}$, that is to say $R$ is a non-trivial quasitriangular structure on $\Bbbk^G\#\Bbbk \mathbb{Z}_{2}$, and if we denote
\begin{align}
\label{e5.x} \alpha_{ij}:=w^1(s_i,s_j),\;\beta_i:=w^2(s_i,a),\;\gamma_i:=w^3(a,s_i),\;\delta:=w^4(a,a),
\end{align}
then the following equations hold
\begin{itemize}
  \item[(i)] $\alpha_{ij}^{k_i}=\alpha_{ij}^{k_j}=1,\;1\leq i,j \leq n$;
  \item[(ii)] $\beta_i^{k_i}=1,\;\beta_i^2=\alpha_{i1}^{m_1}...\alpha_{in}^{m_n},\; 1\leq i\leq n$;
   \item[(iii)]$\gamma_i^{k_i}=1,\;\gamma_i^2=\alpha_{1i}^{m_1}...\alpha_{ni}^{m_n},\; 1\leq i\leq n$;
   \item[(iv)] $\delta^2=\beta_1^{m_1+p_1}...\beta_n^{m_n+p_n}=\gamma_1^{m_1+p_1}...\gamma_n^{m_n+p_n}$;
   \item[(v)] $\alpha_{1i}^{p_1}...\alpha_{ni}^{p_n}=\alpha_{i1}^{p_1}...\alpha_{in}^{p_n}=1,\;\beta_1^{p_1}...\beta_n^{p_n}=\gamma_1^{p_1}...\gamma_n^{p_n}$;
\end{itemize}
\end{proposition}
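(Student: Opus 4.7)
All five identities will be extracted from the structural constraints on $R$ as a quasitriangular structure on $\Bbbk^G\#\Bbbk\mathbb{Z}_2$: by Corollary~\ref{coro5.1.2} (applied with $\sigma=\tau=1$, hence $\eta=1$), $l_R$ is an algebra homomorphism, $r_R$ is an algebra anti-homomorphism, $w^1(s,b)=w^1(b,s)=1$, and $w^4(ab,ab)=\delta$. The overall strategy is to read off the relations among $\alpha_{ij},\beta_i,\gamma_i,\delta$ from the identities these conditions produce, starting with the easy coefficient comparisons and saving the delicate combinatorics for the last relation of (v) and for (iv).

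For (i)--(iii) and the first two relations of (v) I plan direct coefficient comparisons. Evaluating $l(E_{s_i})^{k_i}=l(E_1)$ and $r(E_{s_j})^{k_j}=r(E_1)$ at $e_{s_j}$ and $e_{s_i}$ respectively gives (i). For (ii), $l(X_{s_i})^{k_i}=l(X_1)$ evaluated at $e_a x$, together with $w^2(1,a)=1$, yields $\beta_i^{k_i}=1$, while $r(E_a)^2=r(E_{a^2})=\prod_j r(E_{s_j})^{m_j}$ evaluated at $e_{s_i}x$ yields $\beta_i^2=\prod_j\alpha_{ij}^{m_j}$; (iii) is the dual computation using $r(X_{s_i})^{k_i}=r(X_1)$ and $l(E_a)^2=l(E_{a^2})$. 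Writing $b=s_1^{p_1}\cdots s_n^{p_n}$ and expanding $w^1(b,s_j)$, $w^1(s_j,b)$ through the character property of $w^1$ (a consequence of $l_R$, $r_R$ being algebra maps on the $E$-generators), together with $w^1(s,b)=w^1(b,s)=1$, immediately gives $\prod_i\alpha_{ij}^{p_i}=\prod_i\alpha_{ji}^{p_i}=1$.

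The main obstacle is the third relation of (v), $\prod_i\beta_i^{p_i}=\prod_i\gamma_i^{p_i}$; once this is in place (iv) follows quickly. My plan is to introduce the $S$-characters $\chi_t(s):=w^4(sa,t)/w^4(a,t)$ and $\psi_t(s):=w^4(t,sa)/w^4(t,a)$, which are multiplicative in $s$ by conditions (i), (ii) of Definition~\ref{def4.1.1} (with $\tau=1$), and which satisfy $\chi_a(s_i)=\beta_i$ and $\psi_a(s_i)=\gamma_i$ via Lemma~\ref{lem4.1.2}(ii), (iii); hence $\chi_a(b)=\prod_i\beta_i^{p_i}$ and $\psi_a(b)=\prod_i\gamma_i^{p_i}$. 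Applying the functional equation (v) of Definition~\ref{def4.1.1}, which here reads $w^4(t_1\triangleleft x,t_2\triangleleft x)=w^4(t_1,t_2)$, to arbitrary $t_1=s_1a$, $t_2=s_2a$ and expanding both sides through $\chi,\psi$, then specialising $s_1=1$, I will obtain both $\chi_{tb}=\chi_t$ for every $t\in T$ and $\chi_{ab}(b)\,\psi_a(b)=1$. Combined, these force $\chi_a(b)\,\psi_a(b)=1$, i.e.\ $\prod_i\beta_i^{p_i}\cdot\prod_i\gamma_i^{p_i}=1$. Squaring the second relation of (ii) and using $\prod_i\alpha_{ij}^{p_i}=1$ shows $(\prod_i\beta_i^{p_i})^2=1$, so the two previous displays collapse to $\prod_i\beta_i^{p_i}=\prod_i\gamma_i^{p_i}$. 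Finally, evaluating $l(X_a)^2=l(X_{a^2})=\prod_j l(X_{s_j})^{m_j}$ at $e_a x$ produces $\delta^2\prod_i\gamma_i^{p_i}=\prod_i\beta_i^{m_i}$; multiplying by $\prod_i\beta_i^{p_i}$ and using the third part of (v) together with $(\prod_i\beta_i^{p_i})^2=1$ yields $\delta^2=\prod_i\beta_i^{m_i+p_i}$, while the symmetric computation with $r(X_a)^2=r(X_{a^2})$ delivers $\delta^2=\prod_i\gamma_i^{m_i+p_i}$.
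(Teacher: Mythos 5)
Your proposal is correct, and at the structural level it runs on the same engine as the paper's proof: both extract (i)--(iii) and the first two relations of (v) from the multiplicativity of $l_R$ and anti-multiplicativity of $r_R$ together with $w^1(s,b)=w^1(b,s)=1$ from Corollary \ref{coro5.1.2}(iii), and both get (iv) from $l(X_a)^2=l(X_{a^2})$ (note in passing that $l(X_{s_i})^{k_i}$, $r(E_a)^2$ and $l(X_a)^2$ lie in the span of the idempotents $e_t$, resp.\ $e_s$, so your comparisons should be read at $e_a$ and $e_{s_i}$, not at $e_ax$ and $e_{s_i}x$). You diverge in three inessential but real ways. First, where the paper obtains (iii) and the $\gamma$-half of (iv) by transporting (ii) and the $\beta$-half through $R_\varphi$ via Proposition \ref{pro3.1.1}, you redo the dual computations directly ($r(X_{s_i})^{k_i}=r(X_1)$, $l(E_a)^2=l(E_{a^2})$, $r(X_a)^2=r(X_{a^2})$); this is self-contained and avoids the symmetry machinery at the cost of mild repetition. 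Second, for the key third relation of (v) the paper starts from $w^4(ab,ab)=w^4(a,a)$ (Corollary \ref{coro5.1.2}(iv)) and unwinds it through Lemmas \ref{lem4.1.a}(iv) and \ref{lem4.1.b}(iv) to $w^2(b,a)w^3(a,b)=1$, resolving the sign by $w^3(a,b)^2=w^3(a,1)=1$; your route through the functional equation $w^4(t_1\triangleleft x,t_2\triangleleft x)=w^4(t_1,t_2)$ and the characters $\chi_t,\psi_t$ yields the same identity ($\chi_a(b)\psi_a(b)=1$ is precisely $w^2(b,a)w^3(ab,b)=1$), and your sign resolution $(\prod_i\beta_i^{p_i})^2=1$, obtained by squaring (ii) against $\prod_i\alpha_{ij}^{p_i}=1$, is a clean alternative. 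The one point needing repair is small: Lemma \ref{lem4.1.2}(iii) with $\tau=1$ and $t_1=a$ identifies $\gamma_i=w^3(a,s_i)$ with $\psi_{a\triangleleft x}(s_i)=\psi_{ab}(s_i)$, not with $\psi_a(s_i)$ as you assert; the identification you use requires $w^3(ab,s)=w^3(a,s)$, which holds by \eqref{e3.13} with $\eta\equiv 1$ (equivalently, $\psi_{tb}=\psi_t$ follows from the same functional-equation specialisation carried out in the other expansion order), so this is a one-line fix that does not affect the rest of the argument.
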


\begin{proof}
Using (i) of Lemma \ref{lem4.1.a} and (i) of Lemma \ref{lem4.1.b}, we know that $w^1$ is a bicharacter on $S$. Then we have $w^1(s_i,s_j)^{k_i}=w^1(s_i,s_j)^{k_j}=1$ and so (i) holds. To show (ii), we note that $l(X_{s_i})^{k_i}=l(X_1)$ and $r(E_a)^2=r(E_{s_1^{m_1}...s_n^{m_n}})$ and if we use (iii) of Lemma \ref{lem4.1.a}, we get $w^2(s_i,a)^{k_i}=1$ through letting $t=a$ and so we have $\beta_i^{k_i}=1$. Similarly, since
\begin{align*}
r(E_a)^2=\sum\limits_{s\in S}w^2(s,a)^2 e_{s},\;r(E_{s_1^{m_1}...s_n^{m_n}})=\sum\limits_{s\in S}w^1(s,s_1^{m_1}...s_n^{m_n})e_{s},
\end{align*}
so we have $w^2(s_i,a)^2 =w^1(s_i,s_1^{m_1}...s_n^{m_n})$ through letting $s=s_i$. Because we have shown $w^1$ is a bicharacter on $S$, we obtain $w^1(s_i,s_1^{m_1}...s_n^{m_n})=\alpha_{i1}^{m_1}...\alpha_{in}^{m_n}$ and hence (ii) holds. If we consider $R_\varphi$, then we know $R_\varphi$ is also a general solution for $\Bbbk^G\#_{\sigma,\tau}\Bbbk \mathbb{Z}_{2}$ and so $R_\varphi$ such that (ii). Due to the $w'^i( 1\leq i \leq 4)$ of $R_\varphi$ such that $w'^2(s_i,a)=w^3(ab,s_i)$ by definition of $R_\varphi$ and we have $w^3(ab,s_i)=w^3(a,s_i)w^1(b,s_i)$ by (ii) of Lemma \ref{lem4.1.a}, we obtain $w'^2(s_i,a)=w^3(a,s_i)w^1(b,s_i)$. But $w^1(b,s_i)=1$ because of (iii) of Corollary \ref{coro5.1.2}, so $w^3(ab,s_i)=w^3(a,s_i)=\gamma_i$ and hence (iii) holds.  To show (iv), we first show (v). Since (iii) of Corollary \ref{coro5.1.2}, we know $w^1(s_i,b)=w^1(b,s_i)=1$. But we have shown $w^1$ is a bicharacter on $S$ and because $b=s_1^{p_1}...s_n^{p_n}$ by the assumption, we know $\alpha_{1i}^{p_1}...\alpha_{ni}^{p_n}=\alpha_{i1}^{p_1}...\alpha_{in}^{p_n}=1$.
Because of (iv) in Corollary \ref{coro5.1.2}, we have $w^4(ab,ab)=w^4(a,a)$. Using (iv) of Lemma \ref{lem4.1.a}, we get $w^4(ab,ab)=w^2(b,ab)w^4(a,ab)$. With the help of the (iv) of Lemma \ref{lem4.1.b}, we obtain $w^4(a,ab)=w^4(a,a)w^3(a,b)$ and so we have $w^2(b,ab)w^3(a,b)=1$. Due to $w^2(b,ab)=w^2(b,a)w^1(b,b)=w^2(b,a)$ by (ii) of Lemma \ref{lem4.1.a}, we know $w^2(b,a)w^3(a,b)=1$. Since $w^2(b,a)=\beta_1^{p_1}...\beta_n^{p_n}$ and $w^3(a,b)=\gamma_1^{p_1}...\gamma_n^{p_n}$, we get $\beta_1^{p_1}...\beta_n^{p_n}=\gamma_1^{-p_1}...\gamma_n^{-p_n}$. But $w^3(a,b)^2=w^3(a,1)=1$ by (iii) of Lemma \ref{lem4.1.b}, we get $\gamma_1^{2p_1}...\gamma_n^{2p_n}=1$ and thus $\beta_1^{p_1}...\beta_n^{p_n}=\gamma_1^{p_1}...\gamma_n^{p_n}$. Therefore (v) holds. To show (iv), we only need to show $\delta^2=\beta_1^{m_1+p_1}...\beta_n^{m_n+p_n}$ due to the same reason with the proof of (iii). Since $l(X_a)^2=l(X_{a^2})=l(X_{s_1^{m_1}...s_n^{m_n}})$ and the following equations hold
\begin{align*}
l(X_a)^2=\sum\limits_{s\in S}w^4(a,t) w^4(a,t\triangleleft x) e_{t},\;l(X_{s_1^{m_1}...s_n^{m_n}})=\sum\limits_{t\in T}w^2(s_1^{m_1}...s_n^{m_n},t)e_{t},
\end{align*}
we have
\begin{align}
\label{e5.1} w^4(a,a)w^4(a,ab)=w^2(s_1^{m_1}...s_n^{m_n},a)
\end{align}
through letting $t=a$. Since (iii) of Lemma \ref{lem4.1.a}, we get $w^2(s,t)w^2(s',t)=w^2(ss',t)$ for $s\in S,t\in T$. So we have
\begin{align}
\label{e5.2} w^2(s_1^{m_1}...s_n^{m_n},a)=\beta_1^{m_1}...\beta_n^{m_n}.
\end{align}
Since the (iii) in Lemma \ref{lem4.1.2}, we have $w^3(ab,b)=\frac{w^4(a,ab)}{w^4(a,a)}$. Owing to (ii) of Lemma \ref{lem4.1.a}, we can get $w^3(ab,b)=w^3(a,b)w^1(b,b)$. But we have known $w^1(b,b)=1$, so $w^3(ab,b)=w^3(a,b)$. Furthermore, owing to $w^4(a,ab)=w^4(a,a)w^3(a,b)$ and $w^3(a,b)=\gamma_1^{-p_1}...\gamma_n^{-p_n}$, we have
\begin{align}
\label{e5.3} w^4(a,ab)=w^4(a,a)\gamma_1^{-p_1}...\gamma_n^{-p_n}.
\end{align}
Combining equations \eqref{e5.1}-\eqref{e5.3} and (v), we obtain (iv).
\end{proof}
In fact, the four tuple $(\alpha_{ij},\beta_i,\gamma_i,\delta)_{1\leq i,j \leq n}$ in the above proposition completely determine $R$.
\begin{proposition}\label{pro5.1.4}
Let $R$ be stated in Proposition \ref{pro5.1.3} and let $(\alpha_{ij},\beta_i,\gamma_i,\delta)_{1\leq i,j \leq n}$ be defined by \eqref{e5.x} in Proposition \ref{pro5.1.3}, then the following equations hold
\begin{itemize}
  \item[(i)] $w^1(s_1^{i_1}...s_n^{i_n},s_1^{j_1}...s_n^{j_n})=\prod\limits_{k= 1}^{n} \prod\limits_{l = 1}^{n}\alpha_{k l}^{i_k j_l}$;
  \item[(ii)] $w^2(s_1^{i_1}...s_n^{i_n},s_1^{j_1}...s_n^{j_n}a)=(\prod\limits_{k= 1}^{n}\beta_k^{i_k}) \prod\limits_{k= 1}^{n} \prod\limits_{l = 1}^{n}\alpha_{k l}^{i_k j_l}$;
   \item[(iii)]$w^3(s_1^{i_1}...s_n^{i_n}a,s_1^{j_1}...s_n^{j_n})=(\prod\limits_{k= 1}^{n}\gamma_k^{j_k}) \prod\limits_{k=1}^{n} \prod\limits_{l = 1}^{n}\alpha_{k l}^{i_k j_l}$;
   \item[(iv)] $w^4(s_1^{i_1}...s_n^{i_n}a,s_1^{j_1}...s_n^{j_n}a)=(\prod\limits_{k= 1}^{n}\beta_k^{i_k}) (\prod\limits_{k= 1}^{n}\gamma_k^{j_k}) \prod\limits_{k=1}^{n} \prod\limits_{l = 1}^{n}\alpha_{k l}^{i_k j_l}\delta$;
\end{itemize}
where $0 \leq i_1,...,i_n \leq n-1$ and $0 \leq j_1,...,j_n \leq n-1$.
\end{proposition}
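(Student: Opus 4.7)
My plan is to exploit that in a general solution we have $\sigma\equiv 1$ and $\tau\equiv 1$ on $\Bbbk^G\#\Bbbk\mathbb{Z}_{2}$, so also $\eta\equiv 1$. This trivializes the $\tau$-factors in Lemmas \ref{lem4.1.a} and \ref{lem4.1.b} and reduces all the relations there to pure multiplicativity statements. Together with the bicharacter property of $w^{1}$ already established in the proof of Proposition \ref{pro5.1.3}, these will let me propagate the formulas from the generators $(s_{i}, a)$ to arbitrary words.

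For (i), $w^{1}$ is a bicharacter on $S$ (already shown), so it is multiplicative in each slot; evaluating at generators gives the monomial $\prod_{k,l}\alpha_{kl}^{i_{k}j_{l}}$. For (ii) I will first invoke Lemma \ref{lem4.1.a}(iii) with $\tau\equiv 1$, which makes $w^{2}(\cdot, t)$ multiplicative in the first slot, yielding $w^{2}(s_{1}^{i_{1}}\cdots s_{n}^{i_{n}}, a) = \prod_{k}\beta_{k}^{i_{k}}$. Then Lemma \ref{lem4.1.b}(ii) (which carries no $\tau$-factor) iteratively expands the second slot from $a$ to $s_{1}^{j_{1}}\cdots s_{n}^{j_{n}}a$, picking up the factor $w^{1}(s_{1}^{i_{1}}\cdots s_{n}^{i_{n}}, s_{1}^{j_{1}}\cdots s_{n}^{j_{n}})$ computed in (i). Statement (iii) is entirely symmetric via Lemma \ref{lem4.1.a}(ii) (multiplicativity of $w^{3}$ in the first slot through $w^{1}$) and Lemma \ref{lem4.1.b}(iii) with $\tau\equiv 1$ (multiplicativity of $w^{3}$ in the second slot).

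For (iv) I will strip the first argument via Lemma \ref{lem4.1.a}(iv) with $\tau\equiv 1$, giving
\[
w^{4}(s_{1}^{i_{1}}\cdots s_{n}^{i_{n}}a,\, t') \;=\; w^{2}(s_{1}^{i_{1}}\cdots s_{n}^{i_{n}}, t')\, w^{4}(a, t'),
\]
and then strip the second argument via Lemma \ref{lem4.1.b}(iv) with $\tau\equiv 1$, obtaining
\[
w^{4}(a,\, s_{1}^{j_{1}}\cdots s_{n}^{j_{n}}a) \;=\; w^{3}(a\triangleleft x,\, s_{1}^{j_{1}}\cdots s_{n}^{j_{n}})\, w^{4}(a,a).
\]
Finally I will use Lemma \ref{lem4.1.a}(ii) combined with $w^{1}(b,\cdot)\equiv 1$ (which is Corollary \ref{coro5.1.2}(iii) in the general-solution case, where $\eta\equiv 1$) to replace $w^{3}(a\triangleleft x,\cdot) = w^{3}(ab,\cdot)$ by $w^{3}(a,\cdot)$. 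Substituting the formulas from (ii) and (iii) then produces exactly the claimed product involving $\beta$'s, $\gamma$'s, $\alpha$'s, and a single $\delta = w^{4}(a,a)$.

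The computation is mechanical once one is in the general-solution regime. The only place requiring genuine care is in step (iv), where the splitting of the second argument naturally produces $w^{3}(a\triangleleft x,\cdot)$ rather than $w^{3}(a,\cdot)$; the identity $w^{1}(b,\cdot)=1$ makes this a cosmetic rather than a real obstacle. Apart from that, the main task is bookkeeping of indices and of the order in which generators are peeled off, which is organized cleanly by always pulling $s_{k}$-factors out first and the single $a$-factor last.
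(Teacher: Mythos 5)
Your proof is correct and follows essentially the same route as the paper: (i) from the bicharacter property of $w^1$, (ii) and (iii) via Lemma \ref{lem4.1.a} and Lemma \ref{lem4.1.b} with the $\tau$-factors trivialized, and (iv) by stripping the first argument with Lemma \ref{lem4.1.a}(iv) and the second with Lemma \ref{lem4.1.b}(iv). The only (cosmetic) divergence is how you convert $w^3(a\triangleleft x,\cdot)$ into $w^3(a,\cdot)$ --- you use $w^1(b,\cdot)=1$ from Corollary \ref{coro5.1.2}(iii) together with Lemma \ref{lem4.1.a}(ii), whereas the paper invokes equation \eqref{e3.13} of Lemma \ref{lem4.1.1} with $\eta\equiv 1$; both are valid in the general-solution regime.
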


\begin{proof}
Since $w^1$ is a bicharacter on $S$, we get (i). Owing to (ii) of Lemma \ref{lem4.1.b}, we know $w^2(s_1^{i_1}...s_n^{i_n},s_1^{j_1}...s_n^{j_n}a)=w^1(s_1^{i_1}...s_n^{i_n},s_1^{j_1}...s_n^{j_n})w^2(s_1^{i_1}...s_n^{i_n},a)$. Due to (iii) of Lemma \ref{lem4.1.a}, we obtain $w^2(s_1^{i_1}...s_n^{i_n},a)=\prod\limits_{k= 1}^{n}\beta_k^{i_k}$ and so we have (ii). Similarly, we can show (iii). Thanks to (iv) of Lemma \ref{lem4.1.a}, we get
\begin{align}
\label{e5.1.4} w^4(s_1^{i_1}...s_n^{i_n}a,s_1^{j_1}...s_n^{j_n}a)=w^2(s_1^{i_1}...s_n^{i_n},s_1^{j_1}...s_n^{j_n}a)w^4(a,s_1^{j_1}...s_n^{j_n}a).
\end{align}
Using (iv) of Lemma \ref{lem4.1.b}, we have
\begin{align}
\label{e5.1.5} w^4(a,s_1^{j_1}...s_n^{j_n}a)=w^3(a\triangleleft x,s_1^{j_1}...s_n^{j_n})w^4(a,a).
\end{align}
Because of \eqref{e3.13} in Lemma \ref{lem4.1.1}, we get
\begin{align}
\label{e5.1.6} w^3(a\triangleleft x,s_1^{j_1}...s_n^{j_n})=w^3(a,s_1^{j_1}...s_n^{j_n}).
\end{align}
Since the equations \eqref{e5.1.4}-\eqref{e5.1.6} and (ii),(iii), we know (iv).
\end{proof}
Conversely, given a four tuple $(\alpha_{ij},\beta_i,\gamma_i,\delta)_{1\leq i,j \leq n}$ satisfying conditions (i)-(v) of Proposition \ref{pro5.1.3}, then we have

\begin{proposition}\label{pro5.1.5}
Given a four tuple $(\alpha_{ij},\beta_i,\gamma_i,\delta)_{1\leq i,j \leq n}$ satisfying conditions (i)-(v) of Proposition \ref{pro5.1.3} and let $R$ be the form (ii) on $\Bbbk^G\#\Bbbk \mathbb{Z}_{2}$ in Proposition \ref{pro2.1.1}. If $w^i(1\leq i\leq 4)$ of $R$ are given by (i)-(iv) in Proposition \ref{pro5.1.4} by using the four tuple, then $R$ is a general solution for $\Bbbk^G\#_{\sigma,\tau}\Bbbk \mathbb{Z}_{2}$.
\end{proposition}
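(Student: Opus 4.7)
The plan is to apply Corollary \ref{coro5.1.2} to the Hopf algebra $\Bbbk^G\#\Bbbk\mathbb{Z}_2$, where by construction $\sigma'\equiv 1$, $\tau'\equiv 1$, hence $\eta'\equiv 1$ and every coefficient $P_{g_1^{j_1}\cdots g_n^{j_n}}$ equals $1$. Under this simplification the four conditions of Corollary \ref{coro5.1.2} become purely algebraic identities in the maps $w^1,w^2,w^3,w^4$, and for our $R$ these maps are prescribed by the monomial formulas of Proposition \ref{pro5.1.4} in the variables $(\alpha_{ij},\beta_i,\gamma_i,\delta)$. Thus the proof reduces to a finite list of algebraic identities that must follow from the hypotheses (i)--(v) of Proposition \ref{pro5.1.3}.

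First I would dispatch conditions (iii) and (iv) of Corollary \ref{coro5.1.2}. Condition (iii) asks that $w^1(b,s)=w^1(s,b)=1$, which via Proposition \ref{pro5.1.4}(i) and $b=s_1^{p_1}\cdots s_n^{p_n}$ amounts to $\prod_k\alpha_{kj}^{p_k}=\prod_k\alpha_{jk}^{p_k}=1$, exactly the first half of Proposition \ref{pro5.1.3}(v). Condition (iv) asks that $w^4(ab,ab)=w^4(a,a)=\delta$; substituting $ab=s_1^{p_1}\cdots s_n^{p_n}a$ into Proposition \ref{pro5.1.4}(iv) produces the extra factor $(\prod_k\beta_k^{p_k})(\prod_k\gamma_k^{p_k})\prod_{k,l}\alpha_{kl}^{p_kp_l}$. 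The $\alpha$-product vanishes by Proposition \ref{pro5.1.3}(v), and the second half of (v) gives $\prod_k\beta_k^{p_k}=\prod_k\gamma_k^{p_k}$, so the task reduces to $(\prod_k\beta_k^{p_k})^2=1$. Expanding via $\beta_k^2=\prod_l\alpha_{kl}^{m_l}$ from Proposition \ref{pro5.1.3}(ii) and swapping the order of the product yields $\prod_l(\prod_k\alpha_{kl}^{p_k})^{m_l}$, which equals $1$ by Proposition \ref{pro5.1.3}(v) again.

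Next I would verify the items listed in Lemmas \ref{lem5.1.2} and \ref{lem5.1.3}. Parts (i) and (ii) of each lemma are multiplicative identities in the first (resp.\ second) slot of the $w^i$; these are tautological from the monomial form of Proposition \ref{pro5.1.4}, because $w^1$ is manifestly a bicharacter on $S$ and $w^2,w^3,w^4$ are multiplicative in the $S$-exponents of their first (resp.\ second) argument. Part (iii) requires $\alpha_{ij}^{k_i}=1$ and $\beta_i^{k_i}=1$ (and the symmetric statements), which are exactly Proposition \ref{pro5.1.3}(i) and the first halves of (ii), (iii). Part (iv) asks that $l(E_a)^2=l(E_{a^2})$ and $l(X_a)^2=l(X_{a^2})$: expanding $l(E_a)^2$ (using $\sigma'\equiv 1$) and matching the coefficient of $e_{s'}$ yields $\gamma_l^2=\prod_k\alpha_{kl}^{m_k}$, precisely Proposition \ref{pro5.1.3}(iii); expanding $l(X_a)^2$ and comparing with $l(X_{a^2})$ produces the relation $(\prod_k\gamma_k^{p_k})\delta^2=\prod_k\beta_k^{m_k}$, which combined with $\delta^2=\prod_k\beta_k^{m_k+p_k}$ from Proposition \ref{pro5.1.3}(iv) reduces once more to $(\prod_k\beta_k^{p_k})(\prod_k\gamma_k^{p_k})=1$, already handled above. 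Part (v) (automatic since $\eta'=1$) collapses to $w^2(s,tb)=w^2(s,t)$, and this again reduces to $\prod_k(\prod_l\alpha_{kl}^{p_l})^{i_k}=1$, i.e.\ Proposition \ref{pro5.1.3}(v). The verifications for Lemma \ref{lem5.1.3} are entirely parallel with the roles of $\beta_i$ and $\gamma_i$ interchanged and Proposition \ref{pro5.1.3}(iii) replacing (ii).

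The main obstacle is bookkeeping: the identities for $l(X_a)^2$ in Lemma \ref{lem5.1.2}(iv) and for $w^4(ab,ab)$ in Corollary \ref{coro5.1.2}(iv) require simultaneously invoking four of the five hypotheses of Proposition \ref{pro5.1.3}, and the only genuinely non-tautological manipulation is the exponent-swap $\prod_k\beta_k^{2p_k}=\prod_{k,l}\alpha_{kl}^{m_lp_k}=\prod_l(\prod_k\alpha_{kl}^{p_k})^{m_l}$, which uses the symmetric roles of the exponents $(m_l)$ and $(p_k)$. Once that reshuffling is in place, every remaining check is a one-line substitution of Proposition \ref{pro5.1.4} into the corresponding identity of Corollary \ref{coro5.1.2}.
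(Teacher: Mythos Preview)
Your approach is correct and essentially identical to the paper's: both apply Corollary \ref{coro5.1.2} to $\Bbbk^G\#\Bbbk\mathbb{Z}_2$ (where $\sigma'=\tau'=\eta'\equiv 1$ and every $P=1$) and reduce each of its four conditions to the hypotheses (i)--(v) of Proposition \ref{pro5.1.3}, with the key nontrivial step being $(\prod_k\beta_k^{p_k})(\prod_k\gamma_k^{p_k})=1$; the paper even handles Lemma \ref{lem5.1.3} by the same $\beta\leftrightarrow\gamma$ symmetry you invoke (phrased there via $R_\varphi$). One small point to tighten: the identity $l(X_a)^2=l(X_{a^2})$ must be checked coefficient-by-coefficient for \emph{every} $t\in T$, not just $t=a$, and the general coefficient produces an extra factor $\prod_k\gamma_k^{2j_k}$ on the left versus $\prod_{k,l}\alpha_{kl}^{m_kj_l}$ on the right, which closes precisely by Proposition \ref{pro5.1.3}(iii) --- the paper does this explicitly and your write-up should too.
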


\begin{proof}
Since Corollary \ref{coro5.1.2}, we only need to show $R$ such that the conditions of Corollary \ref{coro5.1.2}. Because the definition of $w^1$, we know $w^1$ is a bicharacter on $S$ and hence we get $l(E_{s_1})^{i_1}...l(E_{s_n})^{i_n}=l(E_{s_1^{i_1}...s_n^{i_n}})$. To show $l(X_{s_1})^{i_1}...l(X_{s_n})^{i_n}=P_{s_1^{i_1}...s_n^{i_n}}l(X_{s_1^{i_1}...s_n^{i_n}})$, we need only to prove that $w^2(-,t)$ is a character on $S$ for $t\in T$ by (iii) of Lemma \ref{lem4.1.a}. By definition of $w^2$, we get $w^2(-,s_1^{j_1}...s_n^{j_n}a)$ is a character on $S$. Owing to $aS=T$, we obtain $w^2(-,t)$ is a character on $S$ for $t\in T$ and so (i) of Lemma \ref{lem5.1.2} holds. To show (ii) of Lemma \ref{lem5.1.2}, we only need to prove that $w^1(s,s')w^3(a,s)=w^3(as,s')$ and $w^2(s,t)w^4(a,t)=w^4(as,t)$ for $s,s'\in S,\;t\in T$ because of (iii), (iv) of Lemma \ref{lem4.1.a}. And these equalities are not difficult to check and so (ii) of Lemma \ref{lem5.1.2} hold. To show (iii) of Lemma \ref{lem5.1.2}, note that $w^1$ is a bicharacter on $S$ and $\alpha_{ij}^{k_i}=1$ by assumption and hence $l(E_{s_i})^{k_i}=l(E_1)$. Similarly, because $w^2(-,t)$ is a character on $S$ for $t\in T$ and $\beta_i^{k_i}=1$ by our conditions, therefore we get $l(X_{s_i})^{k_i}=l(X_1)$ and so we know (iii) of Lemma \ref{lem5.1.2} hold. To show (iv), it can be seen that $w^3(t,-)$ is a bicharacter on $S$ and $\gamma_i^2=\alpha_{1i}^{m_1}...\alpha_{ni}^{m_n}$ by assumption and so we have $l(E_a)^2=l(E_{s_1^{m_1}...s_1^{m_n}})$. By definition, we have
\begin{align*}
l(X_a)^2=[\sum\limits_{t\in T}w^4(a,t)e_{t}x][\sum\limits_{t\in T}w^4(a,t)e_{t}x]=\sum\limits_{t\in T}w^4(a,t)w^4(a,t\triangleleft x)e_{t}
\end{align*}
and $l(X_{s_1^{m_1}...s_1^{m_n}})=\sum_{t\in T}w^2(s_1^{m_1}...s_1^{m_n},t)e_{t}x$. To show $l(X_a)^2=l(X_{s_1^{m_1}...s_1^{m_n}})$, we need only to show $w^4(a,t)w^4(a,t\triangleleft x)=w^2(s_1^{m_1}...s_1^{m_n},t)$ for $t\in T$. For the simplest case $t=a$, we have $w^4(a,a)w^4(a,ab)=\delta^2\prod_{k= 1}^{n}\gamma_k^{p_k}$ and $w^2(s_1^{m_1}...s_1^{m_n},a)=\prod_{k= 1}^{n}\beta_k^{m_k}$ by definition. Since (v) of Proposition \ref{pro5.1.3}, we have $\beta_1^{p_1}...\beta_n^{p_n}=\gamma_1^{p_1}...\gamma_n^{p_n}$. And because $b^2=1$ and $b=s_1^{p_1}...s_n^{p_n}$, we get $\gamma_1^{2p_1}...\gamma_n^{2p_n}=1$. Therefore we get $w^4(a,t)w^4(a,a\triangleleft x)=w^2(s_1^{m_1}...s_1^{m_n},a)$. For the case $t=s_1^{j_1}...s_1^{j_n}a$, if we use the equalities $w^4(a,a)w^4(a,a\triangleleft x)=w^2(s_1^{m_1}...s_1^{m_n},a),\;\gamma_i^2=\alpha_{1_i}^{m_1}...\alpha_{n_i}^{m_n}$, then we can also prove that $w^4(a,s_1^{j_1}...s_1^{j_n}a)w^4(a,s_1^{j_1}...s_1^{j_n}ab)=w^2(s_1^{m_1}...s_1^{m_n},s_1^{j_1}...s_1^{j_n}a)$ and hence we have show (iv) of Lemma \ref{lem5.1.2}. Therefore we have prove that (i) of Corollary \ref{coro5.1.2} holds. To prove that $R$ such that (ii) of Corollary \ref{coro5.1.2}, we consider the $R_\varphi$. If we denote the four maps associated with $R_\varphi$ as $w'^i(1\leq i \leq 4)$, then it can be seen that the following equations hold
\begin{align*}
w'^1(s_i,s_j)=\alpha_{ji},\;w'^2(s_i,a)=\gamma_i,\;w'^3(a,s_i)=\beta_i,\;w'^4(a,a)=\delta.
\end{align*}
Furthermore, one can get that the four tuple $(\alpha_{ji},\gamma_i,\beta_i,\delta)_{1\leq i,j \leq n}$ satisfies conditions (i)-(v) of Proposition \ref{pro5.1.3} and hence $R_\varphi$ such that the conditions of this Proposition and hence $R_\varphi$ such that (i) of Corollary \ref{coro5.1.2}. And this implies that $R$ satisfies (ii) of Corollary \ref{coro5.1.2}.  The (iii) of Corollary \ref{coro5.1.2} holds by (v) of Proposition \ref{pro5.1.3}. Finally we prove that $w^4(ab,ab)=w^4(a,a)$. Due to $\beta_1^{p_1}...\beta_n^{p_n}=\gamma_1^{p_1}...\gamma_n^{p_n},\;\gamma_1^{2p_1}...\gamma_n^{2p_n}=1$ and the definition of $w^4$, we know that $w^4(ab,ab)=w^4(a,a)$.
\end{proof}

Combining Propositions \ref{pro5.1.3}-\ref{pro5.1.5}, we obtain the following theorem
\begin{theorem}\label{thm5.1.1}
All the general solutions for $\Bbbk^G\#_{\sigma,\tau}\Bbbk \mathbb{Z}_{2}$ are given by (i)-(iv) of Proposition \ref{pro5.1.4}, where $(\alpha_{ij},\beta_i,\gamma_i,\delta)_{1\leq i,j \leq n}$ satisfies conditions (i)-(v) of Proposition \ref{pro5.1.3}.
\end{theorem}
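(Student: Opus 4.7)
The plan is to assemble Theorem \ref{thm5.1.1} as a direct consequence of Propositions \ref{pro5.1.3}, \ref{pro5.1.4}, and \ref{pro5.1.5}, which together establish a bijection between the set of general solutions for $\Bbbk^G\#_{\sigma,\tau}\Bbbk \mathbb{Z}_{2}$ and the set of four-tuples $(\alpha_{ij},\beta_i,\gamma_i,\delta)_{1\leq i,j\leq n}$ satisfying conditions (i)--(v) of Proposition \ref{pro5.1.3}. No new calculation is needed; the content of the theorem is the statement that this parameterization is complete.

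First, for the forward direction, I would take an arbitrary general solution $R$ for $\Bbbk^G\#_{\sigma,\tau}\Bbbk \mathbb{Z}_{2}$, i.e.\ a non-trivial quasitriangular structure on $\Bbbk^G\#\Bbbk \mathbb{Z}_{2}$, and extract the four-tuple $(\alpha_{ij},\beta_i,\gamma_i,\delta)$ according to the definition \eqref{e5.x}. Proposition \ref{pro5.1.3} guarantees that this tuple satisfies conditions (i)--(v), while Proposition \ref{pro5.1.4} shows that the maps $w^1,w^2,w^3,w^4$ of $R$ are entirely reconstructible from this tuple through the explicit formulas (i)--(iv) of that proposition. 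Together, these two propositions say that every general solution arises from a valid tuple via the claimed formulas.

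For the converse direction, I would invoke Proposition \ref{pro5.1.5}: given any four-tuple satisfying (i)--(v) of Proposition \ref{pro5.1.3}, the $R$ defined via the formulas (i)--(iv) of Proposition \ref{pro5.1.4} is indeed a non-trivial quasitriangular structure on $\Bbbk^G\#\Bbbk \mathbb{Z}_{2}$, hence a general solution. Combining the two directions yields the desired complete classification.

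Since the entire argument is a bookkeeping step gluing the three earlier results, there is no real obstacle. The only point worth emphasizing in the write-up is that the map from general solutions to tuples (via \eqref{e5.x}) and the map from tuples to general solutions (via Proposition \ref{pro5.1.4}) are mutually inverse, which is transparent because the formulas (i)--(iv) of Proposition \ref{pro5.1.4} specialize on the generators $s_i,a$ exactly to the definitions \eqref{e5.x} of $\alpha_{ij},\beta_i,\gamma_i,\delta$.
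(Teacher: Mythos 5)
Your proposal is correct and follows exactly the paper's own argument: the theorem is proved there by citing Propositions \ref{pro5.1.3}--\ref{pro5.1.5}, with the forward direction given by Propositions \ref{pro5.1.3} and \ref{pro5.1.4} and the converse by Proposition \ref{pro5.1.5}, precisely as you lay out. Your added remark that the two maps are mutually inverse because the formulas in Proposition \ref{pro5.1.4} specialize on $s_i,a$ to the definitions \eqref{e5.x} is a sensible explicit touch, but it is the same bookkeeping step the paper performs implicitly.
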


\begin{proof}
Since Propositions \ref{pro5.1.3}-\ref{pro5.1.5}, we get what we want.
\end{proof}

Let $(\alpha_{ij},\beta_i,\gamma_i,\delta)_{1\leq i,j \leq n}:=(1,1,1,1)_{1\leq i,j \leq n}$, then we can see that the four tuple such that the conditions of Proposition \ref{pro5.1.3} and so we get a general solution for $\Bbbk^G\#_{\sigma,\tau}\Bbbk \mathbb{Z}_{2}$. And hence we know that the general solution for $\Bbbk^G\#_{\sigma,\tau}\Bbbk \mathbb{Z}_{2}$ always exists.

\subsection{Special solutions for quasitriangular structures on $\Bbbk^G\#_{\sigma,\tau}\Bbbk \mathbb{Z}_{2}$}
In this subsection, we will imitate the method used in the subsection 5.1 to give a necessary and sufficient condition for the existence of a special solution on $\Bbbk^G\#_{\sigma,\tau}\Bbbk \mathbb{Z}_{2}$.

\begin{proposition}\label{pro5.2.1}
Let $R$ be a special solution for $\Bbbk^G\#_{\sigma,\tau}\Bbbk \mathbb{Z}_{2}$, and if we denote
\begin{align*}
\alpha_{ij}:=w^1(s_i,s_j),\;\beta_i:=w^2(s_i,a),\;\gamma_i:=w^3(a,s_i),\;\delta:=w^4(a,a),
\end{align*}
then the following equations hold
\begin{itemize}
  \item[(i)] $\alpha_{ij}^{k_i}=\alpha_{ij}^{k_j}=1,\;1\leq i,j \leq n$;
  \item[(ii)] $\beta_i^{k_i}=P_{s_i^{k_i}},\;\beta_i^2\sigma(s_i)=\alpha_{i1}^{m_1}...\alpha_{in}^{m_n},\; 1\leq i\leq n$;
   \item[(iii)]$\gamma_i^{k_i}=P_{s_i^{k_i}},\;\gamma_i^2\sigma(s_i)=\alpha_{1i}^{m_1}...\alpha_{ni }^{m_n},\; 1\leq i\leq n$;
   \item[(iv)] $\delta^2=[\tau(a,a)\tau(b,a)\tau(b,b)^{-1}\sigma(a)^{-1} P_{s_1^{m_1}...s_n^{m_n}}^{-1}P_{s_1^{p_1}...s_n^{p_n}}^{-1}]\beta_1^{m_1+p_1}...\beta_n^{m_n+p_n}$;
   \item[(v)]  $\delta^2=[\tau(a,a)\tau(a,b)\tau(b,b)^{-1}\sigma(a)^{-1} P_{s_1^{m_1}...s_n^{m_n}}^{-1}P_{s_1^{p_1}...s_n^{p_n}}^{-1}]\gamma_1^{m_1+p_1}...\gamma_n^{m_n+p_n}$;
   \item[(vi)]$\alpha_{1i}^{p_1}...\alpha_{ni}^{p_n}=\alpha_{i1}^{p_1}...\alpha_{in}^{p_n}=\eta(a,s_i),\;\beta_1^{p_1}...\beta_n^{p_n}=\gamma_1^{p_1}...\gamma_n^{p_n}\eta(a,b)$;
\end{itemize}
\end{proposition}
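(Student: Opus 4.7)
The plan is to mimic the proof of Proposition \ref{pro5.1.3}, extracting each of the constraints (i)-(vi) directly from the characterization of quasitriangular structures in Corollary \ref{coro5.1.2}: since $R$ is a special solution, $l_R$ is an algebra homomorphism and $r_R$ is an algebra anti-homomorphism, the $\varphi$-symmetry relation $w^1(b,s)=w^1(s,b)=\eta(a,s)$ holds, and $w^4(ab,ab)=\tau(ab,ab)\tau(a,a)^{-1}\delta$. I will repeatedly invoke the $\varphi$-symmetry via Corollary \ref{coro3.1.1} to reduce any $\gamma$-statement to the analogous $\beta$-statement for $R_\varphi$, cutting the work roughly in half.

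Items (i)-(iii) follow the outline of Proposition \ref{pro5.1.3}, with the extra $\sigma$, $\tau$ and $P$ factors carried through. Item (i) is immediate from Lemmas \ref{lem4.1.a}(i) and \ref{lem4.1.b}(i) (making $w^1$ a bicharacter) together with $s_i^{k_i}=1$. Item (ii) follows by iterating Lemma \ref{lem4.1.a}(iii) to obtain $l(X_{s_i})^{k_i}=P_{s_i^{k_i}}l(X_1)$, whose $e_a$-coefficient gives $\beta_i^{k_i}=P_{s_i^{k_i}}$, and from Lemma \ref{lem5.1.3}(iv), $r(E_a)^2=r(E_{a^2})$, where the $\sigma(s_i)$ arises from $e_sx\cdot e_sx=\sigma(s)e_s$. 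For item (iii), apply item (ii) to $R_\varphi$; Lemma \ref{lem4.1.a}(ii) combined with Corollary \ref{coro5.1.2}(iii) yields $\beta'_i=w^3(ab,s_i)=\eta(a,s_i)\gamma_i$, and the $\eta$-factors cancel because $\eta(a,s_i)^{k_i}=\eta(a,s_i^{k_i})=1$ and $\eta(a,s_i)^2=\eta(a^2,s_i)=1$ by Proposition \ref{pro2.1.2}(iii) (since $a^2\in S$).

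I prove (vi) before (iv) and (v). The first two equalities of (vi) follow from $w^1$ bicharacter together with Corollary \ref{coro5.1.2}(iii). The third is the core computation: on the one hand $r(X_b)^2=\tau(b,b)r(X_1)$ (using $b^2=1$) gives $w^3(a,b)^2=\tau(b,b)$ at the $e_a$-coefficient; on the other, expanding $w^4(ab,ab)$ via Lemmas \ref{lem4.1.a}(iv) and \ref{lem4.1.b}(iv) and comparing with $w^4(ab,ab)=\tau(ab,ab)\tau(a,a)^{-1}\delta$ from Corollary \ref{coro5.1.2}(iv), together with $\eta(a,b)^2=1$ and the 2-cocycle identity, produces $w^2(b,a)w^3(a,b)=\tau(b,b)\eta(a,b)^{-1}$. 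Dividing and using $\eta(a,b)^2=1$ gives $w^2(b,a)=w^3(a,b)\eta(a,b)$; multiplying by $P_b$ and identifying $P_bw^2(b,a)=\beta_1^{p_1}\cdots\beta_n^{p_n}$ and $P_bw^3(a,b)=\gamma_1^{p_1}\cdots\gamma_n^{p_n}$ (from iterating Lemmas \ref{lem4.1.a}(iii) and \ref{lem4.1.b}(iii)) yields the third equality.

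Finally for (iv), expand $l(X_a)^2=\tau(a,a)l(X_{a^2})$ and compare $e_a$-coefficients, giving $\delta\cdot w^4(a,ab)\cdot\sigma(a)=\tau(a,a)w^2(a^2,a)$. Lemmas \ref{lem4.1.b}(iv) and \ref{lem4.1.a}(ii) rewrite $w^4(a,ab)$ in terms of $\eta(a,b)\gamma_1^{p_1}\cdots\gamma_n^{p_n}\delta/(\tau(b,a)P_b)$; item (vi) then replaces $\gamma^p$ by $\beta^p\eta(a,b)^{-1}$, and $w^2(a^2,a)=\beta_1^{m_1}\cdots\beta_n^{m_n}/P_{a^2}$ comes from iterating Lemma \ref{lem4.1.a}(iii). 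Item (v) is obtained by applying this argument to $R_\varphi$ (equivalently, by starting from $r(X_a)^2=\tau(a,a)r(X_{a^2})$). The main technical obstacle is the bookkeeping of 2-cocycle values in (iv), (v), and the third equality of (vi): verifying that the many $\tau$-factors accumulate into the precise prefactor $\tau(a,a)\tau(b,a)\tau(b,b)^{-1}\sigma(a)^{-1}P_{a^2}^{-1}P_b^{-1}$ (and its $R_\varphi$-mirror with $\tau(a,b)$ in place of $\tau(b,a)$) requires repeated application of the 2-cocycle equation for $\tau$, the compatibility $\sigma(gh)\sigma(g)^{-1}\sigma(h)^{-1}=\tau(g,h)\tau(g\triangleleft x,h\triangleleft x)$ at $g=h=a$, and the triviality $\eta(a,b)^2=\eta(a^2,b)=1$.
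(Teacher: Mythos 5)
Your proposal is correct and follows essentially the same route as the paper's proof: items (i)--(iii) from the bicharacter property and the expansions of $l(X_{s_i})^{k_i}$ and $r(E_a)^2$ with the $\gamma$-statements transferred to $R_\varphi$, then (vi) before (iv) via $w^4(ab,ab)=\frac{\tau(ab,ab)}{\tau(a,a)}w^4(a,a)$ together with $w^3(a,b)^2=\tau(b,b)$, then (iv) from the $e_a$-coefficient of $l(X_a)^2=\tau(a,a)l(X_{a^2})$, and (v) by the $R_\varphi$-mirror. The only micro-differences are cosmetic (you get $w^3(a,b)^2=\tau(b,b)$ from $r(X_b)^2=\tau(b,b)r(X_1)$ rather than the paper's lemma citation, and your form of (iv) needs the final substitution $\beta_1^{2p_1}\cdots\beta_n^{2p_n}=\tau(b,b)P_{s_1^{p_1}\cdots s_n^{p_n}}^{2}$, which is covered by the bookkeeping you acknowledge).
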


\begin{proof}
We mimic the proof of Proposition \ref{pro5.1.3} as follows. Since $w^1$ is a bicharacter on $S$, we have $w^1(s_i,s_j)^{k_i}=w^1(s_i,s_j)^{k_j}=1$ and so (i) holds. To show (ii), we note that $l(X_{s_i})^{k_i}=P_{s_i^{k_i}}l(X_1)$ and $r(E_a)^2=r(E_{s_1^{m_1}...s_n^{m_n}})$, if we use (iii) of Lemma \ref{lem4.1.a}, we get $w^2(s_i,a)^{k_i}=P_{s_i^{k_i}}$ and so we have $\beta_i^{k_i}=P_{s_i^{k_i}}$. Similarly, since
\begin{align*}
r(E_a)^2=\sum\limits_{s\in S}w^2(s,a)^2 \sigma(s) e_{s},\;r(E_{s_1^{m_1}...s_n^{m_n}})=\sum\limits_{s\in S}w^1(s,s_1^{m_1}...s_n^{m_n})e_{s},
\end{align*}
we have $w^2(s_i,a)^2 \sigma(s_i)=w^1(s_i,s_1^{m_1}...s_n^{m_n})$ through letting $s=s_i$. Because we have shown $w^1$ is a bicharacter on $S$, we obtain $w^1(s_i,s_1^{m_1}...s_n^{m_n})=\alpha_{i1}^{m_1}...\alpha_{in}^{m_n}$ and hence (ii) holds. If we consider $R_\varphi$, then we know $R_\varphi$ is also a special solution for $\Bbbk^G\#_{\sigma,\tau}\Bbbk \mathbb{Z}_{2}$ and so $R_\varphi$ such that (ii). Due to the $w'^i( 1\leq i \leq 4)$ of $R_\varphi$ such that $w'^2(s_i,a)=w^3(ab,s_i)$ by definition of $R_\varphi$ and $w^3(ab,s_i)=w^3(a,s_i)w^1(b,s_i)$ by (ii) of Lemma \ref{lem4.1.a}, we obtain $w'^2(s_i,a)=w^3(a,s_i)w^1(b,s_i)$. But $w^1(b,s_i)=\eta(a,s_i)$ because of (iii) in Corollary \ref{coro5.1.2}, so $w^3(ab,s_i)=w^3(a,s_i)\eta(a,s_i)=\gamma_i\eta(a,s_i)$. Because $\eta(a,s_i)^{k_i}=\eta(a,s_i)^2=1$ and (ii) holds for $R_\varphi$, we know (iii) holds. To show (iv), we first show (vi). Since (iii) in Corollary \ref{coro5.1.2}, we know $w^1(s_i,b)=w^1(b,s_i)=\eta(a,s_i)$. But we have known $w^1$ is a bicharacter on $S$ and $b=s_1^{p_1}...s_n^{p_n}$ by the assumption, we know $\alpha_{1i}^{p_1}...\alpha_{ni}^{p_n}=\alpha_{i1}^{p_1}...\alpha_{in}^{p_n}=\eta(a,s_i)$.
Because of (iv) in Corollary \ref{coro5.1.2}, we have $w^4(ab,ab)=\frac{\tau(ab,ab)}{\tau(a,a)}w^4(a,a)$. Using (iv) of Lemma \ref{lem4.1.a}, we get $w^4(ab,ab)=\tau(b,a)^{-1}w^2(b,ab)w^4(a,ab)$. With the help of the (iv) of Lemma \ref{lem4.1.b}, we obtain $w^4(a,ab)=\tau(a,b)^{-1}w^4(a,a)w^3(a,b)$ and so we have $\tau(b,a)^{-1}\tau(a,b)^{-1}w^2(b,ab)w^3(a,b)=\frac{\tau(ab,ab)}{\tau(a,a)}$. Due to $w^2(b,ab)=\eta(b,a)w^2(b,a)$ by \eqref{e3.12} of Lemma \ref{lem4.1.1}, we know $\tau(b,a)^{-1}\tau(a,b)^{-1}\eta(b,a)w^2(b,a)w^3(a,b)=\frac{\tau(ab,ab)}{\tau(a,a)}$. It can be seen that $\frac{\tau(b,a)\tau(a,b)\tau(ab,ab)}{\tau(a,a)\eta(b,a)}=\tau(b,b)\eta(a,b)$. Moreover, since $w^2(b,a)=P_{s_1^{p_1}...s_n^{p_n}}^{-1}\beta_1^{p_1}...\beta_n^{p_n}$ and $w^3(a,b)=P_{s_1^{p_1}...s_n^{p_n}}^{-1}\gamma_1^{p_1}...\gamma_n^{p_n}$, we get $(\beta_1^{p_1}...\beta_n^{p_n})(\gamma_1^{p_1}...\gamma_n^{p_n})=\tau(b,b)\eta(a,b)P_{s_1^{p_1}...s_n^{p_n}}^2$.
Due to (iii) of Lemma \ref{lem4.1.a}, we obtain $w^3(a,b)^2=\tau(b,b)$ and hence $w^3(a,b)=\tau(b,b)w^3(a,b)^{-1}=\tau(b,b)P_{s_1^{p_1}...s_n^{p_n}}\gamma_1^{-p_1}...\gamma_n^{-p_n}$. Therefore (vi) holds. To show (iv), since $l(X_a)^2=\tau(a,a)l(X_{a^2})=\tau(a,a)l(X_{s_1^{m_1}...s_n^{m_n}})$ and the following equations hold
\begin{align*}
l(X_a)^2=\sum\limits_{s\in S}w^4(a,t) w^4(a,t\triangleleft x)\sigma(t) e_{t},\;l(X_{s_1^{m_1}...s_n^{m_n}})=\sum\limits_{t\in T}w^2(s_1^{m_1}...s_n^{m_n},t)e_{t},
\end{align*}
we have
\begin{align}
\label{e5.2.1} w^4(a,a)w^4(a,ab)\sigma(a)=\tau(a,a)w^2(s_1^{m_1}...s_n^{m_n},a)
\end{align}
through letting $t=a$. Since Lemma \ref{lem4.1.a}, we get $w^2(s,t)w^2(s',t)=\tau(s,s')w^2(ss',t)$ for $s\in S,t\in T$, so we have
\begin{align}
\label{e5.2.2} w^2(s_1^{m_1}...s_n^{m_n},a)=P_{s_1^{m_1}...s_n^{m_n}}^{-1}\beta_1^{m_1}...\beta_n^{m_n}.
\end{align}
Since the (iii) in Lemma \ref{lem4.1.2}, we have $w^3(ab,b)=\tau(b,a)\frac{w^4(a,ab)}{w^4(a,a)}$. Owing to \eqref{e3.13} in Lemma \ref{lem4.1.1}, we can get $w^3(ab,b)=w^3(a,b)\eta(a,b)$. Because we have known $w^3(a,b)=\tau(b,b)P_{s_1^{p_1}...s_n^{p_n}}\gamma_1^{-p_1}...\gamma_n^{-p_n}$
and $w^3(ab,b)=\tau(b,a)\frac{w^4(a,ab)}{w^4(a,a)}$, we have
\begin{align}
\label{e5.2.3} w^4(a,ab)=w^4(a,a)\tau(b,a)^{-1}\eta(a,b)\tau(b,b)P_{s_1^{p_1}...s_n^{p_n}}\gamma_1^{-p_1}...\gamma_n^{-p_n}.
\end{align}
Combining equations \eqref{e5.2.1}-\eqref{e5.2.3} and (v), we obtain (iv). Finally, if we consider $R_\varphi$, then we have (iv) holds for $R_\varphi$. Therefore we get (v) holds for $R$.
\end{proof}

Similar to Proposition \ref{pro5.1.4}, we have
\begin{proposition}\label{pro5.2.4}
Let $R$ be stated in Proposition \ref{pro5.2.1} and let $(\alpha_{ij},\beta_i,\gamma_i,\delta)_{1\leq i,j \leq n}$ be defined in Proposition \ref{pro5.2.1}, then the following equations hold
\begin{itemize}
  \item[(i)] $w^1(s_1^{i_1}...s_n^{i_n},s_1^{j_1}...s_n^{j_n})=\prod\limits_{k= 1}^{n} \prod\limits_{l = 1}^{n}\alpha_{k l}^{i_k j_l}$;
  \item[(ii)] $w^2(s_1^{i_1}...s_n^{i_n},s_1^{j_1}...s_n^{j_n}a)=P_{s_1^{i_1}...s_n^{i_n}}^{-1}(\prod\limits_{k= 1}^{n}\beta_k^{i_k}) \prod\limits_{k= 1}^{n} \prod\limits_{l = 1}^{n}\alpha_{k l}^{i_k j_l}$;
   \item[(iii)]$w^3(s_1^{i_1}...s_n^{i_n}a,s_1^{j_1}...s_n^{j_n})=P_{s_1^{j_1}...s_n^{j_n}}^{-1}(\prod\limits_{k= 1}^{n}\gamma_k^{j_k}) \prod\limits_{k=1}^{n} \prod\limits_{l = 1}^{n}\alpha_{k l}^{i_k j_l}$;
   \item[(iv)] $w^4(s_1^{i_1}...s_n^{i_n}a,s_1^{j_1}...s_n^{j_n}a)=\lambda(i_1,...,i_n,j_1,...,j_n)(\prod\limits_{k= 1}^{n}\beta_k^{i_k}) (\prod\limits_{k= 1}^{n}\gamma_k^{j_k}) \prod\limits_{k=1}^{n} \prod\limits_{l = 1}^{n}\alpha_{k l}^{i_k j_l}\delta$;
\end{itemize}
where $0 \leq i_1,...,i_n \leq n-1,\;0 \leq j_1,...,j_n \leq n-1$ and $\lambda(i_1,...,i_n,j_1,...,j_n):=P_{s_1^{i_1}...s_n^{i_n}}^{-1}P_{s_1^{j_1}...s_n^{j_n}}^{-1}\tau(s_1^{i_1}...s_n^{i_n},a)^{-1}\tau(a,s_1^{j_1}...s_n^{j_n})^{-1}$.
\end{proposition}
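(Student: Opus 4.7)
The plan is to mirror the argument for Proposition \ref{pro5.1.4}, but now carrying through the scalars $P_{g_1^{j_1}\cdots g_n^{j_n}}$ (arising from the products of the generators $X_g$ in the dual algebra) and the nontrivial values of $\tau$. As before, the four identities will be established in the order (i), (ii), (iii), (iv), with each subsequent formula built from the previous ones.

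For (i), I observe that $w^1$ is a bicharacter on $S$: since $R$ is quasitriangular, Lemma \ref{lem4.1.a}(i) and Lemma \ref{lem4.1.b}(i) show that $w^1$ is multiplicative in each argument separately, so iterating yields the product formula. For (ii), Lemma \ref{lem4.1.b}(ii) reduces the computation to evaluating $w^2(s_1^{i_1}\cdots s_n^{i_n}, a)$; the elements $\{X_s\}_{s\in S}$ pairwise commute in $(H_G)^\ast$ (because $\eta|_{S\times S}\equiv 1$ by Proposition \ref{pro2.1.2}(iii)), so $X_{s_1}^{i_1}\cdots X_{s_n}^{i_n} = P_{s_1^{i_1}\cdots s_n^{i_n}}X_{s_1^{i_1}\cdots s_n^{i_n}}$, and applying the algebra map $l_R$ and comparing the $e_a$-coefficient yields (ii). Part (iii) is obtained by the symmetric argument with $r_R$, using Lemma \ref{lem4.1.a}(ii) and Lemma \ref{lem4.1.b}(iii).

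For (iv), I would first apply Lemma \ref{lem4.1.a}(iv) to peel off $s_1^{i_1}\cdots s_n^{i_n}$ from the left factor, producing $\tau(s_1^{i_1}\cdots s_n^{i_n},a)^{-1}w^2(s_1^{i_1}\cdots s_n^{i_n}, s_1^{j_1}\cdots s_n^{j_n}a)\,w^4(a, s_1^{j_1}\cdots s_n^{j_n}a)$, and then Lemma \ref{lem4.1.b}(iv) to reduce $w^4(a, s_1^{j_1}\cdots s_n^{j_n}a)$ to $\tau(s_1^{j_1}\cdots s_n^{j_n},a)^{-1}w^3(a\triangleleft x, s_1^{j_1}\cdots s_n^{j_n})\delta$. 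Because $a\triangleleft x=ab=s_1^{p_1}\cdots s_n^{p_n}a$, evaluating the already-established formula (iii) at $i_k=p_k$ produces an extra factor $\prod_{k,l}\alpha_{kl}^{p_k j_l}$, which by Proposition \ref{pro5.2.1}(vi) equals $\prod_l \eta(a,s_l)^{j_l}=\eta(a,s_1^{j_1}\cdots s_n^{j_n})$.

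The one nontrivial step is then to reconcile this extra $\eta$-factor with the fact that $\lambda$ in (iv) has no $\eta$-factor and displays $\tau(a,s_1^{j_1}\cdots s_n^{j_n})^{-1}$ in place of $\tau(s_1^{j_1}\cdots s_n^{j_n},a)^{-1}$. The key identity I would use is $\eta(a,s)^2=1$ for every $s\in S$: since $(a\triangleleft x)^2=(ab)^2=a^2$, we have $a^2\in S$, so by the bicharacter property and Proposition \ref{pro2.1.2}(iii), $\eta(a,s)^2=\eta(a^2,s)=1$. Combined with the definition $\eta(a,s)=\tau(a,s)\tau(s,a)^{-1}$, a direct check gives $\tau(s_1^{j_1}\cdots s_n^{j_n},a)^{-1}\eta(a,s_1^{j_1}\cdots s_n^{j_n})=\tau(a,s_1^{j_1}\cdots s_n^{j_n})^{-1}$, producing exactly the $\lambda$ factor and completing the proof.
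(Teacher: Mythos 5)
Your proof is correct and takes essentially the same route as the paper's: (i) from the bicharacter property of $w^1$, (ii) and (iii) via Lemmas \ref{lem4.1.a} and \ref{lem4.1.b}, and (iv) by the same two-step peeling using (iv) of Lemma \ref{lem4.1.a} followed by (iv) of Lemma \ref{lem4.1.b}. The only divergence is minor: where the paper cites \eqref{e3.13} of Lemma \ref{lem4.1.1} to obtain $w^3(a\triangleleft x,s_1^{j_1}\cdots s_n^{j_n})=\eta(a,s_1^{j_1}\cdots s_n^{j_n})\,w^3(a,s_1^{j_1}\cdots s_n^{j_n})$, you rederive this identity from part (iii) evaluated at the exponents $p_k$ together with (vi) of Proposition \ref{pro5.2.1}, and you make explicit the final simplification $\tau(s_1^{j_1}\cdots s_n^{j_n},a)^{-1}\eta(a,s_1^{j_1}\cdots s_n^{j_n})=\tau(a,s_1^{j_1}\cdots s_n^{j_n})^{-1}$ (valid since $\eta(a,s)^2=\eta(a^2,s)=1$ for $s\in S$), a step the paper leaves implicit in its closing sentence.
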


\begin{proof}
Since $w^1$ is a bicharacter on $S$, we get (i). Owing to (ii) of Lemma \ref{lem4.1.b}, we know $w^2(s_1^{i_1}...s_n^{i_n},s_1^{j_1}...s_n^{j_n}a)=w^1(s_1^{i_1}...s_n^{i_n},s_1^{j_1}...s_n^{j_n})w^2(s_1^{i_1}...s_n^{i_n},a)$. Due to (iii) of Lemma \ref{lem4.1.a}, we obtain $w^2(s_1^{i_1}...s_n^{i_n},a)=P_{s_1^{p_1}...s_n^{p_n}}^{-1}\prod\limits_{k= 1}^{n}\beta_k^{i_k}$ and so we have (ii). Similarly, we can show (iii). Thanks to (iv) of Lemma \ref{lem4.1.a}, we get
\begin{align}
\label{e5.4} w^4(s_1^{i_1}...s_n^{i_n}a,s_1^{j_1}...s_n^{j_n}a)=\tau(s_1^{i_1}...s_n^{i_n},a)^{-1}w^2(s_1^{i_1}...s_n^{i_n},s_1^{j_1}...s_n^{j_n}a)w^4(a,s_1^{j_1}...s_n^{j_n}a).
\end{align}
Using (iv) of Lemma \ref{lem4.1.b}, we have
\begin{align}
\label{e5.5} w^4(a,s_1^{j_1}...s_n^{j_n}a)=\tau(s_1^{j_1}...s_n^{j_n},a)^{-1}w^3(a\triangleleft x,s_1^{j_1}...s_n^{j_n})w^4(a,a).
\end{align}
Because of \eqref{e3.13} in Lemma \ref{lem4.1.1}, we get
\begin{align}
\label{e5.6} w^3(a\triangleleft x,s_1^{j_1}...s_n^{j_n})=\eta(a,s_1^{j_1}...s_n^{j_n})w^3(a,s_1^{j_1}...s_n^{j_n}).
\end{align}
Since the equations \eqref{e5.4}-\eqref{e5.6} and (ii),(iii), we know (iv) holds.
\end{proof}

Conversely, given a four tuple $(\alpha_{ij},\beta_i,\gamma_i,\delta)_{1\leq i,j \leq n}$ satisfying conditions (i)-(vi) of Proposition \ref{pro5.2.1}, then we have
\begin{proposition}\label{pro5.2.5}
Let $R$ be the form (ii) on $\Bbbk^G\#_{\sigma,\tau}\Bbbk \mathbb{Z}_{2}$ in Proposition \ref{pro2.1.1}, and if $w^i(1\leq i\leq 4)$ of $R$ are given by (i)-(iv) in Proposition \ref{pro5.2.4} by using the four tuple above, then $R$ is a special solution for $\Bbbk^G\#_{\sigma,\tau}\Bbbk \mathbb{Z}_{2}$.
\end{proposition}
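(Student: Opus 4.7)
The plan is to mirror the structure of the proof of Proposition \ref{pro5.1.5}, but keeping track of the extra cocycle factors arising from the nontrivial $\sigma$ and $\tau$. Concretely, I will apply Corollary \ref{coro5.1.2}, so I need to verify its four items: that $R$ satisfies (i)--(iv) of Lemma \ref{lem5.1.2} (making $l_R$ an algebra map), (i)--(iv) of Lemma \ref{lem5.1.3} (making $r_R$ an algebra antihomomorphism), the boundary condition $w^1(b,s)=w^1(s,b)=\eta(a,s)$, and the compatibility $w^4(ab,ab)=\frac{\tau(ab,ab)}{\tau(a,a)}w^4(a,a)$.

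First I would read off from the formulas in Proposition \ref{pro5.2.4} that $w^1$ is a bicharacter on $S$ (this is immediate from (i) of the formula), and that for each fixed $t\in T$ the function $w^2(-,t)$ satisfies the twisted multiplicativity $w^2(s,t)w^2(s',t)=\tau(s,s')w^2(ss',t)$ required by (iii) of Lemma \ref{lem4.1.a} — this follows because $P_{g_1^{j_1}\cdots g_n^{j_n}}$ captures precisely the $\tau$-defect when multiplying the generators $X_{s_i}$, and similarly for $w^3(t,-)$. Together with conditions (i)--(iii) of Proposition \ref{pro5.2.1} (which encode $\alpha_{ij}^{k_i}=1$, $\beta_i^{k_i}=P_{s_i^{k_i}}$, $\gamma_i^{k_i}=P_{s_i^{k_i}}$, and the squares $\beta_i^2\sigma(s_i)=\prod_l\alpha_{il}^{m_l}$, $\gamma_i^2\sigma(s_i)=\prod_l\alpha_{li}^{m_l}$), this yields (i), (iii) and the "$E$-part" of (iv) in Lemma \ref{lem5.1.2}. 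Items (ii) of Lemma \ref{lem5.1.2} reduce, via Lemmas \ref{lem4.1.a}--\ref{lem4.1.b}, to the two cross-relations $w^1(s,s')w^3(a,s)=w^3(as,s')$ and $w^2(s,t)w^4(a,t)=w^4(as,t)$ (up to $\tau$-factors), both of which are direct bookkeeping from the explicit formulas.

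The main obstacle will be the equality $l(X_a)^2=\tau(a,a)\,l(X_{s_1^{m_1}\cdots s_n^{m_n}})$, which unpacks to the identity
\begin{equation*}
w^4(a,t)\,w^4(a,t\triangleleft x)\,\sigma(t)=\tau(a,a)\,w^2(s_1^{m_1}\cdots s_n^{m_n},t),\qquad t\in T.
\end{equation*}
I would verify this first at $t=a$ by plugging in the explicit formula for $w^4$: the left-hand side produces $\delta^2$ times a $\gamma$-monomial and a product of $\tau$-values, and condition (iv) of Proposition \ref{pro5.2.1} is precisely engineered to equate this with the right-hand side (using $b^2=1$ and $\gamma_1^{2p_1}\cdots\gamma_n^{2p_n}=P_{s_1^{p_1}\cdots s_n^{p_n}}^2$ coming from condition (iii) at squares). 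The general case $t=s_1^{j_1}\cdots s_n^{j_n}a$ then follows by multiplying through by the bicharacter $w^1(-,-)$ and invoking (iii) of Proposition \ref{pro5.2.1} to move from $\beta$-monomials to $\alpha$-monomials, where condition (vi) handles the $b$-factors.

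Having established the Lemma \ref{lem5.1.2} conditions, for Lemma \ref{lem5.1.3} I will invoke the symmetry argument used in Proposition \ref{pro5.1.5}: the four-tuple $(\alpha_{ji},\gamma_i,\beta_i,\delta)$ read off from $R_\varphi$ satisfies conditions (i)--(vi) of Proposition \ref{pro5.2.1} as well (conditions (ii)/(iii) swap, (iv)/(v) swap via the $\tau(b,a)\leftrightarrow \tau(a,b)$ interchange, and (vi) is symmetric), so the already-verified Lemma \ref{lem5.1.2} applied to $R_\varphi$ gives Lemma \ref{lem5.1.3} for $R$ by Proposition \ref{pro3.1.1}. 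The remaining item (iii) of Corollary \ref{coro5.1.2} is the first half of condition (vi), which gives $w^1(b,s)=\alpha_{1s}^{p_1}\cdots\alpha_{ns}^{p_n}=\eta(a,s)$ directly, and item (iv) follows from the explicit formula for $w^4(ab,ab)/w^4(a,a)$ once the exponents on the $\beta$'s, $\gamma$'s and $\alpha$'s are collected and compared against $\tau(ab,ab)/\tau(a,a)$ using (vi) and Lemma \ref{lem4.1.8}. This completes the verification that $R$ is a (non-trivial) quasitriangular structure on $\Bbbk^G\#_{\sigma,\tau}\Bbbk\mathbb{Z}_2$, i.e.\ a special solution.
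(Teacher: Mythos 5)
Your plan coincides with the paper's proof in every structural respect: both reduce to Corollary \ref{coro5.1.2}, verify the conditions of Lemma \ref{lem5.1.2} directly (with the hard point being $l(X_a)^2=\tau(a,a)\,l(X_{s_1^{m_1}\cdots s_n^{m_n}})$, checked first at $t=a$ and then propagated to general $t=s_1^{j_1}\cdots s_n^{j_n}a$), obtain the Lemma \ref{lem5.1.3} conditions by passing to $R_\varphi$, and finish with condition (vi) of Proposition \ref{pro5.2.1} and an explicit computation of $w^4(ab,ab)/w^4(a,a)$.

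Two of your bookkeeping identities, however, are carried over verbatim from the trivial-cocycle case (Proposition \ref{pro5.1.5}) and are false as stated here. First, the four tuple actually read off from $R_\varphi$ is not $(\alpha_{ji},\gamma_i,\beta_i,\delta)$ but $(\alpha_{ji},\eta(a,s_i)\gamma_i,\eta(s_i,a)\beta_i,\delta)$: indeed $w'^2(s_i,a)=w^3(a\triangleleft x,s_i)=\eta(a,s_i)\gamma_i$ by \eqref{e3.13}, and similarly for $w'^3$. The symmetry argument survives because $\eta(a,s_i)^2=\eta(a^2,s_i)=1$ and $\eta(a,s_i)^{k_i}=\eta(a,s_i^{k_i})=1$, so the twisted tuple still satisfies (i)--(vi) of Proposition \ref{pro5.2.1}; but with your untwisted tuple you would be verifying the hypotheses for data that is not that of $R_\varphi$, so this step needs the correction. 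Second, the identity $\gamma_1^{2p_1}\cdots\gamma_n^{2p_n}=P_{s_1^{p_1}\cdots s_n^{p_n}}^2$ is off by the factor $\tau(b,b)$: from $w^3(a,b)^2=\tau(b,b)$ (item (iii) of Lemma \ref{lem4.1.b} with $s_1=s_2=b$, $t=a$) and $w^3(a,b)=P_{s_1^{p_1}\cdots s_n^{p_n}}^{-1}\gamma_1^{p_1}\cdots\gamma_n^{p_n}$ one gets $\gamma_1^{2p_1}\cdots\gamma_n^{2p_n}=\tau(b,b)\,P_{s_1^{p_1}\cdots s_n^{p_n}}^2$, and this $\tau(b,b)$ is precisely what cancels the $\tau(b,b)^{-1}$ built into condition (iv) of Proposition \ref{pro5.2.1}. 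With your version, the verification of $w^4(a,a)\,w^4(a,ab)\,\sigma(a)=\tau(a,a)\,w^2(s_1^{m_1}\cdots s_n^{m_n},a)$ at $t=a$ would leave an uncancelled $\tau(b,b)$ (equivalently $\sigma(b)^{-1}$, since the cocycle and compatibility conditions force $\sigma(b)\tau(b,b)=1$), which is not $1$ in general. Both slips are local and repairable; with the corrected constants your argument is exactly the paper's.
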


\begin{proof}
Since Corollary \ref{coro5.1.2}, we only need to show $R$ such that the conditions of Corollary \ref{coro5.1.2}. Because the definition of $w^1$, we know $w^1$ is a bicharacter on $S$ and hence we get $l(E_{s_1})^{i_1}...l(E_{s_n})^{i_n}=l(E_{s_1^{i_1}...s_n^{i_n}})$. To show $l(X_{s_1})^{i_1}...l(X_{s_n})^{i_n}=P_{s_1^{i_1}...s_n^{i_n}}l(X_{s_1^{i_1}...s_n^{i_n}})$, we only need to show $\prod_{k = 1}^{n}w^2(s_k,t)^{i_k}=P_{s_1^{i_1}...s_n^{i_n}}w^2(s_1^{i_1}...s_n^{i_n},t)$ for $t\in T$. Owing to $aS=T$, we can assume $t=s_1^{j_1}...s_n^{j_n}a$. Since $w^2(s_k,s_1^{j_1}...s_n^{j_n}a)=\beta_k \prod_{l = 1}^{n} \alpha_{kl}^{j_l}$, we obtain
\begin{align}
\label{w5.1} \prod_{k = 1}^{n}w^2(s_k,s_1^{j_1}...s_n^{j_n}a)^{i_k}=\prod_{k = 1}^{n}\beta_k^{i_k}\prod_{k,l = 1}^{n}\alpha_{kl}^{i_kj_l}.
\end{align}
Because the definition of $w^2$ and the equation \eqref{w5.1} above, we get $\prod_{k = 1}^{n}w^2(s_k,t)^{i_k}=P_{s_1^{i_1}...s_n^{i_n}}w^2(s_1^{i_1}...s_n^{i_n},t)$ for $t\in T$ and so (i) of Lemma \ref{lem5.1.2} holds. To show (ii) of Lemma \ref{lem5.1.2}, we only need to prove that $w^1(s,s')w^3(a,s)=w^3(as,s')$ and $w^2(s,t)w^4(a,t)=\tau(s,a)w^4(as,t)$ for $s,s'\in S,\;t\in T$ due to Lemma \ref{lem4.1.a}. And these equalities are not difficult to check and so (ii) of Lemma \ref{lem5.1.2} hold. To show (iii) of Lemma \ref{lem5.1.2}, note that $w^1$ is a bicharacter on $S$ and $\alpha_{ij}^{k_i}=1$ by assumption and hence $l(E_{s_i})^{k_i}=l(E_1)$. To show $l(X_{s_i})^{k_i}=P_{s_i^{k_i}}l(X_1)$, we only need to prove that $w^2(s_i,t)^{k_i}=P_{s_i^{k_i}}$ for $t\in T$. Since the definition of $w^2$ and $\beta_i^{k_i}=P_{s_i^{k_i}}$ by assumption, we get $w^2(s_i,t)^{k_i}=P_{s_i^{k_i}}$ for $t\in T$ and so we know (iii) of Lemma \ref{lem5.1.2} hold. To show (iv), it can be seen that $l(E_a)^2=l(E_{s_1^{m_1}...s_1^{m_n}})$ is equivalent to $w^3(a,s)^2\sigma(s)=w^1(a^2,s)$ for $s\in S$. Since (iii) in Proposition \ref{pro5.2.1} and $w^1$ is a bicharacter, we know $w^3(a,s_i)^2\sigma(s_i)=w^1(a^2,s_i)$. By definition, we have $w^3(a,s_1^{j_1}...s_n^{j_n})=P_{s_1^{j_1}...s_n^{j_n}}^{-1}\prod_{k=1}^n w^3(a,s_k)^{j_k}$
and so we get
\begin{align*} w^3(a,s_1^{j_1}...s_n^{j_n})^2&=P_{s_1^{j_1}...s_n^{j_n}}^{-2}\prod_{k=1}^n w^3(a,s_k)^{2j_k}\\
&=P_{s_1^{j_1}...s_n^{j_n}}^{-2}\prod_{k=1}^n w^1(a^2,s_k)^{j_k}\sigma(s_k)^{-j_k}\\
&=P_{s_1^{j_1}...s_n^{j_n}}^{-2}(\prod_{k=1}^n\sigma(s_k)^{-j_k})w^1(a^2,s_1^{j_1}...s_n^{j_n})
\end{align*}
To show $w^3(a,s_1^{j_1}...s_n^{j_n})^2\sigma(s_1^{j_1}...s_n^{j_n})=w^1(a^2,s_1^{j_1}...s_n^{j_n})$, we only need to show
$$P_{s_1^{j_1}...s_n^{j_n}}^{-2}\prod_{k=1}^n \sigma(s_k)^{-j_k}\sigma(s_1^{j_1}...s_n^{j_n})=1.$$
But the equation above follows from the following Lemma \ref{lem5.2.1} and so we have $l(E_a)^2=l(E_{s_1^{m_1}...s_1^{m_n}})$. By definition, we have
\begin{align*}
l(X_a)^2=[\sum\limits_{t\in T}w^4(a,t)e_{t}x][\sum\limits_{t\in T}w^4(a,t)e_{t}x]=\sum\limits_{t\in T}w^4(a,t)w^4(a,t\triangleleft x)\sigma(t)e_{t}
\end{align*}
and $l(X_{s_1^{m_1}...s_1^{m_n}})=\sum_{t\in T}w^2(s_1^{m_1}...s_1^{m_n},t)e_{t}x$. To show $l(X_a)^2=\tau(a,a)l(X_{s_1^{m_1}...s_1^{m_n}})$, we only need to show $w^4(a,t)w^4(a,t\triangleleft x)\sigma(t)=\tau(a,a)w^2(s_1^{m_1}...s_1^{m_n},t)$ for $t\in T$. For the simplest case $t=a$, we have $w^4(a,a)w^4(a,ab)=\tau(a,b)^{-1}P_{s_1^{p_1}...s_n^{p_n}}^{-1}\prod_{k= 1}^{n}\gamma_k^{p_k}\delta^2$ and $w^2(s_1^{m_1}...s_1^{m_n},a)=P_{s_1^{m_1}...s_n^{m_n}}^{-1}\prod_{k= 1}^{n}\beta_k^{m_k}$ by definition. Since the proof of Proposition \ref{pro5.2.1}, we have $w^3(a,b)=P_{s_1^{p_1}...s_n^{p_n}}^{-1}\gamma_1^{p_1}...\gamma_n^{p_n}=\tau(b,b)P_{s_1^{p_1}...s_n^{p_n}}\gamma_1^{-p_1}...\gamma_n^{-p_n}$
and $w^4(a,a)w^4(a,ab)=\tau(a,b)^{-1}\tau(b,b)P_{s_1^{p_1}...s_n^{p_n}}\gamma_1^{-p_1}...\gamma_n^{-p_n}\delta^2$. Owing to (iv), (vi) of Proposition \ref{pro5.2.1}, we get $w^4(a,a)w^4(a,ab)\sigma(a)=\tau(a,a)w^2(s_1^{m_1}...s_1^{m_n},a)$.
For the case $t=s_1^{j_1}...s_1^{j_n}a$, using the following equalities
\begin{align*}
w^4(a,a)w^4(a,ab)\sigma(a)= \tau(a,a)w^2(s_1^{m_1}...s_1^{m_n},a),\;\gamma_i^2\sigma(s_i)=\alpha_{1_i}^{m_1}...\alpha_{n_i}^{m_n}
\end{align*}
and Lemma \ref{lem5.2.1}, we can prove that $w^4(a,t)w^4(a,t\triangleleft x)\sigma(t)=\tau(a,a)w^2(s_1^{m_1}...s_1^{m_n},t)$ and hence we have show (iv) of Lemma \ref{lem5.1.2}. To prove that $R$ such that (i)-(iv) of Lemma \ref{lem5.1.3}, we consider the $R_\varphi$. If we denote the four maps associated with $R_\varphi$ as $w'^i(1\leq i \leq 4)$, then it can be seen that the following equations hold
\begin{align*}
w'^1(s_i,s_j)=\alpha_{ji},\;w'^2(s_i,a)=\eta(a,s_i)\gamma_i,\;w'^3(a,s_i)=\eta(s_i,a)\beta_i,\;w'^4(a,a)=\delta.
\end{align*}
Furthermore, one can check that the four tuple $(\alpha_{ji},\eta(a,s_i)\gamma_i,\eta(s_i,a)\beta_i,\delta)_{1\leq i,j \leq n}$ satisfies conditions (i)-(vi) of Proposition \ref{pro5.2.1} and hence $R_\varphi$ such that the conditions of this Proposition. Therefore $R_\varphi$ such that (i)-(iv) of Lemma \ref{lem5.1.2}. And this implies that $R$ such that (i)-(iv) of Lemma \ref{lem5.1.3}. Since $w^1$ is a bicharacter on $S$ and (vi) of Proposition \ref{pro5.2.1}, we know (iii) of Corollary \ref{coro5.1.2} holds. Finally we prove that $w^4(ab,ab)=\frac{\tau(ab,ab)}{\tau(a,a)}w^4(a,a)$. Since the definition of $w^4$, we have
$$w^4(ab,ab)=P_{s_1^{p_1}...s_n^{p_n}}^{-2}\tau(b,a)^{-1}\tau(a,b)^{-1}\prod_{k=1}^n \beta^{p_k}\prod_{k=1}^n \gamma^{p_k}\eta(a,b)\delta$$.

Due to $P_{s_1^{p_1}...s_n^{p_n}}^{-1}\gamma_1^{p_1}...\gamma_n^{p_n}=\tau(b,b)P_{s_1^{p_1}...s_n^{p_n}}\gamma_1^{-p_1}...\gamma_n^{-p_n}$ and  $\beta_1^{p_1}...\beta_n^{p_n}=\eta(a,b)\gamma_1^{p_1}...\gamma_n^{p_n}$, we know that $w^4(ab,ab)=\frac{\tau(b,b)}{\tau(a,b)\tau(b,a)}w^4(a,a)$. Using the fact $\tau$ is a 2-cocycle, we can see that $\frac{\tau(b,b)}{\tau(a,b)\tau(b,a)}=\frac{\tau(ab,ab)}{\tau(a,a)}$ and hence we get $w^4(ab,ab)=\frac{\tau(ab,ab)}{\tau(a,a)}w^4(a,a)$.
\end{proof}
The following lemma is used to help the proof of Proposition \ref{pro5.2.5}.
\begin{lemma}\label{lem5.2.1}
$P_{s_1^{j_1}...s_n^{j_n}}^{-2}\prod_{k=1}^n \sigma(s_k)^{-j_k}\sigma(s_1^{j_1}...s_n^{j_n})=1$, where $j_1,...,j_n\in \mathbb{N}$.
\end{lemma}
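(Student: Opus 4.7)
The key observation is that, since $s\triangleleft x=s$ for every $s\in S$, the compatibility condition between $\sigma$ and $\tau$ specializes on $S\times S$ to
\[
\tau(s,s')^{2}=\sigma(ss')\sigma(s)^{-1}\sigma(s')^{-1},\qquad s,s'\in S,
\]
so that $\tau|_{S\times S}^{2}$ is precisely the coboundary of $\sigma|_{S}$. The lemma is essentially this fact packaged for the specific ordered word $s_{1}^{j_{1}}\cdots s_{n}^{j_{n}}$.

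The strategy is to expand $P_{s_{1}^{j_{1}}\cdots s_{n}^{j_{n}}}$ explicitly and then square. Multiplying out $X_{s_{1}}^{j_{1}}\cdots X_{s_{n}}^{j_{n}}$ from left to right via $X_{g}X_{h}=\tau(g,h)X_{gh}$ (Lemma \ref{lem2.2.2}) yields $P_{s_{1}^{j_{1}}\cdots s_{n}^{j_{n}}}=\prod_{i=1}^{n}A_{i}$, where $A_{1}=\prod_{l=1}^{j_{1}-1}\tau(s_{1}^{l},s_{1})$ and, for $i\geq 2$,
\[
A_{i}=\prod_{l=1}^{j_{i}}\tau\bigl(s_{1}^{j_{1}}\cdots s_{i-1}^{j_{i-1}}s_{i}^{l-1},\,s_{i}\bigr).
\]
Squaring $A_{i}$ and substituting the coboundary formula turns each factor into $\sigma(\,\cdot\,s_{i})\,\sigma(\,\cdot\,)^{-1}\sigma(s_{i})^{-1}$; the resulting product over $l$ telescopes in the running prefix and gives, uniformly for all $1\leq i\leq n$,
\[
A_{i}^{2}=\sigma\bigl(s_{1}^{j_{1}}\cdots s_{i}^{j_{i}}\bigr)\,\sigma\bigl(s_{1}^{j_{1}}\cdots s_{i-1}^{j_{i-1}}\bigr)^{-1}\sigma(s_{i})^{-j_{i}},
\]
with the convention $\sigma(1)=1$ handling the $i=1$ case.

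Finally, forming $\prod_{i=1}^{n}A_{i}^{2}$ triggers a second telescoping, now in the index $i$: the first two $\sigma$-factors collapse across successive $i$ to $\sigma(s_{1}^{j_{1}}\cdots s_{n}^{j_{n}})$, while the remaining factors assemble into $\prod_{k=1}^{n}\sigma(s_{k})^{-j_{k}}$. This produces
\[
P_{s_{1}^{j_{1}}\cdots s_{n}^{j_{n}}}^{2}=\sigma\bigl(s_{1}^{j_{1}}\cdots s_{n}^{j_{n}}\bigr)\prod_{k=1}^{n}\sigma(s_{k})^{-j_{k}},
\]
which rearranges to the identity claimed in the lemma. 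The only real obstacle is bookkeeping the two nested telescopings; there is no conceptual difficulty beyond the cocycle compatibility restricted to $S$.
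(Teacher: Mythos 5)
Your proof is correct, but it takes a genuinely different route from the paper. The paper's proof never touches the $\sigma$--$\tau$ compatibility condition directly: it works in the dual Hopf algebra, observing that $\Delta(E_s)=E_s\otimes E_s+\sigma(s)X_s\otimes X_s$ for $s\in S$, and then compares the $X\otimes X$-components of $\Delta(E_{s_1})^{j_1}\cdots\Delta(E_{s_n})^{j_n}$ with those of $\Delta(E_{s_1^{j_1}\cdots s_n^{j_n}})$; multiplicativity of $\Delta$ produces the factor $P_{s_1^{j_1}\cdots s_n^{j_n}}^{2}$ (once in each tensor leg) and the identity drops out in one line, with the compatibility of $\sigma$ and $\tau$ entering only implicitly through the bialgebra axioms of $H_G$. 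You instead unwind $P_{s_1^{j_1}\cdots s_n^{j_n}}$ from its definition via $X_gX_h=\tau(g,h)X_{gh}$ (Lemma \ref{lem2.2.2}), square, and feed in the specialization $\tau(s,s')^{2}=\sigma(ss')\sigma(s)^{-1}\sigma(s')^{-1}$ of the defining datum (iii) on the fixed-point subgroup $S$ (valid since $s\triangleleft x=s$ there), after which two telescopings — one in the exponent $l$ within each block $A_i$, one across the blocks in $i$ — assemble exactly the claimed relation $P^{2}=\sigma(s_1^{j_1}\cdots s_n^{j_n})\prod_k\sigma(s_k)^{-j_k}$; your formulas for $A_i$ and $A_i^{2}$ check out, and unitality of $\tau$ together with $\sigma(1)=1$ handles the boundary cases $i=1$ and $j_i=0$. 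The paper's argument is shorter and structurally cleaner, essentially a coalgebra-level bookkeeping trick; yours is more elementary and more informative, in that it isolates precisely where the cocycle compatibility is used and shows the lemma is a purely cohomological statement about the pair $(\sigma|_S,\tau|_{S\times S})$, independent of the Hopf-algebra packaging.
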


\begin{proof}
On the one hand, it can be seen that $\Delta(E_s)=E_s\otimes E_s+\sigma(s)X_s\otimes X_s$ for $s\in S$ and so we have $\Delta(E_{s_1^{j_1}...s_n^{j_n}})=E_{s_1^{j_1}...s_n^{j_n}}\otimes E_{s_1^{j_1}...s_n^{j_n}}+\sigma(s_1^{j_1}...s_n^{j_n})X_{s_1^{j_1}...s_n^{j_n}}\otimes X_{s_1^{j_1}...s_n^{j_n}}$. On the other hand, we have $\Delta(E_{s_1})^{j_1}...\Delta(E_{s_n})^{j_n}=[E_{s_1}\otimes E_{s_1}+\sigma(s_1)X_{s_1}\otimes X_{s_1}]^{j_1}...[E_{s_n}\otimes E_{s_n}+\sigma(s_n)X_{s_n}\otimes X_{s_n}]^{j_n}=E_{s_1^{j_1}...s_n^{j_n}}\otimes E_{s_1^{j_1}...s_n^{j_n}}+\sigma(s_1)^{j_1}...\sigma(s_n)^{j_n}P_{s_1^{j_1}...s_n^{j_n}}^{2}X_{s_1^{j_1}...s_n^{j_n}}\otimes X_{s_1^{j_1}...s_n^{j_n}}$. Since $E_{s_1}^{j_1}...E_{s_n}^{j_n}=E_{s_1^{j_1}...s_n^{j_n}}$, we get $P_{s_1^{j_1}...s_n^{j_n}}^{2}\prod_{k=1}^n \sigma(s_k)^{j_k}=\sigma(s_1^{j_1}...s_n^{j_n})$.
\end{proof}

By Propositions \ref{pro5.2.1}-\ref{pro5.2.5}, we get the following theorem
\begin{theorem}\label{thm5.2.1}
There exists a quasitriangular structure for $\Bbbk^G\#_{\sigma,\tau}\Bbbk \mathbb{Z}_{2}$ if and only if there exists a four tuple $(\alpha_{ij},\beta_i,\gamma_i,\delta)_{1\leq i,j \leq n}$ satisfies conditions (i)-(vi) of Proposition \ref{pro5.2.1}.
\end{theorem}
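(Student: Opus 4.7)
The plan is to obtain Theorem \ref{thm5.2.1} as an immediate consequence of Propositions \ref{pro5.2.1} and \ref{pro5.2.5}, which together establish a two-way correspondence between special solutions and admissible four-tuples. For the necessity direction, suppose $R$ is a special solution on $\Bbbk^G\#_{\sigma,\tau}\Bbbk\mathbb{Z}_{2}$. I would extract the four-tuple
\[
\alpha_{ij} := w^1(s_i,s_j),\ \beta_i := w^2(s_i,a),\ \gamma_i := w^3(a,s_i),\ \delta := w^4(a,a),
\]
and simply invoke Proposition \ref{pro5.2.1}, which already verifies that conditions (i)--(vi) are forced by the axioms of a quasitriangular structure together with the normalisations $w^1(1,s)=w^1(s,1)=1$ and $w^2(1,t)=w^3(t,1)$ fixed at the start of Section \ref{sec5}.

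For the sufficiency direction, given a four-tuple $(\alpha_{ij},\beta_i,\gamma_i,\delta)_{1\le i,j\le n}$ satisfying (i)--(vi) of Proposition \ref{pro5.2.1}, I would build the four maps $w^i$ ($1\le i\le 4$) from the closed formulas of Proposition \ref{pro5.2.4}, assemble $R$ in the form (ii) of Proposition \ref{pro2.1.1}, and then appeal directly to Proposition \ref{pro5.2.5} to conclude that $R$ is a special solution. Thus the theorem is essentially a packaging statement and the proof reduces to two short sentences citing the two propositions.

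The real effort has been concentrated in Proposition \ref{pro5.2.5}, where one must verify items (i)--(iv) of Lemmas \ref{lem5.1.2} and \ref{lem5.1.3} for the constructed $R$ as well as the relation $w^4(ab,ab)=\frac{\tau(ab,ab)}{\tau(a,a)}w^4(a,a)$; this is the main obstacle in the overall program. It is handled by a careful bookkeeping argument involving the $2$-cocycle identity for $\tau$, the coefficients $P_{s_1^{j_1}\cdots s_n^{j_n}}$ introduced in \eqref{e5.1.p}, the compatibility $\sigma(gh)\sigma(g)^{-1}\sigma(h)^{-1}=\tau(g,h)\tau(g\triangleleft x,h\triangleleft x)$, and the symmetry observation that replacing $R$ by $R_{\varphi}$ reduces half of the checks to the other half via the $\varphi$-symmetry machinery of Section \ref{sec2.2}. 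For the present statement itself, no independent calculation is needed beyond citing these ingredients.
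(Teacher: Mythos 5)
Your proposal is correct and coincides with the paper's own proof, which likewise disposes of the theorem in one line by citing Propositions \ref{pro5.2.1}--\ref{pro5.2.5}: necessity by extracting the four-tuple $(\alpha_{ij},\beta_i,\gamma_i,\delta)$ from a given $R$ via Proposition \ref{pro5.2.1}, and sufficiency by reconstructing $R$ from the formulas of Proposition \ref{pro5.2.4} and invoking Proposition \ref{pro5.2.5}. You also correctly identify that the substantive work lives in Proposition \ref{pro5.2.5} (the verification of Lemmas \ref{lem5.1.2}--\ref{lem5.1.3} and of $w^4(ab,ab)=\frac{\tau(ab,ab)}{\tau(a,a)}w^4(a,a)$, using the $2$-cocycle identity, the coefficients $P_{s_1^{j_1}\cdots s_n^{j_n}}$, and the $R_\varphi$ symmetry reduction), so nothing further is needed.
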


\begin{proof}
Since Propositions \ref{pro5.2.1}-\ref{pro5.2.5}, we get what we want.
\end{proof}

\begin{remark}\emph{From the above theorem, we know that all non-trivial quasitriangular structures on $H_G$ are given by (i)-(iv) of Proposition \ref{pro5.2.4}, where $(\alpha_{ij},\beta_i,\gamma_i,\delta)_{1\leq i,j \leq n}$ satisfying conditions (i)-(vi) of Proposition \ref{pro5.2.1}.}\end{remark}

\begin{corollary} \label{coro5.2.1}
There exists a quasitriangular structure for $\Bbbk^G\#_{\sigma,\tau}\Bbbk \mathbb{Z}_{2}$ if and only if there exists a bicharacter $w^1$ on $S$ and a pairing $(\beta_i,\gamma_i)_{1\leq i \leq n}$ satisfies the following conditions
\begin{itemize}
  \item[(i)] $\beta_i^{k_i}=P_{s_i^{k_i}},\;\gamma_i^{k_i}=P_{s_i^{k_i}}$;
  \item[(ii)] $\beta_1^{p_1}...\beta_n^{p_n}=\gamma_1^{p_1}...\gamma_n^{p_n}\eta(a,b),\;\beta_1^{m_1}...\beta_n^{m_n}=\gamma_1^{m_1}...\gamma_n^{m_n}$;
   \item[(iii)]$w^1(s_i,b)=w^1(b,s_i)=\eta(a,s_i),\;w^1(s_i,a^2)=\beta_i^2\sigma(s_i),\;w^1(a^2,s_i)=\gamma_i^2\sigma(s_i)$;
\end{itemize}
where $1\leq i \leq n$.
\end{corollary}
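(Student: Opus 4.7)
My plan is to derive this corollary from Theorem \ref{thm5.2.1} by a careful translation between the four-tuple $(\alpha_{ij},\beta_i,\gamma_i,\delta)_{1\le i,j\le n}$ and the data $(w^1,(\beta_i,\gamma_i))$. The key observation is that since $S=\langle s_1,\dots,s_n\mid s_i^{k_i}=1,\, s_is_j=s_js_i\rangle$ by Corollary \ref{coro5.1.1}, specifying a bicharacter $w^1$ on $S$ is equivalent to specifying scalars $\alpha_{ij}:=w^1(s_i,s_j)$ that satisfy $\alpha_{ij}^{k_i}=\alpha_{ij}^{k_j}=1$, which is exactly condition (i) of Proposition \ref{pro5.2.1}. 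So only conditions (ii)--(vi) of Proposition \ref{pro5.2.1} need to be repackaged, and $\delta$ must be eliminated.

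For the forward direction, assume a quasitriangular structure exists and pick a four-tuple $(\alpha_{ij},\beta_i,\gamma_i,\delta)$ satisfying (i)--(vi) of Proposition \ref{pro5.2.1}. Define $w^1$ as the unique bicharacter on $S$ with $w^1(s_i,s_j)=\alpha_{ij}$. Using $b=s_1^{p_1}\cdots s_n^{p_n}$ and $a^2=s_1^{m_1}\cdots s_n^{m_n}$, the identities $w^1(s_i,a^2)=\alpha_{i1}^{m_1}\cdots\alpha_{in}^{m_n}$, $w^1(a^2,s_i)=\alpha_{1i}^{m_1}\cdots\alpha_{ni}^{m_n}$, $w^1(s_i,b)=\alpha_{i1}^{p_1}\cdots\alpha_{in}^{p_n}$ and $w^1(b,s_i)=\alpha_{1i}^{p_1}\cdots\alpha_{ni}^{p_n}$ translate Proposition \ref{pro5.2.1}(ii),(iii),(vi) directly into corollary (i), (iii), and the first equation of (ii). The second equation of (ii), namely $\beta_1^{m_1}\cdots\beta_n^{m_n}=\gamma_1^{m_1}\cdots\gamma_n^{m_n}$, is obtained by dividing the two formulas for $\delta^2$ in Proposition \ref{pro5.2.1}(iv) and (v) and simplifying $\tau(b,a)/\tau(a,b)=\eta(a,b)^{-1}$, then using the already established relation $\beta_1^{p_1}\cdots\beta_n^{p_n}=\gamma_1^{p_1}\cdots\gamma_n^{p_n}\eta(a,b)$ to cancel the $p$-exponents.

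For the backward direction, assume $w^1$ and $(\beta_i,\gamma_i)$ satisfy the three conditions of the corollary. Set $\alpha_{ij}:=w^1(s_i,s_j)$; then (i) of Proposition \ref{pro5.2.1} holds automatically and, via the identities listed above, conditions (ii), (iii), and (vi) of Proposition \ref{pro5.2.1} follow from corollary (i)--(iii). It remains to produce $\delta\in\Bbbk^\times$ satisfying (iv) and (v) simultaneously. The two prescribed values of $\delta^2$ coincide precisely when $\tau(b,a)\beta_1^{m_1+p_1}\cdots\beta_n^{m_n+p_n}=\tau(a,b)\gamma_1^{m_1+p_1}\cdots\gamma_n^{m_n+p_n}$, which, using $\tau(a,b)/\tau(b,a)=\eta(a,b)$, is equivalent to multiplying the two relations in corollary (ii). Since $\Bbbk$ is algebraically closed of characteristic $0$, this common nonzero value admits a square root, which we take as $\delta$. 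The resulting four-tuple then satisfies (i)--(vi) of Proposition \ref{pro5.2.1}, and Theorem \ref{thm5.2.1} produces a quasitriangular structure.

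The only nontrivial step is the $\delta$ argument: one must verify that the two formulas in Proposition \ref{pro5.2.1}(iv), (v) become compatible exactly under corollary (ii), and invoke algebraic closedness to extract a square root. Everything else is a straightforward bookkeeping translation, made possible by the explicit presentation of $S$ and the expansion of $a^2$ and $b$ in terms of the generators.
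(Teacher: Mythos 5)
Your proposal is correct and takes essentially the same route as the paper's own (much terser) proof: both directions reduce to Theorem \ref{thm5.2.1} by identifying a bicharacter on $S$ with the tuple $\alpha_{ij}=w^1(s_i,s_j)$ and eliminating $\delta$ via a square root of the expression in Proposition \ref{pro5.2.1}(iv). In fact you supply two details the paper glosses over --- deriving $\beta_1^{m_1}\cdots\beta_n^{m_n}=\gamma_1^{m_1}\cdots\gamma_n^{m_n}$ by dividing (iv) by (v) and cancelling the $p$-exponents via (vi), and verifying that the two prescribed values of $\delta^2$ in (iv) and (v) agree precisely under condition (ii) before extracting the square root --- so your write-up is a faithful, more complete version of the same argument.
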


\begin{proof}
If there exists a quasitriangular structure, then we know $w^1, \;(\beta_i,\gamma_i)_{1\leq i \leq n}$ of Proposition \ref{pro5.2.1} such that the conditions (i)-(iii). Conversely, let $\alpha_{ij}:=w^1(s_i,s_j)$ and let $\delta$ be given by (iv) of Proposition \ref{pro5.2.1}, then we know that $(\alpha_{ij},\beta_i,\gamma_i,\delta)_{1\leq i,j \leq n}$ satisfies conditions (i)-(vi) of Proposition \ref{pro5.2.1} by our conditions (i)-(iii). And hence there exists a quasitriangular structure by Theorem \ref{thm5.2.1}.
\end{proof}

\begin{corollary} \label{coro5.2.2}
If there is a bicharacter $w^1$ on $S$ and a set $\{\beta_i\in \Bbbk|\;1\leq i \leq n\}$ such that the following conditions
\begin{itemize}
  \item[(i)] $\beta_i^{k_i}=P_{s_i^{k_i}}$;
  \item[(ii)] $w^1(s_i,a^2)=w^1(a^2,s_i)=\beta_i^2\sigma(s_i)$;
   \item[(iii)]$w^1(s_i,b)=w^1(b,s_i)=\eta(a,s_i)$;
\end{itemize}
where $1\leq i \leq n$, then there exists a quasitriangular structure for $\Bbbk^G\#_{\sigma,\tau}\Bbbk \mathbb{Z}_{2}$.
\end{corollary}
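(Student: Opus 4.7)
The plan is to reduce the statement to Corollary \ref{coro5.2.1} by constructing a pairing $(\beta_i, \gamma_i)_{1 \le i \le n}$ out of the given data. The hypothesis $w^1(s_i, a^2) = w^1(a^2, s_i)$ in (ii) forces any valid $\gamma_i$ in Corollary \ref{coro5.2.1}(iii) to satisfy $\gamma_i^2 = \beta_i^2$, so $\gamma_i$ is $\beta_i$ up to sign. The most natural candidate is $\gamma_i := \beta_i$. With this choice, conditions (i), (iii), and the $m$-part of (ii) in Corollary \ref{coro5.2.1} fall out immediately from hypotheses (i)-(iii) of the present statement: (i) is identical; the identity $\prod_i \beta_i^{m_i} = \prod_i \gamma_i^{m_i}$ is trivial; and (iii) is exactly the three bicharacter conditions we already have.

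The only nontrivial check is the $p$-part of Corollary \ref{coro5.2.1}(ii), namely $\prod_i \beta_i^{p_i} = \prod_i \gamma_i^{p_i}\,\eta(a,b)$, which with $\gamma_i = \beta_i$ collapses to $\eta(a,b) = 1$. To address this, I would compute $w^1(b,b)$ in two ways. Using the bicharacter property, the decomposition $b = s_1^{p_1}\cdots s_n^{p_n}$, and hypothesis (iii),
\[
w^1(b,b) \;=\; \prod_i w^1(s_i, b)^{p_i} \;=\; \prod_i \eta(a, s_i)^{p_i} \;=\; \eta(a, b).
\]
On the other hand, $b^2 = 1$ in $G$ and the bicharacter property force $w^1(b,b)^2 = w^1(b, b^2) = 1$, so at least $\eta(a,b) \in \{\pm 1\}$.

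The main obstacle is that one cannot in general force $\eta(a,b) = 1$ directly (for instance, in $K_8$ it is $-1$), so the choice $\gamma_i = \beta_i$ must be corrected by signs when $\eta(a,b) = -1$. The plan in that case is to set $\gamma_i := \epsilon_i \beta_i$ with $\epsilon_i \in \{\pm 1\}$ chosen to satisfy $\epsilon_i^{k_i} = 1$ (so $\epsilon_i = 1$ whenever $k_i$ is odd, preserving (i) of Corollary \ref{coro5.2.1}), $\prod_i \epsilon_i^{p_i} = -1$, and $\prod_i \epsilon_i^{m_i} = 1$. The key structural input is that $b^2 = 1$ forces $p_i \in \{0, k_i/2\}$ for $k_i$ even and $p_i = 0$ for $k_i$ odd, and $a^2 = s_1^{m_1}\cdots s_n^{m_n}$ imposes an analogous parity constraint on the $m_i$; these together reduce the existence of the $\epsilon_i$'s to a small $\mathbb{F}_2$-linear feasibility problem on the indices with $k_i$ even. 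Once such $(\beta_i, \gamma_i)$ is produced, Corollary \ref{coro5.2.1} directly yields a quasitriangular structure on $\Bbbk^G\#_{\sigma,\tau}\Bbbk\mathbb{Z}_2$, completing the proof.
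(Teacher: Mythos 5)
Your overall route --- reducing to Corollary \ref{coro5.2.1} by producing a pairing $(\beta_i,\gamma_i)_{1\le i\le n}$, observing that hypothesis (ii) forces $\gamma_i^2=\beta_i^2$, and detecting via $w^1(b,b)=\prod_i\eta(a,s_i)^{p_i}=\eta(a,b)$ that the naive choice $\gamma_i:=\beta_i$ fails exactly when $\eta(a,b)=-1$ (as in $K_8$) --- is the same reduction the paper uses, and everything up to that point is correct. The gap is your final step: you reduce the existence of the correcting signs $\epsilon_i$ to the $\mathbb{F}_2$-linear system $\epsilon_i^{k_i}=1$, $\prod_i\epsilon_i^{p_i}=-1$, $\prod_i\epsilon_i^{m_i}=1$, and then assert feasibility rather than prove it. As posed, the system need not be solvable: the structural facts you cite ($p_i\in\{0,k_i/2\}$ for $k_i$ even, and the parity constraint on the $m_i$) do not exclude the case where every $p_i$ with $k_i$ even is itself even, in which case $\prod_i\epsilon_i^{p_i}=1$ identically, nor the case $p_i\equiv m_i\pmod 2$ on all even-$k_i$ indices, and in either case your system with right-hand side $-1$ has no solution. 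What rescues feasibility is information about $\eta$ that your formulation discards: since $a^2\in S$ and $\eta$ is trivial on $S\times S$ (Remark \ref{rk2.1}), each $\eta(a,s_i)\in\{\pm1\}$, with $\eta(a,s_i)=1$ whenever $k_i$ is odd (because $\eta(a,s_i)^{k_i}=\eta(a,s_i^{k_i})=1$), and moreover $\prod_i\eta(a,s_i)^{p_i}=\eta(a,b)$ while $\prod_i\eta(a,s_i)^{m_i}=\eta(a,a^2)=1$. These identities show that in your would-be infeasible configurations one automatically has $\eta(a,b)=1$, so no correction is needed there --- but you never invoke them, and without them the proof is incomplete.

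The same identities hand you a uniform solution of your sign problem, namely $\epsilon_i:=\eta(a,s_i)$, and this is precisely the paper's one-line proof: it defines $\gamma_i:=\beta_i\,\eta(a,s_i)$ and feeds the pair $(\beta_i,\gamma_i)$ into Corollary \ref{coro5.2.1}. Indeed $\gamma_i^{k_i}=\beta_i^{k_i}\eta(a,s_i^{k_i})=P_{s_i^{k_i}}$; $\gamma_i^2=\beta_i^2$ since $\eta(a,s_i)^2=\eta(a^2,s_i)=1$, so condition (iii) of Corollary \ref{coro5.2.1} follows from your hypothesis (ii); $\prod_i\gamma_i^{m_i}=\bigl(\prod_i\beta_i^{m_i}\bigr)\eta(a,a^2)=\prod_i\beta_i^{m_i}$; and $\bigl(\prod_i\gamma_i^{p_i}\bigr)\eta(a,b)=\bigl(\prod_i\beta_i^{p_i}\bigr)\eta(a,b)^2=\prod_i\beta_i^{p_i}$ because $\eta(a,b)^2=\eta(a,b^2)=1$. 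So the repair to your argument is small and canonical, but it is exactly the step your proposal leaves open.
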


\begin{proof}
Let $\gamma_i:=\beta_i\eta(a,s_i)$, then we can see that $w^1$ and $(\beta_i,\gamma_i)_{1\leq i \leq n}$ satisfy the conditions of Corollary \ref{coro5.2.1}. And so we get what we want.
\end{proof}

\begin{example}\label{def5.2.1}
\emph{ Let $K(8n,\sigma,\tau)$ be in Example \ref{def2.1.1}, then we can assume $s_1=a^2,\;s_2=b$. It can be seen that we can give a bicharacter on $S$ satisfying the conditions of Corollary \ref{coro5.2.2} through the following equations}
\begin{align*}
w^1(s_1,s_1):=\beta^2\sigma(s_1),\;w^1(s_1,s_2)=w^1(s_2,s_1):=\eta(a,s_1),\;w^1(s_2,s_2):=\eta(a,s_2),
\end{align*}
\emph{where $\beta\in \Bbbk$ such that $\beta^n=P_{s_1^{n}}$. That is to say there is a special solution for $K(8n,\sigma,\tau)$.}
\end{example}

\section{$\varphi$-symmetric quasitriangular structures on $\Bbbk^G\#_{\sigma,\tau}\Bbbk \mathbb{Z}_{2}$}
Let $\varphi$ be the Hopf isomorphism in Proposition \ref{pro3.1.3}. By Corollary \ref{coro3.1.2}, we know the most simple quasitriangular structures on $\Bbbk^G\#_{\sigma,\tau}\Bbbk \mathbb{Z}_{2}$ are $\varphi$-symmetric quasitriangular structures. We will give a necessary and sufficient condition for the existence of $\varphi$-symmetric quasitriangular structures on $\Bbbk^G\#_{\sigma,\tau}\Bbbk \mathbb{Z}_{2}$ in this section. Before this, we give the following definition.

\begin{definition}\label{def6.1.1}
A quasitriangular function $w$ on $\Bbbk^G\#_{\sigma,\tau}\Bbbk \mathbb{Z}_{2}$ is called a \emph{$\varphi$-symmetric quasitriangular function} if it satisfies $w(t_1,t_2)=w(t_2,t_1)$ for $t_1,t_2\in T$.
\end{definition}
The following proposition is the reason why we give the above definition.

\begin{proposition}\label{pro6.1.1}
Let $R$ be the form (ii) in Proposition \ref{pro2.1.1}, then $R$ is a $\varphi$-symmetric quasitriangular structure if and only if $w^4$ is a $\varphi$-symmetric quasitriangular function and $w^i(1\leq i \leq 3)$ are given by (i)-(iii) in Lemma \ref{lem4.1.2}.
\end{proposition}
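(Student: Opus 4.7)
The plan is to use the criterion from equation \eqref{r} together with Theorem \ref{thm4.1.1} and Lemma \ref{lem4.1.2}, since $R$ being $\varphi$-symmetric is entirely captured by the three coefficient conditions (i)-(iii) of Corollary \ref{coro3.1.2}.

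For the forward direction, suppose $R$ is a $\varphi$-symmetric quasitriangular structure. From $R = R_\varphi$ and the explicit form \eqref{r}, comparing coefficients yields $w^1(s_1,s_2)=w^1(s_2,s_1)$, $w^2(s,t)=w^3(t\triangleleft x,s)$, and $w^4(t_1,t_2)=w^4(t_2,t_1)$. Since $R$ is a non-trivial quasitriangular structure, Proposition \ref{pro4.1.1} says $w^4$ is a quasitriangular function; combined with the last symmetry this makes $w^4$ a $\varphi$-symmetric quasitriangular function in the sense of Definition \ref{def6.1.1}. Moreover, applying Lemma \ref{lem2.2.1} to $R$ shows the hypotheses of Lemma \ref{lem4.1.2} are satisfied, hence $w^1,w^2,w^3$ are given by the formulas (i)-(iii) of that lemma.

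For the reverse direction, assume $w^4$ is a $\varphi$-symmetric quasitriangular function and $w^1,w^2,w^3$ are defined by the formulas in Lemma \ref{lem4.1.2}. Theorem \ref{thm4.1.1} then guarantees that $R$ is a non-trivial quasitriangular structure, so what remains is to verify $R = R_\varphi$, i.e.\ the three coefficient symmetries. The symmetry $w^4(t_1,t_2)=w^4(t_2,t_1)$ is part of the hypothesis. For $w^2(s,t) = w^3(t\triangleleft x,s)$, I would apply the formula (iii) of Lemma \ref{lem4.1.2} at $t'=t\triangleleft x$, which gives $w^3(t\triangleleft x,s)=\tau(s,t_1)\tfrac{w^4(t,st_1)}{w^4(t,t_1)}$, and then use the symmetry of $w^4$ to turn the right-hand side into $\tau(s,t_1)\tfrac{w^4(st_1,t)}{w^4(t_1,t)}$, which equals $w^2(s,t)$ by (ii) of Lemma \ref{lem4.1.2}. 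For $w^1(s_1,s_2)=w^1(s_2,s_1)$, I would specialize (i) of Lemma \ref{lem4.1.2} to the case $t_1=t_2=t$ and again use the symmetry of $w^4$ term-by-term to interchange the roles of $s_1$ and $s_2$.

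I do not expect a serious obstacle: each step is a direct substitution, and once the formulas of Lemma \ref{lem4.1.2} are in place the three coefficient identities read off almost mechanically from the symmetry $w^4(t_1,t_2)=w^4(t_2,t_1)$. The only subtlety is making sure the formulas in Lemma \ref{lem4.1.2} are indeed independent of the auxiliary choice of $t_1,t_2\in T$ (which they are, by conditions (i) and (ii) of Definition \ref{def4.1.1}), so that the symmetrization arguments above are unambiguous.
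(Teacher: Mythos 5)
Your proposal is correct and follows essentially the same route as the paper: necessity via Proposition \ref{pro4.1.1} and Lemma \ref{lem4.1.2}, sufficiency via Theorem \ref{thm4.1.1} plus deriving $w^1(s_1,s_2)=w^1(s_2,s_1)$ and $w^2(s,t)=w^3(t\triangleleft x,s)$ from the symmetry of $w^4$. Your substitutions in the reverse direction (using $(t\triangleleft x)\triangleleft x=t$ in formula (iii) and specializing $t_1=t_2$ in formula (i)) simply make explicit the computation the paper compresses into one sentence.
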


\begin{proof}
If $R$ is a $\varphi$-symmetric quasitriangular structure, then we know $w^4$ is a quasitriangular function due to Proposition \ref{pro4.1.1}. By definition of $\varphi$-symmetric quasitriangular structure, we get $w(t_1,t_2)=w(t_2,t_1)$ for $t_1,t_2\in T$. Moreover, since Lemma \ref{lem4.1.2}, we obtain that $w^i(1\leq i \leq 3)$ are given by (i)-(iii) in Lemma \ref{lem4.1.2} and so we have proved the necessity. Conversely, if $w^4$ is a $\varphi$-symmetric quasitriangular function and $w^i(1\leq i \leq 3)$ are given by (i)-(iii) in Lemma \ref{lem4.1.2}, then we get $R$ is a quasitriangular structure by Theorem \ref{thm4.1.1}. To show $R$ is a $\varphi$-symmetric quasitriangular structure, we only need to prove that $w^2(s,t)=w^3(t\triangleleft x,s)$ and $w^1(s_1,s_2)=w^1(s_2,s_1)$ for $s,s_1,s_2\in S,\;t\in T$ by definition. Since (i)-(iii) of Lemma \ref{lem4.1.2} and $w^4(t_1,t_2)=w^4(t_2,t_1)$ for $t_1,t_2\in T$, we get $w^2(s,t)=w^3(t\triangleleft x,s)$ and $w^1(s_1,s_2)=w^1(s_2,s_1)$.
\end{proof}

\begin{corollary} \label{coro6.1.1}
Let $R$ be a quasitriangular structure on $\Bbbk^G\#_{\sigma,\tau}\Bbbk \mathbb{Z}_{2}$, then $R$ is a $\varphi$-symmetric quasitriangular structure if and only if $w^1(s_i,s_j)=w^1(s_j,s_i),\;w^2(s_i,a)=w^3(ab,s_i)$ for $1\leq i,j\leq n$.
\end{corollary}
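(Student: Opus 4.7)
The plan is to exploit the parametrization of non-trivial quasitriangular structures by the four-tuple $(\alpha_{ij},\beta_i,\gamma_i,\delta)_{1\le i,j\le n}$ developed in Propositions \ref{pro5.2.1}--\ref{pro5.2.5}. The ``only if'' direction drops out by reading off the coefficients of $e_{s_i}\otimes e_{s_j}$ and of $e_{s_i}x\otimes e_a$ in the identity $R=R_\varphi$ via the explicit form of $R_\varphi$ given in equation \eqref{r}; those two coefficient matchings yield exactly the two displayed identities, the second after substituting $a\triangleleft x=ab$.

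For the ``if'' direction, I would set $R':=R_\varphi$, which is again a quasitriangular structure on $\Bbbk^G\#_{\sigma,\tau}\Bbbk\mathbb{Z}_2$ by Corollary \ref{coro3.1.1}, and denote its four-tuple by $(\alpha'_{ij},\beta'_i,\gamma'_i,\delta')$. The target is to show that this four-tuple coincides with that of $R$; once this is established, Proposition \ref{pro5.2.4} forces all of $w^1,w^2,w^3,w^4$ for $R$ and $R'$ to agree, whence $R=R_\varphi$. Reading \eqref{r} at the generators gives $\alpha'_{ij}=w^1(s_j,s_i)$, $\beta'_i=w^3(ab,s_i)$, $\gamma'_i=w^2(s_i,ab)$, and $\delta'=w^4(a,a)=\delta$, so the first hypothesis immediately yields $\alpha'_{ij}=\alpha_{ij}$ and the second yields $\beta'_i=\beta_i$.

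The one nontrivial step will be matching $\gamma_i$ with $\gamma'_i$. For this I would use two ingredients. First, equation \eqref{e3.12} in Lemma \ref{lem4.1.1} gives
\begin{equation*}
\gamma'_i=w^2(s_i,a\triangleleft x)=\eta(s_i,a)\,w^2(s_i,a)=\eta(s_i,a)\,\beta_i.
\end{equation*}
Second, writing $ab=ba$ (as $G$ is abelian) with $b\in S$ and $a\in T$, part (ii) of Lemma \ref{lem4.1.a} applied to $(s,t,s')=(b,a,s_i)$ yields $w^3(ab,s_i)=w^1(b,s_i)\,w^3(a,s_i)=w^1(b,s_i)\,\gamma_i$, and Corollary \ref{coro5.1.2}(iii) provides $w^1(b,s_i)=\eta(a,s_i)$. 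Combining these with the hypothesis $\beta_i=w^3(ab,s_i)$ gives $\beta_i=\eta(a,s_i)\,\gamma_i$, and hence $\gamma'_i=\eta(s_i,a)\,\beta_i=\gamma_i$.

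The hard part is precisely this last identification: the two conditions on generators do not obviously force symmetry of $w^4$ on all of $T\times T$, and it is only once the constraint $w^1(b,s_i)=\eta(a,s_i)$ from Corollary \ref{coro5.1.2}(iii)---itself a quasitriangularity constraint tied to the relation $a\triangleleft x=ab$---is invoked that the hypothesis $\beta_i=\beta'_i$ transfers to $\gamma_i=\gamma'_i$. With the two four-tuples matched, the global symmetry $R=R_\varphi$ is then delivered by Proposition \ref{pro5.2.4}, and Proposition \ref{pro6.1.1} identifies this with $R$ being $\varphi$-symmetric.
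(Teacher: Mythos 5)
Your proposal is correct, but it reaches the sufficiency by a different mechanism than the paper. The paper never forms $R_\varphi$ explicitly in this proof: by Proposition \ref{pro6.1.1} it reduces $\varphi$-symmetry to the single condition $w^4(t_1,t_2)=w^4(t_2,t_1)$, writes $t_1=as$, $t_2=as'$, and derives the factorization $w^4(as,as')=\tau(s,a)^{-1}\tau(s',a)^{-1}w^4(a,a)\,w^1(s,s')\,w^2(s,a)\,w^3(ab,s')$ from Lemma \ref{lem4.1.2}(ii),(iii) and Lemma \ref{lem4.1.b}(ii); the hypotheses on generators extend multiplicatively to all of $S$ (via Lemma \ref{lem4.1.a}(iii), Lemma \ref{lem4.1.b}(iii) and the bicharacter property of $w^1$, using that $\tau$ is symmetric on $S$), making the right-hand side manifestly symmetric in $(s,s')$. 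You instead invoke the rigidity of the four-tuple parametrization: $R_\varphi$ is quasitriangular by Corollary \ref{coro3.1.1}, its tuple is read off from \eqref{r}, and Proposition \ref{pro5.2.4} forces $R=R_\varphi$ once the tuples agree. Your one genuinely delicate step --- transferring $\beta_i=\beta_i'$ to $\gamma_i=\gamma_i'$ via $w^3(ab,s_i)=w^1(b,s_i)\gamma_i$, $w^1(b,s_i)=\eta(a,s_i)$ from Corollary \ref{coro5.1.2}(iii), and $w^2(s_i,a\triangleleft x)=\eta(s_i,a)\beta_i$ from \eqref{e3.12} --- is sound, and in fact reproduces the computation of the tuple of $R_\varphi$ as $(\alpha_{ji},\eta(a,s_i)\gamma_i,\eta(s_i,a)\beta_i,\delta)$ carried out in the proof of Proposition \ref{pro5.2.5}, so your route is fully supported by the paper's machinery. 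What each approach buys: the paper's argument is more self-contained (only Section 4 lemmas, one explicit symmetric formula), while yours makes transparent the conceptual point that $\varphi$-symmetry is detected exactly by equality of the generator four-tuples of $R$ and $R_\varphi$, at the cost of leaning on the heavier classification result. One cosmetic point: your final appeal to Proposition \ref{pro6.1.1} is unnecessary, since $R=R_\varphi$ is precisely Definition \ref{def3.1.1} of $\varphi$-symmetry.
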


\begin{proof}
The necessity is obvious. In order to prove the sufficiency, we only need to prove that $w(t_1,t_2)=w(t_2,t_1)$ for $t_1,t_2\in T$ because of Proposition \ref{pro6.1.1} above. Since $aS=T$, we can assume that $t_1=as$ and $t_2=as'$ for some $s,s'\in S$. Then we have $w^4(as,as')=\tau(s,a)^{-1}w^2(s,as')w^4(a,as')$ by (ii) of Lemma \ref{lem4.1.2}. Because (ii) of Lemma \ref{lem4.1.b}, we have $w^2(s,as')=w^2(s,a)w^1(s,s')$. Owing to $w^4(a,as')=\tau(s',a)^{-1}w^4(a,a)w^3(ab,s')$ by (iii) of Lemma \ref{lem4.1.2}, we get $$w^4(as,as')=\tau(s,a)^{-1}\tau(s',a)^{-1}w^4(a,a)w^1(s,s')w^2(s,a)w^3(ab,s').$$
Since $w^1$ is a bicharacter on $S$, we get $w^1(s,s')=w^1(s',s)$. Due to (iii) of Lemma \ref{lem4.1.a} and (iii) of Lemma \ref{lem4.1.b}, we know $w^2(s,a)=w^3(ab,s)$ and $w^3(ab,s')=w^2(s',a)$. Therefore we have $ w^4(as,as')=w^4(as',as)$.
\end{proof}

As an application of results in subsection 5.2, we give the following proposition

\begin{proposition}\label{pro6.1.2}
There exists a $\varphi$-symmetric quasitriangular structure for $\Bbbk^G\#_{\sigma,\tau}\Bbbk \mathbb{Z}_{2}$ if and only if there exists a bicharacter $w^1$ on $S$ and a set $\{\beta_i\in \Bbbk|\;1\leq i \leq n\}$ satisfies the following conditions
\begin{itemize}
  \item[(i)] $w^1(s_i,s_j)=w^1(s_j,s_i)$;
  \item[(ii)] $\beta_i^{k_i}=P_{s_i^{k_i}},\;w^1(s_i,a^2)=\beta_i^2\sigma(s_i)$;
   \item[(iii)]$w^1(s_i,b)=\eta(a,s_i)$;
\end{itemize}
where $n=|S|$ and $1\leq i ,j \leq n$.
\end{proposition}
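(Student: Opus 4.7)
The statement is essentially a $\varphi$-symmetric refinement of Corollary \ref{coro5.2.2}, and my plan is to combine that corollary with the $\varphi$-symmetry criterion of Corollary \ref{coro6.1.1}.

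For necessity, suppose $R$ is a $\varphi$-symmetric quasitriangular structure with associated tuple $(\alpha_{ij},\beta_i,\gamma_i,\delta)$ as in Proposition \ref{pro5.2.1}. Keep the bicharacter $w^1$ defined by $w^1(s_i,s_j)=\alpha_{ij}$ and the scalars $\beta_i=w^2(s_i,a)$. Then condition (i) is exactly the first half of Corollary \ref{coro6.1.1}. Condition (ii) is a direct restatement of Proposition \ref{pro5.2.1}(ii), using that $\alpha_{i1}^{m_1}\cdots\alpha_{in}^{m_n}=w^1(s_i,s_1^{m_1}\cdots s_n^{m_n})=w^1(s_i,a^2)$, and condition (iii) is the first half of Proposition \ref{pro5.2.1}(vi) after substituting $b=s_1^{p_1}\cdots s_n^{p_n}$.

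For sufficiency, given such $w^1$ and $\{\beta_i\}$, set $\gamma_i:=\beta_i\eta(a,s_i)$. Because (i) makes $w^1$ symmetric, both sides of conditions (ii) and (iii) of Corollary \ref{coro5.2.2} are supplied by our own (ii) and (iii), so that corollary produces a quasitriangular structure $R$ realizing the prescribed $w^1$, $\beta_i$, $\gamma_i$. To invoke Corollary \ref{coro6.1.1}, the identity $w^1(s_i,s_j)=w^1(s_j,s_i)$ is part of (i), so it only remains to verify $w^2(s_i,a)=w^3(ab,s_i)$. By construction $w^2(s_i,a)=\beta_i$, while Lemma \ref{lem4.1.1} gives $w^3(ab,s_i)=w^3(a\triangleleft x,s_i)=\eta(a,s_i)w^3(a,s_i)=\eta(a,s_i)\gamma_i=\eta(a,s_i)^2\beta_i$.

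The crux, and the only step requiring any thought, is therefore the identity $\eta(a,s_i)^2=1$. I expect this to be the main (though brief) obstacle, and to follow at once from the bicharacter property of $\eta$ together with our standing assumption (Remark \ref{rk2.1}, i.e., Proposition \ref{pro2.1.2}(iii)) that $\eta$ is trivial on $S\times S$: since $a^2\in S$, one has $\eta(a,s_i)^2=\eta(a^2,s_i)=1$. With this in hand, $w^3(ab,s_i)=\beta_i=w^2(s_i,a)$, and Corollary \ref{coro6.1.1} yields the $\varphi$-symmetry of the constructed $R$.
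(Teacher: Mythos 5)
Your proposal is correct and follows essentially the same route as the paper's own proof: necessity via Proposition \ref{pro5.2.1} (the paper cites its packaging as Corollary \ref{coro5.2.1}) combined with Corollary \ref{coro6.1.1}, and sufficiency by setting $\gamma_i:=\beta_i\eta(a,s_i)$, invoking Corollary \ref{coro5.2.2} to produce $R$, and checking $\varphi$-symmetry through $w^3(ab,s_i)=\eta(a,s_i)w^3(a,s_i)=\eta(a,s_i)^2\beta_i$ together with $\eta(a,s_i)^2=\eta(a^2,s_i)=1$. The step you flag as the crux is exactly the paper's closing computation, so no changes are needed.
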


\begin{proof}
If $R$ is a $\varphi$-symmetric quasitriangular structure, then we can find a bicharacter $w^1$ on $S$ and a pairing $(\beta_i,\gamma_i)_{1\leq i \leq n}$ satisfy (ii), (iii) by Corollary \ref{coro5.2.1}. Since Corollary \ref{coro6.1.1}, we know $w^1$ satisfies (i). Conversely, it can be seen that $w^1$ and $\{\beta_i\in \Bbbk|\;1\leq i \leq n\}$ such that conditions of Corollary \ref{coro5.2.2}, so we can find a quasitriangular structure $R$ on $\Bbbk^G\#_{\sigma,\tau}\Bbbk \mathbb{Z}_{2}$ satisfies $w^1$ of $R$ is exactly the $w^1$ and $w^2(s_i,a)=\beta_i$. Then we will show that $R$ is $\varphi$-symmetric and hence we complete the proof. Since Corollary \ref{coro6.1.1}, we only need to prove that $w^1(s_i,s_j)=w^1(s_j,s_i),\;w^2(s_i,a)=w^3(ab,s_i)$ for $1\leq i,j\leq n$. Owing to (i), we know $w^1(s_i,s_j)=w^1(s_j,s_i)$. Because of the proof of Corollary \ref{coro5.2.2}, we get $w^3(a,s_i)=\eta(a,s_i)w^2(s_i,a)$. Due to (ii) of Lemma \ref{lem4.1.1}, we obtain $w^3(ab,s_i)=w^3(a,s_i)\eta(a,s_i)$. Therefore $w^3(ab,s_i)=\eta(a,s_i)^2w^2(s_i,a)$. But $\eta(a,s_i)^2=\eta(a^2,s_i)=1$, so  $w^3(ab,s_i)=w^2(s_i,a)$.
\end{proof}

\section{All quasitriangular structures on $K(8n,\sigma,\tau),\;A(8n,\sigma,\tau)$}
We consider $K(8n,\sigma,\tau),\;A(8n,\sigma,\tau)$ to be the simplest Hopf algebras satisfying all the conditions in Proposition \ref{pro2.1.2} due to the numbers of generators of $G$ are very small. Moreover, we will see that $\Bbbk^G\#_{\sigma,\tau}\Bbbk \mathbb{Z}_{2}$ has a quotient, either $K(8n,\sigma,\tau)$ or $A(8n,\sigma,\tau)$. For these reasons, we will give all the quasitriangular structures on $K(8n,\sigma,\tau),\;A(8n,\sigma,\tau)$ in this section. Let $\Bbbk^G\#_{\sigma,\tau}\Bbbk \mathbb{Z}_{2}$ be in Definition
\ref{def2.1.2}, and if there is a subgroup $H$ of $G$ such that $H\triangleleft x=H$, then we have another data $(H,\triangleleft|_H,\sigma|_{H},\tau|_{H\times H})$. For our convenience, we denote the data $(H,\triangleleft|_H,\sigma|_{H},\tau|_{H\times H})$ as $(H,\triangleleft,\sigma,\tau)$.

\begin{proposition}\label{pro7.1.1}
$\Bbbk^H\#_{\sigma,\tau}\Bbbk \mathbb{Z}_{2}$ is a quotient of $\Bbbk^G\#_{\sigma,\tau}\Bbbk \mathbb{Z}_{2}$.
\end{proposition}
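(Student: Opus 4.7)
The plan is to construct an explicit surjective Hopf algebra morphism $\pi\colon \Bbbk^G\#_{\sigma,\tau}\Bbbk \mathbb{Z}_{2}\twoheadrightarrow \Bbbk^H\#_{\sigma,\tau}\Bbbk \mathbb{Z}_{2}$. Writing the standard generators of the target as $\{f_h,y\}_{h\in H}$, I would define $\pi$ on the basis $\{e_g,e_gx\}_{g\in G}$ by $\pi(e_g):=f_g$ and $\pi(e_gx):=f_gy$ when $g\in H$, and $\pi(e_g):=0=:\pi(e_gx)$ when $g\notin H$. Equivalently, $\pi$ sends $\sum_{g\in H}e_g\mapsto 1$, $\sum_{g\notin H}e_g\mapsto 0$, and $x\mapsto y$.

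Next I would verify that $\pi$ is an algebra map by checking each defining relation of Definition \ref{def2.1.2} in the target. The relation $e_ge_h=\delta_{g,h}e_g$ passes through trivially, while $xe_g=e_{g\triangleleft x}x$ uses precisely the hypothesis $H\triangleleft x=H$, which guarantees that $g\in H$ if and only if $g\triangleleft x\in H$ and thus both sides are simultaneously zero or simultaneously equal to the required expression in the target. The square relation $x^2=\sum_{g\in G}\sigma(g)e_g$ becomes $y^2=\sum_{h\in H}\sigma|_H(h)f_h$ after applying $\pi$, which is exactly the defining relation for $\Bbbk^H\#_{\sigma,\tau}\Bbbk\mathbb{Z}_{2}$.

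For the coalgebra structure, the key observation is the following subgroup trick: if $g\in H$ and $hk=g$ in $G$, then $h\in H$ if and only if $k=h^{-1}g\in H$. Consequently, in $\Delta(e_g)=\sum_{hk=g}e_h\otimes e_k$ the terms in which $h,k\notin H$ are killed by $\pi\otimes\pi$, while the remaining terms reproduce $\Delta(f_g)=\sum_{hk=g,\,h,k\in H}f_h\otimes f_k$. The same trick handles $\Delta(x)$, with the additional input that $\tau|_{H\times H}$ is exactly the $2$-cocycle defining the coproduct on the target, so $(\pi\otimes\pi)\circ\Delta(x)=\Delta(y)$. Counit preservation is immediate from the definitions, and compatibility with the antipode is automatic for any bialgebra map between Hopf algebras. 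Surjectivity is clear since $f_h=\pi(e_h)$ and $y=\pi(x)$ generate the target.

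The only nontrivial bookkeeping is the coproduct computation, but it reduces entirely to the subgroup-closure observation above; there is no real obstacle once $\pi$ is written down in the correct form.
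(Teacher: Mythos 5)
Your proposal is correct and coincides with the paper's own proof: the map you define is exactly the paper's $\psi$ (sending $e_g\mapsto e_g$, $e_gx\mapsto e_gx$ for $g\in H$ and everything else to zero), and you merely spell out the verification that the paper summarizes as ``it can be seen that $\psi$ is a morphism of Hopf algebras.'' Your added details (the subgroup-closure trick for $\Delta(e_g)$, the use of $H\triangleleft x=H$ for the relation $xe_g=e_{g\triangleleft x}x$, and the automatic compatibility with antipodes) are all accurate.
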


\begin{proof}
We define a linear map $\psi:\Bbbk^G\#_{\sigma,\tau}\Bbbk \mathbb{Z}_{2}\rightarrow \Bbbk^H\#_{\sigma,\tau}\Bbbk \mathbb{Z}_{2}$ by letting
\begin{align*}
\psi(e_h):=e_h,\;\psi(e_g):=0,\;\psi(e_hx):=e_hx,\;\psi(e_gx):=0,
\end{align*}
where $h\in H,\;g\notin H$. Then it can be seen that $\psi$ is a morphism of Hopf algebras and $\psi$ is surjective. So we have completed the proof.
\end{proof}

\begin{corollary} \label{coro7.1.1}
$K(8n,\sigma,\tau)$ is a quotient of $\Bbbk^G\#_{\sigma,\tau}\Bbbk \mathbb{Z}_{2}$ or $A(8n,\sigma,\tau)$ is a quotient of $\Bbbk^G\#_{\sigma,\tau}\Bbbk \mathbb{Z}_{2}$.
\end{corollary}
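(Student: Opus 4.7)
The plan is to invoke Proposition~\ref{pro7.1.1}: it suffices to exhibit a subgroup $H \leq G$ satisfying $H \triangleleft x = H$ whose restricted data $(H, \triangleleft|_H, \sigma|_H, \tau|_{H\times H})$ coincides with that of $K(8n,\sigma,\tau)$ or $A(8n,\sigma,\tau)$ for some natural number $n$. Since the action $\triangleleft:\mathbb{Z}_2 \to \Aut(G)$ is required to be injective, the set $T$ is non-empty; pick any $a \in T$ and set $d = \ord(a)$. From $a \triangleleft x = ab$ with $b \in S$ and $b^2 = 1$ one gets $a^2 \triangleleft x = (ab)^2 = a^2$, so $a^2 \in S$ and $d$ is even.

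The argument splits on whether $b$ lies in $\langle a\rangle$. In the first case, $b$ is the unique element of order $2$ in the cyclic group $\langle a\rangle$, so $b = a^{d/2}$. The crucial parity observation is that $d/2$ is then forced to be even: computing
\begin{equation*}
b \;=\; b \triangleleft x \;=\; a^{d/2} \triangleleft x \;=\; (ab)^{d/2} \;=\; a^{d/2} b^{d/2} \;=\; b \cdot b^{d/2}
\end{equation*}
yields $b^{d/2} = 1$, hence $2 \mid d/2$ since $b \neq 1$. Writing $d = 4m$, one gets $b = a^{2m}$ and $a \triangleleft x = a^{2m+1}$, so $H := \langle a\rangle \cong \mathbb{Z}_{4m}$ carries exactly the data defining $A(8m,\sigma,\tau)$.

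In the remaining case $b \notin \langle a\rangle$, one has $\langle a\rangle \cap \langle b\rangle = \{1\}$, and the subgroup $H := \langle a,b\rangle$ decomposes as $\langle a\rangle \times \langle b\rangle \cong \mathbb{Z}_d \times \mathbb{Z}_2$. Writing $d = 2n$ gives $H \cong \mathbb{Z}_{2n}\times \mathbb{Z}_2$, generated by $a$ of order $2n$ and $b$ of order $2$ with $a \triangleleft x = ab$ and $b \triangleleft x = b$, which is exactly the data defining $K(8n,\sigma,\tau)$. In both cases $H \triangleleft x \subseteq H$ (from $a \triangleleft x = ab \in H$ and $b \triangleleft x = b \in H$) and hence $H \triangleleft x = H$ since $\triangleleft x$ is bijective, so Proposition~\ref{pro7.1.1} delivers the required quotient. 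The only non-routine ingredient is the parity observation isolating Case~1; identifying the correct $H$ via the dichotomy $b \in \langle a\rangle$ versus $b \notin \langle a\rangle$ is the main point of the argument, after which everything reduces to bookkeeping about cyclic subgroups of the finite abelian group $G$.
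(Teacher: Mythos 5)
Your proposal is correct and follows essentially the same route as the paper: both reduce to Proposition~\ref{pro7.1.1} with $H=\langle a,b\rangle$ for a chosen $a\in T$, split on whether $b\in\langle a\rangle$, and identify the restricted data with that of $A(8n,\sigma,\tau)$ in the first case and $K(8n,\sigma,\tau)\;(H\cong\mathbb{Z}_{2n}\times\mathbb{Z}_2)$ in the second. The only local divergence is the parity step: you deduce $2\mid d/2$ from $b=b\triangleleft x=(ab)^{d/2}$, while the paper writes $b=a^m$ and rules out odd $m$ via $a^2,a^m\in S$ and $\gcd(2,m)=1$ forcing $a\in S$; your variant is marginally tidier since fixing $b=a^{d/2}$ with $d=\ord(a)$ pins down $H\cong\mathbb{Z}_{4n}$ exactly, a point the paper glosses over.
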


\begin{proof}
Since (ii) of Proposition \ref{pro2.1.2}, there is $b\in S$ such that $b^2=1$ and $t\triangleleft x=tb$ for $t\in T$. Taking $a\in T$ and let $H:=\langle a,b\rangle$ as subgroup of $G$, then we know that $\Bbbk^H\#_{\sigma,\tau}\Bbbk \mathbb{Z}_{2}$ is a quotient of $\Bbbk^G\#_{\sigma,\tau}\Bbbk \mathbb{Z}_{2}$ by Proposition \ref{pro7.1.1}. Next we will show that $H=\langle a,b|\;a^{2n}=1,b^2=1,ab=ba\rangle$ or $H=\langle a,b|\;a^{4n}=1,b=a^{2n}\rangle$ as group for some $n\in \mathbb{N}$ and thus we complete the proof. If $b\in \langle a\rangle$, then we can assume $b=a^{m}$ for some $m\in \mathbb{N}$. Since $b^2=1$, then we have $a^{2m}=1$. we claim that $m$ is an even number in this case. Otherwise, $m$ is odd and then we have $a^m\in S$. Because $a^2\in S$ by definition and $(2,m)=1$, so we get $a\in S$. But this is a contradiction, and hence we can assume that $m=2n$. Then it can be seen that $H=\langle a,b|\;a^{4n}=1,b=a^{2n}\rangle$ as group. If $b\notin \langle a\rangle$, then we will show that $H=\langle a,b|\;a^{2n}=1,b^2=1,ab=ba\rangle$. Since $a^2\in S$ and $a\notin S$, we can assume that the order of $a$ is $2n$ for some $n\in \mathbb{N}$. Let $i,j\in \mathbb{N}$ and if $a^ib^j=1$, then we have $2|j$ due to $b\notin \langle a\rangle$. Then we know $a^i=1$ and hence $(2n)|i$. Therefore we get $H=\langle a,b|\;a^{2n}=1,b^2=1,ab=ba\rangle$.
\end{proof}

Not only $K(8n,\sigma,\tau),\;A(8n,\sigma,\tau)$ have the simplest form, but also their quasitriangular structures are very simple.

\begin{proposition}\label{pro7.1.2}
All quasitriangular structures on $K(8n,\sigma,\tau),\;A(8n,\sigma,\tau)$ are $\varphi$-symmetric.
\end{proposition}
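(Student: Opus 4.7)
The plan is to apply Corollary~\ref{coro6.1.1}, which reduces $\varphi$-symmetry of a quasitriangular structure $R$ to the two identities $w^1(s_i,s_j)=w^1(s_j,s_i)$ and $w^2(s_i,a)=w^3(ab,s_i)$. By Theorem~\ref{thm5.2.1}, any such $R$ corresponds to a four-tuple $(\alpha_{ij},\beta_i,\gamma_i,\delta)$ satisfying (i)--(vi) of Proposition~\ref{pro5.2.1}, so both identities can be verified at the level of this data.

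First I would identify $S$, the generators $s_i$, and the exponents $m_i,p_i$ characterised by $a^2=s_1^{m_1}\cdots s_n^{m_n}$ and $b=s_1^{p_1}\cdots s_n^{p_n}$. For $K(8n,\sigma,\tau)$, since $(a^ib^j)\triangleleft x=a^ib^{i+j}$, we have $S=\langle a^2,b\rangle$; taking $s_1=a^2$, $s_2=b$ and $a\in T$ gives $m_1=1,\,m_2=0,\,p_1=0,\,p_2=1$. For $A(8n,\sigma,\tau)$, $S=\langle a^2\rangle$ is cyclic with $s_1=a^2$, and $a\triangleleft x=a^{2n+1}$ forces $b=a^{2n}=s_1^n$, so $m_1=1,\,p_1=n$.

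The $w^1$-symmetry is immediate for $A(8n,\sigma,\tau)$ since $S$ is cyclic. For $K(8n,\sigma,\tau)$, the case $i=1$ of Proposition~\ref{pro5.2.1}(vi) with $(p_1,p_2)=(0,1)$ reads $\alpha_{21}=\alpha_{12}=\eta(a,s_1)=\eta(a,a^2)=\eta(a,a)^2=1$, while $\alpha_{11}=\alpha_{11}$ is tautological, so $w^1$ is symmetric on generators and hence on $S$. For the second identity, equation~\eqref{e3.13} of Lemma~\ref{lem4.1.1} rewrites $w^3(ab,s_i)$ as $\gamma_i\eta(a,s_i)$, so what is needed is $\beta_i=\gamma_i\eta(a,s_i)$. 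Equating (iv) and (v) of Proposition~\ref{pro5.2.1} and using $\eta(a,b)=\tau(a,b)\tau(b,a)^{-1}$ produces $\prod_i\beta_i^{m_i+p_i}=\eta(a,b)\prod_i\gamma_i^{m_i+p_i}$; combining with (vi) yields the additional relation $\prod_i\beta_i^{m_i}=\prod_i\gamma_i^{m_i}$.

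Plugging in the data finishes the proof: for $A(8n,\sigma,\tau)$, $m_1=1$ forces $\beta_1=\gamma_1$, which matches $\eta(a,s_1)=1$. For $K(8n,\sigma,\tau)$, the same relation with $(m_1,m_2)=(1,0)$ gives $\beta_1=\gamma_1=\gamma_1\eta(a,s_1)$, while the case $i=2$ of (vi) gives $\beta_2=\gamma_2\eta(a,b)=\gamma_2\eta(a,s_2)$. Corollary~\ref{coro6.1.1} then delivers $\varphi$-symmetry in both cases. The only real obstacle is bookkeeping: one must unpack the exponents $(m_i,p_i)$ correctly and chase the constraints of Proposition~\ref{pro5.2.1}; no conceptual difficulty arises once Corollary~\ref{coro6.1.1} is in place.
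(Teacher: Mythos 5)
Your proposal is correct, but it reaches the key identity by a genuinely different route than the paper. Both arguments reduce, via Corollary~\ref{coro6.1.1}, to checking $w^1(s_i,s_j)=w^1(s_j,s_i)$ and $w^2(s_i,a)=w^3(ab,s_i)$ on generators, and both get the $w^1$-symmetry the same way (Corollary~\ref{coro5.2.1}(iii)/Proposition~\ref{pro5.2.1}(vi), plus cyclicity of $S$ for $A(8n,\sigma,\tau)$). The difference is in the second identity: the paper works directly with the maps $l_R,r_R$ --- it applies \eqref{e3.14} at $(t_1,t_2)=(a,ab)$ to get $w^4(a,ab)=w^4(ab,a)$, then expands $l(X_a)^2=\tau(a,a)l(X_{a^2})$ and $r(X_a)^2=\tau(a,a)r(X_{a^2})$ to obtain $w^2(a^2,a)=w^3(a,a^2)$, and handles $s_2=b$ in $K(8n,\sigma,\tau)$ through Lemma~\ref{lem4.1.2}(ii),(iii). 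You instead work entirely at the level of the classification data: equating the two expressions for $\delta^2$ in Proposition~\ref{pro5.2.1}(iv),(v) gives $\prod_i\beta_i^{m_i+p_i}=\eta(a,b)\prod_i\gamma_i^{m_i+p_i}$, which combined with (vi) yields $\prod_i\beta_i^{m_i}=\prod_i\gamma_i^{m_i}$, and since the exponent vectors are $m=(1,0),\,p=(0,1)$ for $K(8n,\sigma,\tau)$ and $m_1=1,\,p_1=n$ for $A(8n,\sigma,\tau)$, this pins down $\beta_i=\gamma_i\eta(a,s_i)$ componentwise, which via \eqref{e3.13} is exactly $w^2(s_i,a)=w^3(ab,s_i)$. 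Your identifications of $S$, the $s_i$, and the exponents $(m_i,p_i)$ are all correct, and the step $\eta(a,s_1)=\eta(a,a)^2=1$ is fine. What each approach buys: yours is shorter and more systematic, reusing constraints already proved (note, though, that parts (v),(vi) of Proposition~\ref{pro5.2.1} were themselves obtained by the $R_\varphi$-twist, so the same mechanism is hidden inside your citations), while the paper's computation is self-contained at the level of $l,r$ and does not rely on the Section~5.2 classification. One caveat worth flagging: your trick only works because $m$ and $p$ are concentrated on single generators in these two families --- for general $G$ the product relation $\prod_i\beta_i^{m_i}=\prod_i\gamma_i^{m_i}$ would not give componentwise equality --- and the correct citation for the correspondence you invoke is Proposition~\ref{pro5.2.1} (valid for every special solution) together with Proposition~\ref{pro5.2.4}, rather than Theorem~\ref{thm5.2.1}, which is only an existence statement.
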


\begin{proof}
Let $R$ be a non-trivial quasitriangular structure on $K(8n,\sigma,\tau)$, then we will show that $R$ is $\varphi$-symmetric. Owing to the definition of $K(8n,\sigma,\tau)$, we can assume that $s_1=a^2,\;s_2=b$. Since Corollary \ref{coro6.1.1}, we only need to show that the following equations hold
$$w^1(s_1,s_2)=w^1(s_2,s_1),\;w^2(s_1,a)=w^3(ab,s_1),\;w^2(s_2,a)=w^3(ab,s_2).$$
Because (iii) of Corollary \ref{coro5.2.1}, we get $w^1(s_1,b)=w^1(b,s_1)=\eta(a,s_1)$. But $s_2=b$ and so we obtain $w^1(s_1,s_2)=w^1(s_2,s_1)$. Due to \eqref{e3.14} of Lemma \ref{lem4.1.1}, we have $w^4(ab,a)=w^4(a,ab)$. Since $l(X_a)^2=\tau(a,a)l(X_{a^2})$, we get $w^4(a,t)w^4(a,t\triangleleft x)\sigma(t)=\tau(a,a)w^2(a^2,t)$ by expanding the equation. Let $t=a$, then we have
\begin{align}
\label{e7.1} w^4(a,a)w^4(a,ab)\sigma(a)=\tau(a,a)w^2(a^2,a).
\end{align}
Similarly, we obtain that $w^4(t,a)w^4(t\triangleleft x,a)\sigma(t)=\tau(a,a)w^3(t,a^2)$ by expanding $r(X_a)^2=\tau(a,a)r(X_{a^2})$. Let $t=a$, then we have
\begin{align}
\label{e7.2} w^4(a,a)w^4(ab,a)\sigma(a)=\tau(a,a)w^3(a,a^2).
\end{align}
Since $w^4(a,ab)=w^4(ab,a)$ and the equations \eqref{e7.1}, \eqref{e7.2}, we get $w^2(a^2,a)=w^3(a,a^2)$. Because of \eqref{e3.13} in Lemma \ref{lem4.1.1}, the know $w^3(ab,a^2)=w^3(a,a^2)$ and so the equation $w^2(s_1,a)=w^3(ab,s_1)$ holds. To show $w^2(s_2,a)=w^3(ab,s_2)$, we use (ii) of Lemma \ref{lem4.1.2} and we get $w^2(b,a)=\tau(b,a)\frac{w^4(ab,a)}{w^4(a,a)}$. Similarly, we get $w^3(ab,b)=\tau(b,a)\frac{w^4(a,ab)}{w^4(a,a)}$ by (iii) of Lemma \ref{lem4.1.2}. Because we have known $w^4(a,ab)=w^4(ab,a)$, we get $w^2(b,a)=w^3(ab,b)$ and so $w^2(s_2,a)=w^3(ab,s_2)$. Therefore $R$ is $\varphi$-symmetric. Similarly, one can prove that all quasitriangular structures on $A(8n,\sigma,\tau)$ are $\varphi$-symmetric.
\end{proof}

Let $Q_K:=\{$non-trivial quasitriangular structures on $K(8n,\sigma,\tau)$ $\}$, then we have
\begin{theorem}\label{thm7.1.1}
$Q_K \stackrel{1-1}{\longleftrightarrow}\{(\beta_1,\beta_2,\delta)|\;\beta_1^n=P_{s_1^{n}},\;\beta_2^2=P_{s_2^2},\;\delta^2=\frac{\tau(a,a)\tau(b,a)}{\tau(b,b)\sigma(a)}\beta_1\beta_2\}$, where $s_1=a^2,\;s_2=b$.
\end{theorem}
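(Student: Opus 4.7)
The plan is to specialize Theorem \ref{thm5.2.1} to $K(8n,\sigma,\tau)$ after invoking Proposition \ref{pro7.1.2} to reduce to the $\varphi$-symmetric case. Take generators $s_1=a^2$ and $s_2=b$ of $S$; then $k_1=n$, $k_2=2$, and in the presentations $a^2=s_1^{m_1}s_2^{m_2}$, $b=s_1^{p_1}s_2^{p_2}$ from Corollary \ref{coro5.1.1} one reads off $(m_1,m_2,p_1,p_2)=(1,0,0,1)$ and $P_{s_1}=P_{s_2}=1$. A preliminary observation is that $\eta(a,a^2)=1$: this follows from the 2-cocycle identity applied to $(a,a,a)$, which yields $\tau(a,a^2)=\tau(a^2,a)$.

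The forward map sends $R\in Q_K$ to the triple $(\beta_1,\beta_2,\delta):=(w^2(s_1,a),w^2(s_2,a),w^4(a,a))$. Condition (ii) of Proposition \ref{pro5.2.1} immediately produces $\beta_1^n=P_{s_1^n}$ and $\beta_2^2=P_{s_2^2}$. The formula for $\delta^2$ is obtained by substituting the specific values $(m_i,p_i)=(1,0,0,1)$ and $P_{s_1}=P_{s_2}=1$ into condition (iv) of Proposition \ref{pro5.2.1}; this simplifies to $\delta^2=\tau(a,a)\tau(b,a)\tau(b,b)^{-1}\sigma(a)^{-1}\beta_1\beta_2$, as required.

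For the inverse direction, I propose to build the full 4-tuple $(\alpha_{ij},\beta_i,\gamma_i,\delta)$ from a triple satisfying the three listed conditions, guided by condition (vi) of Proposition \ref{pro5.2.1} together with the $\varphi$-symmetry characterization in Corollary \ref{coro6.1.1}. Explicitly, set $\alpha_{12}=\alpha_{21}=\eta(a,a^2)=1$, $\alpha_{22}=\eta(a,b)$, $\alpha_{11}=\beta_1^2\sigma(a^2)$, $\gamma_1=\beta_1$, and $\gamma_2=\beta_2\eta(a,b)^{-1}$. Verifying that this 4-tuple fulfills conditions (i)--(vi) of Proposition \ref{pro5.2.1} then allows Proposition \ref{pro5.2.5} to produce a non-trivial $\mathcal{R}$-matrix whose associated triple is precisely $(\beta_1,\beta_2,\delta)$; that $R$ is $\varphi$-symmetric is automatic from the form of the 4-tuple together with Corollary \ref{coro6.1.1}.

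The main technical step will be checking that the ``dual'' conditions (iii) and (v) of Proposition \ref{pro5.2.1} involving $\gamma_i$ hold automatically for the above choice. This reduces, using the 2-cocycle property of $\tau$, the compatibility identity $\sigma(gh)\sigma(g)^{-1}\sigma(h)^{-1}=\tau(g,h)\tau(g\triangleleft x,h\triangleleft x)$ applied to $(a,a)$, and the identity $\eta(a,b)^2=\eta(a,b^2)=1$, to the three conditions in the statement. Uniqueness of $R$ attached to a given triple is then automatic, since Theorem \ref{thm5.2.1} already determines $R$ from its 4-tuple and the 4-tuple above is in turn determined by $(\beta_1,\beta_2,\delta)$.
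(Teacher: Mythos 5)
Your proposal is correct and takes essentially the same route as the paper: the same forward map via conditions (ii) and (iv) of Proposition \ref{pro5.2.1}, the same data for the converse (your $\alpha_{ij}$ and $\gamma_i$ coincide with the paper's bicharacter \eqref{e7.4} and its choice $\gamma_i=\beta_i\eta(a,s_i)$, since $\eta(a,b)^{-1}=\eta(a,b)$), with the verification run directly through Propositions \ref{pro5.2.1} and \ref{pro5.2.5} where the paper packages the same computation as Proposition \ref{pro6.1.2} via Corollary \ref{coro5.2.2}, and bijectivity from Proposition \ref{pro7.1.2} in both cases. One small correction: the instance of the compatibility identity actually needed (for $\beta_2^2\sigma(b)=1$) is at $(a,b)$, which combined with the cocycle identity gives $\tau(b,b)\sigma(b)=1$, rather than at $(a,a)$.
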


\begin{proof}
Given a non-trivial quasitriangular structure $R$ on $K(8n,\sigma,\tau)$, we can define a triple $(\beta_1,\beta_2,\delta)$ through letting $\beta_1:=w^2(s_1,a),\;\beta_2:=w^2(s_2,a),\;\delta:=w^4(a,a)$. Since (ii), (iv) of Proposition \ref{pro5.2.1}, we know $\beta_1^n=P_{s_1^{n}},\;\beta_2^2=P_{s_2^2},\;\delta^2=\frac{\tau(a,a)\tau(b,a)}{\tau(b,b)\sigma(a)}\beta_1\beta_2$. Conversely, let $(\beta_1,\beta_2,\delta)$ be a triple satisfying $\beta_1^n=P_{s_1^{n}},\;\beta_2^2=P_{s_2^2},\;\delta^2=\frac{\tau(a,a)\tau(b,a)}{\tau(b,b)\sigma(a)}\beta_1\beta_2$, then we claim that there is a unique quasitriangular structure $R$ such that $w^2(s_1,a)=\beta_1,\;w^2(s_2,a)=\beta_2,\;w^4(a,a)=\delta$. To do this, let $w^1$ be a bicharacter on $S$ which is determined as follows
\begin{align}
\label{e7.4} w^1(s_1,s_1):=\beta_1^2\sigma(s_1),\;w^1(s_1,s_2)=w^1(s_2,s_1):=1,\;w^1(s_2,s_2):=\eta(a,s_2).
\end{align}
then we will use Proposition \ref{pro6.1.2} to get a quasitriangular structure $R$ such that $w^2(s_1,a)=\beta_1,\;w^2(s_2,a)=\beta_2,\;w^4(a,a)=\delta$. We first show $w^1$ is well defined. To show this, the only non-trivial thing is to prove that $[\beta_1^2\sigma(s_1)]^n=1$. Since Lemma \ref{lem5.2.1}, we get $P_{s_1^n}^2\sigma(s_1)^n=1$ and so $[\beta_1^2\sigma(s_1)]^n=1$. Then we prove that $w^1$ and the set $\{\beta_1,\beta_2\}$ such that the conditions of Proposition \ref{pro6.1.2}. To prove this, the only non-trivial thing is to show $\beta_2^2\sigma(s_2)=1$. Owing to $\tau(a,b)\tau(ab,b)=\sigma(ab)\sigma(a)^{-1}\sigma(b)^{-1}$ and $\sigma(ab)=\sigma(a)$, we know $\tau(b,b)\sigma(b)=1$. Due to $\beta_2^2\sigma(s_2)=\tau(b,b)\sigma(b)$, we have $\beta_2^2\sigma(s_2)=1$. Now we can use Proposition \ref{pro6.1.2} and Proposition \ref{pro5.2.4} to get a $\varphi$-symmetric quasitriangular structure $R$ satisfying $w^2(s_1,a)=\beta_1,\;w^2(s_2,a)=\beta_2,\;w^4(a,a)=\delta$. Since Lemma \ref{lem4.1.2}, we know $R$ is unique if it is a $\varphi$-symmetric quasitriangular structure and it satifies that $w^2(s_1,a)=\beta_1,\;w^2(s_2,a)=\beta_2,\;w^4(a,a)=\delta$. Finally, since Proposition \ref{pro7.1.2}, we know that this correspondence we have discussed is one-to-one.
\end{proof}

\begin{remark}\emph{In fact, from the proof of the above theorem, we know that all non-trivial quasitriangular structures on $K(8n,\sigma,\tau)$ are given by (i)-(iv) of Proposition \ref{pro5.2.4}, where $(\beta_1,\beta_2,\delta)$ are in Theorem \ref{thm7.1.1} and $w^1$ is defined by \eqref{e7.4} above and $\alpha_{ij}=w^1(s_i,s_j),\;\gamma_i=\beta_i \eta(a,s_i)$ for $1\leq i\leq 2$.}\end{remark}

Similar to above Theorem \ref{thm7.1.1}, let $Q_A:=\{$non-trivial quasitriangular structures on $A(8n,\sigma,\tau)$ $\}$, then we have
\begin{theorem}\label{thm7.1.2}
$Q_A \stackrel{1-1}{\longleftrightarrow}\{(\beta,\delta)|\;\beta^{2n}=P_{s^{2n}},\;\delta^2=\frac{\tau(a,a)\tau(b,a)}{\tau(b,b)\sigma(a)}P_{s^n}\beta^{1+n}\}$, where $s=a^2$.
\end{theorem}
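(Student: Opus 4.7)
The plan is to mimic the proof of Theorem \ref{thm7.1.1}, specialising the general machinery of Section \ref{sec5} to the presentation of $A(8n,\sigma,\tau)$. First I would record the relevant data: since $G=\mathbb{Z}_{4n}=\langle a\rangle$ and $a\triangleleft x=a^{2n+1}$, we get $S=\langle s\rangle$ with $s:=a^2$ of order $2n$, and $b=a^{2n}=s^n$; in the notation of Corollary \ref{coro5.1.1} there is a single $S$-generator ($n=1$), and $k_1=2n$, $m_1=1$, $p_1=n$. A crucial simplification is that the 2-cocycle identity $\tau(a,a)\tau(a^2,a)=\tau(a,a^2)\tau(a,a)$ gives $\tau(a,s)=\tau(s,a)$, hence $\eta(a,s)=1$, and consequently $\eta(a,b)=\eta(a,s)^n=1$. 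By Proposition \ref{pro7.1.2} every element of $Q_A$ is $\varphi$-symmetric, so Corollary \ref{coro6.1.1} forces $\gamma:=w^3(a,s)=w^2(s,a)=:\beta$.

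For the forward direction, given $R\in Q_A$, set $\beta:=w^2(s,a)$ and $\delta:=w^4(a,a)$. Proposition \ref{pro5.2.1}(ii) applied with $k_1=2n$ immediately yields $\beta^{2n}=P_{s^{2n}}$. Specialising Proposition \ref{pro5.2.1}(iv) to a single $S$-generator, $m_1=1$, $p_1=n$, and using $P_s=1$, produces the $\delta^2$ formula of the theorem after rewriting using the identity $P_{s^{2n}}=P_{s^n}^2\tau(b,b)$ (obtained by comparing $X_s^{2n}$ and $(X_s^n)^2$) together with $\beta^{2n}=P_{s^{2n}}$ to absorb the $P_{s^n}^{\pm 1}$ factors correctly.

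For the backward direction, given a pair $(\beta,\delta)$ satisfying the two hypothesised equations, I would construct the full quadruple $(\alpha,\beta,\gamma,\delta)$ of Proposition \ref{pro5.1.3} by setting $\gamma:=\beta$, $\alpha:=\beta^2\sigma(s)$, and extending to a bicharacter $w^1(s^i,s^j):=\alpha^{ij}$ on $S$. Well-definedness of $w^1$ requires $\alpha^{2n}=1$, which follows from $\beta^{4n}=P_{s^{2n}}^2$ together with Lemma \ref{lem5.2.1}. I then check the six conditions of Proposition \ref{pro5.2.1}: (ii) and (iii) hold by construction, (i) from the computation of $\alpha^{2n}$, (vi) reduces to $\alpha^n=\eta(a,s)=1$ and $\beta^n=\gamma^n$ which are trivial here, and (iv), (v) coincide after using $\gamma=\beta$ and $\eta(a,b)=1$, and are precisely the hypothesised $\delta^2$ relation. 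Proposition \ref{pro5.2.5} then delivers a quasitriangular structure $R$ with $w^2(s,a)=\beta$ and $w^4(a,a)=\delta$.

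Finally, the bijectivity of $R\leftrightarrow(\beta,\delta)$ follows from Lemma \ref{lem4.1.2} combined with Proposition \ref{pro5.2.4}: the four maps $w^1,w^2,w^3$ are determined by $w^4$, and $w^4$ itself is determined by the quadruple $(\alpha,\beta,\gamma,\delta)$, which, under $\varphi$-symmetry and the forced identities $\eta(a,s)=\eta(a,b)=1$, collapses to the pair $(\beta,\delta)$. I expect the main difficulty to be the bookkeeping in the second paragraph, namely reconciling the $P_{s^n}^{-1}$ that naturally arises from Proposition \ref{pro5.2.1}(iv) with the $P_{s^n}$ appearing in the theorem statement; this requires a careful application of the relation $P_{s^{2n}}=P_{s^n}^2\tau(b,b)$ alongside $\beta^{2n}=P_{s^{2n}}$.
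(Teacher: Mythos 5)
Your architecture is exactly the paper's: specialise the Section \ref{sec5} machinery to $s_1=s=a^2$, $k_1=2n$, $m_1=1$, $p_1=n$; get the forward direction from Proposition \ref{pro5.2.1}(ii),(iv); get the backward direction by feeding the four tuple $(\beta^2\sigma(s),\beta,\beta,\delta)$ into Propositions \ref{pro5.2.4}--\ref{pro5.2.5}; and get bijectivity from Lemma \ref{lem4.1.2} together with Proposition \ref{pro7.1.2}. Your preliminary observations $\eta(a,s)=1$, $\eta(a,b)=1$ and hence $\gamma=\beta$ are correct and are implicitly used by the paper. One small caveat: in your check of condition (vi) the identity $\alpha^n=1$ is not ``trivial''; it amounts to $(\beta^2\sigma(s))^n=P_{s^{2n}}\sigma(s)^n=1$, which needs $P_{s^{2n}}=\sigma(s)^{-n}$, i.e.\ Lemma \ref{lem5.2.1} combined with $\tau(b,b)\sigma(b)=1$ (the identity established inside the proof of Theorem \ref{thm7.1.1}).

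The genuine gap is precisely the step you yourself flag as the main difficulty, and it cannot be repaired in the way you propose. Proposition \ref{pro5.2.1}(iv) specialises (using $P_s=1$) to $\delta^2=\frac{\tau(a,a)\tau(b,a)}{\tau(b,b)\sigma(a)}P_{s^n}^{-1}\beta^{1+n}$. Your rewriting via $P_{s^n}^{-1}=P_{s^n}\tau(b,b)P_{s^{2n}}^{-1}=P_{s^n}\tau(b,b)\beta^{-2n}$ produces $\delta^2=\frac{\tau(a,a)\tau(b,a)}{\sigma(a)}P_{s^n}\beta^{1-n}$ --- exponent $1-n$ and no $\tau(b,b)$ --- which is not the printed relation. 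Indeed the printed and the derived relations differ by the factor $P_{s^n}^{2}=\sigma(b)\sigma(s)^{-n}$ (Lemma \ref{lem5.2.1} plus $\tau(b,b)\sigma(b)=1$), and this is not $1$ in general: on $G=\mathbb{Z}_8$ take the coboundary $\tau(a^i,a^j)=\mu(a^i)\mu(a^j)\mu(a^{i+j})^{-1}$ with $\mu(1)=1$, $\mu(a^2)=1$, $\mu(a^4)=\sqrt{-1}$, and $\sigma(g)=[\mu(g)\mu(g\triangleleft x)]^{-1}$; this is admissible data (a coboundary is symmetric, so Proposition \ref{pro2.1.2}(iii) holds, and the compatibility of $\sigma$ and $\tau$ holds by construction), yet $P_{s^n}^{2}=\tau(s,s)^2=-1$. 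So no bookkeeping converts $P_{s^n}^{-1}$ into $P_{s^n}$. What is actually provable --- and what the paper's own proof establishes, since it cites (ii),(iv) of Proposition \ref{pro5.2.1} verbatim and performs no such rewriting --- is the bijection onto $\{(\beta,\delta)\;:\;\beta^{2n}=P_{s^{2n}},\ \delta^2=\frac{\tau(a,a)\tau(b,a)}{\tau(b,b)\sigma(a)}P_{s^n}^{-1}\beta^{1+n}\}$; the $P_{s^n}$ in the printed statement should be read as a typo (the same proof also misprints ``$w^1(s,s)=\beta$'' where $w^2(s,a)=\beta$ is meant). If you keep the printed relation, your backward direction breaks as well: a pair satisfying it yields a four tuple violating (iv)--(v) of Proposition \ref{pro5.2.1} unless $P_{s^n}^2=1$. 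Either prove the $P_{s^n}^{-1}$ version, or observe that $(\beta,\delta)\mapsto(\beta,P_{s^n}\delta)$ identifies the two solution sets, so the bare one-to-one assertion survives even though the natural correspondence $\beta=w^2(s,a)$, $\delta=w^4(a,a)$ lands in the $P_{s^n}^{-1}$ set.
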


\begin{proof}
Let $R$ be a non-trivial quasitriangular structure on $K(8n,\sigma,\tau)$ and let $w^1(s,s)=\beta,w^4(a,a)=\delta$, then it can be seen that $(\beta,\delta)$ such that the following conditions
\begin{align}
\label{e7.3} \beta^{2n}=P_{s^{2n}},\;\delta^2=\frac{\tau(a,a)\tau(b,a)}{\tau(b,b)\sigma(a)}P_{s^n}\beta^{1+n}.
\end{align}
due to (ii),(iv) of Proposition \ref{pro5.2.1}. Conversely, if $(\beta,\delta)$ satisfies the equation \eqref{e7.3}, then we can use (i)-(iv) of Proposition \ref{pro5.2.4} to define a $R$ as follows
\begin{align}
\label{e7.5} w^1(s,s):=\beta^2\sigma(s),\;w^2(s,a)=w^3(a,s):=\beta,\;w^4(a,a):=\delta.
\end{align}
Then we can see that the four tuple $(\beta^2\sigma(s),\beta,\beta,\delta)$ satisfies conditions (i)-(vi) of Proposition \ref{pro5.2.1} and thus $R$ is a non-trivial quasitriangular structure. Moreover, since Lemma \ref{lem4.1.2}, we know $R$ is unique if it is a $\varphi$-symmetric quasitriangular structure and satisfies $w^2(s,a)=w^3(s,a)=\beta,\;w^4(a,a)=\delta$. Finally, since Proposition \ref{pro7.1.2}, we know that this correspondence we have discussed is one-to-one.
\end{proof}

\begin{remark}\emph{From the proof of the above theorem, we know that all non-trivial quasitriangular structures on $A(8n,\sigma,\tau)$ are given by (i)-(iv) of Proposition \ref{pro5.2.4}, where $(\beta,\delta)$ are in Theorem \ref{thm7.1.2} and $w^1$ is defined by \eqref{e7.5} above and $\alpha_{11}=w^1(s,s),\;\gamma=\beta $.}\end{remark}


\begin{thebibliography}{99}
\bibitem{AA} A. Abella, Some advances about the existence of compact involutions in semisimple Hopf algebras, S\~{a}o Paulo J. Math. Sci. 13 (2019), no. 2, 628-651.

\bibitem{E} D. E. Evans, M. Pugh, Braided subfactors, spectral measures, planar algebras, and Calabi-Yau algebras associated to SU(3) modular invariants, Progress in operator algebras, noncommutative geometry, and their applications, Theta Ser. Adv. Math., 15, Theta, Bucharest, 2012.

\bibitem{D} D. Pansera, A class of semisimple Hopf algebras acting on quantum polynomial algebras, Rings, modules and codes, 303-316, Contemp. Math., 727, Amer. Math. Soc.,
Providence, RI, 2019.


\bibitem{Kac} G. I. Kac, Group extensions which are ring groups, Mat. Sb. (N.S.) 76 (1968), 473-496.

\bibitem{K} Y. Kashina, Classification of semisimple Hopf algebras of dimension 16, J. Algebra 232 (2000), no. 2, 617-663.

\bibitem{K1} Y. Kashina, On semisimple Hopf algebras of dimension $2^m$, Algebr. Represent. Theory 19 (2016), no. 6, 1387-1422.


\bibitem{M1} A. Masuoka, Semisimple Hopf algebras of dimension 6, 8, Israel J. Math. 92 (1995), no. 1-3, 361-373.

\bibitem{M2} A. Masuoka, Some further classification results on semisimple Hopf algebras, Comm. Algebra 24 (1996), no. 1, 307-329.

\bibitem{M3} A. Masuoka, Hopf algebra extensions and cohomology, New directions in Hopf algebras, 167-209, Math. Sci. Res.Inst. Publ., 43, Cambridge
Univ. Press, Cambridge, 2002.

\bibitem{M4} A. Masuoka, Extensions of Hopf algebras, Deformation of group schemes and applications to number theory (Japanese) (Kyoto,
1995).
\~{S}urikaisekikenky\~{u}sho K\~{o}ky\~{u}roku No. 942 (1996), 53-65.


\bibitem{M5} A. Masuoka, Extensions of Hopf algebras and Lie bialgebras, Trans. Amer. Math. Soc. 352 (2000), no. 8, 3837-3879.

\bibitem{M6} A. Masuoka, Cohomology and coquasi-bialgebra extensions associated to a matched pair of bialgebras, Adv. Math. 173 (2003), no. 2, 262-315.

\bibitem{Na} S. Natale, On quasitriangular structures in Hopf algebras arising from exact group
factorizations, Comm. Algebra 39 (2011), no. 12, 4763-4775.

\bibitem{NNW} D. Naidu , D. Nikshych , S. Witherspoon, Fusion subcategories of representation categories of twisted quantum doubles of finite groups, Int. Math. Res. Not. IMRN 2009, no. 22, 4183-4219.

\bibitem{W} M. Wakui, Polynomial invariants for a semisimple and cosemisimple Hopf algebra of finite dimension, J. Pure Appl. Algebra 214 (2010), no. 6, 701-728.

\bibitem{R} D. E. Radford, Hopf Algebras, World Scientific, Series on Knots and Everything, 49.
World Scientific Publishing Co. Pte. Ltd., Hackensack, NJ, 2012. xxii+559 pp.

\bibitem{R2} D. E. Radford, Minimal quasitriangular Hopf algebras, J. Algebra 157 (1993), no. 2, 285-315.

\bibitem{S} S. Suzuki, A family of braided cosemisimple Hopf algebras of finite dimension, Tsukuba J. Math. 22 (1998), no. 1, 1-29.

\bibitem{R5} D. E. Radford, On Kauffman's knot invariants arising from finite-dimensional Hopf algebras, Advances in Hopf algebras (Chicago, IL, 1992), 205-266, Lecture Notes in Pure and Appl. Math., 158, Dekker, New York, 1994.

\bibitem{Ge} S. Gelaki,
On the quasitriangularity of $U_q(sl_n)'$, J. London Math. Soc. (2) 57 (1998), no. 1, 105-125.

\bibitem{N2} A. Nenciu, Quasitriangular structures for a class of pointed Hopf algebras constructed by Ore extensions, Comm. Algebra 29 (2001), no. 8, 3419-3432.

\bibitem{N1} A. Nenciu, Quasitriangular pointed Hopf algebras constructed by Ore extensions, Algebr. Represent. Theory 7 (2004), no. 2, 159-172.

\bibitem{Na} S. Natale, On quasitriangular structures in Hopf algebras arising from exact group factorizations, Comm. Algebra 39 (2011), no. 12, 4763-4775.

\bibitem{NR} N. Reshetikhin, V. G. Turaev, Invariants of 3-manifolds via link polynomials and quantum groups, Invent. Math. 103 (1991), no. 3, 547-597.

\bibitem{L} K Zhou, Liu G, On the quasitriangular structures of abelian extensions of $\mathbb{Z}_{2}$, Comm. Algebra 24 (2021), no. 1, 307-329.
\end{thebibliography}
\end{document}